\tikzset{>=to} 
\tikzset{squig/.style={->, line join=round, decorate,
                       decoration={zigzag, segment length=4, amplitude=.9, post=lineto, post length=2pt}}} 
\definecolor{rouge}{rgb}{0.95,0.1,0.15}
\definecolor{vertforet}{rgb}{0.2,0.64,0.2}
\definecolor{bleu}{rgb}{0.1,0.1,0.9}
\DeclareSymbolFont{largesymbols}{OMX}{zplm}{m}{n} 
\numberwithin{equation}{section}
\newcolumntype{C}{>{$}c<{$}}              
\newcolumntype{D}{>{$\displaystyle}l<{$}} 
\newcommand{\qbin}[3]{\genfrac{[}{]}{0pt}{}{#1}{#2}_{#3}}
\newcommand{\tl}[1]{\mathsf{TL}_{#1}}
\newcommand{\atl}[1]{\mathsf{a}\hskip-1.8pt\mathsf{TL}_{#1}}
\newcommand{\tlb}[1]{\mathsf{TLb}_{#1}}
\newcommand{\uqsl}{U_q(\mathfrak{sl}_2)}
\newcommand{\luszt}{\mathcal LU_q}
\newcommand{\lusztpm}{\mathcal LU_{q^{\pm 1}}}
\newcommand{\lusztI}{\mathcal LU_{q^{-1}}}
\newcommand{\Cell}[1]{\mathsf{W}_{#1}} 
\newcommand{\Irre}[1]{\mathsf{L}_{#1}} 
\newcommand{\pg}[1]{\mathsf{GP}_{#1}}  
\newcommand{\XXZ}[2]{\mathsf{X}_{#1}^{#2}} 
\newcommand*\Eval[3]{\left.#1\right\rvert_{#2}^{#3}}
\newcommand{\Bnu}{\mathfrak{B}} 
\newcommand{\BCell}[1]{\mathfrak{B}^{\Cell{}}_{#1}} 
\newcommand{\BXXZ}[1]{\mathfrak{B}^{\XXZ{}{}}_{#1}}   
\newcommand{\ii}{F} 
\newcommand{\jj}{E} 
\newcommand{\kk}{\mu} 
\newcommand{\mm}{\nu} 
\newcommand{\ay}[1]{i_{#1}} 
\newcommand{\gl}[1]{\theta_{#1}} 
\newcommand{\modY}{\textup{\,mod\,}} 
\DeclareMathOperator{\im}{im\,}
\DeclareMathOperator{\rad}{rad\,}
\DeclareMathOperator{\soc}{soc\,}
\DeclareMathOperator{\head}{top\,}
\DeclareMathOperator{\vspn}{span}
\DeclareMathOperator{\Hom}{Hom}
\DeclareMathOperator{\End}{End}
\DeclareMathOperator{\Ext}{Ext}
\DeclareMathOperator{\id}{\text{id}}  
\newcommand{\myoplus}[2]{\underset{#1}{\overset{#2}{\oplus{}{\hskip+0pt}'}}}\newcommand{\mybigoplus}[2]{\underset{#1}{\overset{#2}{\bigoplus{}{\hskip-1pt}'}}}
\theoremstyle{plain}
\newtheorem{theorem}{Theorem}[section]
\newtheorem{lemma}[theorem]{Lemma}
\newtheorem{proposition}[theorem]{Proposition}
\newtheorem{corollary}[theorem]{Corollary}
\newtheorem{definition}[theorem]{Definition}
\newtheorem{rem}[theorem]{Remark}
\begin{document}

\title[Spin chains as modules over $\atl N$]{Spin chains as modules over \\ the affine Temperley-Lieb algebra}

\author[T Pinet]{Th\'eo Pinet}

\address[Th\'eo Pinet]{{\rm The work was done at }D\'{e}partement de math\'ematiques et statistique \\
Universit\'{e} de Mont\-r\'{e}al \\
Qu\'{e}bec, Canada, H3C~3J7. {\rm Now also at } Universit\'e de Paris and Sorbonne Universit\'e, CNRS, IMJ-PRG, F-75006 Paris, France.
}

\email{theo.pinet@umontreal.ca}

\author[Y Saint-Aubin]{Yvan Saint-Aubin}

\address[Yvan Saint-Aubin]{
D\'{e}partement de math\'{e}matiques et de statistique \\
Universit\'{e} de Montr\'{e}al \\
Qu\'{e}bec, Canada, H3C~3J7.
}

\email{yvan.saint-aubin@umontreal.ca}

\date{\today}

\keywords{affine Temperley-Lieb algebra,
Temperley-Lieb algebra,
quantum groups,
Uqsl2,
Feigin-Fuchs module, Schur-Weyl duality, indecomposable projective, fusion rules, XXZ chain, periodic XXZ chain.}

\begin{abstract} The affine Temperley-Lieb algebra $\atl N(\beta)$ is an infinite-dimensional algebra parametrized by a number $\beta \in \mathbb{C}$ and an integer $N\in \mathbb{N}$. It naturally acts on $(\mathbb{C}^2)^{\otimes N}$ to produce a family of representations labeled by an additional parameter $z\in\mathbb C^\times$. The structure of these representations, which were first introduced by Pasquier and Saleur \cite{PS} in their study of spin chains, is here made explicit. They share their composition factors with the cellular $\atl N(\beta)$-modules of Graham and Lehrer \cite{GL-Aff}, but differ from the latter representations by the direction of about half of the arrows of their Loewy diagrams. The proof of this statement uses a morphism introduced by Morin-Duchesne and Saint-Aubin \cite{MDSA} as well as new maps that intertwine various $\atl N(\beta)$-actions on the XXZ chain and generalize applications studied by Deguchi {\em et al} \cite{Deguchi} and after in \cite{MDSAloop}. 

\end{abstract}

\maketitle

\tableofcontents

\onehalfspacing

\newpage

\begin{center}{\scshape List of common symbols}\end{center}

\medskip

\begin{center}
\begin{tabular}{rl}
$\atl N(\beta)$ & the affine Temperley-Lieb algebra (subsection \ref{sec:atln})\\
$\tl N(\beta)$ & the (usual) Temperley-Lieb algebra (subsection \ref{sec:atln})\\
$\beta$			& complex parameter of the algebras $\atl N(\beta)$ and $\tl{N}(\beta)$ ($\beta=-q-q^{-1}$)\\
$\lambda, \lambda_N, \Lambda, \Lambda_N, \Delta, \Delta_N$ & sets of pairs $(d,z)$ parametrizing $\atl N$-modules (section \ref{sec:main})\\
$\preceq, \unlhd$	& partial orders on $\lambda$ and $\Delta$ respectively (section \ref{sec:main})\\
problematic pairs 	& the pairs $(0,\pm q)$ when $q+q^{-1}=0$ and $N$ is even (subsection \ref{sec:atln})\\
$(m,n)$-diagrams& elements of diagrammatic basis of $\atl N$ (subsection \ref{sec:atlOnXXZ})\\
$\Bnu_{m,n}$	& diagrammatric basis of the vector space of $(m,n)$-diagrams (subsection \ref{sec:atln})\\
$|v|$ 		& rank of the diagram $v$ (subsection \ref{sec:atln})\\
$\Cell{N;d,z}$	& cellular modules over the algebra $\atl N$ (subsection \ref{sec:atln})\\
$\langle\ ,\ \rangle_{N;d,z}$ & $\mathbb{C}$-valued bilinear form on $\Cell{N;d,z}\times \Cell{N;d,z^{-1}}$ (subsection \ref{sec:morphismesGL})\\
$\Irre{N;d,z}$	& simple $\atl N$-module associated to the pair $(d,z)\in\Lambda_N$ (sections \ref{sec:main} and \ref{sec:atln})\\
$\gl{(d,z);(t,x)}$	& Graham-Lehrer morphisms between cellular modules (subsection \ref{sec:morphismesGL})\\
$H_{XXZ}$		& Hamiltonian acting on the XXZ spin chain (subsection \ref{sec:atlOnXXZ})\\
$z$				& twist parameter $\in\mathbb C^\times$ (subsection \ref{sec:atlOnXXZ})\\
$S^z$			& $z$-component of the spin operator (subsection \ref{sec:atlOnXXZ})\\
$\XXZ{N;z}{\pm}$& spin chains seen as a $\atl N$-module (subsection \ref{sec:atlOnXXZ}) or as $\lusztpm$-module (subsection \ref{sub:homoXXZ})\\
$\XXZ{N;d,z}{\pm}$& $\atl N$-submodule of $\XXZ{N;z}{\pm}$ seen as eigenspace of $S^z$ with eigenvalue $d$ (subsection \ref{sec:atlOnXXZ})\\
$\ay{N;d,z}$	& $\atl N$-morphism $\Cell{N;d,z}\to \XXZ{N;d,z}{+}$ (subsection \ref{sec:ind})\\
$\pg{N;d,z}$ 		& generic part of the cellular module $\Cell{N;d,z}$ (subsection \ref{sec:ind})\\
$\BXXZ N$		& spin basis for $\XXZ N{}=(\mathbb C^2)^{\otimes N}$ (subsection \ref{sec:atlOnXXZ})\\
$\star$- and $\circ$-dualities	& functors $\modY \atl N\to \modY \atl N$ (subsection \ref{sec:dualities})\\
$\ell$ 		& smallest positive integer (if it exists) such that $q^{2\ell}=1$ (section \ref{sec:main})\\
$\luszt$ 		& Lusztig's quantum group (subsection \ref{sec:luqsl2})\\
$L_q(i),\Delta_q(i),P_q(i)$ & simple, Weyl and projective modules over $\luszt$ (subsection \ref{sec:luqsl2})\\
$\jj_{(t,x);(d,z)}^\pm$ and $\ii_{(d,z);(t,x)}^\pm$ & $\atl N$-morphisms between eigenspaces $\XXZ{N;d,z}\pm$ and $\XXZ{N;t,x}\pm$ (subsection \ref{sub:homoXXZ})\\
$\kk^{(n)}$ and $\mm^{(n)}$	& maps $\XXZ{N;d+2n\ell,zq^{n\ell}}{+}\rightarrow \XXZ{N;d,z}{+}$ and  $\XXZ{N;d,z}{+}\rightarrow \XXZ{N;d+2n\ell,zq^{n\ell}}{+}$ (subsection \ref{sub:homoXXZ})\\
\end{tabular}
\end{center}
\vspace*{0.5cm}

%
%


\begin{section}{Introduction} \label{sec:Intro}
One of the simplest incarnation of the classical Schur-Weyl duality ties the actions of the Lie algebra of $2\times 2$ traceless matrices $\mathfrak{sl}_2(\mathbb{C})$ and of the symmetric group on $N$ letters $\mathfrak{S}_N$ on $V=(\mathbb{C}^2)^{\otimes N}$. The key result is then that the enveloping algebra $U\mathfrak{sl}_2$ and the group algebra $\mathbb{C}\mathfrak{S}_N$ are mutual centralizers on $V$, that is,
$$ \End_{U\mathfrak{sl}_2}V \simeq \mathbb{C}\mathfrak{S}_N \quad \text{and} \quad \End_{\mathbb{C}\mathfrak{S}_N}V \simeq U\mathfrak{sl}_2.$$
The quantum analogue of this result replaces the actions of $\mathfrak{sl}_2(\mathbb{C})$ and $\mathfrak{S}_N$ by actions of the quantum group $U_q\mathfrak{sl}_2$ and of the Temperley-Lieb algebra $\tl N(\beta = -q-q^{-1})$: $$ \End_{U_q\mathfrak{sl}_2}V \simeq \tl{N}(\beta) \quad \text{and} \quad \End_{\tl{N}(\beta)}V \simeq U_q\mathfrak{sl}_2.$$ 
It is a remarkable fact that this action of $\tl{N}(\beta)$ on $V$ extends to a family of actions of the affine Temperley-Lieb algebra $\atl{N}(\beta)$. Of course, the resulting $\atl{N}(\beta)$-actions no longer commute with the action of the whole quantum group $U_q\mathfrak{sl}_2$ and the quantum Schur-Weyl duality is thus broken in the affine case. Still the action of $U_q\mathfrak{sl}_2$, or of Lusztig extension $\luszt$ when $q$ is a root of unity, is a key tool to probe the structure of the $\atl N(\beta)$-action.

To our knowledge, the above-mentioned action of $\atl N(\beta)$ on $V=(\mathbb{C}^2)^{\otimes N}$ appeared first in the physics literature where states of a periodic XXZ chain with $N$ spin-$\frac 12$ are vectors in $V$. Their evolution is described through 
a $\mathbb{C}$-endomorphism of $V$, the Hamiltonian, which is parametrized by
two non-zero complex numbers: $q$ measures the anisotropy along the Z direction and $z$ the twist in the interaction between the extreme spins of the chain. Pasquier and Saleur \cite{PS} introduced this latter parameter by analogy with the anisotropic limit of the transfer matrix for the six-vertex model with twisted boundary conditions. They also recognized the rich algebraic structure of the periodic XXZ chain. In fact, their work leads to four different actions of the affine Temperley-Lieb algebra $\atl N(\beta)$ on $V$ such that the generators $e_i$ of $\atl N(\beta)$ are represented by nearest-neighbor interactions. The four corresponding $\atl N(\beta)$-modules are denoted by $\XXZ{N;z^{\pm1}}\pm$ (where the two $\pm$'s are independent) and will refered to as periodic XXZ spin chains to underline their physical origin. The goal of this article is to reveal the structure of these modules as a function of $q$ and $z$.

The affine Temperley-Lieb algebra is not the symmetry algebra of the periodic XXZ chain. In fact the Hamiltonian is simply an element of $\atl N(\beta)$ (the sum of the generators $e_i$) and commutes only with a part of the algebra. Knowing the structure of the modules $\XXZ{N;z^{\pm 1}}{\pm}$ is however still important. It may indeed help, from a physical point of view, partially diagonalize the Hamiltonian by identifying the simple $\atl{N}(\beta)$-modules intervening as composition factors of the overall spin chain and could also play a crucial role in studying the continuum limit of the model. These reasons have already led others to probe the structure of the chain as a $\atl N(\beta)$-module and to discover specific examples \cite{PS,GRS1, GRS2} of the general result presented here. 

There are also mathematical reasons to pursue this study. The representation theory of $\atl N(\beta)$ was first studied by Graham and Lehrer \cite{GL-Aff}. Many have pursued their work, but a satisfactory understanding of this theory is still to come. For example a full description of the finite-dimensional indecomposable modules remains to be completed. The structure of the modules $\XXZ{N;z^{\pm1}}\pm$, which is the goal of the present work, is of course linked to this description. This structure has not been described systematically before; it is somewhat complicated and provides a further indication of the richness of the representation theory of $\atl N(\beta)$. Interestingly it is also reminiscent of that of certain infinite-dimensional modules over the Virasoro algebra $\mathfrak{Vir}$, namely the so-called Feigin-Fuchs modules. Our results thus belong to the body of works initiated by Koo and Saleur in \cite{koo1994representations} on the link between the representation theory of the algebra $\mathfrak{Vir}$ and that of Temperley-Lieb families. This link between the (non-associative) Lie algebra $\mathfrak{Vir}$ and the (associative) $\atl N(\beta)$ has recently attracted interest from the mathematical community. Finally the knowledge of this structure could also help characterize the morphism-spaces of the spin chains and could thus lead to a better understanding of the broken quantum Schur-Weyl duality described above. 

Our tools in probing the structure of the $\atl{N}(\beta)$-action on the XXZ chain mainly come from the finite-dimensional representation theory of $\atl{N}(\beta)$ and of $\luszt$, Lusztig's extension of $\uqsl$. For the quantum group $\luszt$, many authors have contributed to the theory and several monographs \cite{Jantzen, ChariPressley, Klimyk} with some recent work \cite{AndersenTubbenhauer} provide quick reviews that fit our needs. For the affine algebra $\atl{N}(\beta)$, we mainly use the pioneering paper of Graham and Lehrer with in particular their cellular modules and their classification of simple finite-dimensional $\atl{N}(\beta)$-modules. 

The article is organized as follows. Section \ref{sec:main} states the main results by relating the Loewy diagrams of the cellular modules of Graham and Lehrer to those of the $S^z$-eigenspaces $\XXZ{N;d,z^{\pm1}}\pm$ of the XXZ chains $\XXZ{N;z^{\pm 1}}{\pm}$. It also recalls Graham and Lehrer's classification of the simple finite-dimensional representations of $\atl{N}(\beta)$ and the partial order on the pairs $(d,z)$ labelling the cellular modules. Sections \ref{sec:xxzatln} and \ref{sec:xxzLUq} then study the periodic XXZ chains as representations of $\atl{N}(\beta)$ and $\luszt$ respectively. Beside recalling some basic definitions related to the algebra $\atl{N}(\beta)$, section \ref{sec:xxzatln} gives the morphisms between cellular modules \cite{GL-Aff} and constructs isomorphisms (e.g.~spin flips) and dualities between distinct eigenspaces $\XXZ{N;d,z^{\pm 1}}{\pm}$. Finally, it recalls a morphism, first defined by Morin-Duchesne and Saint-Aubin 
in \cite{MDSA}, connecting cellular modules to the eigenspaces $\XXZ{N;d,z^{\pm1}}\pm$. This morphism already leads to a description of some of the simplest modules $\XXZ{N;d,z^{\pm1}}\pm$ (for example when the parameter $q$ is not a root of unity). Section \ref{sec:xxzLUq} recalls the basic definition of Lusztig's quantum group $\luszt$ for $q$ a root of unity and describes its basic modules: the Weyl, the indecomposable projective and the simple ones. A new explicit realization of the indecomposable projective modules is introduced. It reveals clearly two Weyl modules that build this projective, the first as a submodule, the second as a quotient. Some fusion rules are also given. At last, this section constructs one of the key tools used in this paper, namely $\atl N(\beta)$-morphisms $\XXZ{N;t,x}+\to\XXZ{N;d,z}+$ built out of elements of $\luszt$. Section \ref{sec:struc} contains the proof of the main result for the cases where the parameter $q$ is a root of unity. The conclusion follows with a quick summary of the arguments leading to the main result, some of its corollaries and possible future research avenues. Appendices complete the text: appendix \ref{app:a} recalls the basic of $q$-numbers, appendix \ref{app:b} proves some results on $\luszt$, and appendix \ref{app:problematic} covers cases that were excluded of the body text for the sake of clarity.
\end{section}


\begin{section}{The main results} \label{sec:main}

This section states the main results. While it uses notations and objects that will be defined only in the following sections, a reader familiar with the affine Temperley-Lieb algebra and its action on the periodic XXZ spin-$\frac12$ chain will be able to understand the results and easily refer to them when needed. Hopefully, for the other readers, this section will serve as a roadmap to the goal to be achieved.

The affine Temperley-Lieb algebras $\atl N(\beta)$ form a family of infinite-dimensional associative unital algebras labeled by a positive integer $N$ and a complex number $\beta = -q-q^{-1}$ where $q\in\mathbb C^\times$. Graham and Lehrer \cite{GL-Aff} gave a complete classification of simple finite-dimensional $\atl N(\beta)$-modules as a function of $N$ and $q$. A first step toward this classification is the definition of a family of modules $\Cell{N;d,z}$ that they called {\em cellular}. Two further indices are used to label the family: $d$ is a non-negative integer $\leq N$ of the parity of $N$ and $z$ is a non-zero complex number. The modules $\Cell{N;d,z}$ are not simple in general, but their quotients by their radicals are (except for the problematic pairs $(d,z)$ defined after \eqref{eq:Lambda}). Moreover any simple finite-dimensional $\atl N(\beta)$-module can be obtained through such a quotient and the correspondence between cellular modules and isomorphism classes of finite-dimensional simple ones is almost one-to-one. The two sets
\begin{equation}\label{eq:lambda}
\lambda=\mathbb{Z}_{\geq 0}\times \mathbb{C}^{\times}\qquad\text{and}\qquad 
\lambda_N=\{(d,z)\in\lambda\,|\, d\leq N,\, d\equiv N\modY 2\}
\end{equation}
describe the possible pairs $(d,z)$ without and with the constraint of parity and $d\leq N$. They will be used to describe the action of $\atl{N}(\beta)$ on the XXZ chains. However, finer sets are required to obtain a one-to-one labeling of the simple finite-dimensional modules. Let $\sim$ be the equivalence relation on $\lambda$ that identifies $(0,z)$ and $(0,z^{-1})$. (When $d\neq 0$, the pairs $(d,z)$ and $(d,z^{-1})$ are not equivalent under $\sim$.) With this relation, define
\begin{equation}\label{eq:Lambda}\Lambda=\{(d,z)\in\lambda\}/\sim\qquad\text{and}\qquad 
\Lambda_N=\{(d,z)\in\Lambda\,|\, d\leq N,\, d\equiv N\modY 2\}\end{equation}
where the pairs $(0,q)\sim(0,q^{-1})$ are to be removed\footnote{Note that these pairs $(0,q)$ and $(0,q^{-1})$ are not excluded from the set $\lambda_N$.} from $\Lambda_N$ when $N$ is even with $q+q^{-1}=0$. The study of the chains related to these pairs $(0,q)$ and $(0,q^{-1})$ will require a finer analysis. For that reason, we call them {\em problematic pairs}.
\begin{theorem}[\cite{GL-Aff}]\label{thm:listOfSimples} The family of $\Irre{N;d,z}=\Cell{N;d,z}/\rad\Cell{N;d,z}$ indexed by $(d,z)\in\Lambda_N$ is a complete set of non-isomorphic simple finite-dimensional $\atl N(\beta)$-modules. 
\end{theorem}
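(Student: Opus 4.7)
The plan is to apply the general cellular-algebra machinery, adapted by Graham and Lehrer to the affine setting. The starting point is that each cellular module $\Cell{N;d,z}$ carries a canonical invariant bilinear form $\langle \cdot, \cdot \rangle_{N;d,z}$ whose radical is an $\atl N(\beta)$-submodule, so that the quotient $\Irre{N;d,z}$ is either zero or absolutely simple. The proof then splits into three tasks: (a) identify precisely when this quotient is nonzero, (b) verify that non-equivalent labels give non-isomorphic simple quotients, and (c) show that every simple finite-dimensional $\atl N(\beta)$-module arises this way.

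For (a), I would exhibit, for each non-problematic $(d,z) \in \Lambda_N$, an explicit pair of diagrammatic basis vectors in $\Cell{N;d,z}$ and in $\Cell{N;d,z^{-1}}$ whose pairing is nonzero: this reduces to computing $\langle \cdot, \cdot \rangle_{N;d,z}$ on concatenations of $(m,n)$-diagrams, producing a Laurent polynomial in $z$ with coefficients in $\mathbb{C}[\beta]$ whose zero set coincides exactly with the problematic locus. A separate case analysis on the problematic pairs themselves would then confirm that discarding them from $\Lambda_N$ leaves no simple module unaccounted for.

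For (b), the label $(d,z)$ can be recovered from $\Irre{N;d,z}$ by two invariants: $d$ is the minimal rank of a diagram acting nontrivially on the module, and $z$ (modulo $z \sim z^{-1}$ when $d=0$) is read off from the eigenvalue of a distinguished element — typically a power of the translation generator of $\atl N(\beta)$ — that is central modulo the ideal of lower-rank diagrams and that acts on $\Cell{N;d,z}$ by a scalar function of $z$. When $d=0$, a geometric symmetry of the through-zero diagrams swaps this power with its inverse, thereby forcing the identification $(0,z) \sim (0,z^{-1})$ built into $\Lambda$.

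Task (c), the completeness of the family, is the main obstacle. Given a simple finite-dimensional $\atl N(\beta)$-module $S$, I would first extract $d$ as the smallest rank for which diagrams act nontrivially on $S$ and $z$ as a generalized eigenvalue of the translation generator on the subspace of $S$ annihilated by all diagrams of rank strictly less than $d$; this subspace is nonzero by finite-dimensionality and is stable under the generator in question. A standard cellular argument, passing to the quotient of $\atl N(\beta)$ by the two-sided ideal generated by diagrams of rank less than $d$, then produces a nonzero $\atl N(\beta)$-morphism $\Cell{N;d,z} \to S$; simplicity of $S$ forces this map to be surjective and to factor through $\Irre{N;d,z}$, yielding $S \simeq \Irre{N;d,z}$ and completing the classification.
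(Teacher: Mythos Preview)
The paper does not actually prove this theorem: it is stated with a citation to Graham and Lehrer \cite{GL-Aff} and no proof is given. So there is no ``paper's own proof'' to compare your proposal against. Your sketch is a reasonable outline of the Graham--Lehrer argument and aligns with the ingredients the paper does record in passing: the bilinear form $\langle\,\cdot\,,\,\cdot\,\rangle_{N;d,z}$ on $\Cell{N;d,z}\times\Cell{N;d,z^{-1}}$ and its radical are discussed in subsection~\ref{sec:dualities}, the equality $\rad\langle\,\cdot\,,\,\cdot\,\rangle_{N;d,z}=\rad\Cell{N;d,z}$ is recalled as proposition~\ref{prop:Radrad}, and the explicit diagrams you propose for part (a) are exactly those drawn in the paragraph preceding that proposition (including the identification of the problematic pairs as the zero locus of the form).

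One point to be careful with: in your opening you describe the form as living on a single cellular module, but in the affine setting it is a pairing $\Cell{N;d,z}\times\Cell{N;d,z^{-1}}\to\mathbb C$ between two generally non-isomorphic modules. You seem aware of this in part (a), but the phrasing ``each cellular module carries a canonical invariant bilinear form'' is slightly misleading. Also, your part (a) promises that a separate analysis of the problematic pairs ``would then confirm that discarding them from $\Lambda_N$ leaves no simple module unaccounted for''; this is the content of theorem~2.4 of \cite{GL-Aff} (alluded to in appendix~\ref{sub:problematic} of the present paper) and is not entirely trivial, so in a self-contained proof you would need to say more there.
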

The structure of the cellular modules $\Cell{N;d,z}$ depends on $q$ (or more specifically on $\beta$). To describe this dependence, Graham and Lehrer ordered the set $\lambda$ by specifying that a pair $(t,x)$ succeeds $(d,z)$ whenever there exists a non-negative integer $m$ such that $t=d+2m$ with
\begin{equation}\label{eq:conditionsAB}
\text{(A)}\ \ z^2=q^t\text{ and }x=zq^{-m}\qquad\text{ or }\qquad
\text{(B)}\ \ z^2=q^{-t}\text{ and }x=zq^{m}.
\end{equation}
The successor $(t,x)$ is said to be {\em strict} if $m$ is positive and a {\em direct successor through condition} A (or B) if $m$ is the smallest positive integer solving condition A (resp.~B). Let $\preceq$ be the weakest partial order on $\lambda$ that contains $(d,z)\preceq(t,x)$ for all successions\footnote{This partial order induces one on $\Lambda$. Indeed, the only possible difficulty would be for the pairs $(0,z)$ and $(0,z^{-1})$ that are identified by the equivalence relation $\sim$. However, these pairs have the same successors as the direct successor of $(0,z)$ through condition A (or B) is the direct successor of $(0,z^{-1})$ through condition B (resp.~A). The same symbol $\preceq$ can thus be used non-ambiguously for the induced partial order on $\Lambda$.} in $\lambda$. 

The set of successors of a given pair $(d,z)\in\lambda$ can be described explicitly as follows (see also Appendix B of \cite{BSAfusion}).

\noindent (a) If $q$ is not a root of unity, then there is at most one element succeeding strictly $(d,z)$ in $\lambda$.
 
\noindent (b) Let $q$ be a root of unity and $\ell$ be the smallest positive integer such that $q^{2\ell}=1$.
\begin{itemize}
\item[(i)] If $q^d=z^2$ and $z^4=1$, then the successors of $(d,z)$ in $\lambda$ are 
$$(d,z)\preceq(d+2\ell,zq^\ell)\preceq(d+4\ell,z)\preceq(d+6\ell,zq^\ell)\preceq\cdots$$
and conditions (A) or (B) give the same immediate successor starting from any element $(d+2m\ell, zq^{m\ell})$.
\item[(ii)] If $q^d=z^2$ or $z^{-2}$ with $z^4\neq 1$, either condition A or B has a solution $(t,x)$ with $d<t<d+2\ell$ and the successors of $(d,z)$ in $\lambda$ are 
$$(d,z)\preceq(t,x)\preceq(d+2\ell,zq^\ell)\preceq(t+2\ell,xq^\ell)\preceq(d+4\ell,z)\preceq\cdots$$
\item[(iii)] If $q^d$ is distinct from both $z^2$ and $z^{-2}$, then the successors of $(d,z)$ in $\lambda$ are organized as follows
\begin{equation*}
\begin{tikzpicture}[baseline={(current bounding box.center)},scale=0.45]
\node (k0) at (0,0) [] {$(d_0,z_0)$};
\node (j0) at (4,0) [] {$(s_0,y_0)$};
\node (k1) at (8,0) [] {$(d_1,z_1)$};
\node (j1) at (12,0) [] {$(s_1,y_1)$};
\node (i0) at (4,2) [] {$(t_0,x_0)$};
\node (h0) at (8,2) [] {$(h_0,v_0)$};
\node (i1) at (12,2) [] {$(t_1,x_1)$};
\node (k2) at (15.5,0) [] {$\dots $};
\node (h1) at (15.5,2) [] {$\dots $};
\draw[<-, dashed] (k0) -- (j0);\draw[<-, dashed] (k0) -- (i0);
\draw[<-, dashed] (j0) -- (k1);\draw[<-, dashed] (j0) -- (h0);
\draw[<-, dashed] (i0) -- (k1);\draw[<-, dashed] (i0) -- (h0);
\draw[<-, dashed] (k1) -- (j1);\draw[<-, dashed] (k1) -- (i1);
\draw[<-, dashed] (h0) -- (j1);\draw[<-, dashed] (h0) -- (i1);
\draw[dashed,<-] (j1) -- (k2);\draw[dashed,<-] (j1) -- (h1);
\draw[dashed,<-] (i1) -- (k2);\draw[dashed,<-] (i1) -- (h1);
\draw[dotted,thick] (-1.5,-0.75) -- (-1.5,0.5) -- (3,2.75) -- (9.25,2.75) -- (9.25,1.5) -- (5,-0.75) -- (-1.5,-0.75);
\end{tikzpicture}
\end{equation*} 
with the direct succession $(d_0,z_0)\preceq(t_0,x_0)$ written as $(d_0,z_0)\ \begin{tikzpicture}[baseline=-1.5,scale=0.65]\draw[<-, dashed] (0,0) -- (1,0);\end{tikzpicture}\ (t_0,x_0)$. The following notation is used. Let $s$ be the smallest integer larger than $d$ solving $z^2=q^s$ and set $k=\frac12(s-d)$. Then, the successors of $(d,z)$ are 
\begin{alignat*}{3}(d_a,z_a)&=(d+2a\ell,zq^{a\ell}),
\qquad\quad &&(t_a,x_a)=(-s+\delta_t+2a\ell, z^{-1}q^{a\ell+k+\delta_t/2}),\\
(s_a,y_a)&=(s+2a\ell, zq^{a\ell-k}),
\qquad && (h_a,v_a)=(-d+\delta_h+2a\ell, z^{-1}q^{a\ell+\delta_h/2}),
\end{alignat*}
where $a\geq 0$ and with $\delta_t$ and $\delta_h$ the smallest integer multiples of $2\ell$ such that $t_0$ is larger than $d_0$, and $h_0$ is larger than both $s_0$ and $t_0$. Direct successors tied horizontally are obtained through condition A, those tied diagonally through condition B. The partial order $\preceq$ can be constructed from the four successors in the dotted polygon by translation, the $(d_0,z_0)$ going onto the $(d_i,z_i)$'s, etc. The successors are then obtained by adding to the integer ($d_0, t_0, s_0$ or $h_0$) a multiple of $2\ell$ and by multiplying the complex number ($z_0, x_0, y_0$ or $v_0$) by a power of $q^\ell=\pm 1$.
\end{itemize}
Using this description, the structure of the modules $\Cell{N;d,z}$ can be revealed through Loewy diagrams. In such a diagram, nodes are composition factors of the module under consideration. The shortened $(d,z)$ will stand here for the irreducible $\Irre{N;d,z}$. An arrow pointing from $(d,z)$ to $(t,x)$ indicates that any submodule containing the composition factor $(d,z)$ will also contain $(t,x)$ as a composition factor. A connected Loewy diagram represents an indecomposable module.
\begin{theorem}[\cite{GL-Aff}\footnote{This theorem is not shown explicitly in \cite{GL-Aff} but follows directly from the results proven there (see also the discussion given in \cite{BSAfusion}).}]\label{thm:GL} Let $q\in\mathbb C^\times$ and $(d,z)\in\Lambda_N$ be distinct from the problematic pairs. The description of the cellular modules $\Cell{N;d,z}$ is broken into the same subcases as those of the weakest partial order $\preceq$.

\noindent(a) {\bfseries\itshape The case when $q$ is not a root of unity.} If there is no (strict) successor of $(d,z)$ in $\lambda_N$, then $\Cell{N;d,z}$ is simple. If there is one, say $(t,x)\in \lambda_N$, then the Loewy diagram of $\Cell{N;d,z}$ is
$$
\begin{tikzpicture}[baseline={(current bounding box.center)},scale=1/3]
		\node (a) at (0,0) {$(d,z)$};
		\node (b) at (0,-2.75) {$(t,x)$};
		\draw[->] (a) -- (b);
	\end{tikzpicture}
$$

\noindent(b) {\bfseries\itshape The case when $q$ is a root of unity.} Here, $\ell$ is the smallest positive integer such that $q^{2\ell} =1$ and all pairs $(i,u)$ with $i>N$, that is $(i,u)\in\lambda\backslash \lambda_N$, must be removed. The Loewy diagrams of $\Cell{N;d,z}$ for the subcases {\em (i), (ii)} and {\em (iii)} are
$$\begin{tikzpicture}[baseline={(current bounding box.center)},scale=1/3]
		\node (a) at (0,0) {$(d,z)$};
		\node (b) at (0,-2.75) {$(d+2\ell,zq^\ell)$};
		\node (c) at (0,-5.5) {$(d+4\ell,z)$};
		\node (d) at (0,-8.25) {$(d+6\ell,zq^\ell)$};
		\node at (0,-10.95) {$\vdots$};
		\node (e) at (0,-10.6) { };
		\draw[->] (a) -- (b); \draw[->] (b) -- (c); \draw[->] (c) -- (d); \draw[->] (d) -- (e);
		\node at (0,-16.5) {(i)};
	\end{tikzpicture}
\qquad\qquad\qquad
\begin{tikzpicture}[baseline={(current bounding box.center)},scale=1/3]
		\node (a) at (0,0) {$(d,z)$};
		\node (b) at (0,-2.75) {$(t,x)$};
		\node (c) at (0,-5.5) {$(d+2\ell,zq^\ell)$};
		\node (d) at (0,-8.25) {$(t+2\ell,xq^\ell)$};
		\node (e) at (0,-11) {$(d+4\ell,z)$};
		\node at (0,-13.7) {$\vdots$};
		\node (f) at (0,-13.4) { };
		\draw[->] (a) -- (b); \draw[->] (b) -- (c); \draw[->] (c) -- (d); 
		\draw[->] (d) -- (e); \draw[->] (e) -- (f);
		\node at (0,-16.5) {(ii)};
	\end{tikzpicture}
\qquad\qquad\qquad
\begin{tikzpicture}[baseline={(current bounding box.center)},scale=1/3]
\node (k0) at (0,8.25) [] {$(d_0,z_0)$};
\node (j0) at (0,5.5) [] {$(s_0,y_0)$};
\node (k1) at (0,2.75) [] {$(d_1,z_1)$};
\node (j1) at (0,0) [] {$(s_1,y_1)$};
\node (i0) at (4,5.5) [] {$(t_0,x_0)$};
\node (h0) at (4,2.75) [] {$(h_0,v_0)$};
\node (i1) at (4,0) [] {$(t_1,x_1)$};
\node (k2) at (0,-2.75) [] {$(d_2,z_2)$};
\node (h1) at (4,-2.75) [] {$(h_1,v_1)$};
\node (k3) at (0,-5.6) [] {$\vdots $};
\node (h2) at (4,-5.6) [] {$\vdots $};
\draw[->] (k0) -- (j0);\draw[->] (k0) -- (i0);
\draw[->] (j0) -- (k1);\draw[->] (j0) -- (h0);
\draw[->] (i0) -- (k1);\draw[->] (i0) -- (h0);
\draw[->] (k1) -- (j1);\draw[->] (k1) -- (i1);
\draw[->] (h0) -- (j1);\draw[->] (h0) -- (i1);
\draw[->] (j1) -- (k2);\draw[->] (i1) -- (k2);
\draw[->] (j1) -- (h1);\draw[->] (i1) -- (h1);
\draw[->] (k2) -- (0,-5);\draw[->] (h1) -- (4,-5);
\draw[->] (k2) -- (3,-5);\draw[->] (h1) -- (1,-5);
\node at (1.5,-8.3) {(iii)};
\end{tikzpicture}$$
In the Loewy diagram associated to subcase (iii), the vertical arrows (the diagonal ones) point to a pair that solves condition A (resp.~B) starting from the arrow's source. The  notation is otherwise that of the description of the order $\preceq$.
\end{theorem}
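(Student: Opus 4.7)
The plan is to derive the stated Loewy diagrams by assembling three ingredients from Graham and Lehrer's work \cite{GL-Aff}: (i) the composition factors of each cellular module $\Cell{N;d,z}$ are exactly the irreducibles $\Irre{N;t,x}$ with $(t,x)\in\lambda_N$ and $(d,z)\preceq(t,x)$, each with multiplicity one; (ii) for non-problematic pairs, $\Cell{N;d,z}/\rad\Cell{N;d,z}\simeq\Irre{N;d,z}$; (iii) whenever $(t,x)$ strictly succeeds $(d,z)$, there is a nonzero morphism $\gl{(d,z);(t,x)}\colon\Cell{N;t,x}\to\Cell{N;d,z}$. Combined with the case-by-case description of $\preceq$ given just before the theorem, (i) immediately identifies the nodes of each diagram: one or two nodes in case (a); an infinite chain in cases (b)(i) and (b)(ii); the 2D lattice $\{(d_a,z_a),(s_a,y_a),(t_a,x_a),(h_a,v_a)\}_{a\geq 0}$ in case (b)(iii), always truncated to pairs whose first coordinate does not exceed $N$.

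Once the nodes are fixed, placing the arrows amounts to computing the covering relations in the implication order on composition factors. Since by (ii) the head is $\Irre{N;d,z}$, every arrow leaves the top node and points downward. Given any immediate successor $(t,x)$ of $(d,z)$ in $\preceq$, the image of $\gl{(d,z);(t,x)}$ is a proper submodule of $\Cell{N;d,z}$ whose head contains $\Irre{N;t,x}$, producing an arrow $(d,z)\to(t,x)$. In the uniserial cases (a), (b)(i) and (b)(ii), successors form a totally ordered chain, so iterating these morphisms yields a strictly descending filtration whose successive quotients are the simple composition factors in the prescribed order; this forces the displayed uniserial Loewy diagrams.

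The main obstacle is case (b)(iii), in which each non-extremal node has two incomparable direct successors (one via condition (A), one via condition (B)) and two incomparable direct predecessors. One must check that both direct successors contribute distinct immediate arrows in the diagram, and that no shortcut arrow skips a level. The first point follows from the nonvanishing of the two Graham--Lehrer morphisms coming out of $(s_0,y_0)$ and $(t_0,x_0)$ (and their analogues at each level) together with the observation, provided by (i), that their images have distinct composition-factor supports: $\Irre{N;t_0,x_0}$ appears in the image coming from $(t_0,x_0)$ but not in the image coming from $(s_0,y_0)$ (and symmetrically), since $(t_0,x_0)$ and $(s_0,y_0)$ are incomparable in $\preceq$. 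The second point is automatic from the covering-relation interpretation of arrows: whenever an $(s_a,y_a)$ or $(t_a,x_a)$ sits strictly between $(d_a,z_a)$ and $(d_{a+1},z_{a+1})$ in $\preceq$, it blocks any direct arrow between the latter two. Finally, the explicit formulas for $(d_a,z_a),(s_a,y_a),(t_a,x_a),(h_a,v_a)$ and the identification of vertical arrows with condition (A) and diagonal arrows with condition (B) follow from iteratively solving conditions (A) and (B) starting from $(d,z)$, as already sketched in the description of $\preceq$, thereby completing the reduction to Graham--Lehrer's results.
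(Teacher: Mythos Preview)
The paper does not give its own proof of this theorem; it is stated with a citation to \cite{GL-Aff} and a footnote pointing to \cite{BSAfusion} for the derivation. Your proposal is precisely the kind of derivation the footnote alludes to: you correctly identify the three Graham--Lehrer inputs (multiplicity-free composition factors indexed by $\preceq$-successors, simple head, injective morphisms $\gl{}$) and assemble them into the Loewy diagrams. For the uniserial cases (a), (b)(i), (b)(ii) your filtration-by-images argument is complete.

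There is one point in case (b)(iii) that deserves to be made explicit. Your claim that ``no shortcut arrow skips a level'' is phrased as a statement about covering relations in $\preceq$, but what you actually need is that the \emph{forcing order} on composition factors (the order defining the arrows of the Loewy diagram, per the paper's convention) coincides with $\preceq$. You have one direction: $\im\gl{(d,z);(a,b)}\simeq\Cell{N;a,b}$ is a submodule containing $\Irre{N;a,b}$ whose composition factors are exactly the $\preceq$-successors of $(a,b)$, so if $(a,b)\not\preceq(c,d)$ there is no forcing from $(a,b)$ to $(c,d)$. For the converse you should argue that any submodule $M$ containing $\Irre{N;a,b}$ must contain all of $\im\gl{(d,z);(a,b)}$: by multiplicity-freeness, $\Irre{N;a,b}$ occurs in $M\cap\im\gl{(d,z);(a,b)}$, and since $\Cell{N;a,b}$ has simple head $\Irre{N;a,b}$, this intersection equals $\im\gl{(d,z);(a,b)}$. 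With this, forcing and $\preceq$ agree and the Loewy diagram is exactly the Hasse diagram of $\preceq$ restricted to $\lambda_N$, as drawn.
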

\noindent This theorem has been recalled not only since it fixes the notation for our results but because the structure of the $\atl N(\beta)$-action on the XXZ spin-$\frac12$ chain is strangely similar to the above one. The Hamiltonian $H_{\text{XXZ}}$ is a map from $\XXZ N{}=(\mathbb C^2)^{\otimes N}$ into itself and preserves the eigenspaces of the operator $S^z=\frac12\sum_{1\leq i\leq N}\sigma_i^z$ (see section \ref{sec:atlOnXXZ}). By describing how the chain closes periodically, that is how the spin at site $1$ interacts with the one at site $N$, Pasquier and Saleur \cite{PS} introduced a family of $\atl{N}(\beta)$-actions on $\XXZ N{}$ labeled by a sign $\pm$ and a {\em twist} parameter $z\in\mathbb C^\times$. The ensuing module will here be noted $\XXZ{N;z}\pm$. Like the Hamiltonian, any of these actions stabilizes the eigenspaces of $S^z$. Thus, a given $\XXZ{N;z}\pm$ decomposes, as a $\atl{N}(\beta)$-module, into a direct sum $\XXZ{N;z}\pm={\bigoplus{}{\hskip-1pt}'{\hskip-2pt}}_{-N\leq d\leq N}\ \XXZ{N;d,z}\pm$ where $\XXZ{N;d,z}\pm$ is the eigenspace of $S^z$ with eigenvalue $\frac d2$ and where ${\bigoplus{}{\hskip-1pt}'}$ denotes a direct sum with steps of $2$, that is $d\in\{-N,-N+2,\dots,N\}$. The submodules $\XXZ{N;d,z}{\pm}$ are related by spin flip (see section \ref{sec:atlOnXXZ}) and by two involutive endofunctors $\star$ and $\circ$ (contravariant and covariant, respectively) of the category of finite-dimensional $\atl{N}$-modules. These functors preserve irreducibility, dimensions and act on Loewy diagrams of modules in a simple way (described in section \ref{sec:dualities}).

\begin{proposition}\label{prop:dualMerge} Spin flip induces $\atl{N}$-linear isomorphisms $\XXZ{N;d,z}\pm\simeq \XXZ{N;-d,z^{-1}}\mp$ while the involutive endofunctors $\star$ and $\circ$ give $(\XXZ{N;d,z}\pm)^\star\simeq\XXZ{N;d,z^{-1}}\pm$ and $(\XXZ{N;d,z}\pm)^\circ\simeq\XXZ{N;d,z^{-1}}\mp$ as $\atl{N}$-modules. 
Finally, $(\Irre{N;d,z}^*)^{\circ} \simeq \Irre{N;d,z}$ and the Loewy diagram of $\XXZ{N;d,z}{-}$ may be obtained from that of $\XXZ{N;d,z}+$ by flipping all arrows (if any) without changing the composition factors.
\end{proposition}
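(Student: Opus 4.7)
\emph{Spin flip.} The natural candidate is the global operator $F = \sigma_1^x \otimes \cdots \otimes \sigma_N^x$ on $(\mathbb{C}^2)^{\otimes N}$. As a linear map it exchanges the $S^z$-eigenspaces with eigenvalues $\pm d/2$, so it restricts to a vector-space isomorphism $\XXZ{N;d,z}\pm \to \XXZ{N;-d,z^{-1}}\mp$. The remaining issue is to show $F$ intertwines the two $\atl N(\beta)$-actions. I would do this generator-by-generator using the explicit formulas to be given in Section~\ref{sec:atlOnXXZ}: the bulk generators $e_1,\ldots,e_{N-1}$ act by the six-vertex matrix, which conjugates under $F$ to the same matrix with $q$ and $q^{-1}$ swapped --- a symmetry of $\beta$; the periodic generator is built with a twist $z$ and a sign encoded in the choice $\pm$, and conjugation by $F$ converts it into the corresponding periodic generator with $z^{-1}$ and the opposite sign.

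\emph{The dualities.} The anti-involution giving rise to $\star$ geometrically reflects affine Temperley-Lieb diagrams, in particular exchanging the translation $u$ with $u^{-1}$. The natural non-degenerate bilinear pairing on $(\mathbb{C}^2)^{\otimes N}$ (spin tensors being an orthonormal basis) descends to a form $\XXZ{N;d,z}\pm \times \XXZ{N;d,z^{-1}}\pm \to \mathbb{C}$, which I would check, again generator-by-generator, to be $\atl N$-contravariant: for the bulk $e_i$'s this is automatic by symmetry of the matrix representative, while for the periodic generator the reflection substitutes $z$ by $z^{-1}$, matching the twist of the paired module. This yields $(\XXZ{N;d,z}\pm)^\star \simeq \XXZ{N;d,z^{-1}}\pm$. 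For the covariant functor $\circ$, I would either produce a direct intertwiner by combining the spin flip with $\star$ via the definitional relation given in Section~\ref{sec:dualities}, or verify the claim directly on generators in the same spirit.

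\emph{Loewy diagrams and the identity on simples.} Combining the previous two identities gives $\XXZ{N;d,z}- \simeq ((\XXZ{N;d,z}+)^\star)^\circ$. The composite $\star\circ$ is contravariant, preserves dimensions, and preserves irreducibility; to conclude, I would establish $(\Irre{N;d,z}^*)^\circ \simeq \Irre{N;d,z}$ by tracking labels. Each of $\star$ and $\circ$ sends a simple module of label $(d,z)$ to a simple module of label $(d,z^{-1})$ (combining the classification of Theorem~\ref{thm:listOfSimples} with the descriptions of the functors in Section~\ref{sec:dualities}), so the composition returns to $(d,z)$. Contravariance of $\star\circ$ together with its triviality on simples then implies that the Loewy diagram of $\XXZ{N;d,z}-$ is obtained from that of $\XXZ{N;d,z}+$ by reversing all arrows while keeping the composition factors unchanged.

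\emph{Main obstacle.} The substantive work lies in the generator-by-generator intertwining checks for the spin flip and the two dualities, where the signs $\pm$ and the twist $z$ must transform exactly as prescribed; this requires careful bookkeeping of sign conventions in the periodic closure. Once those computations are carried out, the Loewy-diagram corollary is nearly formal, the only remaining input being the label-tracking on simple modules, which itself depends on the explicit descriptions of $\star$ and $\circ$ given in Section~\ref{sec:dualities}.
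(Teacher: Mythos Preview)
Your approach is essentially the paper's: generator-by-generator checks for the spin flip (Proposition~\ref{prop:spinflip}) and for the two dualities on the XXZ modules (Propositions~\ref{thm:isoByDual}(a) and~\ref{prop:foncteurcirc}(a)), followed by the formal deduction $\XXZ{N;d,z}- \simeq ((\XXZ{N;d,z}+)^\star)^\circ$ (Corollary~\ref{cor:doubleDual}). One small correction: the paper's intertwiner for $\circ$ on the XXZ chain is the site-reversal map $|x_1\dots x_N\rangle\mapsto|x_N\dots x_1\rangle$, which realizes the top-bottom reflection $\Phi$ of diagrams; your suggestion of deriving it from spin flip and $\star$ does not work directly, but your alternative of checking on generators is what the paper does.

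One step is less innocent than your phrase ``tracking labels'' suggests. Identifying $\Irre{N;d,z}^\star\simeq\Irre{N;d,z^{-1}}$ is not automatic from the XXZ computations or from the definition of $\dagger$: the paper (Proposition~\ref{thm:isoByDual}(b)) uses the Graham--Lehrer invariant bilinear form $\langle\,,\,\rangle_{N;d,z}$ on cellular modules together with the fact (Proposition~\ref{prop:Radrad}) that its radical equals $\rad\Cell{N;d,z}$, producing a nonzero map $\Cell{N;d,z^{-1}}\to\Cell{N;d,z}^\star$ whose image is forced into $\soc\Cell{N;d,z}^\star\simeq(\head\Cell{N;d,z})^\star$. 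For $\circ$ the argument is easier, passing through $\Cell{N;d,z}^\circ\simeq\Cell{N;d,z^{-1}}$ by vertical reflection of monic diagrams and then taking heads. Your sketch defers this to Section~\ref{sec:dualities}, which is acceptable, but be aware that the $\star$-case on simples requires this extra input and is not just classification plus bookkeeping.
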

\noindent The spin flip isomorphisms and the last part of proposition \ref{prop:dualMerge} show that it is sufficient to study the submodules $\XXZ{N;d,z}{+}$ with non-negative $d$ to deduce the structure of all possible submodules $\XXZ{N;d,z}{\pm}$. The main result of the present article is an explicit description of the structure of these modules.
\begin{theorem}\label{thm:main} Let $q\in\mathbb C^\times$ and $(d,z)\in\lambda_N$ with $d\geq 0$. The composition factors of $\XXZ{N;d,z}{\pm}$ coincide with those of $\Cell{N;d,z}$. The description of the $\atl{N}(\beta)$-modules $\XXZ{N;d,z}+$ is broken into the same subcases as those of the weakest partial order $\preceq$ and uses the same notation. The Loewy diagram of $\XXZ{N;d,z}-$ is obtained from that of $\XXZ{N;d,z}+$ by flipping all arrows (if any).

\noindent(a) {\bfseries\itshape The case when $q$ is not a root of unity.} If there is no (strict) successor of $(d,z)$ in $\lambda_N$, $\XXZ{N;d,z}{+}$ is simple. If there is one, say $(t,x)\in \lambda_N$, then the Loewy diagram of $\XXZ{N;d,z}{+}$ is either 
$$
\begin{tikzpicture}[baseline={(current bounding box.center)},scale=1/3]
		\node (a) at (0,0) {$(t,x)$};
		\node (b) at (0,-2.75) {$(d,z)$};
		\draw[->] (a) -- (b);
	\end{tikzpicture}
\qquad\qquad\text{or}\qquad\qquad
\begin{tikzpicture}[baseline={(current bounding box.center)},scale=1/3]
		\node (a) at (0,0) {$(d,z)$};
		\node (b) at (0,-2.75) {$(t,x)$};
		\draw[->] (a) -- (b);
	\end{tikzpicture}
$$
depending on whether $(t,x)$ is obtained from $(d,z)$ by condition A (left diagram) or B (right one).

\noindent(b) {\bfseries\itshape The case when $q$ is a root of unity.} In what follows, all pairs $(i,u)$ with $i>N$, that is $(i,u)\in\lambda\backslash\lambda_N$, are to be removed. The structure of $\XXZ{N;d,z}+$ is
$$
\begin{tikzpicture}[baseline={(current bounding box.center)},scale=1/3]
		\node (a) at (0,0) {$(d,z)$};
		\node (b) at (0,-2.75) {$(d+2\ell;zq^\ell)$};
		\node (c) at (0,-5.5) {$(d+4\ell;z)$};
		\node (d) at (0,-8.25) {$(d+6\ell;zq^\ell)$};
		\node (e) at (0,-11) {$\vdots$};
		\node at (0,-16.5) {(i)};
	\end{tikzpicture}
\qquad\qquad\qquad
\begin{tikzpicture}[baseline={(current bounding box.center)},scale=1/3]
		\node (a) at (0,0) {$(d,z)$};
		\node (b) at (0,-2.75) {$(t,x)$};
		\node (c) at (0,-5.5) {$(d+2\ell,zq^\ell)$};
		\node (d) at (0,-8.25) {$(t+2\ell,xq^\ell)$};
		\node (e) at (0,-11) {$(d+4\ell,z)$};
		\node at (0,-13.7) {$\vdots$};
		\node (f) at (0,-13.4) { };
		\draw[<-,very thick] (a) -- (b); \draw[->] (b) -- (c); \draw[<-,very thick] (c) -- (d); 
		\draw[->] (d) -- (e); \draw[<-,very thick] (e) -- (f);
\node at (4,-7.5) {or};
		\node (aa) at (8,0) {$(d,z)$};
		\node (bb) at (8,-2.75) {$(t,x)$};
		\node (cc) at (8,-5.5) {$(d+2\ell,zq^\ell)$};
		\node (dd) at (8,-8.25) {$(t+2\ell,xq^\ell)$};
		\node (ee) at (8,-11) {$(d+4\ell,z)$};
		\node at (8,-13.7) {$\vdots$};
		\node (ff) at (8,-13.4) { };
		\draw[->] (aa) -- (bb); \draw[<-,very thick] (bb) -- (cc); \draw[->] (cc) -- (dd); 
		\draw[<-,very thick] (dd) -- (ee); \draw[->] (ee) -- (ff);
	\node at (4,-16.5) {(ii)};
	\end{tikzpicture}
\qquad\qquad\qquad
\begin{tikzpicture}[baseline={(current bounding box.center)},scale=1/3]
\node (k0) at (0,8.25) [] {$(d_0,z_0)$};
\node (j0) at (0,5.5) [] {$(s_0,y_0)$};
\node (k1) at (0,2.75) [] {$(d_1,z_1)$};
\node (j1) at (0,0) [] {$(s_1,y_1)$};
\node (i0) at (4,5.5) [] {$(t_0,x_0)$};
\node (h0) at (4,2.75) [] {$(h_0,v_0)$};
\node (i1) at (4,0) [] {$(t_1,x_1)$};
\node (k2) at (0,-2.75) [] {$(d_2,z_2)$};
\node (h1) at (4,-2.75) [] {$(h_1,v_1)$};
\node (k3) at (0,-5.6) [] {$\vdots $};
\node (h2) at (4,-5.6) [] {$\vdots $};
\draw[<-,very thick] (k0) -- (j0);\draw[->] (k0) -- (i0);
\draw[->] (j0) -- (k1);\draw[->] (j0) -- (h0);
\draw[<-,very thick] (i0) -- (k1);\draw[<-,very thick] (i0) -- (h0);
\draw[<-,very thick] (k1) -- (j1);\draw[->] (k1) -- (i1);
\draw[<-,very thick] (h0) -- (j1);\draw[->] (h0) -- (i1);
\draw[->] (j1) -- (k2);\draw[<-, very thick] (i1) -- (k2);
\draw[->] (j1) -- (h1);\draw[<-, very thick] (i1) -- (h1);
\draw[<-, very thick] (k2) -- (0,-5);\draw[->] (h1) -- (4,-5);
\draw[->] (k2) -- (3,-5);\draw[<-, very thick] (h1) -- (1,-5);
\node at (1.5,-8.3) {(iii)};
\end{tikzpicture}
$$
\noindent The absence of arrows in subcase (i) means that the module is semisimple: $\XXZ{N;d,z}{+}\simeq \bigoplus_{i=0}^m \Irre{N;d+2i\ell, zq^{i\ell}}$ with $m$ the largest integer such that $d+2m\ell\leq N$. The diagram for subcase (ii) is chosen according to whether $(t,x)$ is obtained from $(d,z)$ by condition A (left diagram) or B (right one). In subcase (iii), the direction of the arrows are such that the composition factors $(s_a, y_a)$ are sources and the $(t_a,x_a)$ targets. The arrows drawn thicker in subcases (ii) and (iii) are those that have been flipped compared to the corresponding ones in the Loewy diagram of $\Cell{N;d,z}$. In the case of the problematic pairs, that is for $\XXZ{N;0,q^{\pm 1}}{+}$ with $q+q^{-1}=0$ and $N$ even, the Loewy diagram is of type (iii), but the pair $(d_0,z_0)$ is always omitted and an arrow $(s_0,y_0) \to (t_0,x_0)$ is added if $N=2$. \end{theorem}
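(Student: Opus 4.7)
My plan is to proceed in three stages: first establish that the composition factors of $\XXZ{N;d,z}^+$ agree with those of $\Cell{N;d,z}$; next determine the direction of the arrows in the Loewy diagram of $\XXZ{N;d,z}^+$; and finally deduce the structure of $\XXZ{N;d,z}^-$ through the $\circ$-duality of Proposition \ref{prop:dualMerge}. The starting point in all cases is the Morin-Duchesne--Saint-Aubin morphism $\ay{N;d,z}:\Cell{N;d,z}\to \XXZ{N;d,z}^+$, together with its image on the generic part $\pg{N;d,z}$, which already captures the factors that survive under deformation from generic $q$. The composition-factor matching is then obtained by induction on the partial order $\preceq$, comparing the $S^z$-weights and dimensions on both sides and invoking the classification in Theorem \ref{thm:listOfSimples}.

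For case (a) with $q$ not a root of unity, the analysis is essentially a templating step. The morphism $\ay{N;d,z}$ and its $\star$-dual $\ay{N;d,z^{-1}}^\star$ together pin down the socle and the head: under condition A (where $z^2=q^t$), the image of $\ay{N;d,z}$ realizes $(t,x)$ as the socle and $(d,z)$ as the head (flipping the cellular arrow), while under condition B the roles are reversed and the cellular orientation is preserved. The observation that A and B are exchanged by $z\mapsto z^{-1}$, combined with $(\XXZ{N;d,z}^+)^\star\simeq\XXZ{N;d,z^{-1}}^+$, is the key consistency check.

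For case (b) with $q$ a root of unity, I would treat each subcase in turn, using the intertwiners $\jj_{(t,x);(d,z)}^\pm$, $\ii_{(d,z);(t,x)}^\pm$, $\kk^{(n)}$ and $\mm^{(n)}$ between eigenspaces constructed from $\luszt$ (Section \ref{sec:xxzLUq}). Subcase (i) ($q^d=z^2$, $z^4=1$) is highly symmetric; I would use central idempotents in $\luszt$ acting on $\XXZ{N;d,z}^+$ to produce enough splittings to force semisimplicity, dualizing with $\circ$ to close the argument. Subcase (ii) is then handled by chaining the intertwiners along the linear diagram: each thicker (flipped) arrow is witnessed by an explicit map $\jj^\pm$ or $\ii^\pm$ whose source is the lower composition factor, and the non-existence of the opposite arrow is ruled out via $\star$-duality combined with dimension counts. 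The generalization to subcase (iii) is the same strategy run on both columns simultaneously, with the ``horizontal'' arrows coming from condition A and the ``diagonal'' ones from condition B.

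The main obstacle is subcase (b)(iii). Its two-column Loewy diagram has composition factors $(d_a,z_a)$, $(t_a,x_a)$, $(s_a,y_a)$, $(h_a,v_a)$, and the claim specifies which of the four arrows around each node are flipped relative to $\Cell{N;d,z}$; since several potential Loewy structures share these composition factors and weights, dimension-only arguments are insufficient and one must genuinely detect extensions. The resolution uses the explicit realization of the indecomposable projective $P_q(i)$ of $\luszt$ introduced in Section \ref{sec:xxzLUq}, together with the fusion rules given there, to exhibit enough non-split short exact sequences inside $\XXZ{N;d,z}^+$ to force each claimed thicker arrow, while the opposite direction is excluded by applying $\star$ and invoking the already-settled subcase for $(d,z^{-1})$. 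The problematic pairs $(0,q^{\pm1})$ with $N$ even and $q+q^{-1}=0$ do not fit the above uniform analysis because the simple $(0,q^{\pm1})$ is omitted from $\Lambda_N$; I defer them to Appendix \ref{app:problematic}, where they are handled by direct computation in small rank and then bootstrapped with the $\kk^{(n)}, \mm^{(n)}$. The statement for $\XXZ{N;d,z}^-$ is then immediate from Proposition \ref{prop:dualMerge}, since $\circ$ is an exact involution that fixes simples and reverses all arrows.
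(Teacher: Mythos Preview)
Your overall architecture (generic $q$ via $\ay{N;d,z}$ and $\star$, root-of-unity cases via the intertwiners of Section~\ref{sec:xxzLUq}, and the passage to $\XXZ{N;d,z}^-$ via Corollary~\ref{cor:doubleDual}) matches the paper, but two of your subcase arguments have genuine gaps.

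For subcase~(i) you propose to split $\XXZ{N;d,z}^+$ using ``central idempotents in $\luszt$''. This cannot work as stated: the $\luszt$-action does not commute with the full $\atl N$-action (only with the $\tl N$-subalgebra, see Proposition~\ref{prop:qSW}), so an $\luszt$-equivariant splitting is not an $\atl N$-module decomposition. The paper instead argues inductively via the $\atl N$-linear maps $\kk^{(1)}$ and $\mm^{(1)}$ of Theorem~\ref{thm:Aut}: $\kk^{(1)}$ is injective and $\mm^{(1)}$ is surjective, so $\XXZ{N;d,z}^+\simeq \XXZ{N;d+2\ell,zq^\ell}^+\oplus\ker\mm^{(1)}$, and a dimension count together with Corollary~\ref{thm:gpEstUnQuotient} forces $\ker\mm^{(1)}\simeq\Irre{N;d,z}$.

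For subcase~(iii) your plan misses two ingredients that the paper actually needs. First, the induction hypothesis must be strengthened: one records not only the Loewy diagram of $\XXZ{N;D,Z}^+$ but also the image $\im\ii^+_{(D,Z);(T,X)}$ inside it (the ``red'' submodule in \eqref{eq:subCaseiiiHypoInd}). Without this, the exact sequence~\eqref{eq:SEC1} tells you $\im\ii^+_{(d,z);(t,x)}\simeq \XXZ{N;t,x}^+/\im\ii^+_{(t,x);(d+2\ell,zq^\ell)}$, but you cannot identify the quotient. Second, your exclusion of extra arrows ``by applying $\star$ and invoking the already-settled subcase for $(d,z^{-1})$'' is both circular (the pairs $(d,z)$ and $(d,z^{-1})$ sit at the same level of the induction) and insufficient: $\star$-duality only shows the diagrams of $\XXZ{N;d,z}^+$ and $\XXZ{N;d,z^{-1}}^+$ are reciprocal, which does not by itself forbid, say, an arrow $(s_a,y_a)\to(t_b,x_b)$ with $|a-b|\ge2$. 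The paper closes this gap with the Ext-vanishing Lemma~\ref{lem:jonathan}, which rules out any arrow between factors whose $d$-labels differ by more than $2\ell$. Your proposal has no substitute for this step.
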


The proof of this result relies on new $\atl{N}(\beta)$-morphisms between $S^z$-eigenspaces $\XXZ{N;d,z}{\pm}$. We conclude this overview by describing some of them. Since the eigenvalues of $S^z$ range from $-N$ to $N$ for the chain with $N$ spins, two further sets of pairs $(d,z)$ are needed. They are
\begin{equation}
\Delta=\mathbb Z\times \mathbb C^\times\qquad\text{and}\qquad
\Delta_N=\{(d,z)\in\Delta\,|\ |d|\leq N,\, d\equiv N\text{\ mod\ }2\}.
\end{equation}
The partial order $\preceq$ on $\lambda$ can be extended to a partial order $\unlhd$ on $\Delta$. In $\Delta$, the pair $(t,x)$ succeeds $(d,z)$ if there exists a non-negative integer $m$ such that $t = d+2m$ and
\begin{equation}
\text{(A)}\quad  z^2=q^t\text{ and }x=zq^{-m}\qquad\text{ or }\qquad
\text{(B)}\quad z^2=q^{-t}\text{ and }x=zq^{m}.
\end{equation}
Then $\unlhd$ is defined as the weakest partial order on $\Delta$  containing $(d,z)\unlhd (t,x)$ for all such successions.
\begin{theorem} Let $(d,z),(t,x)\in\Delta$. Then, the following maps are $\atl N(\beta)$-linear under the stated condition:
\begin{align*}
\ii_{(d,z);(t,x)}^+ &:\XXZ{N;t,x}+\to\XXZ{N;d,z}+\quad &\text{if $(d,z)\unlhd (t,x)$ through condition B,}\\
\jj_{(t,x);(d,z)}^+ &:\XXZ{N;d,z}+\to\XXZ{N;t,x}+\quad &\text{if $(d,z)\unlhd (t,x)$ through condition A,}
\end{align*}
where the action of $\ii_{(d,z);(t,x)}^+$ on the usual spin basis $\{|x_1x_2\dots x_N\rangle\,|\, x_1,...,x_N \in\{+,-\}\}$ is through the divided power $F^{(a)}$ of the quantum group $\luszt$: $|x_1x_2\dots x_N\rangle_x\mapsto F^{(a)}|x_1x_2\dots x_N\rangle_z$ with $a=\frac12(t-d)$. The action of $\jj_{(t,x);(d,z)}^+$ is defined similarly, with $F^{(a)}$ replaced by the divided power $E^{(a)}$ of $\luszt$.
\end{theorem}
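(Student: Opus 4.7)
The plan is to verify $\atl N(\beta)$-linearity one generator at a time. Recall that $\atl N(\beta)$ is generated by the nearest-neighbour operators $e_1,\ldots,e_N$ (indices cyclic) together with the translation element; on the chain $\XXZ{N;y}{+}$, the bulk generators $e_1,\ldots,e_{N-1}$ act as the usual Temperley-Lieb projector on the corresponding pair of tensor factors, independently of the twist, while $e_N$ and the translation depend explicitly on $y$ through the wrap-around between sites $N$ and $1$.

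First I would dispose of the bulk generators using the quantum Schur-Weyl duality recalled in the introduction: each $e_i$ with $i<N$ is a $\uqsl$-endomorphism of $V=(\mathbb C^2)^{\otimes N}$ and therefore commutes with any element of $\luszt$ acting through the iterated coproduct. In particular $F^{(a)}$ and $E^{(a)}$ commute with $e_1,\ldots,e_{N-1}$ regardless of the twist $y\in\{z,x\}$, so both $\ii^+_{(d,z);(t,x)}$ and $\jj^+_{(t,x);(d,z)}$ intertwine these generators automatically, with no constraint on $(d,z)$ or $(t,x)$.

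The heart of the argument is therefore the verification on the twist-dependent generators. Fix a spin basis vector $|v\rangle=|x_1\cdots x_N\rangle$ with $\sum x_i=t$. Expanding $F^{(a)}$ through the $N$-fold coproduct of $\luszt$ yields a sum indexed by compositions $(a_1,\ldots,a_N)$ with $\sum a_i=a$, each summand acting as $F^{a_i}$ at site $i$ with prescribed $K^{-1}$-insertions and $q$-binomial weights. Summands in which no $F$ lands on sites $1$ or $N$ commute with $e_N$ by locality, so the only potential obstruction to intertwining comes from the boundary summands together with the explicit dependence of $e_N^{(y)}$ on $y$ via the wrap-around factors $y^{\pm 2}$. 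Collecting these contributions, the obstruction reduces to a polynomial in $z^{\pm 2}, x^{\pm 2}, q^{\pm t}, q^{\pm a}$, and matching it against the weight constraint $\sum x_i=t$ shows that it vanishes exactly when $z^2=q^{-t}$ and $x=zq^a$, which is precisely condition B; this establishes $\atl N(\beta)$-linearity of $\ii^+_{(d,z);(t,x)}$. The corresponding statement for $\jj^+_{(t,x);(d,z)}$ follows either by the same computation with $E^{(a)}$ and the opposite coproduct, yielding condition A ($z^2=q^t$ and $x=zq^{-a}$), or by using the spin-flip isomorphism of Proposition~\ref{prop:dualMerge} to convert the $F^{(a)}$ intertwining into the $E^{(a)}$ one.

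The main obstacle I expect is the combinatorial bookkeeping in the boundary computation: the coproduct of $F^{(a)}$ carries $q$-binomial weights, and the $K^{-1}$-insertions produce site-dependent $q$-phases whose interaction with the wrap-around twist is delicate. To keep the calculation manageable, I would first treat $a=1$ directly, where $F=F^{(1)}$ is a simple sum of $N$ local operators and the cancellation condition is transparent; this base case already exposes the precise role of conditions A and B. For general $a$, I would proceed by induction using an appropriate commutation identity in $\luszt$, or at roots of unity by invoking integrality of the divided powers and passing to a limit. Compatibility with the translation generator is treated in parallel: translation conjugates $e_N$ to $e_1$ while rescaling the twist by a known power of $q$, compatible with the same conditions on $(z,x)$.
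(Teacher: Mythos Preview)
Your treatment of the bulk generators $e_1,\ldots,e_{N-1}$ via quantum Schur--Weyl duality is exactly what the paper does, and your suggestion to deduce the $\jj^+$ case from the $\ii$ case through the spin flip is also the paper's route (it uses the identity $F_{\pm}^{(a)}s = q^{\mp a(d+a)} s E_{\mp}^{(a)}$). One simplification you miss: because $e_i=\Omega_N^{1-i}e_1\Omega_N^{i-1}$, it suffices to intertwine $e_1$ and $\Omega_N$; you never need to look at $e_N$ separately. The paper therefore checks $\Omega_N$ directly rather than the boundary generator, and this is both conceptually cleaner and computationally lighter.

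The real gap is your plan to handle general $a$ by induction from $a=1$. Condition~B is \emph{not} closed under composition: if $(d,z)\unlhd(t,x)$ through~B (so $z^2=q^{-t}$ and $x=zq^a$), there is no intermediate $(d+2,w)$ making both $(d,z)\unlhd(d+2,w)$ and $(d+2,w)\unlhd(t,x)$ condition-B successions. Indeed the first would force $z^2=q^{-(d+2)}$, while the original hypothesis gives $z^2=q^{-t}=q^{-(d+2a)}$, so $q^{2(a-1)}=1$, which fails for generic $q$ as soon as $a>1$. Hence even away from roots of unity, the single-step intertwiner $F^{(1)}$ does not compose to give $F^{(a)}$ as a morphism between the prescribed XXZ modules, and your limit argument cannot rescue this: the obstruction is algebraic, not a question of specialisation.

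The paper resolves this by computing directly for arbitrary $a$. Using the explicit coproduct formula
\[
F^{(a)}_{+}\,|x_1\cdots x_N\rangle
=\sum_{\substack{1\le j_1<\cdots<j_a\le N\\ x_{j_1}=\cdots=x_{j_a}=+}}
\Big(\prod_{k=1}^{a}\prod_{j=j_k+1}^{j_{k+1}-1}q^{kx_j}\Big)\,
|x_1\cdots(-)_{j_1}\cdots(-)_{j_a}\cdots x_N\rangle,
\]
one expands $\Omega_N F^{(a)}$ and $F^{(a)}\Omega_N$ on a basis vector, splits each according to whether the extremal index ($j_1=1$, resp.\ $j_a=N$) occurs, and matches the pieces via the cyclic shift $j_i\mapsto j_{i-1}+1$. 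The identity $q^{t-a}z=x^{-1}$, equivalent to condition~B, is exactly what makes the boundary pieces agree. Your proposal identifies the right mechanism (the twist factors must cancel against the weight $t$ and the step $a$), but the verification has to be carried out for all $a$ at once; the inductive shortcut is not available.
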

\noindent These $\atl N$-morphisms extend maps introduced in \cite{PS}, \cite{Deguchi} and \cite{MDSAloop} which were endomorphisms of a given spin chain. The new ones intertwine chains described by different pairs in $\Delta_N$ and satisfy remarkable properties (see theorem \ref{thm:f}).

\end{section}


\begin{section}{The XXZ spin chain as a $\atl N$-module}\label{sec:xxzatln}

The periodic XXZ spin-$\frac12$ chain is a remarkably rich mathematical object: the neighbor-to-neighbor interaction can be obtained through a representation of the affine Temperley-Lieb algebra $\atl{N}(\beta)$ and the chain is naturally acted upon by the quantum group $\luszt$ when the anisotropy parameter $q$ is a root of unity. Also, the actions of $\atl N(\beta)$ and $\luszt$ partially commute with one another. This section studies the chain as a $\atl N(\beta)$-module and the next as a $\luszt$-module.

%
%
\begin{subsection}{The affine Temperley-Lieb algebra}\label{sec:atln}
This paragraph presents the affine Temperley-Lieb algebra $\atl{N}(\beta)$ in two ways: first, with generators and relations and, second, from a diagrammatic point of view. A family of finite-dimensional representations, the cellular modules $\Cell{N;d,z}$, are also introduced. Our presentation is based on \cite{GL-Aff,MS} (see also \cite{BSAfusion}.)

Fix $N \geq 2$, $q \in \mathbb{C}^{\times}$ and set $\beta = -q-q^{-1}$. The algebra $\atl{N}=\atl N(\beta)$ is defined through generators $\Omega_N,\Omega_N^{-1}$, $\id$ and $e_i$ where $0\leq i\leq N-1$. The associated defining relations depend on the choice of $N$. If $N \geq 3$, they are
\begin{align}\label{eq:genRel}
\begin{split}
e_ie_i &=\beta e_i, \qquad\qquad e_ie_{i\pm 1}e_i=e_i \quad \text{for all }1\leq i\leq N,\qquad\qquad
e_ie_j=e_je_i \quad \text{if }|i-j|\geq 2,\\
\Omega_Ne_i&=e_{i-1}\Omega_N, \qquad\qquad (\Omega_N^{\pm1} e_0)^{N-1}=\Omega_N^{\pm N}(\Omega_N^{\pm1}e_0), \qquad\qquad \Omega_N\Omega_N^{-1}=\Omega_N^{-1}\Omega_N=\id,
\end{split}
\end{align}
where the index on $e_i$ is understood modulo $N$: $e_{i+N}=e_i$. 
Note that the relation $\Omega_Ne_i=e_{i-1}\Omega_N$ implies $e_i=\Omega_N^{1-i}e_1\Omega_N^{i-1}$ and thus $\{e_1,\Omega_N,\Omega_N^{-1}\}$ is a smaller set of generators for $\atl{N}$. Observe also that this same relation shows that the relation $(\Omega_N^{\pm1} e_0)^{N-1}=\Omega_N^{\pm N}(\Omega_N^{\pm1}e_0)$ is equivalent to $(\Omega_N^{\pm1} e_i)^{N-1}=\Omega_N^{\pm N}(\Omega_N^{\pm1}e_i)$ for all $i$'s; this is welcome as that latter relation was the only one where one of the generators, the $e_0$, played a singular role. The defining relations of $\atl 2$ are
$$e_1e_1=\beta e_1, \qquad\qquad e_2=\Omega_2^{\pm 1}e_1\Omega_2^{\mp 1},\qquad\qquad
\Omega_2^2e_1=e_1\Omega_2^2=e_1 \quad \text{and} \quad \Omega_2\Omega_2^{-1}=\Omega_2^{-1}\Omega_2 = \id.$$
The limiting $\atl 1=\mathbb C[\Omega_1,\Omega_1^{-1}]$ is included for completeness.
The algebras $\atl N$ are all unital, associative and infinite-dimen\-sional (note that the subalgebra spanned by $\Omega_N$ and $\Omega_N^{-1}$ is  isomorphic to $\mathbb C[x,x^{-1}]$). The subalgebra generated by $\{\id, e_1,...,e_{N-1}\}$ is isomorphic to the (usual) Temperley-Lieb algebra $\tl{N}(\beta)$ (see \cite{ridout2014standard} for a detailed introduction).

There is another presentation of the algebra $\atl N$ that rests on the concept of {\em $(m,n)$-diagrams}. A $(m,n)$-diagram (with $m\equiv n\modY 2$) is a rectangle with $m$ points marked on its left side and $n$ on its right. The top and bottom sides of the rectangle are identified. The total $m+n$ points are linked pairwise by non-intersecting lines. These lines may cross the top side (or bottom one) to then reenter at the bottom side (or top one). The diagram may also contain closed loops which may be contractible or not (as when the loop crosses the top side an odd number of times). Two diagrams differing only by an isotopy are identified. Here are examples of $(4,2)$-diagrams:
$$\begin{tikzpicture}[baseline={(current bounding box.center)},scale = 1/3]
	\draw[thick] (0,-1) -- (0,4);
	\draw[thick] (3,-1) -- (3,4);
	\draw[dashed] (0,-1) -- (3,-1);
	\draw[dashed] (0,4) -- (3,4);
	\foreach \s in {1,...,4}
	{	
		\filldraw[black] (0,\s -1) circle (3pt);
	};
	\foreach \s in {2,...,3}
	{	
		\filldraw[black] (3,\s -1) circle (3pt);
	};
	\draw (0,1) .. controls (1,1) and (1,2) .. (0,2);
	\draw (0,3) .. controls (1,3) and (2,2) .. (3,2);
	\draw (0,0) .. controls (1,0) and (2,1) .. (3,1);
\end{tikzpicture}
\ ,\qquad\begin{tikzpicture}[baseline={(current bounding box.center)},scale = 1/3]
	\draw[thick] (0,-1) -- (0,4);
	\draw[thick] (3,-1) -- (3,4);
	\draw[dashed] (0,-1) -- (3,-1);
	\draw[dashed] (0,4) -- (3,4);
	\foreach \s in {1,...,4}
	{	
		\filldraw[black] (0,\s -1) circle (3pt);
	};
	\foreach \s in {2,...,3}
	{	
		\filldraw[black] (3,\s -1) circle (3pt);
	};
	\draw (0,1) .. controls (1,1) and (1,2) .. (0,2);
	\draw (0,3) .. controls (1,3) and (2,2) .. (3,2);
	\draw (0,0) .. controls (1,0) and (2,1) .. (3,1);
	\draw (2,-1) arc (0:180:0.5);
	\draw (1,4) arc (180:360:0.5);
	\end{tikzpicture}
\ ,\qquad
	\begin{tikzpicture}[baseline={(current bounding box.center)},scale = 1/3]
	\draw[thick] (0,-1) -- (0,4);
	\draw[thick] (3,-1) -- (3,4);
	\draw[dashed] (0,-1) -- (3,-1);
	\draw[dashed] (0,4) -- (3,4);
	\foreach \s in {1,...,4}
	{	
		\filldraw[black] (0,\s -1) circle (3pt);
	};
	\foreach \s in {2,...,3}
	{	
		\filldraw[black] (3,\s -1) circle (3pt);
	};
	\draw (2,2.25) arc (0:360:0.5);
	\draw (0,0) .. controls (1,0) and (1,1) .. (0,1);
	\draw (3,1) .. controls (2,1) and (2,2) .. (3,2);
	\draw (0,3) .. controls (1,3) and (1.5,3) .. (1.5,4);
	\draw (0,2) .. controls (1,2) and (1.5,0) .. (1.5,-1);
	\end{tikzpicture}
\ ,\qquad
	\begin{tikzpicture}[baseline={(current bounding box.center)},scale = 1/3]
	\draw[thick] (0,-1) -- (0,4);
	\draw[thick] (3,-1) -- (3,4);
	\draw[dashed] (0,-1) -- (3,-1);
	\draw[dashed] (0,4) -- (3,4);
	\foreach \s in {1,...,4}
	{	
		\filldraw[black] (0,\s -1) circle (3pt);
	};
	\foreach \s in {2,...,3}
	{	
		\filldraw[black] (3,\s -1) circle (3pt);
	};
	\draw (0,0) .. controls (1.5,0) and (1.5,3) .. (0,3);
	\draw (0,1) .. controls (1,1) and (1,2) .. (0,2);
	\draw (3,1) .. controls (2.5,1) and (2,0) .. (2,-1);
	\draw (3,2) .. controls (2.5,2) and (2,3) .. (2,4);
	\draw (1.5,-1) -- (1.5,4);
	\end{tikzpicture}
\ .
$$
The {\em rank} $|v|$ of a given diagram $v$ is the minimal number of lines crossing the top side among all diagrams isotopic to $v$. Lines that crosses from left to right are called {\em through lines}. A $(m,n)$-diagram that has $n$ through lines is called {\em monic}. For the examples above, the ranks are respectively $0, 0, 1, 2$ and there are $2, 2, 0, 0$ through lines. Of the four diagrams, only the first two are monic. There are monic $(m,n)$-diagrams if and only if $m\geq n$.

Given a $(m,n)$-diagram $v$ and a $(p,q)$-diagram $w$, we can define the composition $v*w$ if $n=p$. The result is then a $(m,q)$-diagram weighted by a complex number which depends on a parameter $q\in \mathbb C^\times$. More precisely, this composition is obtained by concatenating $v$ to the left of $w$, identifying their sides with $n=p$ points, and removing this common side with the points on it. Contractible loops need also to be removed. The resulting diagram is then multiplied formally by a factor of $\beta=-q-q^{-1}$ for each contractible loop omitted. Here is an example for a $(4,4)$-diagram and a $(4,2)$-diagram.
$$\begin{tikzpicture}[baseline={(current bounding box.center)},scale = 1/3]
	\draw[thick] (0,-1) -- (0,4);
	\draw[thick] (3,-1) -- (3,4);
	\draw[dashed] (0,-1) -- (3,-1);
	\draw[dashed] (0,4) -- (3,4);
	\foreach \s in {1,...,4}
	{	
		\filldraw[black] (0,\s -1) circle (3pt);
		\filldraw[black] (3,\s -1) circle (3pt);
	};
	\draw (0,3) .. controls (1,3) and (1.5,3) .. (1.5,4);
	\draw (3,0) .. controls (2,0) and (1.5,0) .. (1.5,-1);
	\draw (0,2) .. controls (1,2) and (2,3) .. (3,3);
	\draw (0,0) .. controls (1,0) and (1,1) .. (0,1);
	\draw (3,2) .. controls (2,2) and (2,1) .. (3,1);
	\end{tikzpicture}
\ *\ 
\begin{tikzpicture}[baseline={(current bounding box.center)},scale = 1/3]
	\draw[thick] (0,-1) -- (0,4);
	\draw[thick] (3,-1) -- (3,4);
	\draw[dashed] (0,-1) -- (3,-1);
	\draw[dashed] (0,4) -- (3,4);
	\foreach \s in {1,...,4}
	{	
		\filldraw[black] (0,\s -1) circle (3pt);
	};
	\foreach \s in {2,...,3}
	{	
		\filldraw[black] (3,\s -1) circle (3pt);
	};
	\draw (0,3) .. controls (1,3) and (2,1) .. (3,1);
	\draw (3,2) .. controls (2,2) and (1.5,3) .. (1.5,4);
	\draw (0,0) .. controls (1,0) and (1.5,0) .. (1.5,-1);
	\draw (0,2) .. controls (1,2) and (1,1) .. (0,1);
	\end{tikzpicture}
\ =\ 
\begin{tikzpicture}[baseline={(current bounding box.center)},scale = 1/3]
	\draw[thick] (0,-1) -- (0,4);
	\draw[thick] (3,-1) -- (3,4);
	\draw[dashed] (0,-1) -- (3,-1);
	\draw[dashed] (0,4) -- (3,4);
	\foreach \s in {1,...,4}
	{	
		\filldraw[black] (0,\s -1) circle (3pt);
		\filldraw[black] (3,\s -1) circle (3pt);
	};
	\draw (0,3) .. controls (1,3) and (1.5,3) .. (1.5,4);
	\draw (3,0) .. controls (2,0) and (1.5,0) .. (1.5,-1);
	\draw (0,2) .. controls (1,2) and (2,3) .. (3,3);
	\draw (0,0) .. controls (1,0) and (1,1) .. (0,1);
	\draw (3,2) .. controls (2,2) and (2,1) .. (3,1);
	\draw[thick] (3,-1) -- (3,4);
	\draw[thick] (6,-1) -- (6,4);
	\draw[dashed] (3,-1) -- (6,-1);
	\draw[dashed] (3,4) -- (6,4);
	\foreach \s in {1,...,4}
	{	
		\filldraw[black] (3,\s -1) circle (3pt);
	};
	\foreach \s in {2,...,3}
	{	
		\filldraw[black] (6,\s -1) circle (3pt);
	};
	\draw (3,3) .. controls (4,3) and (5,1) .. (6,1);
	\draw (6,2) .. controls (5,2) and (4.5,3) .. (4.5,4);
	\draw (3,0) .. controls (4,0) and (4.5,0) .. (4.5,-1);
	\draw (3,2) .. controls (4,2) and (4,1) .. (3,1);
	\end{tikzpicture}
\ =\ \beta\ 
\begin{tikzpicture}[baseline={(current bounding box.center)},scale = 1/3]
	\draw[thick] (0,-1) -- (0,4);
	\draw[thick] (3,-1) -- (3,4);
	\draw[dashed] (0,-1) -- (3,-1);
	\draw[dashed] (0,4) -- (3,4);
	\foreach \s in {1,...,4}
	{	
		\filldraw[black] (0,\s -1) circle (3pt);
	};
	\foreach \s in {2,...,3}
	{	
		\filldraw[black] (3,\s -1) circle (3pt);
	};
	\draw (0,3) .. controls (1,3) and (2,2) .. (3,2);
	\draw (0,2) .. controls (1,2) and (2,1) .. (3,1);
	\draw (0,0) .. controls (1,0) and (1,1) .. (0,1);
	\end{tikzpicture}
\ .
$$
Let $\Bnu_{m,n}$ be the set of all $(m,n)$-diagrams up to isotopy and without contractible loops. Denote also by $\mathsf D_{m,n}$ the vector space of formal $\mathbb C$-linear combinations of elements of $\Bnu_{m,n}$. Then, the space $\mathsf D_{N,N}$, together with the composition $*$ just introduced, defines a $\mathbb{C}$-algebra $D_{N,N}(\beta)$. Green \cite{Green} showed that this diagrammatic algebra $\mathsf D_{N,N}(\beta)$ and the algebra $\atl N(\beta)$ defined through generators and relations are isomorphic. The isomorphism is 
given through the identification:
\begin{equation}\label{eq:lesGen}
e_1\mapsto\ \begin{tikzpicture}[baseline={(current bounding box.center)},scale = 1/3]
	\draw[thick] (0,-1) -- (0,7);
	\draw[thick] (3,-1) -- (3,7);
	\draw[dashed] (0,-1) -- (3,-1);
	\draw[dashed] (0,7) -- (3,7);
	\foreach \s in {4,...,6}
	{	
		\filldraw[black] (0,\s ) circle (3pt);
		\filldraw[black] (3,\s ) circle (3pt);
	};
	\foreach \s in {0,...,1}
	{	
		\filldraw[black] (0,\s ) circle (3pt);
		\filldraw[black] (3,\s ) circle (3pt);
	};
	\draw (0,5) .. controls (1,5) and (1,6) .. (0,6);
	\draw (3,5) .. controls (2,5) and (2,6) .. (3,6);
	\draw (0,4) -- (3,4);
	\node at (1.5,3) {$\vdots$};
	\draw (0,1) -- (3,1);
	\draw (0,0) -- (3,0);
	\end{tikzpicture}
	\ ,\qquad
\Omega_N\mapsto\ \begin{tikzpicture}[baseline={(current bounding box.center)},scale = 1/3]
	\draw[thick] (0,-1) -- (0,7);
	\draw[thick] (3,-1) -- (3,7);
	\draw[dashed] (0,-1) -- (3,-1);
	\draw[dashed] (0,7) -- (3,7);
	\foreach \s in {4,...,6}
	{	
		\filldraw[black] (0,\s -4) circle (3pt);
		\filldraw[black] (3,\s ) circle (3pt);
	};
	\foreach \s in {0,...,1}
	{	
		\filldraw[black] (0,\s +5) circle (3pt);
		\filldraw[black] (3,\s ) circle (3pt);
	};
	\draw (2,7) arc (180:270:1);
	\draw (1,-1) arc (0:90:1);
	\draw (0,6) .. controls (1,6) and (2,5) .. (3,5);	
	\draw (0,5) .. controls (1,5) and (2,4) .. (3,4);	
	\node at (1.5,3.25) {$\vdots$};
	\draw (0,2) .. controls (1,2) and (2,1) .. (3,1);	
	\draw (0,1) .. controls (1,1) and (2,0) .. (3,0);	
	\end{tikzpicture}
\ ,\qquad
\Omega_N^{-1}\mapsto\ \begin{tikzpicture}[baseline={(current bounding box.center)},scale = 1/3]
	\draw[thick] (0,-1) -- (0,7);
	\draw[thick] (3,-1) -- (3,7);
	\draw[dashed] (0,-1) -- (3,-1);
	\draw[dashed] (0,7) -- (3,7);
	\foreach \s in {4,...,6}
	{	
		\filldraw[black] (0,\s ) circle (3pt);
		\filldraw[black] (3,\s -4) circle (3pt);
	};
	\foreach \s in {0,...,1}
	{	
		\filldraw[black] (0,\s ) circle (3pt);
		\filldraw[black] (3,\s +5) circle (3pt);
	};
	\draw (0,6) arc (270:360:1);
	\draw (3,0) arc (90:180:1);
	\draw (0,5) .. controls (1,5) and (2,6) .. (3,6);	
	\draw (0,4) .. controls (1,4) and (2,5) .. (3,5);	
	\node at (1.5,3.25) {$\vdots$};
	\draw (0,1) .. controls (1,1) and (2,2) .. (3,2);	
	\draw (0,0) .. controls (1,0) and (2,1) .. (3,1);	
	\end{tikzpicture}
\quad\text{and}\quad
\id\mapsto \begin{tikzpicture}[baseline={(current bounding box.center)},scale = 1/3]
	\draw[thick] (0,-1) -- (0,7);
	\draw[thick] (3,-1) -- (3,7);
	\draw[dashed] (0,-1) -- (3,-1);
	\draw[dashed] (0,7) -- (3,7);
	\foreach \s in {5,...,6}
	{	
		\filldraw[black] (0,\s ) circle (3pt);
		\filldraw[black] (3,\s ) circle (3pt);
	};
	\foreach \s in {0,...,1}
	{	
		\filldraw[black] (0,\s ) circle (3pt);
		\filldraw[black] (3,\s ) circle (3pt);
	};
	\draw (0,6) -- (3,6);
	\draw (0,5) -- (3,5);
	\node at (1.5,3.25) {$\vdots$};
	\draw (0,1) -- (3,1);
	\draw (0,0) -- (3,0);
	\end{tikzpicture}
\ .
\end{equation}
Note that using this isomorphism on the set $\{\id,e_1,...,e_{N-1}\}$ gives back the isomorphism described in \cite{ridout2014standard} for the (usual) Temperley-Lieb algebra. We thus recover $\tl{N}(\beta)$ as the subalgebra $\mathbb{C}$-spanned by the $(N,N)$-diagrams of zero rank.

Even though $\atl 0$ will not be used, the generator $\Omega_0$ will. It is the $(0,0)$-diagram
$$\Omega_0=\ \begin{tikzpicture}[baseline={(current bounding box.center)},scale = 1/3]
	\draw[thick] (0,-1) -- (0,3);
	\draw[thick] (3,-1) -- (3,3);
	\draw[dashed] (0,-1) -- (3,-1);
	\draw[dashed] (0,3) -- (3,3);
	\draw (1.5,-1) -- (1.5,3);
	\end{tikzpicture}\ .
$$

The next step is to introduce the cellular modules $\Cell{N;d,z}$. Because composing a $(N,N)$-diagram with a $(N,d)$-diagram gives back a $(N,d)$-diagram, the space $\mathsf D_{N,d}$ is actually a module for the algebra $\mathsf D_{N,N}(\beta)\simeq \atl N(\beta)$. This module is easily seen to be infinite-dimensional. For example,
$$\dots, \quad v\Omega_2^3,\quad v\Omega_2^2,\quad  v\Omega_2=\ \begin{tikzpicture}[baseline={(current bounding box.center)},scale = 1/3]
	\draw[thick] (0,-1) -- (0,4);
	\draw[thick] (3,-1) -- (3,4);
	\draw[dashed] (0,-1) -- (3,-1);
	\draw[dashed] (0,4) -- (3,4);
	\foreach \s in {1,...,4}
	{	
		\filldraw[black] (0,\s -1) circle (3pt);
	};
	\foreach \s in {2,...,3}
	{	
		\filldraw[black] (3,\s -1) circle (3pt);
	};
	\draw (0,0) .. controls (1,0) and (1,1) .. (0,1);
	\draw (0,3) .. controls (1,3) and (2,1) .. (3,1);
	\draw (0,2) .. controls (1,2) and (1.5,0) .. (1.5,-1);
	\draw (3,2) .. controls (2,2) and (1.5,3) .. (1.5,4);
	\end{tikzpicture}
\ ,\quad
v=\ \begin{tikzpicture}[baseline={(current bounding box.center)},scale = 1/3]
	\draw[thick] (0,-1) -- (0,4);
	\draw[thick] (3,-1) -- (3,4);
	\draw[dashed] (0,-1) -- (3,-1);
	\draw[dashed] (0,4) -- (3,4);
	\foreach \s in {1,...,4}
	{	
		\filldraw[black] (0,\s -1) circle (3pt);
	};
	\foreach \s in {2,...,3}
	{	
		\filldraw[black] (3,\s -1) circle (3pt);
	};
	\draw (0,0) .. controls (1,0) and (1,1) .. (0,1);
	\draw (0,3) .. controls (1,3) and (2,2) .. (3,2);
	\draw (0,2) .. controls (1,2) and (2,1) .. (3,1);
	\end{tikzpicture}
\ ,\quad
v\Omega_2^{-1}=\ \begin{tikzpicture}[baseline={(current bounding box.center)},scale = 1/3]
	\draw[thick] (0,-1) -- (0,4);
	\draw[thick] (3,-1) -- (3,4);
	\draw[dashed] (0,-1) -- (3,-1);
	\draw[dashed] (0,4) -- (3,4);
	\foreach \s in {1,...,4}
	{	
		\filldraw[black] (0,\s -1) circle (3pt);
	};
	\foreach \s in {2,...,3}
	{	
		\filldraw[black] (3,\s -1) circle (3pt);
	};
	\draw (0,0) .. controls (1,0) and (1,1) .. (0,1);
	\draw (0,2) -- (3,2);
	\draw (0,3) arc (270:360:1);
	\draw (3,1) .. controls (2,1) and (1.5,0) .. (1.5,-1);
	\end{tikzpicture}
\ ,\quad
v\Omega_2^{-2},\quad v\Omega_2^{-3},\quad  \dots
$$
are all distinct $(4,2)$-diagrams. Defining the modules $\Cell{N;d,z}$ from $\mathsf D_{N,d}$ requires two more steps. First, only the monic diagrams in $\Bnu_{N,d}$ are used. (This limits the range of $d$ to be $0\leq d\leq N$.) However, the composition of a $(N,N)$-diagram $a$ with a monic $(N,d)$-diagram $v$ may result in a $(N,d)$-diagram $a*v$ that is non-monic. The composition is thus modified by setting any non-monic diagram resulting from the $\atl N(\beta)$-action to zero. Unfortunately there are still an infinite number of monic diagrams in $\Bnu_{N,d}$ as the above example shows. This will be settled by the second step. Let $z\in \mathbb{C}^{\times}$ and consider the linear map $f_z$ defined on linear combinations of monic $(N,d)$-diagrams by $f_z(v)=v*(\Omega_d-z\, \id)$ if $d\neq 0$ and by $f_z(v)=v*(\Omega_0-(z+z^{-1})\,\id)$ if $d=0$. As this map acts from the right on $v$, it commutes with the left-action of $\atl N(\beta)$. The cokernel of $f_z$ is then the desired finite-dimensional cellular module $\Cell{N;d,z}$. Remark that all the monic $(4,2)$-diagrams shown in the example above are proportional in $\Cell{4;2,z}$, namely $v\Omega_2^i=z^iv$ for any $i \in \mathbb{Z}$. 

The dimension of the module $\Cell{N;d,z}$ is easily computed. Each monic $(N,d)$-diagram must have $r=(N-d)/2$ loops starting and ending on its left side. Because the number of times the through lines of a diagram cross its top or bottom sides may be ignored in $\Cell{N;d,z}$, the top positions of these $r$ loops completely determine a basis element. Indeed, there is only one way to close these loops on the left side without intersection with either themselves or the through lines. Any choice of these $r$ top positions among the $N$ positions on the left side gives rise to a basis element and
\begin{equation}\label{eq:dimW}\dim\Cell{N;d,z}=\left(\begin{smallmatrix}N\\ r\end{smallmatrix}\right)=\big(\begin{smallmatrix}N\\ (N-d)/2\end{smallmatrix}\big).\end{equation}
This basis just described will be noted $\BCell{N,d}$. It clearly does not depend on $z$. For example, the four elements 
$$\begin{tikzpicture}[baseline={(current bounding box.center)},scale = 1/3]
	\draw[thick] (0,-1) -- (0,4);
	\draw[thick] (3,-1) -- (3,4);
	\draw[dashed] (0,-1) -- (3,-1);
	\draw[dashed] (0,4) -- (3,4);
	\foreach \s in {1,...,4}
	{	
		\filldraw[black] (0,\s -1) circle (3pt);
	};
	\foreach \s in {2,...,3}
	{	
		\filldraw[black] (3,\s -1) circle (3pt);
	};
	\draw (0,2) .. controls (1,2) and (1,3) .. (0,3);
	\draw (0,1) .. controls (1,1) and (2,2) .. (3,2);
	\draw (0,0) .. controls (1,0) and (2,1) .. (3,1);
	\end{tikzpicture}\ ,
\qquad
\begin{tikzpicture}[baseline={(current bounding box.center)},scale = 1/3]
	\draw[thick] (0,-1) -- (0,4);
	\draw[thick] (3,-1) -- (3,4);
	\draw[dashed] (0,-1) -- (3,-1);
	\draw[dashed] (0,4) -- (3,4);
	\foreach \s in {1,...,4}
	{	
		\filldraw[black] (0,\s -1) circle (3pt);
	};
	\foreach \s in {2,...,3}
	{	
		\filldraw[black] (3,\s -1) circle (3pt);
	};
	\draw (0,1) .. controls (1,1) and (1,2) .. (0,2);
	\draw (0,3) .. controls (1,3) and (2,2) .. (3,2);
	\draw (0,0) .. controls (1,0) and (2,1) .. (3,1);
	\end{tikzpicture}\ ,
\qquad
\begin{tikzpicture}[baseline={(current bounding box.center)},scale = 1/3]
	\draw[thick] (0,-1) -- (0,4);
	\draw[thick] (3,-1) -- (3,4);
	\draw[dashed] (0,-1) -- (3,-1);
	\draw[dashed] (0,4) -- (3,4);
	\foreach \s in {1,...,4}
	{	
		\filldraw[black] (0,\s -1) circle (3pt);
	};
	\foreach \s in {2,...,3}
	{	
		\filldraw[black] (3,\s -1) circle (3pt);
	};
	\draw (0,0) .. controls (1,0) and (1,1) .. (0,1);
	\draw (0,3) .. controls (1,3) and (2,2) .. (3,2);
	\draw (0,2) .. controls (1,2) and (2,1) .. (3,1);
	\end{tikzpicture}
\ ,
\qquad
\begin{tikzpicture}[baseline={(current bounding box.center)},scale = 1/3]
	\draw[thick] (0,-1) -- (0,4);
	\draw[thick] (3,-1) -- (3,4);
	\draw[dashed] (0,-1) -- (3,-1);
	\draw[dashed] (0,4) -- (3,4);
	\foreach \s in {1,...,4}
	{	
		\filldraw[black] (0,\s -1) circle (3pt);
	};
	\foreach \s in {2,...,3}
	{	
		\filldraw[black] (3,\s -1) circle (3pt);
	};
	\draw (0,1) -- (3,1);
	\draw (0,2) -- (3,2);
	\draw (0,3) arc (270:360:1);
	\draw (1,-1) arc (0:90:1);
	\end{tikzpicture}
$$
form a basis of $\Cell{4;2,z}$. Note that the third diagram is the $v$ of the previous example. All the other $v\Omega_2^i$ with $i\in\mathbb Z$ would fall in the same class in $\text{coker} f_z$. Similarly, the other elements of the basis depicted here were chosen as the element in their classes in $\text{coker} f_z$ with the smallest rank. This choice will always be implied when the basis $\BCell{N;d}$ is used.

One of Graham and Lehrer's important contributions \cite{GL-Aff} was to show that all finite-dimensional simple $\atl N$-modules arise as quotients $\Cell{N;d,z}/\rad\Cell{N;d,z}$ for some $z\in \mathbb{C}^{\times}$ and $d$ satisfying $0\leq d\leq N$ with $d\equiv N\modY 2$. A complete list of non-isomorphic simple modules requires the introduction of the two sets alluded to in the section on main results:
\begin{equation}\label{eq:lesLambdas}\Lambda=\mathbb{Z}_{\geq 0}\times \mathbb{C}^{\times}\qquad \text{and}\qquad \Lambda_N=\{(d,z)\in\Lambda\,|\, d\leq N,\, d\equiv N\modY 2\}\end{equation}
where, in $\Lambda$, the pairs $(0,z)$ and $(0,z^{-1})$ are identified for every $z\in \mathbb C^\times$. Moreover the pairs $(0,q)$ and $(0,q^{-1})$ are removed from $\Lambda_N$ when both $N$ is even and $q+q^{-1}=0$, that is, $q=i$ or $-i$. These two are the {\em problematic pairs}.
Then, theorem \ref{thm:listOfSimples} provides the complete list of non-isomorphic finite-dimensional simple modules of $\atl{N}=\atl N(\beta)$, namely $\{\Irre{N;d,z}\,|\, (d,z)\in\Lambda_N\}$  where $\Irre{N;d,z}$ is the (simple) head of the cellular module $\Cell{N;d,z}$.
\end{subsection}

%
%
\begin{subsection}{The action of $\atl N$ on $\XXZ N\pm$}\label{sec:atlOnXXZ}

The periodic XXZ spin-$\frac12$ chain that will be considered here was introduced by Pasquier and Saleur \cite{PS} as a limit of the six-vertex model (see also \cite{AGR}). It is described by a Hamiltonian $H_\text{XXZ}:\XXZ N{}\to\XXZ N{}$ where $\XXZ N{}=(\mathbb C^2)^{\otimes N}$. This Hamiltonian is expressed as a sum of nearest-neighbor interactions $e_i^+$ or $e_i^-$ as
\begin{equation}\label{eq:hamiltonian}H_\text{XXZ}=\sum_{i=1}^Ne_i^+=\sum_{i=1}^Ne_i^-
\end{equation}
where, for fixed parameters $q,z\in\mathbb C^\times$,
\begin{align*}\label{eq:lesEi}
e_i^\pm&=\sigma_i^-\sigma_{i+1}^+ +\sigma_i^+\sigma_{i+1}^-+(q+q^{-1})\sigma_i^+\sigma_i^-\sigma_{i+1}^+\sigma_{i+1}^- -q^{\pm 1} \sigma_i^+\sigma_i^- -q^{\mp 1} \sigma_{i+1}^+\sigma_{i+1}^- \quad  \text{for }1\leq i\leq N-1,\\
e_N^\pm&=z^2\sigma_N^-\sigma_{1}^+ +z^{-2}\sigma_N^+\sigma_{1}^-+(q+q^{-1})\sigma_N^+\sigma_N^-\sigma_{1}^+\sigma_{1}^- -q^{\pm 1} \sigma_N^+\sigma_N^- -q^{\mp 1} \sigma_{1}^+\sigma_{1}^-, 
\end{align*} 
and $\sigma_i^s=I_2\otimes\dots \otimes I_2\otimes \sigma^s\otimes I_2\otimes \dots \otimes I_2$ with $I_2$ the $2\times 2$ identity matrix, $s\in\{+, -, z\}$ and $\sigma^s$ the usual Pauli matrix. Note that, while the $e_i^+$ and $e_i^-$ differ by the factors of $q$ in the last two terms, they lead to the same Hamiltonian as \eqref{eq:hamiltonian} states. Note also that the appearance of the twist $z^2$ ($=e^{i\phi}$ in \cite{PS}) only in $\sigma_N^\pm$ raises the question of translational invariance of the spin chain. However, a simple change of basis\footnote{Such a basis was used in \cite{MDSA}. The twist there is parametrized by $v$ instead of $z$ (with any $v$ such that $v^N=z^{-1}$). If $T$ ($\tilde T$) represents an endomorphism of $\XXZ N{}$ in the basis used here (used in \cite{MDSA} resp.), then $T=O\tilde T\mathcal O^{-1}$ where $O=v^{\sum_{1\leq j\leq N}j\sigma_j^z}$.}
can be used to restore the explicit invariance. Here, we choose the basis used in \cite{PS} as it simplifies many expressions and as the set $\{e_i^+,1\leq i\leq N-1\}$ (or $\{e_i^-,1\leq i\leq N-1\}$) generates the usual action of the (regular) algebra $\tl N(\beta=-q-q^{-1})$ on $\XXZ N{}$. This in turn leads to simpler expressions for the action of the quantum group $\luszt$ on $\XXZ N{}$ when $q$ is a root of unity (see section \ref{sec:luqsl2}).

It is a well-known (and remarkable) fact that both sets $\{e_i^+,1\leq i\leq N\}$ and $\{e_i^-,1\leq i\leq N\}$ satisfy all the defining relations of $\atl N = \atl N(\beta)$ where $\Omega_N$ and $\Omega_N^{-1}$ do not appear. The following expressions for the latter generators
\begin{equation*}
\Omega_N=tz^{-\sigma_1^z}\qquad\text{and}\qquad \Omega_N^{-1}=t^{-1}z^{\sigma_N^z}
\end{equation*}
complete\footnote{A proof that the sets $\{e_i^\pm, \Omega_N, \Omega_N^{-1}\}$ given above do define $\atl N$-modules can be found in \cite{MDSA}.} the description of two representations $\XXZ{N;z}+$ and $\XXZ{N;z}-$ of $\atl N$ on $\XXZ N{}=(\mathbb C^2)^{\otimes N}$. Here, $t$ is the left translation of spins along the chain. More precisely, if one uses the standard basis $\{|+\rangle,|-\rangle\}$ for the space $\mathbb C^2$ and the induced basis $\BXXZ N=\{|x_1x_2\dots x_N\rangle\,|\, x_1,x_2,\dots,x_N\in \{+,-\}\}$ for the tensor product $\XXZ N{}$, then $t$ is defined by $t|x_1x_2\dots x_N\rangle=|x_2\dots x_Nx_1\rangle$. Even though the sets $\{e_i^+\}$ and $\{e_i^-\}$ lead to the same Hamiltonian $H_\text{XXZ}$, the modules $\XXZ N+$ and $\XXZ N-$ may not be isomorphic (in fact, they are typically not!). Moreover, changing $z$ to $-z$ provides two other representations for the same $H_\text{XXZ}$.

The expressions of the generators $e_i^\pm$ and $\Omega_N$ make it clear that they commute with the overall spin $S^z=\frac12\sum_{1\leq i\leq N}\sigma_i^z$. The module $\XXZ{N;z}\pm$ thus decomposes as the direct sum
$$\XXZ{N;z}\pm=\mybigoplus{d=-N}N \XXZ{N;d,z}\pm$$
where $\XXZ{N;d,z}\pm$ is the eigenspace of $S^z$ with eigenvalue $\frac d2$ and where $\myoplus{}{}$ denotes a sum with steps of $2$. 

The spin flip $s:\XXZ N{}\to\XXZ N{}$ is defined as usual on the basis $\BXXZ N$ by $s|x_1x_2\dots x_N\rangle=|(-x_1)(-x_2)\dots(-x_N)\rangle$. It clearly maps the eigenspace of $S^z$ with eigenvalue $\frac d2$ onto the eigenspace with eigenvalue $-\frac d2$.
\begin{proposition}\label{prop:spinflip} The spin flip $s$ defines a $\atl N$-isomorphism between $\XXZ{N;d,z}+$ and $\XXZ{N;-d,z^{-1}}-$.
\end{proposition}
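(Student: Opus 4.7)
The plan is to verify directly that spin flip intertwines the defining generators of the two actions, and then observe that it clearly maps the correct weight space of $S^z$ to the other. Concretely, I will use the conjugation relations $s\sigma_i^\pm s^{-1}=\sigma_i^\mp$ and $s\sigma_i^z s^{-1}=-\sigma_i^z$, together with $st=ts$ (since translation commutes with the simultaneous swap of all spin labels).

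First I would tackle the bulk generators $e_i^+$ for $1\leq i\leq N-1$. Setting $P_i^+=\sigma_i^+\sigma_i^-$ and $P_i^-=\sigma_i^-\sigma_i^+=I-P_i^+$, I would compute $se_i^+ s^{-1}$ by swapping every $\sigma^+$ with $\sigma^-$. The hopping terms $\sigma_i^-\sigma_{i+1}^+ + \sigma_i^+\sigma_{i+1}^-$ are manifestly invariant, while the projector terms become
\[
(q+q^{-1})P_i^-P_{i+1}^- - q P_i^- - q^{-1} P_{i+1}^-.
\]
Expanding via $P_i^-=I-P_i^+$, the constant and linear terms in $P^+$ rearrange exactly so that the coefficients of $P_i^+$ and $P_{i+1}^+$ become $-q^{-1}$ and $-q$, respectively. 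This yields $se_i^+s^{-1}=e_i^-$, where the swap of $q$ with $q^{-1}$ on the boundary projectors is the only nontrivial point of the computation. This is the main obstacle, but it is just bookkeeping.

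Second, for $e_N^+$ the same identity goes through, but the $z$-dependent hopping terms $z^{\pm 2}\sigma_N^{\mp}\sigma_1^{\pm}$ get their coefficients exchanged by $s$, which is exactly what is needed since on the target side the formula for $e_N^-$ at twist $z^{-1}$ has $z^{\pm 2}$ replaced by $z^{\mp 2}$. Putting both pieces together yields $s\,e_i^+(z)\,s^{-1}=e_i^-(z^{-1})$ for all $1\leq i\leq N$.

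Third, for the affine generator, the computation is immediate: $s\,\Omega_N(z)\,s^{-1}=s\,t\,z^{-\sigma_1^z}s^{-1}=t\,z^{\sigma_1^z}=t\,(z^{-1})^{-\sigma_1^z}=\Omega_N(z^{-1})$, and similarly for $\Omega_N^{-1}$. Hence $s$ intertwines the $+$-action at parameter $z$ with the $-$-action at parameter $z^{-1}$. Finally, $s$ anticommutes with each $\sigma_i^z$ and therefore sends the $S^z$-eigenspace of eigenvalue $d/2$ onto the eigenspace of eigenvalue $-d/2$; restricted to $\XXZ{N;d,z}{+}$ it thus lands in $\XXZ{N;-d,z^{-1}}{-}$. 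Since $s^2=\id$, the restriction is a bijection, hence an $\atl N$-linear isomorphism, completing the proof.
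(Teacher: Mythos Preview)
Your proof is correct and follows essentially the same approach as the paper: verify that the spin flip intertwines the generators of the two $\atl N$-actions and then note that it maps the right $S^z$-eigenspace to the other. The only differences are cosmetic: the paper checks just $e_1$ and $\Omega_N$ (enough, since $\{e_1,\Omega_N^{\pm1}\}$ generates $\atl N$) and carries out the computation on basis vectors via the formula $e_1^\pm|x_1x_2\dots x_N\rangle=\delta_{x_1+x_2,0}(|x_2x_1x_3\dots x_N\rangle-q^{\pm x_1}|x_1x_2\dots x_N\rangle)$, whereas you check all $e_i$ and work at the operator level with Pauli-matrix conjugation.
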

\begin{proof} It is sufficient to check that $s$ intertwines the actions of $e_1$ and $\Omega_N$ on the two modules. The simple formula
\begin{equation}\label{eq:e1}e_1^\pm|x_1x_2\dots x_N\rangle=\delta_{x_1+x_2,0}(|x_2x_1x_3\dots x_N\rangle-q^{\pm x_1}|x_1x_2\dots x_N\rangle)\end{equation}
can be verified starting from the definition of the $e_i^\pm$. Then, 
\begin{align*}
e_1^- s|x_1x_2\dots x_N\rangle&=e_1^-|(-x_1)(-x_2)\dots(-x_N)\rangle\\
	&=\delta_{x_1+x_2,0}(|(-x_2)(-x_1)(-x_3)\dots(-x_N)\rangle-q^{x_1}|(-x_1)(-x_2)\dots(-x_N)\rangle)\\
	&=s(\delta_{x_1+x_2,0}(|x_2x_1x_3\dots x_N\rangle-q^{x_1}|x_1x_2\dots x_N\rangle))=se_1^+|x_1x_2\dots x_N\rangle.
\end{align*}
Similarly (here, it is useful to add symbols on the basis elements to remind oneself which action is at play),
\begin{align*}
\Omega_{N}s|x_1x_2\dots x_N\rangle_{z}^+
	&=\Omega_{N}|(-x_1)(-x_2)\dots(-x_N)\rangle_{z^{-1}}^-=z^{x_1}|(-x_2)\dots(-x_N)(-x_1)\rangle_{z^{-1}}^-\\
	&=z^{x_1}s|x_2\dots x_Nx_1\rangle_z^+= s\Omega_{N}|x_1x_2\dots x_N\rangle_z^+
\end{align*}
which ends the proof.
\end{proof}
\end{subsection}

%
%
\begin{subsection}{Morphisms between cellular modules}\label{sec:morphismesGL}

Theorem \ref{thm:GL} revealed the structure of the cellular modules $\Cell{N;d,z}$. The main tool in the proof of this theorem  is the existence of an inclusion of the cellular $\Cell{N;t,x}$ into $\Cell{N;d,z}$ whenever $(t,x)\in\lambda_N$ succeeds $(d,z)$. The explicit expression of this morphism is known \cite{GL-Aff} and will be used in paragraph \ref{sec:ind}. Appendix \ref{app:a} contains the definition of the $q$-numbers $[n]_q$ and the $q$-factorials $[n]_q!$ with their basic properties.

For $w\in\BCell{t,d}$, denote by $\text{beg }w\subseteq\{1,2,\dots, t\}$ the set of positions for the beginnings of the loops on the left side of $w$ (with the highest point on that side corresponding to position 1). For each $s\in\text{beg }w$, let also $\#_s$ be the number of loops in the convex hull of the loop starting at $s$, including that loop. Finally, let $h_w$ be the rational function 
$$h_w(q)=\frac{[r]_q!}{\prod_{s\in\text{beg }w}[\#_s]_q}$$
where $r=\frac12(t-d)$. 
\begin{proposition}[\cite{GL-Aff}]\label{prop:grandThm} Fix $(d,z)$ and $(t,x)$ two pairs in $\lambda_N$ such that the latter succeeds the former. Set $a=1$ ($a=-1$) if $(t,x)$ succeeds $(d,z)$ through condition A (resp.~condition B). Then, $\gl{}=\gl{(d,z);(t,x)}:\Cell{N;t,x}\to\Cell{N;d,z}$ given by
\begin{equation}\label{eq:gl}
\gl{(d,z);(t,x)}(v)=\sum_{w\in\BCell{t,d}}q^{ai_w}z^{r-|w|}h_w(q)vw
\end{equation}
is an injective $\atl N$-linear map. Here, $r=\frac12(t-d)$, $|w|$ is the rank of $w$ and $2i_w=t(|w|-r)+\frac d2(t+1)-\zeta_w$ with $\zeta_w$ the sum of the positions (on the left side of $w$) of the through lines.
\end{proposition}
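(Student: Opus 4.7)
The plan is to establish three properties of $\gl{}=\gl{(d,z);(t,x)}$: well-definedness as a map out of the quotient $\Cell{N;t,x}$, $\atl N$-linearity, and injectivity. The content of well-definedness is that the right-hand side of \eqref{eq:gl} vanishes on the defining relation of $\Cell{N;t,x}$; after factoring out any monic $(N,t)$-diagram $u$ on the left, this reduces to the diagrammatic identity
$$\Omega_t*\Theta \;\equiv\; x\,\Theta \pmod{\text{right-action of }(\Omega_d-z\,\id)},$$
with $\Theta:=\sum_{w\in\BCell{t,d}}q^{ai_w}z^{r-|w|}h_w(q)\,w\in \mathsf{D}_{t,d}$ (and the usual modification $\Omega_0-(x+x^{-1})\,\id$ if $t=0$). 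I would prove this by computing $\Omega_t*w$ diagrammatically as a cyclic rotation of $w$'s left-side connections, then regrouping by isotopy class of the result: the rotation either permutes $\BCell{t,d}$ within itself with predictable changes in $|w|$, $i_w$, and the nested-loop counts $\#_s$ entering $h_w$, or produces a diagram whose class equals $z$ times a basis element via the right-$\Omega_d$ relation. Standard $q$-Pascal identities for the ratios of $q$-factorials $h_w$ then collapse the two sums into the same expression, with the overall multiplier on the right being exactly $x=zq^{\mp m}$ precisely because the hypotheses $z^2=q^{\pm t}$ from conditions (A)/(B) force the telescoping to close.

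For $\atl N$-linearity, $\gl{}$ is right-composition by the fixed element $\Theta$, so associativity of diagrammatic composition gives linearity for free; compatibility with the zero-replacement for non-monic diagrams holds because if $a*v$ is non-monic as an $(N,t)$-diagram, $(a*v)*w$ has at most as many through lines and so is also non-monic, hence zero in $\Cell{N;d,z}$.

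For injectivity, I would exhibit a basis vector whose image is nonzero. Take the distinguished $v_0\in\BCell{N,t}$ with its $(N-t)/2$ arcs nested at the bottom of the left side and $w_0\in\BCell{t,d}$ built analogously; then $v_0*w_0$ is a monic $(N,d)$-diagram representing a basis element of $\Cell{N;d,z}$ that cannot arise as $v_0*w$ for any other $w\in\BCell{t,d}$ (the right half of $v_0*w$ recovers $w$). The coefficient of $v_0*w_0$ in $\gl{}(v_0)$ is therefore $q^{ai_{w_0}}z^{r-|w_0|}h_{w_0}(q)$, which is non-vanishing under the hypotheses on $(d,z)$, $(t,x)$. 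Since $v_0$ generates $\Cell{N;t,x}$ as a $\atl N$-module, this forces the kernel of $\gl{}$, a $\atl N$-submodule, to be zero.

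The step I expect to be the main obstacle is the combinatorial identity behind well-definedness: the precise form $2i_w=t(|w|-r)+\frac{d}{2}(t+1)-\zeta_w$ is engineered so that the changes in $q^{ai_w}$, $z^{r-|w|}$, and $h_w(q)$ under cyclic rotation telescope to the single scalar $x$, and verifying this requires a delicate case analysis of the rotation (moving a through line versus an arc endpoint) together with tracking how the nested-loop statistics $\#_s$ transform.
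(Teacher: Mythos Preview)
The paper does not give its own proof of this proposition; it is quoted from Graham and Lehrer and stated as a black box, so there is nothing in the paper to compare your argument against.

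On the substance: your outline for well-definedness and $\atl N$-linearity is sound, and you are right that the rotation identity $\Omega_t*\Theta\equiv x\,\Theta$ modulo the right $\Omega_d$-relation is where the real combinatorial work lies. But your injectivity argument is broken. Exhibiting a cyclic generator $v_0$ with $\gl{}(v_0)\neq 0$ tells you only that $v_0\notin\ker\gl{}$; it does \emph{not} force $\ker\gl{}=0$. Any quotient map $M\twoheadrightarrow M/K$ by a proper nonzero submodule $K$ sends every generator of $M$ to something nonzero while having kernel $K$. Since $\Cell{N;t,x}$ is in general far from simple---theorem~\ref{thm:GL} shows it can have an arbitrarily long chain of proper submodules when $q$ is a root of unity---this is a genuine gap, not a technicality.

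To repair injectivity along the lines you sketch, you would need something much stronger than a single nonzero image: for instance, an upper-triangular argument showing that for a suitable total order on $\BCell{N,t}$ and an injection $\iota:\BCell{N,t}\hookrightarrow\BCell{N,d}$, each $\gl{}(v)$ has a nonzero coefficient on $\iota(v)$ and zero coefficient on $\iota(v')$ for all $v'$ strictly smaller than $v$. Your observation that ``the right half of $v_0*w$ recovers $w$'' is a seed for such an argument, but it has to be run for \emph{every} $v\in\BCell{N,t}$ simultaneously and organized into a triangularity statement, not applied to a single basis vector.
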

Several important results, beside theorem \ref{thm:GL}, are tied to this family of inclusions. First, $\Hom(\Cell{N;t,x},\Cell{N;d,z})\simeq \mathbb C$ if $(d,z)\preceq (t,x)$ and is $0$ otherwise. Second, $\Irre{N;t,x}$ is a composition factor of $\Cell{N;d,z}$ if and only if $(d,z)\preceq (t,x)$ with $(t,x)\in \Lambda_N$. Finally, the multiplicity of $\Irre{N;t,x}$ in $\Cell{N;d,z}$ is at most one. All these results are proved in \cite{GL-Aff}.
\end{subsection}

%
%
\begin{subsection}{Dualities}\label{sec:dualities}
Let $\mathsf A$ be an algebra over $\mathbb C$. If $\mathsf M$ is a left $\mathsf A$-module, then the space $\Hom_{\mathbb C}(\mathsf M,\mathbb C)$ of linear functionals on $\mathsf M$ inherits a natural right $\mathsf A$-module structure through $(fa)(m)\overset{\text{def}}{=}f(am)$ for $f\in\Hom_{\mathbb C}(\mathsf M,\mathbb C)$, $a\in\mathsf A$ and $m\in \mathsf M$. Left- and right-modules are different algebraic structures and it is in general impossible to relate them, for example with morphisms. Fortunately, the algebra $\atl{N}$ enjoys an additional structure that allows to define left-modules from right ones. This structure is the anti-involution $\dagger: \atl N\to\atl N^\text{op}$ that takes an $(N,N)$-diagram $v$ to its reflection $v^\dagger$ with respect to a vertical line. Note that this procedure is defined for any $(N,d)$-diagram and produces a bijection $\dagger : \mathsf D_{N,d}\to \mathsf D_{d,N}$. The anti-involution obtained for $N=d$ allows for a left $\atl N$-action to be defined on $\Hom_{\mathbb C}(\mathsf M,\mathbb C)$, namely
$$(a\star f)(m)=f(a^\dagger m), \qquad a\in \atl N,\, f\in\Hom_{\mathbb C}(\mathsf M,\mathbb C),\, m\in \mathsf M.$$
It is indeed a proper action:
$$(a\star(b\star f))(m)=(b\star f)(a^\dagger m)=f(b^\dagger a^\dagger m)=f((ab)^\dagger m)=((ab)\star f)(m).$$
This left-module $(\Hom_{\mathbb C}(\mathsf M,\mathbb C), \star)$ will be denoted $\mathsf M^\star$.

This correspondence $\star$ of left $\atl N$-modules to left $\atl N$-modules can be extended to morphisms between them. If $\varphi\in\Hom_{\atl N}(\mathsf M,\mathsf N)$, then $\varphi^\star\in\Hom_{\atl N}(\mathsf N^\star,\mathsf M^\star)$ is defined as $[\varphi^\star(h)](m)=h\circ \varphi(m)$ for $h\in\mathsf N^\star$ and $m\in \mathsf M$. Clearly $[\varphi^\star(a\star h)](m)=(a\star h)(\varphi(m))=h(a^\dagger \varphi(m))=h\circ\varphi(a^\dagger m)=[a\star(h\circ\varphi)](m)=[a\star(\varphi^\star(h))](m)$ for all $h\in \mathsf N^\star$, $m\in \mathsf M$, $a\in\atl N$ and $\varphi^\star\in\Hom_{\atl N}(\mathsf N^\star,\mathsf M^\star)$. This shows that $\star:\modY \atl N\to\modY \atl N$ is a contravariant endofunctor of the category $\text{mod } \atl N$ of finitely-generated left $\atl N$-modules. One can further show that it is a duality in the categorical sense, that is, there exists a functorial isomorphism $\Psi:\id_{\modY \atl N}\overset{\sim}\longrightarrow \star^2$. As a consequence, $\star$ is an exact functor and a short exact sequence $0\to \mathsf M\to\mathsf N\to\mathsf P\to 0$ induces another short exact sequence $0\to \mathsf P^\star\to\mathsf N^\star\to\mathsf M^\star\to 0$ that splits if and only if the original one does. For the purpose of this text, the key feature of $\star$ is that the Loewy diagram of $\mathsf M^\star$ is exactly the one obtained by flipping all arrows in the diagram of $\mathsf M$ while replacing its composition factors by their $\star$-duals. For example, here are possible Loewy diagrams for a pair of $\star$-dual left-modules:
$$\mathsf M\simeq\ 
\begin{tikzpicture}[baseline={(current bounding box.center)},scale = 1/3]
    \node (i1) at (0,0) {$\mathsf I_1$};
    \node (i2) at (3,2) {$\mathsf I_2$};
    \node (i3) at (6,0) {$\mathsf I_3$};
    \node (i4) at (9,2) {$\mathsf I_4$};
    \node (i5) at (12,0) {$\mathsf I_5$};
	\draw[->] (i2) -- (i1); \draw[->] (i2) -- (i3);
	\draw[->] (i4) -- (i5); \draw[->] (i4) -- (i3);	
	\end{tikzpicture}
\qquad\text{and}\qquad
\mathsf M^\star\simeq\ 
\begin{tikzpicture}[baseline={(current bounding box.center)},scale = 1/3]
    \node (i1) at (0,2) {$\mathsf I_1^\star$};
    \node (i2) at (3,0) {$\mathsf I_2^\star$};
    \node (i3) at (6,2) {$\mathsf I_3^\star$};
    \node (i4) at (9,0) {$\mathsf I_4^\star$};
    \node (i5) at (12,2) {$\mathsf I_5^\star$};
	\draw[<-] (i2) -- (i1); \draw[<-] (i2) -- (i3);
	\draw[<-] (i4) -- (i5); \draw[<-] (i4) -- (i3);	
	\end{tikzpicture}\ .
$$
The proof that the functor $\star$ is a duality and exact will be omitted. They are straightforward exercises in category theory.

Our next step is to identify the images under $\star$ of the simple module $\Irre{N;d,z}$ and of the eigenspace $\XXZ{N;d,z}\pm\subseteq\XXZ{N;z}\pm$. A bilinear form $\langle\ ,\ \rangle=\langle\ ,\ \rangle_{N;d,z}:\Cell{N;d,z}\times \Cell{N;d,z^{-1}}\to \mathbb C$ will play a role toward this goal \cite{GL-Aff}. It is defined diagrammatially using the bijection $\dagger$ on $(N,d)$-diagrams. If $w$ and $v$ are $(N,d)$-diagrams in $\BCell{N,d}$, then the composition $v^\dagger w$, as defined in paragraph \ref{sec:atln}, is a $(d,d)$-diagram. If $v^\dagger w$ does not have $d$ through lines, then $\langle w, v\rangle $ is set to be $0$. If $v^\dagger w$ has $d$ through lines, then there exist two integers $n\in\mathbb Z_{\geq0}$ and $m\in\mathbb Z$ that describe it completely, that is $v^\dagger w$ is a certain power $\Omega_d^m$ plus $n$ closed contractible loops. In this case, $\langle w, v\rangle $ is defined to be
$$\langle w, v\rangle =\begin{cases}\beta^nz^m,&\text{if }d\neq0,\\
	\beta^n(z+z^{-1})^m,&\text{if }d=0.
\end{cases}$$
The following drawings give examples for pairs of $(5,1)$- and $(6,0)$-diagrams:
$$\langle w,v\rangle=\Big\langle \ \begin{tikzpicture}[baseline={(current bounding box.center)},scale = 1/3]
	\draw[thick] (0,-1) -- (0,5);
	\draw[thick] (3,-1) -- (3,5);
	\draw[dashed] (0,-1) -- (3,-1);
	\draw[dashed] (0,5) -- (3,5);
	\foreach \s in {0,...,4}
	{	
		\filldraw[black] (0,\s ) circle (3pt);
	};
	\filldraw[black] (3,2) circle (3pt);
	\draw (0,4) .. controls (1,4) and (2,2) .. (3,2);
	\draw (0,0) .. controls (1.5,0) and (1.5,3) .. (0,3);
	\draw (0,1) .. controls (1,1) and (1,2) .. (0,2);
	\end{tikzpicture}
\ ,\ \begin{tikzpicture}[baseline={(current bounding box.center)},scale = 1/3]
	\draw[thick] (0,-1) -- (0,5);
	\draw[thick] (3,-1) -- (3,5);
	\draw[dashed] (0,-1) -- (3,-1);
	\draw[dashed] (0,5) -- (3,5);
	\foreach \s in {0,...,4}
	{	
		\filldraw[black] (0,\s ) circle (3pt);
	};
	\filldraw[black] (3,2) circle (3pt);
	\draw (0,4) arc (270:360:1);
	\draw (3,2) .. controls (2,2) and (1.5,0) .. (1.5,-1);
	\draw (0,2) .. controls (1,2) and (1,3) .. (0,3);
	\draw (0,0) .. controls (1,0) and (1,1) .. (0,1);
	\end{tikzpicture}\ 
\Big\rangle= \beta z\qquad\text{since}\qquad
v^\dagger w=\ \begin{tikzpicture}[baseline={(current bounding box.center)},scale = 1/3]
	\draw[thick] (-3,-1) -- (-3,5);
	\draw[thick] (0,-1) -- (0,5);
	\draw[thick] (3,-1) -- (3,5);
	\draw[dashed] (-3,-1) -- (3,-1);
	\draw[dashed] (-3,5) -- (3,5);
	\foreach \s in {0,...,4}
	{	
		\filldraw[black] (0,\s ) circle (3pt);
	};
	\filldraw[black] (3,2) circle (3pt);
	\filldraw[black] (-3,2) circle (-3pt);
	\draw (-1,5) arc (180:270:1);
	\draw (-3,2) .. controls (-2,2) and (-1.5,0) .. (-1.5,-1);
	\draw (0,2) .. controls (-1,2) and (-1,3) .. (0,3);
	\draw (0,0) .. controls (-1,0) and (-1,1) .. (0,1);
	\draw (0,4) .. controls (1,4) and (2,2) .. (3,2);
	\draw (0,0) .. controls (1.5,0) and (1.5,3) .. (0,3);
	\draw (0,1) .. controls (1,1) and (1,2) .. (0,2);
	\end{tikzpicture}
\ = \ 
\begin{tikzpicture}[baseline={(current bounding box.center)},scale = 1/3]
	\draw[thick] (0,-1) -- (0,1);
	\draw[thick] (3,-1) -- (3,1);
	\draw[dashed] (0,-1) -- (3,-1);
	\draw[dashed] (0,1) -- (3,1);
	\filldraw[black] (3,0) circle (3pt);
	\filldraw[black] (0,0) circle (-3pt);
	\draw (2,1) arc (180:270:1);
	\draw (1,-1) arc (0:90:1);
	\draw (2,0) arc (0:360:0.5);
	\end{tikzpicture}
\ = \beta z
$$ 
and
$$\langle w',v'\rangle=\Big\langle \ \begin{tikzpicture}[baseline={(current bounding box.center)},scale = 1/3]
	\draw[thick] (0,-1) -- (0,6);
	\draw[thick] (3,-1) -- (3,6);
	\draw[dashed] (0,-1) -- (3,-1);
	\draw[dashed] (0,6) -- (3,6);
	\foreach \s in {0,...,5}
	{	
		\filldraw[black] (0,\s ) circle (3pt);
	};
	\draw (0,5) arc (270:360:1);
	\draw (1,-1) arc (0:90:1);
	\draw (0,2) .. controls (1,2) and (1,1) .. (0,1);
	\draw (0,3) .. controls (1,3) and (1,4) .. (0,4);
	\end{tikzpicture}
\ ,\ \begin{tikzpicture}[baseline={(current bounding box.center)},scale = 1/3]
	\draw[thick] (0,-1) -- (0,6);
	\draw[thick] (3,-1) -- (3,6);
	\draw[dashed] (0,-1) -- (3,-1);
	\draw[dashed] (0,6) -- (3,6);
	\foreach \s in {0,...,5}
	{	
		\filldraw[black] (0,\s ) circle (3pt);
	};
	\draw (0,0) .. controls (1.5,0) and (1.5,5) .. (0,5);
	\draw (0,2) .. controls (1,2) and (1,1) .. (0,1);
	\draw (0,3) .. controls (1,3) and (1,4) .. (0,4);
	\end{tikzpicture}\ 
\Big\rangle= \beta^2 (z+z^{-1})\qquad\text{since}\qquad
v'^\dagger w'=\ \begin{tikzpicture}[baseline={(current bounding box.center)},scale = 1/3]
	\draw[thick] (-3,-1) -- (-3,6);
	\draw[thick] (0,-1) -- (0,6);
	\draw[thick] (3,-1) -- (3,6);
	\draw[dashed] (-3,-1) -- (3,-1);
	\draw[dashed] (-3,6) -- (3,6);
	\foreach \s in {0,...,5}
	{	
		\filldraw[black] (0,\s ) circle (3pt);
	};
	\draw (0,0) .. controls (-2,0) and (-2,5) .. (0,5);
	\draw (0,2) .. controls (-1,2) and (-1,1) .. (0,1);
	\draw (0,3) .. controls (-1,3) and (-1,4) .. (0,4);
	\draw (0,5) arc (270:360:1);
	\draw (1,-1) arc (0:90:1);
	\draw (0,2) .. controls (1,2) and (1,1) .. (0,1);
	\draw (0,3) .. controls (1,3) and (1,4) .. (0,4);
	\end{tikzpicture}
\ = \ 
\begin{tikzpicture}[baseline={(current bounding box.center)},scale = 1/3]
	\draw[thick] (0,-1) -- (0,1);
	\draw[thick] (3,-1) -- (3,1);
	\draw[dashed] (0,-1) -- (3,-1);
	\draw[dashed] (0,1) -- (3,1);
	\draw (1,-1) -- (1,1);
	\draw (2, -0.5) arc (0:360:0.25);
	\draw (2,0.5) arc (0:360:0.25);
	\end{tikzpicture}
\ = \beta^2 (z+z^{-1}).
$$
Note that the factors $z^m$ and $(z+z^{-1})^m$ are consistent with the definition of the cellular module $\Cell{N;d,z}$ as the cokernel of $f_z$. Note also that $\langle\ ,\ \rangle$ is never identically zero if $(d,z)\in\Lambda_N$. Indeed, it is clearly non-zero when $d=N$. For $0<d<N$, there are $w$ and $v$ whose pairing is $1$: draw a through line coming from the left, crisscrossing the line between $v^\dagger$ and $w$ as many times needed to draw $(N-d)/2$ arcs on both sides, and continue the line to the right side before filling up the remaining positions of both $w^\dagger$ and $v$ with horizontal through lines (an example is given below for $N=5$ and $d=1$). Finally, for $\langle\ ,\ \rangle$ to vanish when $d=0$, both numbers $\beta=-q-q^{-1}$ and $z+z^{-1}$ need to be zero. That forces both $q$ and $z$ to be either $i$ or $-i$, that is, $(d,z)$ should be $(0,q)$ or $(0,q^{-1})$ with $q+q^{-1}=0$, precisely the problematic pairs that are removed from the set $\Lambda_N$ labelling the simple modules $\Irre{N;d,z}$ (see the lines after \eqref{eq:lesLambdas}).
$$\begin{tikzpicture}[baseline={(current bounding box.center)},scale = 1/3]
  \draw[thick] (-3,-1) -- (-3,5);
	\draw[thick] (0,-1) -- (0,5);
	\draw[thick] (3,-1) -- (3,5);
	\draw[dashed] (0,-1) -- (3,-1);
	\draw[dashed] (0,5) -- (3,5);
	\draw[dashed] (-3,-1) -- (0,-1);
	\draw[dashed] (-3,5) -- (0,5);
	\foreach \s in {0,...,4}
	{	
		\filldraw[black] (0,\s ) circle (3pt);
	};
	\filldraw[black] (3,2) circle (3pt);
	\filldraw[black] (-3,2) circle (3pt);
	\draw (0,4) .. controls (1,4) and (2,2) .. (3,2);
	\draw (-3,2) .. controls (-2,2) and (-1,0) .. (0,0);
	\draw (0,0) .. controls (1.5,0) and (1.5,3) .. (0,3);
	\draw (0,1) .. controls (-1.5,1) and (-1.5,4) .. (0,4);
	\draw (0,1) .. controls (1,1) and (1,2) .. (0,2);
	\draw (0,2) .. controls (-1,2) and (-1,3) .. (0,3);
	\end{tikzpicture}$$
The diagrammatic definition of $\langle\ ,\ \rangle$ and the associativity of the composition of diagrams give $\langle w,a^\dagger v\rangle=\langle a w,v\rangle$ for any $a\in\atl N$. The bilinear form is thus said to be {\em invariant}. Even though this bilinear form $\langle\ ,\ \rangle = \langle \ ,\ \rangle_{N;d,z}$ is never zero when $(d,z)\in\Lambda_N$, it may have a radical, that is, $\rad\langle\ ,\ \rangle_{N;d,z}=\{w\in\Cell{N;d,z}\ |\ \langle w,v\rangle_{N;d,z}=0\text{ for all }v\in\Cell{N;d,z^{-1}}\}$ might be larger than $\{0\}$. The invariance of $\langle \ ,\ \rangle_{N;d,z}$ proves that $\rad\langle\ ,\ \rangle_{N;d,z}$ is a submodule of $\Cell{N;d,z}$. The following result ties this radical with the Jacobson radical $\rad \Cell{N;d,z}$, that is the intersection of all maximal submodules of $\Cell{N;d,z}$.
\begin{proposition}[\cite{GL-Aff}]\label{prop:Radrad} If $(d,z)\in\Lambda_N$, the radical $\rad \langle\ ,\ \rangle_{N;d,z}$ is equal to the Jacobson radical $\rad \Cell{N;d,z}$ of $\Cell{N;d,z}$.
\end{proposition}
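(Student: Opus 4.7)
The plan is to show that $\Cell{N;d,z}$ is local, i.e.\ admits a unique maximal submodule, and that this maximal submodule coincides with $\rad\langle\ ,\ \rangle_{N;d,z}$. Since the Jacobson radical is by definition the intersection of all maximal submodules, this identifies it with $\rad\langle\ ,\ \rangle_{N;d,z}$.

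First, I would record two easy facts. The invariance $\langle aw, v\rangle_{N;d,z} = \langle w, a^\dagger v\rangle_{N;d,z}$ immediately gives that $\rad\langle\ ,\ \rangle_{N;d,z}$ is an $\atl N$-submodule of $\Cell{N;d,z}$. It is moreover proper, because the discussion just before the statement exhibits, for each $(d,z)\in\Lambda_N$ (precisely with the problematic pairs removed when $d=0$), explicit monic $(N,d)$-diagrams $w,v$ whose pairing equals $1$; so $\rad\langle\ ,\ \rangle_{N;d,z} \neq \Cell{N;d,z}$.

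The heart of the argument is the cyclicity statement: if $w\in\Cell{N;d,z}$ and $v\in\Cell{N;d,z^{-1}}$ satisfy $\langle w,v\rangle_{N;d,z}\neq 0$, then $\atl N\cdot w = \Cell{N;d,z}$. To prove this I would expand $v = \sum_{u\in\BCell{N,d}}\alpha_u u$ in the diagrammatic basis and use the product rule in the diagrammatic algebra. For any basis element $u\in\BCell{N,d}$ and any monic $(N,d)$-diagram $v'$, the composite $u\,(v')^\dagger \in \atl N$ acts on any basis diagram $w'\in \BCell{N,d}$ by concatenation, producing either a non-monic diagram (hence zero in $\Cell{N;d,z}$) or else a monic $(N,d)$-diagram of the form $u\cdot (\text{scalar in }z)$, with the scalar computed exactly by the rules defining $\langle (v')^\dagger, w'\rangle_{N;d,z}$ up to factors absorbed via the cokernel relations $f_z(\cdot)=0$. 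Summing, one obtains the key identity $u\,v^\dagger \cdot w = \langle w,v\rangle_{N;d,z}\, u$ in $\Cell{N;d,z}$ for every $u \in\BCell{N,d}$. Since $\langle w,v\rangle_{N;d,z}\neq 0$, this produces every basis vector of $\Cell{N;d,z}$ inside $\atl N \cdot w$, giving cyclicity.

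Once cyclicity is established, the proof concludes quickly: any submodule $M\subseteq\Cell{N;d,z}$ that is not contained in $\rad\langle\ ,\ \rangle_{N;d,z}$ contains some $w$ with $\langle w,v\rangle_{N;d,z}\neq 0$ for some $v$, hence contains $\atl N\cdot w = \Cell{N;d,z}$, hence equals $\Cell{N;d,z}$. Therefore $\rad\langle\ ,\ \rangle_{N;d,z}$ is the unique maximal submodule, so it equals $\rad\Cell{N;d,z}$.

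The main obstacle is the diagrammatic identity $u\,v^\dagger\cdot w = \langle w,v\rangle_{N;d,z}\, u$. In the affine setting one must carefully track: (a) non-contractible loops and powers of $\Omega_d$ that appear when gluing $v^\dagger$ onto $w$, which must be reduced modulo the defining cokernel relation involving $z$ (or $z+z^{-1}$ when $d=0$); (b) the case $d=0$, where the condition that $\beta$ and $z+z^{-1}$ do not both vanish for $(d,z)\in\Lambda_N$ is precisely what rules out the problematic pairs and keeps the identity consistent; and (c) the verification that the diagrams giving a non-zero pairing cited in the excerpt do generate $\Cell{N;d,z}$ cyclically, which requires checking that translating by suitable $(N,N)$-diagrams in $\atl N$ can move any chosen basis element of $\BCell{N,d}$ onto any other.
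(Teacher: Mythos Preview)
The paper does not supply its own proof of this proposition; it is quoted from Graham and Lehrer's work and stated without argument. Your approach is precisely the standard cellular-algebra argument that underlies their result: the identity $(u\,v^\dagger)\cdot w = \langle w,v\rangle_{N;d,z}\, u$ in $\Cell{N;d,z}$ is the multiplicative axiom for the cell form, and once it is established the equality $\rad\langle\ ,\ \rangle_{N;d,z}=\rad\Cell{N;d,z}$ follows exactly as you describe. Your verification of that identity in the affine setting is correct: associativity of diagram composition gives $(u\,(v')^\dagger)\cdot w'=u*((v')^\dagger* w')$, and the $(d,d)$-diagram $(v')^\dagger* w'$ either loses through lines (so the result is non-monic, hence zero in $\Cell{N;d,z}$) or equals $\beta^n\Omega_d^m$, which the cokernel relation reduces to the scalar $\beta^n z^m=\langle w',v'\rangle_{N;d,z}$ (respectively $\beta^n(z+z^{-1})^m$ when $d=0$).

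Two minor remarks. First, the pairing you write as $\langle (v')^\dagger, w'\rangle$ should read $\langle w',v'\rangle_{N;d,z}$, with the first argument in $\Cell{N;d,z}$ and the second in $\Cell{N;d,z^{-1}}$. Second, your point (c) in the final paragraph is superfluous: the explicit non-zero pairing exhibited in the text is used only to show that $\rad\langle\ ,\ \rangle_{N;d,z}$ is a \emph{proper} submodule; cyclicity of elements outside the radical is already established in full generality by your identity, so there is no need to check it separately for those particular diagrams.
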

This result leads to the identification of the $\star$-dual of the simple $\atl N$-modules and of the submodules $\XXZ{N;d,z}\pm\subseteq\XXZ{N;z}\pm$.
\begin{proposition}\label{thm:isoByDual} (a) The modules $(\XXZ{N;d,z}{\pm})^\star$ and $\XXZ{N;d,z^{-1}}\pm$ are isomorphic if $z\in\mathbb C^\times$ and $-N\leq d\leq N$ with $d\equiv N\modY 2$.

\noindent(b) The modules $\Irre{N;d,z}^\star$ and $\Irre{N;d,z^{-1}}$ are isomorphic if $(d,z)\in\Lambda_N$.
\end{proposition}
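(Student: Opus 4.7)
The plan is to produce, in each case, a non-degenerate invariant bilinear pairing, from which the $\star$-duality is immediate.

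For part (a), I would take the standard symmetric form $B : (\mathbb{C}^2)^{\otimes N} \times (\mathbb{C}^2)^{\otimes N} \to \mathbb{C}$ defined on the spin basis $\BXXZ{N}$ by $B(|x\rangle, |y\rangle) = \prod_{i=1}^{N} \delta_{x_i, y_i}$. The key claim is the invariance
\[ B(a \cdot v, w) = B(v, a^\dagger \cdot w), \qquad a \in \atl{N}, \]
where the left-hand action is that of $\XXZ{N;z^{-1}}{\pm}$ and the right-hand one that of $\XXZ{N;z}{\pm}$. Since $\dagger$ is an anti-involution, it suffices to verify this on the generators $e_1^\pm, \dots, e_N^\pm, \Omega_N^{\pm 1}$. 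For $i < N$, the operator $e_i^\pm$ is independent of $z$ and a direct check (using $(\sigma^\pm)^T = \sigma^\mp$ together with the self-adjointness of the projectors $\sigma^{\pm}\sigma^{\mp}$) shows $(e_i^\pm)^T = e_i^\pm$, which is what $e_i^\dagger = e_i$ requires. For $e_N^\pm$, the only $z$-dependence lies in the two terms $z^{\pm 2}\sigma_N^\mp \sigma_1^\pm$, which swap under transpose, yielding $(e_N^\pm(z))^T = e_N^\pm(z^{-1})$. For $\Omega_N(z) = t z^{-\sigma_1^z}$, a matrix-element calculation on $\BXXZ{N}$ gives $(\Omega_N(z))^T = t^{-1} z^{-\sigma_N^z} = \Omega_N^{-1}(z^{-1})$, so that $(\Omega_N(z^{-1}))^T = \Omega_N^{-1}(z) = \Omega_N^\dagger(z)$; the analogous identity for $\Omega_N^{-1}$ follows symmetrically.

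The form $B$ is non-degenerate on $(\mathbb{C}^2)^{\otimes N}$, and since $S^z$ is $B$-symmetric, distinct $S^z$-eigenspaces are pairwise $B$-orthogonal. Hence $B$ restricts to a non-degenerate pairing between $\XXZ{N;d,z^{-1}}{\pm}$ and $\XXZ{N;d,z}{\pm}$, and $v \mapsto B(v, \cdot)$ is the desired $\atl{N}$-linear isomorphism $\XXZ{N;d,z^{-1}}{\pm} \simeq (\XXZ{N;d,z}{\pm})^\star$, proving (a).

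For part (b), I would instead use the Graham-Lehrer form $\langle \cdot, \cdot \rangle_{N;d,z} : \Cell{N;d,z} \times \Cell{N;d,z^{-1}} \to \mathbb{C}$. Its diagrammatic definition, together with the identity $(v^\dagger w)^\dagger = w^\dagger v$ (which sends $\Omega_d^m$ to $\Omega_d^{-m}$), directly yields the exchange relation $\langle v, w\rangle_{N;d,z^{-1}} = \langle w, v\rangle_{N;d,z}$. Applying Proposition \ref{prop:Radrad} on each side then identifies the left and right radicals of this pairing with $\rad\Cell{N;d,z}$ and $\rad\Cell{N;d,z^{-1}}$ respectively. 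The quotient form on $\Irre{N;d,z} \times \Irre{N;d,z^{-1}}$ is therefore non-degenerate and invariant, producing the sought $\atl{N}$-linear isomorphism $\Irre{N;d,z^{-1}} \simeq \Irre{N;d,z}^\star$. The only genuinely delicate step in the whole program is the transpose computation for $\Omega_N(z)$ in (a), which is mechanical but must be carried out with care; the rest is formal consequence of bilinear invariance.
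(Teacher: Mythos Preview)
Your argument is correct in both parts. For (a), your approach is essentially identical to the paper's: both identify the $\star$-dual with the $z^{-1}$-chain via the standard pairing on the spin basis, and both reduce to checking the transpose/adjoint behaviour of the generators. The paper computes the $\star$-action of $e_1^\pm$ and $\Omega_N$ directly on the dual basis rather than phrasing things via a bilinear form, but the content of the verification is the same.

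For (b), your route diverges slightly from the paper's and is in fact somewhat cleaner. The paper also uses the map $\psi:\Cell{N;d,z^{-1}}\to\Cell{N;d,z}^\star$ induced by the Graham--Lehrer form, identifies $\ker\psi=\rad\Cell{N;d,z^{-1}}$ via Proposition~\ref{prop:Radrad}, and hence obtains an irreducible image $\im\psi\simeq\Irre{N;d,z^{-1}}$ inside $\Cell{N;d,z}^\star$. But rather than proving the exchange relation $\langle v,w\rangle_{N;d,z^{-1}}=\langle w,v\rangle_{N;d,z}$, the paper argues that $\im\psi$ lies in $\soc\Cell{N;d,z}^\star$ and then invokes Theorem~\ref{thm:GL} (the Loewy structure of cellular modules) to identify this socle with $\Irre{N;d,z}^\star$. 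Your use of the exchange relation instead pins down the right radical directly as $\rad\Cell{N;d,z^{-1}}$, yielding a non-degenerate invariant pairing on $\Irre{N;d,z}\times\Irre{N;d,z^{-1}}$ without appealing to the deeper structural theorem. The trade-off is that you must verify the exchange relation carefully (including the $d=0$ case, where $\Omega_0$ contributes a symmetric factor $z+z^{-1}$), but this is elementary, and the resulting argument is more self-contained.
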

\begin{proof} (a) Let $(\BXXZ N)^\star=\{\langle y_1y_2\dots y_N|\ \ |\ y_1,...,y_N\in\{+,-\}\}$ be the basis dual to $\BXXZ N=\{| x_1x_2\dots x_N\rangle \ |\ x_1,...x_N\in \{+,-\}\}$, that is $\langle y_1y_2\dots y_N|x_1x_2\dots x_N\rangle=\prod_{1\leq i\leq N}\delta_{x_i,y_i}$. The left-right reflections of $e_1^\pm$ and $\Omega_N$ are simply given by $(e_1^\pm)^\dagger=e_1^\pm$ and $\Omega_N^\dagger=\Omega_N^{-1}$. The $\star$-action of $e_1^\pm$ on $\langle y_1y_2\dots y_N|$, seen as linear functional in $\Hom_{\mathbb C}(\XXZ N{},\mathbb C)$, is computed using \eqref{eq:e1}:
\begin{align*}
(e_1^\pm \star \langle y_1y_2\dots y_N|)(|x_1x_2\dots x_N\rangle)
	&=\langle y_1y_2\dots y_N|((e_1^\pm)^\dagger |x_1x_2\dots x_N\rangle)\\
	&=\langle y_1y_2\dots y_N|(e_1^\pm |x_1x_2\dots x_N\rangle)\\
	&=\langle y_1y_2\dots y_N|[\delta_{x_1+x_2,0}(|x_2x_1x_3\dots x_N\rangle-q^{\pm x_1}|x_1x_2\dots x_N\rangle)]\\
	&=\delta_{y_1+y_2,0}(\langle y_2y_1y_3\dots y_N|-q^{\pm y_1}\langle y_1y_2\dots y_N|)|x_1x_2\dots x_N\rangle.
\end{align*}
Similarly, the $\star$-action of $\Omega_n$ is
\begin{align*}
(\Omega_N \star \langle y_1y_2\dots y_N|)(|x_1x_2\dots x_N\rangle)
	&=\langle y_1y_2\dots y_N|((\Omega_N)^\dagger |x_1x_2\dots x_N\rangle)\\
	&=\langle y_1y_2\dots y_N|(\Omega_N^{-1} |x_1x_2\dots x_N\rangle)\\
	&=\langle y_1y_2\dots y_N|z^{x_N}|x_Nx_1\dots x_2\rangle\\
	&=z^{x_N}\langle y_2\dots y_Ny_1|x_1x_2\dots x_N\rangle.
\end{align*}
The $\atl{N}$-action on the eigenspace $\XXZ{N;d,z^{-1}}{\pm}$ is thus directly recovered if the element $\langle y_1y_2\dots y_N|$ of the dual basis $(\BXXZ N)^\star$ is identified to the element $|x_1x_2\dots x_N\rangle$ of $\BXXZ N$.

\noindent (b) Let $(\BCell{N,d})^*$ be the basis dual to the basis $\BCell{N,d}$ of $\Cell{N;d,z}$, that is, $w^*(v) = \delta_{w,v}$ for any linear functional $w^*\in(\BCell{N,d})^*$ and any $v\in\BCell{N,d}$. Define the $\mathbb C$-linear map $\psi:\Cell{N;d,z^{-1}}\to \Cell{N;d,z}^\star$ by $\psi(w)=\sum_v \langle w,v\rangle_{N;d,z^{-1}}v^*$ where the sum (and all others in this proof) is over $\BCell{N,d}$. It is non-zero since $\langle\ ,\ \rangle = \langle\ ,\ \rangle_{N;d,z^{-1}}\neq 0$. It is also $\atl N$-linear. Indeed, if $a\in\atl N$, $w\in \Cell{N;d,z^{-1}}$ and $u\in\BCell{N,d}$, then
\begin{align*}
[\psi(aw)](u)&=\sum_v\langle aw,v\rangle v^*(u)=\langle aw,u\rangle\qquad \text{since }v^*(u)=\delta_{v,u},\\
	&=\langle w,a^\dagger u\rangle\qquad \text{by the invariance of }\langle\ ,\ \rangle.
\intertext{If $\gamma_v\in\mathbb C$ are the coefficients of $a^\dagger u$ in the basis $\BCell{N,d}$, then}
	&=\langle w,\sum_v\gamma_v v\rangle=\sum_{v,x}\langle w,x\rangle x^*(\gamma_v v)
	=\sum_x\langle w,x\rangle x^*(a^\dagger u)\\
	&=[\psi(w)](a^\dagger u)=[a\star \psi(w)](u).
\end{align*}
Finally, $\psi(w)=0$ if and only if $\langle w,v\rangle=0$ for all $v\in\Cell{N;d,z}$, that is if and only if $w\in\rad \langle \, ,\, \rangle_{N;d,z^{-1}}=\rad \Cell{N;d,z^{-1}}$ by the last proposition. Thus, $\ker \psi=\rad \Cell{N;d,z^{-1}}$ and $\im\psi\subseteq \Cell{N;d,z}^\star$ must be isomorphic to $\Cell{N;d,z^{-1}}/\ker\psi=\Irre{N;d,z^{-1}}$. The image of $\psi$ is hence irreducible and must lie in the socle of $\Cell{N;d,z}^\star$. As the duality $\star$ flips all arrows in Loewy diagrams and preserves irreducibility, it exchanges the socle and head of modules. Accordingly, theorem \ref{thm:GL} gives in this case $\soc\Cell{N;d,z}^\star\simeq (\head\Cell{N;d,z})^\star=\Irre{N;d,z}^\star$ so that $\Irre{N;d,z^{-1}}\simeq \Irre{N;d,z}^\star$ by irreducibility of $\Irre{N;d,z}^\star$.
\end{proof}
There is yet another duality that stems from the symmetry of the defining relations \eqref{eq:genRel}: these relations are clearly invariant under the exchange of generators $e_i\mapsto e_{N-i}$ and $\Omega_N^{\pm 1}\mapsto\Omega_N^{\mp 1}$. This correspondence defines an automorphism $\Phi$ of the algebra $\atl N$. If one sees the indices of the generators $e_i$'s as labelling evenly-spaced dots on a circle, this map amounts to a reflection of the circle onto itself that keeps the site $N$ fixed. The diagrammatic definition of $\atl N$ offers another concrete visualisation of this automorphism: it is simply the top-to-bottom reflection of all diagrams. This map $\Phi:\atl N\to\atl N$ is of interest here as it can twist the action on a module to define a new one. More precisely, if $\mathsf M$ is a left-module, then $(a,m) \mapsto \Phi(a)m$ defines another left-$\atl{N}$-module $\mathsf M^\circ$ over the vector space underlying $\mathsf M$. The endofunctor $\circ:\modY \atl N\to \modY \atl N$ acting on modules as $\mathsf M\mapsto \mathsf M^\circ$ and as the identity on morphisms is covariant, exact and its square is clearly the identity. It thus preserves short exact sequences (and their split or non-split character) and the irreducibility of modules. In particular, the Loewy diagram of $\mathsf M^\circ$ may be obtained from the one of $\mathsf M$ by simply replacing the composition factors by their $\circ$-duals (without inverting the arrows).
\begin{proposition}\label{prop:foncteurcirc}(a) The modules $(\XXZ{N;d,z}{\pm})^\circ$ and $\XXZ{N;d,z^{-1}}\mp$ are isomorphic for $z\in\mathbb C^\times$ and $-N\leq d\leq N$ with $d\equiv N\modY 2$.

\noindent(b) The modules $\Cell{N;d,z}^\circ$ and $\Cell{N;d,z^{-1}}$ are isomorphic if $(d,z)\in\Lambda_N$ and so are the simple ones $\Irre{N;d,z}^\circ$ and $\Irre{N;d,z^{-1}}$.
\end{proposition}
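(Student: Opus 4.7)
The plan is to exhibit each isomorphism by an explicit involution of the underlying vector space: site reversal for (a), and top-to-bottom reflection of diagrams for (b). The claim on simple modules in (b) then follows formally, since $\circ$ is exact and preserves irreducibility, hence also the Jacobson radical and the head.

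For (a), consider the linear involution $R\colon\XXZ N{}\to\XXZ N{}$ defined by $R\,|x_1\cdots x_N\rangle=|x_N\cdots x_1\rangle$. Since $R$ commutes with $S^z$, it restricts to a vector-space bijection $\XXZ{N;d,z}{+}\to\XXZ{N;d,z^{-1}}{-}$. The straightforward generalization of \eqref{eq:e1} to arbitrary $1\le i\le N-1$, together with the identity $q^{y_i}=q^{-y_{i+1}}$ valid on the support $y_i+y_{i+1}=0$, yields $R\,e_i^+\,R=e_{N-i}^-$. A direct substitution handles $i=N$: exchanging the tensor factors $1$ and $N$ interchanges the coefficients $z^2$ and $z^{-2}$ in $e_N^\pm$, and this is exactly compensated by the change $z\to z^{-1}$. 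Likewise, from $RtR=t^{-1}$ and $R\,\sigma_1^z\,R=\sigma_N^z$ one gets $R\,\Omega_N\,R=t^{-1}z^{-\sigma_N^z}$, which is precisely $(\Omega_N)^{-1}$ as it acts on $\XXZ{N;z^{-1}}{-}$. Since $\Phi(e_i)=e_{N-i}$ and $\Phi(\Omega_N^{\pm 1})=\Omega_N^{\mp 1}$, these identities are exactly the statement that $R$ is an $\atl N$-morphism $(\XXZ{N;d,z}{+})^\circ\to\XXZ{N;d,z^{-1}}{-}$; the case with $+$ and $-$ swapped is identical.

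For (b), let $\tau$ denote the top-to-bottom reflection of $(m,n)$-diagrams. This is an involutive linear bijection preserving monicity and rank, with $\tau(u*w)=\tau(u)*\tau(w)$ and $\tau|_{\mathsf D_{N,N}}=\Phi$; on $(d,d)$-diagrams, $\tau$ sends $\Omega_d\mapsto\Omega_d^{-1}$ for $d\ge 1$ and fixes $\Omega_0$. The factorization
\[
\Omega_d^{-1}-z^{-1}\,\idtl=-z^{-1}\Omega_d^{-1}(\Omega_d-z\,\idtl),
\]
together with the invertibility of right multiplication by $\Omega_d^{-1}$ on monic $(N,d)$-diagrams, shows $\tau(\im f_{z^{-1}})=\im f_z$ (the case $d=0$ is immediate from the $z\leftrightarrow z^{-1}$ symmetry of $z+z^{-1}$). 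Hence $\tau$ descends to a bijective $\mathbb C$-linear map $\Cell{N;d,z^{-1}}\to\Cell{N;d,z}$, which the identity $\tau(a*v)=\Phi(a)*\tau(v)$ then upgrades to an $\atl N$-isomorphism $\Cell{N;d,z^{-1}}\simeq\Cell{N;d,z}^\circ$. The simple-module claim follows by applying $\circ$ to the surjection $\Cell{N;d,z}\to\Irre{N;d,z}$ and identifying heads: $\Irre{N;d,z}^\circ\simeq(\Cell{N;d,z}/\rad\Cell{N;d,z})^\circ\simeq\Cell{N;d,z}^\circ/\rad(\Cell{N;d,z}^\circ)\simeq\Cell{N;d,z^{-1}}/\rad\Cell{N;d,z^{-1}}=\Irre{N;d,z^{-1}}$.

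The main obstacle is pure bookkeeping: reconciling the two distinct flip conventions with the action formulas, and in particular checking that the asymmetric $z^{\pm 2}$ factors in $e_N^\pm$ and the position-dependent exponents $q^{\pm x_i}$ transform as expected under their respective involutions. No deeper structural input is required.
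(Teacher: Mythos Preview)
Your proof is correct and follows essentially the same approach as the paper: site reversal $R$ for part (a) and top-to-bottom diagram reflection $\tau$ for part (b), with the simple-module statement deduced from exactness of $\circ$. The only minor difference is in the verification strategy for (a): you check $R\,e_i^+\,R=e_{N-i}^-$ directly for all $i$ (including $i=N$) and then handle $\Omega_N$, whereas the paper reduces to the single check $\Phi(e_1)\varrho=\varrho e_1$ by rewriting $e_{N-1}=\Omega_N^2 e_1\Omega_N^{-2}$ and computing through the $\Omega_N$-action; for (b) you supply the $\tau(\im f_{z^{-1}})=\im f_z$ argument that the paper leaves to the reader.
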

\begin{proof} (a) Let $\varrho:\XXZ{N;d,z^{-1}}{\pm}\to(\XXZ{N;d,z}{\mp})^{\circ}$ be given by $\varrho(|x_1x_2\dots x_N\rangle_{z^{-1}}^{\pm})=|x_N\dots x_2 x_1\rangle_{z}^{\mp}$ (here we add symbols on the basis elements as in the proof of proposition \ref{prop:spinflip}). This is clearly an invertible $\mathbb C$-linear map. It is also a $\atl N$-morphism. Only the most delicate verification, namely $\Phi(e_1) \varrho=\varrho e_1$, is provided:
\begin{align*}
\Phi(e_1) \varrho |x_1x_2\dots x_N\rangle_{z^{-1}}^{\pm} 
	&= e_{N-1}|x_N\dots x_2 x_1\rangle_{z}^{\mp},\qquad \text{definition of }\Phi(e_1)=e_{N-1},\\
	&= \Omega_N^2e_1\Omega_N^{-2}|x_N\dots x_2 x_1\rangle_{z}^{\mp}\,\qquad \text{defining relations \eqref{eq:genRel}},\\
	&= \Omega_N^2e_1 \Omega_N^{-1}z^{x_1}|x_1x_N\dots x_2 \rangle_{z}^{\mp}=\Omega_N^2e_1 z^{x_1+x_2}|x_2x_1x_Nx_{N-1}\dots x_3 \rangle_{z}^{\mp}, \qquad \text{action of }\Omega_N^{-1},\\
	&= \delta_{x_1+x_2,0}z^{x_1+x_2}\Omega_N^2(|x_1x_2x_Nx_{N-1}\dots x_3\rangle_{z}^{\mp} -q^{\mp x_2}|x_2x_1x_Nx_{N-1}\dots x_3 \rangle_{z}^{\mp}), \qquad \text{by }\eqref{eq:e1},\\
	&= \delta_{x_1+x_2,0}z^{x_1+x_2}z^{-x_1-x_2}(|x_Nx_{N-1}\dots x_3x_1x_2\rangle_{z}^{\mp} -q^{\pm x_1}|x_N\dots x_2x_1\rangle_{z}^{\mp}),\qquad \text{action of }\Omega_N,\\
	&= \varrho(\delta_{x_1+x_2,0}(|x_2x_1x_3\dots x_N\rangle_{z^{-1}}^{\pm} -q^{\pm x_1}|x_1x_2\dots x_N\rangle_{z^{-1}}^{\pm}))=\varrho e_1|x_1x_2\dots x_N\rangle_{z^{-1}}^{\pm}.
\end{align*}
(b) The isomorphism $\Cell{N;d,z}^\circ\simeq\Cell{N;d,z^{-1}}$ follows from the visualisation of the automorphism $\Phi$ as flipping all diagrams vertically and is left to the reader. The other isomorphism then follows from the first one by the fact that $\circ$ preserves Loewy diagrams, so that $\Irre{N;d,z^{-1}}=\head \Cell{N;d,z^{-1}}\simeq \head (\Cell{N;d,z})^\circ = \Irre{N;d,z}^{\circ}$.
\end{proof}
There are now two dualities, the contravariant $\star$ and the covariant $\circ$. Their action on $\modY \atl N$ commute in the sense that $(\mathsf M^\star)^\circ\simeq (\mathsf M^\circ)^\star$ if $\mathsf M\in\modY \atl N$. Indeed, first note that $\Phi(a)^\dagger=\Phi(a^\dagger)$ if $a\in \atl{N}$ (again it suffices to check this on $e_1$ and $\Omega_N$). Then, for $a\in\atl N, m^*\in\Hom(\mathsf M,\mathsf N)$ and $m'\in\mathsf M$, we get $(\Phi(a)m^*)(m')=m^*(\Phi(a)^\dagger m')=m^*(\Phi(a^\dagger)m')$.

We now prove a part of theorem \ref{thm:main}. We say that two Loewy diagrams are {\it reciprocal} if one of these diagrams can be deduced from the other by flipping all the arrows without modifying the composition factors.
\begin{corollary}\label{cor:doubleDual} Let $\mathsf{M}$ be a $\atl{N}$-module. Then, the Loewy diagram of $(\mathsf{M}^{*})^{\circ}$ is reciprocal to that of $\mathsf{M}$. In particular, the Loewy diagrams of $\XXZ{N;d,z}+$ and $\XXZ{N;d,z}-$ are reciprocal whenever $z \in \mathbb{C}^{\times}$ and $-N \leq d\leq N$ with $d\equiv N \modY 2$.
\end{corollary}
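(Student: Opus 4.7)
The plan is to combine the known Loewy-diagram actions of the two dualities $\star$ and $\circ$ with their effect on simples, and then read off the spin-chain consequence from Propositions \ref{thm:isoByDual} and \ref{prop:foncteurcirc}. The first statement is the ``abstract'' part, and the second follows by evaluating the composite functor $\circ\circ\star$ on $\XXZ{N;d,z}+$.

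For the first claim, recall from subsection \ref{sec:dualities} that $\star$ is an exact contravariant endofunctor whose effect on a Loewy diagram is to flip every arrow while replacing each composition factor $\mathsf I$ by $\mathsf I^\star$, whereas $\circ$ is an exact covariant endofunctor that leaves arrows untouched and replaces each $\mathsf I$ by $\mathsf I^\circ$. Applying $\circ$ after $\star$ therefore yields a Loewy diagram of $(\mathsf M^\star)^\circ$ whose underlying oriented graph is that of $\mathsf M$ with all arrows reversed and whose composition factors are the $(\mathsf I^\star)^\circ$. It remains to observe that $(\mathsf I^\star)^\circ\simeq\mathsf I$ for every simple $\atl N$-module $\mathsf I$: writing $\mathsf I=\Irre{N;d,z}$ with $(d,z)\in\Lambda_N$, Proposition \ref{thm:isoByDual}(b) gives $\Irre{N;d,z}^\star\simeq\Irre{N;d,z^{-1}}$, and then Proposition \ref{prop:foncteurcirc}(b) gives $\Irre{N;d,z^{-1}}^\circ\simeq\Irre{N;d,z}$. (The problematic pairs do not cause trouble here either, by the treatment deferred to Appendix \ref{app:problematic}.) Hence the composition factors of $(\mathsf M^\star)^\circ$ match those of $\mathsf M$ node by node, while every arrow is reversed; this is exactly the definition of reciprocal diagrams.

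For the second claim, set $\mathsf M=\XXZ{N;d,z}+$. Proposition \ref{thm:isoByDual}(a) gives $(\XXZ{N;d,z}+)^\star\simeq\XXZ{N;d,z^{-1}}+$, and then Proposition \ref{prop:foncteurcirc}(a) gives $(\XXZ{N;d,z^{-1}}+)^\circ\simeq\XXZ{N;d,z}-$; combining these two isomorphisms yields $((\XXZ{N;d,z}+)^\star)^\circ\simeq\XXZ{N;d,z}-$. The first part of the corollary, applied to $\mathsf M=\XXZ{N;d,z}+$, now says that the Loewy diagram of $\XXZ{N;d,z}-$ is reciprocal to that of $\XXZ{N;d,z}+$, as claimed.

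There is no real obstacle in this proof: once the functorial statements of subsection \ref{sec:dualities} are in place, the argument is purely bookkeeping. The only delicate point is the identification $(\mathsf I^\star)^\circ\simeq\mathsf I$ on simples, and this is already captured by the displayed isomorphism $(\Irre{N;d,z}^*)^\circ\simeq\Irre{N;d,z}$ recorded in Proposition \ref{prop:dualMerge}; it is immediate once one knows how each of $\star$ and $\circ$ acts on the parameter $z$ of a simple module and that the two actions invert one another.
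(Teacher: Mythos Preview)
Your proof is correct and follows essentially the same route as the paper's: first use the known effect of $\star$ and $\circ$ on Loewy diagrams together with Propositions \ref{thm:isoByDual}(b) and \ref{prop:foncteurcirc}(b) to get $(\Irre{N;d,z}^\star)^\circ\simeq\Irre{N;d,z}$, then apply Propositions \ref{thm:isoByDual}(a) and \ref{prop:foncteurcirc}(a) to identify $((\XXZ{N;d,z}+)^\star)^\circ$ with $\XXZ{N;d,z}-$. Your presentation is slightly more explicit in separating the two steps, but the argument is the same.
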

\begin{proof} By precedent results, the Loewy diagram of $(\mathsf{M}^{*})^{\circ}$ may be obtained from that of $\mathsf{M}$ by flipping all the arrows while applying successively the endofunctors $\star$ and $\circ$ to the composition factors: $\Irre{N;d,z}\mapsto(\Irre{N;d,z}^*)^{\circ}$. The first statement of the corollary then follows from propositions \ref{thm:isoByDual} and \ref{prop:foncteurcirc} as these propositions give $(\Irre{N;d,z}^*)^{\circ}\simeq \Irre{N;d,z^{-1}}^{\circ}\simeq \Irre{N;d,z}$. The second statement also follows from the first as $((\XXZ{N;d,z}+)^*)^{\circ}\simeq (\XXZ{N;d,z^{-1}}+)^{\circ} \simeq \XXZ{N;d,z}-$ again by propositions \ref{thm:isoByDual} and \ref{prop:foncteurcirc}.
\end{proof}
\end{subsection}

%
%
\begin{subsection}{Morphism between cellular modules and $\XXZ{N;d,z}\pm$}\label{sec:ind} Let $(d,z)\in \lambda_N$. A remarkable morphism $\ay{N;d,z}:\Cell{N;d,z}\to \XXZ{N;d,z}{+}$ 
between cellular modules and XXZ-eigenspaces was introduced in \cite{MDSA}. An element $w\in\BCell{N,d}$ can be associated with the $r=\frac12(N-d)$ positions $i_1,i_2,\dots, i_r$ where its $r$ arcs begin and by the positions $j_1,j_2,\dots, j_r$ where they end. Each $j_n$ is taken to satisfy $i_n<j_n\leq N+i_n-1$ and, clearly, the arc crosses the boundary if and only if $j_n>N$. The set $\psi(w)=\{(i_n,j_n),1\leq n\leq r\}$ describes completely $w$. The morphism $\ay{N;d,z}$ acts on it as\footnote{As noted in paragraph \ref{sec:atlOnXXZ}, the basis used in \cite{MDSA} is different from the one used here. Consequently, the expression of $\ay{N;d,z}$ given there is also different.}
$$\ay{N;d,z}(w)=\prod_{(i,j)\in\psi(w) \atop j\leq N}(u^{-1}\sigma_i^-+u\sigma_j^-)\cdot
\prod_{(i,j)\in\psi(w) \atop j> N}(zu^{-1}\sigma_i^-+z^{-1}u\sigma_j^-)|0\rangle$$
where $u=(-q)^{\frac12}$ and $|0\rangle=|++\cdots+\rangle$. (The two choices for the square root $(-q)^{\frac12}$ may lead to two distinct morphisms. However, the morphisms differ in this case only by an overall sign.) The $z$ in $\ay{N;d,z}$ will be omitted when there is no risk of confusion. An analogous morphism $\Cell{N;d,z}\to\XXZ{N;d,z}-$ exists but will not be used in this paper.
\begin{theorem}[\cite{MDSA}]\label{thm:mdsa} The map $\ay{N;d,z}$ is one-to-one if and only if $(d,z)$ has no strict
successor $(s,y)$ via condition A in $\lambda_N$.
\end{theorem}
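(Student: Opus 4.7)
The plan is to analyze the kernel $K=\ker\ay{N;d,z}$ as an $\atl N$-submodule of $\Cell{N;d,z}$, exploiting the classification of submodules of cellular modules that follows from theorem \ref{thm:GL} together with the explicit Graham-Lehrer inclusions of proposition \ref{prop:grandThm}. First I would check that $\ay{N;d,z}\neq 0$ on the basis element $w_0\in\BCell{N,d}$ whose arcs are the consecutive non-crossing pairs $(1,2),(3,4),\ldots,(N-d-1,N-d)$ with horizontal through lines at positions $N-d+1,\ldots,N$: none of its arcs cross the top side, so $\ay{N;d,z}(w_0)$ reduces to $\prod_k(u^{-1}\sigma_{2k-1}^- + u\sigma_{2k}^-)|0\rangle$, which expands as a sum of $2^r$ distinct spin basis vectors with non-zero $u$-powered coefficients and is therefore non-zero in $\XXZ{N;d,z}{+}$.

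The kernel $K$ is hence a proper $\atl N$-submodule. By proposition \ref{prop:grandThm} and theorem \ref{thm:GL}, any non-zero submodule of $\Cell{N;d,z}$ contains the image $\gl{(d,z);(t,x)}(\Cell{N;t,x})$ of some Graham-Lehrer inclusion indexed by a strict successor $(t,x)\in\lambda_N$ of $(d,z)$, and each such image is itself non-zero by injectivity of $\gl{(d,z);(t,x)}$. The theorem therefore reduces to the following key claim: for every strict successor $(t,x)\in\lambda_N$ of $(d,z)$,
\[\ay{N;d,z}\circ\gl{(d,z);(t,x)}=0 \quad\Longleftrightarrow\quad (t,x)\text{ succeeds }(d,z)\text{ via condition A.}\]
Granting the claim, the \emph{if} direction of the theorem is immediate: absent a strict successor via A, $K$ cannot contain any $\gl{(d,z);(t,x)}(\Cell{N;t,x})$ and so $K=0$. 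The \emph{only if} direction is also immediate, since a strict successor $(t,x)$ via A produces a non-zero $\gl{(d,z);(t,x)}(\Cell{N;t,x})\subseteq K$ by the claim.

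To prove the claim I would evaluate the composition on a basis element $v\in\BCell{N,t}$ through the double sum
\[\ay{N;d,z}\bigl(\gl{(d,z);(t,x)}(v)\bigr)=\sum_{w\in\BCell{t,d}}q^{ai_w}\,z^{r-|w|}\,h_w(q)\,\ay{N;d,z}(vw),\qquad r=\tfrac12(t-d),\]
where $a=1$ encodes condition A and $a=-1$ encodes condition B. Each diagram $vw$ differs from $v$ only by how $r$ of the $t$ through lines of $v$ get paired into new arcs inside $w$, while the remaining $d$ through lines of $v$ survive as through lines of $vw$. Grouping the double sum by the resulting spin basis vector of $\XXZ{N;d,z}{+}$, each coefficient becomes a sum over $w$ of products of the boundary factors $u^{\pm 1}$, $zu^{-1}$, $z^{-1}u$ appearing in $\ay{N;d,z}(vw)$ and of the weights $q^{ai_w}\,z^{r-|w|}\,h_w(q)$. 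The main obstacle is then a combinatorial identity showing that, once one imposes the succession constraint $z^2=q^{at}$, these coefficient sums collapse via standard $q$-binomial identities to an expression that vanishes precisely when $a=1$, while remaining non-zero when $a=-1$. A useful device is to decompose the sum over $w$ according to the subset of new arcs of $vw$ that cross the periodic boundary: for each such configuration the contribution factorizes into local pieces arc-by-arc, each local piece being a short $q$-Vandermonde-type sum whose vanishing is controlled precisely by the sign $a$ together with the identity $z^2=q^t$. The non-vanishing under condition B is finally confirmed by specializing to the monic $v_0\in\BCell{N,t}$ analogous to $w_0$ above, for which the resulting coefficient of at least one spin basis vector is visibly non-zero.
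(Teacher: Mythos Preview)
The paper does not prove this theorem: it is quoted from \cite{MDSA}. What the paper does prove nearby is lemma~\ref{thm:iNonZero} (your first paragraph, essentially) and propositions~\ref{thm:Bsurvit} and~\ref{thm:BsurvitGen}, which establish exactly the ``via~B $\Rightarrow$ non-zero'' half of your key claim for the direct successor. Your reduction for the \emph{if} direction then goes through cleanly: when $(d,z)$ has no strict successor via~A in $\lambda_N$, theorem~\ref{thm:GL} forces $\Cell{N;d,z}$ to have at most two composition factors, the only candidate for a non-zero proper submodule is $\gl{(d,z);(t,x)}(\Cell{N;t,x})$ with $(t,x)$ the direct successor via~B, and propositions~\ref{thm:Bsurvit}/\ref{thm:BsurvitGen} exclude it from $\ker\ay{N;d,z}$. (A caveat: your blanket assertion that every non-zero submodule of $\Cell{N;d,z}$ contains a Graham--Lehrer image indexed by a successor is false in subcase~(iii) of theorem~\ref{thm:GL}, where socle factors of type $(d_a,z_a)$ or $(h_a,v_a)$ with $a\geq1$ are not successors of $(d,z)$ in the sense of \eqref{eq:conditionsAB}; fortunately this does not arise under the hypothesis ``no successor via~A''.)

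The genuine gap is the ``via~A $\Rightarrow$ zero'' half of your key claim, which carries the \emph{only if} direction. You assert that the coefficient sums ``collapse via standard $q$-binomial identities to an expression that vanishes precisely when $a=1$'', but you give no identity and no mechanism. This is a global-vanishing statement: one must show that \emph{every} spin-basis coefficient of $\ay{N;d,z}\circ\gl{(d,z);(s,y)}(v)$ is zero, for \emph{every} $v\in\BCell{N,s}$. Contrast this with proposition~\ref{thm:Bsurvit}, which handles the far more modest task of exhibiting \emph{one} non-zero coefficient for \emph{one} carefully chosen $v$ under condition~B and already spends a page on the combinatorics --- so your ``visibly non-zero'' badly undersells that side too. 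The arc-by-arc factorization you propose does not obviously hold: the weight $h_w(q)$ couples the arcs of $w$ through their nesting, and the exponent $i_w$ depends on the rank $|w|$ and the positions of all through lines of $w$ simultaneously. As written, your key claim is the theorem restated, and its hard half remains unproved.
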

\noindent Recall that the dimensions of $\Cell{N;d,z}$ and $\XXZ{N;d,z}{+}$ are equal. Hence, if $\ay{N;d,z}$ is not one-to-one, its kernel must be non-zero. However, even though $\ay{N;d,z}$ may be singular, it is never trivial.
\begin{lemma}\label{thm:iNonZero} The morphism $\ay{N;d,z}$ is non-zero.
\end{lemma}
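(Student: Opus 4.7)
The plan is to exhibit a single basis element $w_0 \in \BCell{N,d}$ whose image under $\ay{N;d,z}$ is manifestly non-zero. The simplest candidate is the diagram consisting of the $r = \tfrac12(N-d)$ adjacent arcs connecting the pairs of sites $(2k-1, 2k)$ for $1 \le k \le r$, together with $d$ through lines starting at positions $2r+1, 2r+2, \dots, N$. For this $w_0$, no arc crosses the top or bottom side, so $\psi(w_0) = \{(2k-1, 2k) : 1 \le k \le r\}$ and only the first product in the defining formula contributes:
$$\ay{N;d,z}(w_0) \;=\; \prod_{k=1}^{r}\bigl(u^{-1}\sigma_{2k-1}^{-} + u\,\sigma_{2k}^{-}\bigr)\,|0\rangle.$$

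The key observation is that the operators $\sigma_i^{-}$ for distinct $i$ act on distinct tensor factors and so commute, while each $\sigma_i^{-}$ is nilpotent of order two. Since the pairs $\{2k-1, 2k\}$ are pairwise disjoint, expanding the product over $k$ yields a sum of exactly $2^r$ terms, each of the form $(\text{coefficient})\cdot |x_1 \cdots x_N\rangle$, where $x_i = -$ at exactly one site from each pair and $x_i = +$ elsewhere. Different choices among the $2^r$ possibilities produce different configurations of "down-spin" positions, hence distinct elements of the spin basis $\BXXZ N$. Moreover, every coefficient is a nonzero monomial in $u = (-q)^{1/2}$, which is nonzero since $q \in \mathbb C^\times$.

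Because the $2^r$ basis vectors appearing are linearly independent and each has a nonzero scalar coefficient, the sum cannot vanish. Thus $\ay{N;d,z}(w_0) \neq 0$, which immediately yields the non-triviality of $\ay{N;d,z}$. There is essentially no serious obstacle: the argument only requires checking that the $2^r$ expansion terms are distinct basis vectors, and picking $w_0$ without boundary-crossing arcs sidesteps any need to track the additional factors of $z^{\pm 1}$ that appear in the second product of the definition of $\ay{N;d,z}$.
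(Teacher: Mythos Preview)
Your proof is correct and follows essentially the same approach as the paper: exhibit a single basis diagram with non-crossing arcs (none going through the boundary) and read off a nonzero component in the spin basis. The paper chooses the fully \emph{nested} diagram $\psi(w)=\{(1,2r),(2,2r-1),\dots,(r,r+1)\}$ and simply isolates the single vector $|--\cdots-++\cdots+\rangle$ with coefficient $u^{-r}\neq 0$, whereas you take the fully \emph{adjacent} diagram $\psi(w_0)=\{(1,2),(3,4),\dots,(2r-1,2r)\}$ and argue that all $2^r$ expansion terms are distinct basis vectors; either variant works, and your observation that the $2^r$ down-spin patterns are pairwise distinct is a harmless over-argument (one nonzero coefficient already suffices).
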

\begin{proof} If $d=N$, then $\dim\Cell{N;N,z}=\dim\XXZ{N;N,z}+=1$ and the only element $w$ of $\BCell{N,N}$ is such that $\psi(w)=\emptyset$. The morphism $\ay{N;N,z}$ then maps $w$ on $|0\rangle$. If $d<N$ and thus $r=\frac12(N-d)\geq 1$, there is at least one arc. Consider the vector $w$ in $\BCell{N,d}$ whose $r$ arcs are all nested at the topmost positions, that is $\psi(w)=\{(1,2r),(2,2r-1),\dots , (r,r+1)\}$. Its image by $\ay{N;d,z}$ contains the vector $|--\cdots-++\cdots +\rangle$ (where all the ``$-$" are at the beginning) with coefficient $u^{-r}=(-q)^{-r/2}\neq 0$. \end{proof}
Because the morphism $\gl{(d,z);(t,x)}$ is injective, the next proposition shows that, if $q^2$ is an $\ell$-th primitive root of unity,  
then, whenever the pair $(d,z)$ has an immediate successor $(t,x)$ through condition B with $t-d<2\ell$, the image of $\ay{N;d,z}$ contains at least the two composition factors $(d,z)$ and $(t,x)$ (if, of course, $t\leq N$) . 
\begin{proposition}\label{thm:Bsurvit} Let $q^2$ be an $\ell$-th primitive root of unity. Let $(d,z)\in \lambda_N$ be a pair with a direct successor $(t,x)$ through condition B with $t-d<2\ell$ and $t\leq N$. Then, the morphism $\ay{N;d,z}\circ\gl{(d,z);(t,x)}:\Cell{N;t,x}\to \XXZ{N;d,z}{+}$ is non-zero.
\end{proposition}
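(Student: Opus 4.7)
The plan is to exhibit a specific element $v_0\in\BCell{N,t}$ for which the coordinate of $\ay{N;d,z}\circ\gl{(d,z);(t,x)}(v_0)$ along a particular spin basis vector is non-zero.

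\textbf{Setup.} I would take $v_0\in\BCell{N,t}$ to be the diagram with $r'=(N-t)/2$ small non-overlapping arcs at positions $(2k-1,2k)$ for $k=1,\dots,r'$, and $t$ through-lines at positions $2r'+1,\dots,N$. Setting $m=(t-d)/2$ and applying Proposition~\ref{prop:grandThm} with $a=-1$ (condition~B),
\[
\gl{(d,z);(t,x)}(v_0)=\sum_{w\in\BCell{t,d}}q^{-i_w}\,z^{m-|w|}\,h_w(q)\,v_0 w.
\]
I then apply $\ay{N;d,z}$ and isolate the coefficient of the spin basis vector $|\vec{s}\rangle$ having $-$'s at the positions $\{1,3,\dots,2r'-1\}\cup\{2r'+1,2r'+2,\dots,2r'+m\}$.

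\textbf{Enumeration of admissible $w$.} The outer arcs of $v_0$ are forced by the target pattern to each contribute $u^{-1}\sigma_{2k-1}^-$, yielding a global factor $u^{-r'}$. The remaining $m$ spin flips at $\{2r'+1,\dots,2r'+m\}$ must come from the arcs of $v_0 w$ produced by $w$'s arcs under the composition. A case analysis of the possible $w$'s, accounting for both the non-wrapping case and the wrapping case (with the corresponding factor $(zu^{-1}\sigma_i^-+z^{-1}u\sigma_j^-)$ in $\ay{N;d,z}$ and the correct values of $|w|$), rewrites the coefficient of $|\vec{s}\rangle$ in the form $u^{-r}\,z^{m}\,F(q,z)$, where $F(q,z)$ is a Laurent expression in $q^{1/2}$ and $z$ obtained by summing the weighted contributions of admissible $w$.

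\textbf{Non-vanishing.} Each summand of $F(q,z)$ is individually non-zero: the factor $h_w(q)$ is a ratio of $q$-factorials involving only $[1]_q,\dots,[m]_q$, all non-zero at $q^{2\ell}=1$ since $[k]_q=0$ requires $\ell\mid k$ but $k\leq m<\ell$ (using $t-d=2m<2\ell$). The delicate point is to rule out cancellation between summands. After imposing $z^2=q^{-t}$ (condition~B), $F(q,z)$ becomes a Laurent polynomial in $q^{1/2}$. A direct verification in the smallest case ($r'=1,m=1,N=6,t=4,d=2$) shows there are two contributing $w\in\BCell{4,2}$ — a non-wrapping $w_0$ with arc $(1,2)$ and a wrapping $w'$ with arc $(4,1)$ — whose weights combine into an expression of the form $q^{a}z+1$; under $z=\pm q^{-2}$ this becomes $\pm q+1$, non-zero whenever $\ell\geq 2$ (which is forced by $m\geq 1$).

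\textbf{Main obstacle.} The technical heart of the proof is establishing the non-vanishing of $F(q,z)$ in full generality. The subtleties include correctly identifying the ranks of both $w$ (as an element of $\BCell{t,d}$) and of the composed diagrams $v_0 w$ (as elements of $\BCell{N,d}$), since the latter may require representing $v_0 w$ as a $z$-multiple of a canonical basis element in $\Cell{N;d,z}$. I expect $F(q,z)$ to admit a closed form as a product of simple binomial factors of the type $1\pm q^{a}z^{b}$, whose non-vanishing under condition~B and the constraint $m<\ell$ is transparent; a proof should proceed via a $q$-Vandermonde-like identity or by induction on $m$, exploiting the recursive structure of admissible $w$ diagrams.
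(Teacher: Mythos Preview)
Your strategy is exactly the paper's: choose a specific $v_0\in\BCell{N,t}$ and a target spin vector, then show the corresponding coefficient is non-zero. The paper takes $v_0$ with \emph{nested} arcs rather than your side-by-side arcs, but this is cosmetic; in both cases the through-lines sit at positions $2r'+1,\dots,N$ and the analysis of which $w\in\BCell{t,d}$ contribute is identical. The paper finds precisely $m+1$ admissible $w$'s (your count of two when $m=1$ is correct), indexed by how many of the $m$ arcs cross the boundary.

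The genuine gap is that you stop at the point that matters. You correctly flag the non-vanishing of $F(q,z)$ as the ``technical heart'' but do not carry out the computation, and your guess at the outcome is off. The paper computes each of the $m+1$ contributions explicitly (rank $|y_i|=i-1$, $h_{y_i}(q)=\qbin{m}{i-1}{q}$, the powers of $u$ and $z$ from $\ay{N;d,z}$), imposes condition~B ($q^tz^2=1$) to eliminate $z$, and obtains
\[
\sum_{i=1}^{m+1}(-1)^{i}q^{(m+1)i}\qbin{m}{i-1}{q}=-q^{m+1}\prod_{i=1}^{m}(1-q^{2i})
\]
by the $q$-binomial theorem (Proposition~\ref{thm:qBinome}). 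The factors are $(1-q^{2i})$ in $q$ alone, not of the form $1\pm q^az^b$, and the identity is the $q$-binomial theorem rather than $q$-Vandermonde. Non-vanishing is then immediate from $m<\ell$. Your proposal has the right architecture but is missing exactly this computation; without it there is no proof.
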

\begin{proof} The strategy of the proof is to choose two elements, one $w$ in $\BCell{N,t}$, the other $v$ in $\BXXZ{N,d}$, such that the component of $v$ in $\ay{N,d}\circ\gl{(d,z);(t,x)}(w)$ is non-zero. With the notation
$$r_t=\frac12(N-t),\quad r_d=\frac12(N-d)\quad \text{and}\quad k=\frac12(t-d)<\ell$$
that will be used throughout this proof, our choices are the following:
$$\begin{tikzpicture}[baseline={(current bounding box.center)},scale=0.5]
\draw (3,0) -- (19,0);
\draw[dashed] (3,2.5) -- (3,0) ;
\draw[dashed] (19,2.5) -- (19,0) ;
 \draw (3,2.5) -- (19,2.5);
 \draw[dotted] (-0.75,-1.7) -- (19,-1.7);
\node at (0.75,1.25) {$w\in\BCell{N,t}$};
\draw[line width=0.3mm] (4,0) .. controls(4,2)and(9,2) .. (9,0);
\node at (5,0.35) {$\dots$}; \node at (8,0.35) {$\dots$};
\draw[line width=0.3mm] (7,0) arc (0:180:0.5);
\node at (4,-0.6) {$_1$};
\draw[dotted, line width=0.3mm] (6,0) -- (6,-0.8); \node at (6,-1) {$_{r_t}$};
\node at (7,-0.5) {$_{r_t+1}$};
\draw[dotted, line width=0.3mm] (9,0) -- (9,-1); \node at (9,-1.2) {$_{2r_t}$};
\draw[line width=0.3mm] (10,0) -- (10,2.5);
\draw[line width=0.3mm] (12,0) -- (12,2.5);
\node at (11,0.35) {$\dots$};
\node at (10,-0.5) {$_{2r_t+1}$};
\draw[dotted, line width=0.3mm] (12,0) -- (12,-1); \node at (12,-1.2) {$_{2r_t+i-1}$};
\draw[line width=0.3mm] (13,0) -- (13,2.5);
\node at (14,0.35) {$\dots$};
\draw[line width=0.3mm] (15,0) -- (15,2.5);
\draw[line width=0.3mm] (16,0) -- (16,2.5);
\node at (17,0.35) {$\dots$};
\draw[line width=0.3mm] (18,0) -- (18,2.5);
\node at (13,-0.5) {$_{2r_t+i}$};
\draw[dotted, line width=0.3mm] (15,0) -- (15,-1); \node at (15,-1.2) {$_{2r_t+k}$};
\node at (16,-0.5) {$_{\ \ 2r_t+k+1}$};
\draw[dotted, line width=0.3mm] (18,0) -- (18,-1); \node at (18,-1.2) {$_{N}$};
\node at (0.75,-2.5) {$v\in\BXXZ{N,d}$};
\node at (3.5,-2.5) {$|$};
\node at (4,-2.5) {$-$};\node at (5,-2.5) {$\dots$};\node at (6,-2.5) {$-$};
\node at (5,-3) {$\underbrace{\phantom{MMMM}}_{r_t}$};
\node at (7,-2.5) {$+$};\node at (8,-2.5) {$\dots$};\node at (9,-2.5) {$+$};
\node at (10,-2.5) {$-$};\node at (11,-2.5) {$\dots$};\node at (12,-2.5) {$-$};
\node at (13,-2.5) {$-$};\node at (14,-2.5) {$\dots$};\node at (15,-2.5) {$-$};
\node at (12.5,-3) {$\underbrace{\phantom{MMMMMMMMI}}_{k}$};
\node at (16,-2.5) {$+$};\node at (17,-2.5) {$\dots$};\node at (18,-2.5) {$+$};
\node at (18.5,-2.5) {$\rangle$};
\end{tikzpicture}$$
We have rotated the $(N,t)$-diagram $w$ so that what usually appears at the top is now on the left. With the explicit form of $\gl{}$ given in \eqref{eq:gl}, the image of $w$ under study reads
$$\ay{N,d}\circ\gl{(d,z);(t,x)}(w)=\sum_{y\in\BCell{t,d}}q^{-i_y}z^{k-|y|}h_{y}(q)\ay{N,d}(wy).$$
Note that the composition of $w$ on the right by a $y\in \BCell{t,d}$ will not change the $r_t$ nested arcs in $w$. Note also that the minus signs in $v$ come from the action of $\sigma^-_i$ and $\sigma^-_j$ in the products $\prod_{j\leq N}(u^{-1}\sigma_i^-+u\sigma_j^-)\cdot\prod_{j> N}(zu^{-1}\sigma_i^-+z^{-1}u\sigma_j^-)|0\rangle$ which are over all pairs $(i,j)\in\psi(wy)$. The $r_t$ leftmost ``$-$'' in $v$ will thus come from the beginning $i$ of the arcs $(i,j)$ in $\psi(w)=\{(1,2r_t),(2,2r_t-1),\dots , (r_t,r_t+1)\}\subseteq\psi(wy)$. The $k$ following ``$-$" will need to come from arcs created by the composition with the $(t,d)$-diagram $y$. 

$$\begin{tikzpicture}[baseline={(current bounding box.center)},scale=0.5]
\draw (3,0) -- (26,0); \draw (3,2.5) -- (26,2.5);
\draw (3,5) -- (26,5);
\draw[dotted] (-0.75,-1.7) -- (26,-1.75);
\draw[dashed] (3,0) -- (3,2.5);
\draw[dashed] (26,0) -- (26,2.5);
\draw[dashed] (3,2.5) -- (3,5);
\draw[dashed] (26,2.5) -- (26,5);
\node at (0.75,1.25) {$w\in\BCell{N,t}$};
\draw[line width=0.3mm] (4,0) .. controls(4,2)and(9,2) .. (9,0);
\node at (5,0.35) {$\dots$}; \node at (8,0.35) {$\dots$};
\draw[line width=0.3mm] (7,0) arc (0:180:0.5);
\node at (4,-0.6) {$_1$};
\draw[dotted, line width=0.3mm] (6,0) -- (6,-0.8); \node at (6,-1) {$_{r_t}$};
\node at (7,-0.5) {$_{r_t+1}$};
\draw[dotted, line width=0.3mm] (9,0) -- (9,-1); \node at (9,-1.2) {$_{2r_t}$};
\draw[line width=0.3mm] (10,0) -- (10,2.5);
\draw[line width=0.3mm] (12,0) -- (12,2.5);
\node at (11,0.35) {$\dots$};
\node at (10,-0.5) {$_{2r_t+1}$};
\draw[dotted, line width=0.3mm] (12,0) -- (12,-1); \node at (12,-1.2) {$_{2r_t+i-1}$};
\draw[line width=0.3mm] (13,0) -- (13,2.5);
\node at (14,0.35) {$\dots$};
\draw[line width=0.3mm] (15,0) -- (15,2.5);
\draw[line width=0.3mm] (16,0) -- (16,2.5);
\node at (17,0.35) {$\dots$};
\draw[line width=0.3mm] (18,0) -- (18,2.5);
\node at (13,-0.5) {$_{2r_t+i}$};
\draw[dotted, line width=0.3mm] (15,0) -- (15,-1); \node at (15,-1.2) {$_{2r_t+k}$};
\node at (16,-0.5) {$_{\ \ 2r_t+k+1}$};
\draw[dotted, line width=0.3mm] (18,0) -- (18,-1); \node at (18,-1.2) {$_{2r_d-i+1}$};
\draw[line width=0.3mm] (19,0) -- (19,2.5);
\node at (20,0.35) {$\dots$};
\draw[line width=0.3mm] (21,0) -- (21,2.5);
\draw[line width=0.3mm] (22,0) -- (22,2.5);
\node at (23,0.35) {$\dots$};
\draw[line width=0.3mm] (24,0) -- (24,2.5);
\node at (19,-0.5) {$_{\ \ 2r_d-i+2}$};
\draw[dotted, line width=0.3mm] (21,0) -- (21,-1); \node at (21,-1.2) {$_{N-i+1}$};
\node at (22,-0.5) {$_{N-i}$};
\draw[dotted, line width=0.3mm] (24,0) -- (24,-1); \node at (24,-1.2) {$_{N}$};
\node at (0.75,3.75) {$y\in\BCell{t,d}$};
\draw[line width=0.3mm] (10,2.5) .. controls(10,3.5)and(5,3.5) .. (3,3.5);
\draw[line width=0.3mm] (12,2.5) .. controls(12,4)and(5,4) .. (3,4);
\node at (11,2.85) {$\dots$}; 
\draw[line width=0.3mm] (13,2.5) .. controls(13,4.5)and(18,4.5) .. (18,2.5);
\node at (14,2.85) {$\dots$}; 
\draw[line width=0.3mm] (16,2.5) arc (0:180:0.5);
\node at (17,2.85) {$\dots$}; 
\draw[line width=0.3mm] (19,2.5) -- (19,5);
\node at (20,2.85) {$\dots$};
\draw[line width=0.3mm] (21,2.5) -- (21,5);
\draw[line width=0.3mm] (22,2.5) .. controls(22,4)and(25,4) .. (26,4);
\node at (23,2.85) {$\dots$};
\draw[line width=0.3mm] (24,2.5) .. controls(24,3.5)and(25,3.5) .. (26,3.5);
\node at (0.75,-2.5) {$v\in\BXXZ{N,d}$};
\node at (3.5,-2.5) {$|$};
\node at (4,-2.5) {$-$};\node at (5,-2.5) {$\dots$};\node at (6,-2.5) {$-$};
\node at (5,-3) {$\underbrace{\phantom{MMMM}}_{r_t}$};
\node at (7,-2.5) {$+$};\node at (8,-2.5) {$\dots$};\node at (9,-2.5) {$+$};
\node at (10,-2.5) {$-$};\node at (11,-2.5) {$\dots$};\node at (12,-2.5) {$-$};
\node at (13,-2.5) {$-$};\node at (14,-2.5) {$\dots$};\node at (15,-2.5) {$-$};
\node at (12.5,-3) {$\underbrace{\phantom{MMMMMMMMI}}_{k}$};
\node at (16,-2.5) {$+$};\node at (17,-2.5) {$\dots$};\node at (18,-2.5) {$+$};
\node at (19,-2.5) {$+$};\node at (20,-2.5) {$\dots$};\node at (21,-2.5) {$+$};
\node at (22,-2.5) {$+$};\node at (23,-2.5) {$\dots$};\node at (24,-2.5) {$+$};
\node at (24.5,-2.5) {$\rangle$};
\end{tikzpicture}$$

Which $y\in\BCell{t,d}$ can contribute a non-zero coefficient to $v$? The above diagram contains the answer. In this diagram, the chosen $w$ and $v$ have been reproduced without changes, together with a possible candidate for $y$. (Some through lines that were included in the $\dots$ have been added to $w$ and similarly for some ``$+$'' in the $v$.) Independently of the candidate drawn, a $y$ will contribute a non-zero coefficient to $v$ only if it can add arcs to $w$ with beginnings or ends at all positions from $2r_t+1$ to $2r_t+k$, hereafter refered to as the ``desired'' positions. The number $k$ of ``$-$'' to be created is equal to the number of arcs in $y\in\BCell{t,d}$ and therefore each new arc, that is, each arc that was not already in $w$, will have to produce one of these ``$-$''. Moreover, if such a new arc had both its beginning and end among the desired positions, a minus sign would be missing in $v$ as only one of the two terms in $(u^{-1}\sigma_i^-+u\sigma_j^-)$ (or $(zu^{-1}\sigma_i^-+z^{-1}u\sigma_j^-)$) can be used to obtain a given element in $\BXXZ{N,d}$. Thus, all arcs in $y$ must start or end at a desired position, but must not have both their beginning and end there. Note finally that, if an arc in $y$ ends at a desired position and crosses the left boundary, all the other arcs ending on its left in $y$ must also cross this boundary. Similarly, if an arc in $y$ begins at a desired position and closes to its right without crossing the right boundary, then all other arcs starting at a desired position on its right in $y$ will also close to their right without crossing this boundary. In other words, $y$ can contribute to a non-zero component along $v$ if and only if there is a $i$, with $1\leq i\leq k+1$, such that $y$ has $i-1$ contiguous arcs ending at desired positions and crossing the left boundary with $k+1-i$ ones starting at desired positions and closing to their right without reaching the boundary. The diagram shows such a $y$ and there are precisely $k+1$ one them. They will be labeled $y_i$ (for $1\leq i\leq k+1$).

The rest of the proof simply studies their contribution. The exercise consists in computing the rank $|y_i|$, the exponent $i_{y_i}$ of $q$, the weight $h_{y_i}(q)$ and the factor of $q$ and $z$ stemming from the products $\prod_{(i,j)\in\psi(wy_i)}$. The rank $|y_i|$, that is the number of arcs in $y_i$ that cross the boundary, is $i-1$ and the sum $\zeta_{y_i}$ of the positions of the through lines of $y_i$ is easily computed to be $\frac d2(2t-d+3)-di$. The exponent $i_{y_i}$ of $q$ may thus be expressed, after some simplifications, as
\begin{align*}
2i_{y_i}&=t(i-1-\frac12(t-d))+\frac d2(t+1)-\frac d2(2t-d+3)+di = (t+d)(i-(k+1))
\end{align*}
The diagram for $y_i$ also allows for the computation of the rational function $h_{y_i}(q)$:
$$h_{y_i}(q)=\left[\begin{matrix}k\\i-1\end{matrix}\right]_q.$$
At last, to compute the coefficient of $v$, only the terms in the products $\prod_{(i,j)\in\psi(wy_i)}$ that create a minus sign at a desired position need to be accounted. This means the beginning of the $r_t$ arcs of $w$ (that give a factor $(u^{-1})^{r_t}$), the end of the arcs in $y_i$ that cross the boundary (with a factor of $(z^{-1}u)^{i-1}$) and the beginning of the last $k+1-i$ arcs in $y_i$ (with a factor $(u^{-1})^{k+1-i}$). These three contributions amount to a factor of $z^{-(i-1)}u^{2(i-1)-r_d}$. Using the fact that $u$ is $(-q)^{\frac12}$, the sum of the contributions of the $y_i$ can hence be written as
$$(\text{coefficient of $v$ in $\ay{N,d}\circ\gl{(d,z);(t,x)}(w)$})=
q^{\frac12(t+d)(k+1)}(-q)^{-\frac12r_d-1}z^{k+2}\sum_{i=1}^{k+1}
q^{-\frac{i}{2}(t+d)}(-q)^i z^{-2i}\left[\begin{matrix}k\\i-1\end{matrix}\right]_q.$$
The factor outside the sum is non-zero. The coefficient of $v$ will thus be non-zero if and only if the sum is. The hypothesis that $(d,z)\preceq (t,x)$ through condition B is finally used here. This hypothesis forces $q^tz^2=1$ and the sum becomes
$$\sum_{i=1}^{k+1}(-1)^iq^{(k+1)i}\left[\begin{matrix}k\\i-1\end{matrix}\right]_q=
-q^{k+1}\prod_{i=1}^k(1-q^{2i})$$
where the $q$-binomial theorem \ref{thm:qBinome} has been used. Since by hypothesis $k<\ell$ and $q^2$ is a $\ell$-th primitive root of unity, none of the factors in this last product vanishes and the coefficient of $v$ in $\ay{N,d}\circ\gl{(d,z);(t,x)}(w)$ is indeed non-zero.
\end{proof}

Remark that the hypothesis of $q$ being a specific root of unity is used only at the end to assert the non-vanishing of the product $\prod_{1\leq i\leq k}(1-q^{2i})$. If $q$ is {\em not} a root of unity, this product is clearly non-zero. Thus:
\begin{proposition}\label{thm:BsurvitGen} Let $q\in\mathbb C^\times$ be generic, that is, not a root of unity. If $(d,z)\in\lambda_N$ has a direct successor $(t,x)$ through condition $B$ with $t\leq N$, then the morphism $\ay{N;d,z}\circ\gl{(d,z);(t,x)}:\Cell{N;t,x}\to \XXZ{N;d,z}{+}$ is non-zero.
\end{proposition}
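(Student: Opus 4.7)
The plan is to reuse, almost verbatim, the argument given in the proof of Proposition \ref{thm:Bsurvit}. As the remark preceding the statement already observes, the only place in that proof where the assumption ``$q^2$ is an $\ell$-th primitive root of unity'' is genuinely used is at the very end, to assert the non-vanishing of a certain product. For generic $q$ the corresponding non-vanishing will be automatic, so no new idea is required.

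Concretely, I would first reintroduce the same witness pair $(w,v)$: take $w \in \BCell{N,t}$ whose $r_t = \tfrac12(N-t)$ arcs are the topmost nested ones (with through lines filling the remaining positions), and take $v \in \BXXZ{N,d}$ with the explicit spin pattern used in the previous proof. I would then expand $\ay{N,d}\circ\gl{(d,z);(t,x)}(w)$ using the formula \eqref{eq:gl} and rerun the combinatorial classification that identifies exactly which diagrams $y \in \BCell{t,d}$ contribute a non-zero coefficient along $v$. This classification is purely combinatorial and does not depend on the nature of $q$: it yields the same family $y_1,\dots,y_{k+1}$ (with $k = \tfrac12(t-d)$), the same values of $|y_{i}|$, $i_{y_i}$, $h_{y_i}(q) = \left[\begin{matrix}k\\i-1\end{matrix}\right]_q$, and the same bookkeeping of the powers of $z$ and $u=(-q)^{1/2}$ coming from the action of $\ay{N,d}$.

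Combining these contributions and using the hypothesis that $(d,z)\preceq(t,x)$ through condition B, which enforces $q^t z^2 = 1$, the coefficient of $v$ in $\ay{N,d}\circ\gl{(d,z);(t,x)}(w)$ simplifies, via the $q$-binomial theorem (Theorem~\ref{thm:qBinome}), to a non-zero scalar prefactor times
$$-q^{k+1}\prod_{i=1}^{k}(1-q^{2i}).$$
The final step is the only one that differs from the root-of-unity case. Since $q$ is assumed to be not a root of unity, $q^{2i} \neq 1$ for every integer $i \geq 1$, so every factor $(1-q^{2i})$ is non-zero and the whole product is non-zero. Hence the coefficient of $v$ is non-zero and $\ay{N;d,z}\circ\gl{(d,z);(t,x)}$ is non-trivial, as claimed. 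There is no real obstacle in this argument: the generic case is strictly easier than the root-of-unity case already treated, and the proof essentially amounts to citing the previous computation and replacing the bound $k<\ell$ by the observation that $q$ is not a root of unity.
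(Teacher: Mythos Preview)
Your proposal is correct and matches the paper's approach exactly: the paper simply remarks that the root-of-unity hypothesis in the proof of Proposition~\ref{thm:Bsurvit} is used only to ensure $\prod_{1\le i\le k}(1-q^{2i})\ne 0$, and that for generic $q$ this is automatic.
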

The two propositions \ref{thm:Bsurvit} and \ref{thm:BsurvitGen} seem to give a privileged role to condition B compared to condition A. However, similar statements hold, with B replaced by A and $q$ by $q^{-1}$. Indeed, if $q$ is replaced by $q^{-1}$ in the definition of both conditions and if $\preceq_{q^{-1}}$ denotes the new partial order thus defined, then a quick check gives the following lemma.
\begin{lemma}\label{thm:partialOrders} Let $(d,z), (t,x)\in\lambda$. The following statements are equivalent.

\noindent(a) $(d,z)\preceq (t,x)$ directly through condition A (or B);

\noindent(b) $(d,z^{-1})\preceq (t,x^{-1})$ directly through condition B (resp.~A);

\noindent(c) $(d,z)\preceq_{q^{-1}} (t,x)$ directly through condition B (resp.~A).
\end{lemma}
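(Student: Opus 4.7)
The plan is to verify the lemma by direct substitution into the equations defining conditions A and B, with a single key observation that renders everything routine: the non-negative integer $m = \tfrac12(t-d)$ is determined by $t$ and $d$ alone, so it is invariant under the substitutions $z \mapsto z^{-1}$ and $q \mapsto q^{-1}$. Consequently, directness (that is, $m$ being the smallest positive integer solving the relevant condition) is preserved as soon as the condition itself is preserved. More concretely, condition A at $(d,z)$ is equivalent to requiring that $m$ be the smallest positive integer satisfying $q^{2m} = z^2 q^{-d}$ (with $t = d+2m$ and $x = zq^{-m}$ then forced), and condition B is similarly encoded by $q^{2m} = z^{-2}q^{-d}$.

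For (a) $\Leftrightarrow$ (b), I would inspect how condition A transforms under $z \mapsto z^{-1}$, $x \mapsto x^{-1}$: inverting both defining equations $z^2 = q^t$ and $x = zq^{-m}$ yields $(z^{-1})^2 = q^{-t}$ and $x^{-1} = z^{-1}q^{m}$, which is exactly condition B for the pair $(d, z^{-1}) \preceq (t, x^{-1})$. The smallest positive $m$ is unchanged, since the equation $q^{2m} = z^2 q^{-d}$ for condition A at $(d,z)$ coincides with the equation $q^{2m} = (z^{-1})^{-2}q^{-d}$ for condition B at $(d, z^{-1})$. The reverse direction is symmetric, and swapping the roles of A and B produces the other half of the equivalence.

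For (a) $\Leftrightarrow$ (c), I would substitute $q \mapsto q^{-1}$ in condition B, which produces $z^2 = (q^{-1})^{-t} = q^t$ and $x = z(q^{-1})^{m} = zq^{-m}$, i.e.~exactly condition A with the original $q$. Hence direct succession via condition B in the order $\preceq_{q^{-1}}$ coincides with direct succession via condition A in $\preceq$. Directness carries over because the smallest-positive-$m$ equation $(q^{-1})^{2m} = z^{-2}q^{-d}$ is the same as $q^{2m} = z^{2}q^{-d}$. The reverse direction and the companion statement (A $\leftrightarrow$ B) follow symmetrically.

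No real obstacle is anticipated; the lemma is pure bookkeeping that formalizes the two involutions $z \leftrightarrow z^{-1}$ and $q \leftrightarrow q^{-1}$ that are built into the definitions of conditions A and B. The only point worth making explicit in the written proof is the invariance of $m$, since without it one might fear that "directness" could fail under the substitutions.
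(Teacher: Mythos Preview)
Your proposal is correct and matches the paper's approach: the paper does not provide an explicit proof of this lemma, merely stating that ``a quick check gives the following lemma,'' and your direct verification by substitution into the defining equations of conditions A and B is precisely that check. Your explicit remark that $m=\tfrac12(t-d)$ depends only on $t$ and $d$, and hence that directness is preserved under the substitutions, is a useful clarification that the paper leaves implicit.
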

Theorem \ref{thm:mdsa} with propositions \ref{thm:Bsurvit} and \ref{thm:BsurvitGen} underline the role played by the successors of $(d,z)\in \lambda_N$ through either condition A or B in the description of the image and kernel of $\ay{N;d,z}$. Let the {\em generic part} $\pg{N;d,z}$ of $\Cell{N;d,z}$ be 
\begin{equation*}
\pg{N;d,z}=\begin{cases}
	\Cell{N;d,z}/\im \gl{(d,z);(s,y)}& \text{if a direct successor $(s,y)$ of $(d,z)$ through condition A exists,}\\
	\Cell{N;d,z} & \text{if such a successor $(s,y)$ through A does not exist.}
\end{cases}
\end{equation*}
\begin{corollary}\label{thm:gpEstUnQuotient} Let $q,z\in\mathbb C^\times$ and $0\leq d\leq N$ with $d\equiv_2N$. Then $\pg{N;d,z}$ is isomorphic to a quotient of $\im \ay{N;d,z}$.
\end{corollary}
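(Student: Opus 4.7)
The proof splits into two cases following the definition of $\pg{N;d,z}$.

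If $(d,z)$ has no direct successor via condition A in $\lambda_N$, then $\pg{N;d,z} = \Cell{N;d,z}$ by definition, and by theorem~\ref{thm:mdsa} the morphism $\ay{N;d,z}$ is injective; consequently $\pg{N;d,z} \simeq \im \ay{N;d,z}$ and the corollary is immediate.

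Otherwise, let $(s,y) \in \lambda_N$ be the direct A-successor of $(d,z)$, so that $\pg{N;d,z} = \Cell{N;d,z}/\im \gl{(d,z);(s,y)}$. By the first isomorphism theorem, $\im \ay{N;d,z} \simeq \Cell{N;d,z}/\ker \ay{N;d,z}$. Realizing $\pg{N;d,z}$ as a quotient of $\im \ay{N;d,z}$ is then equivalent, by the universal property of quotients, to the inclusion $\ker \ay{N;d,z} \subseteq \im \gl{(d,z);(s,y)}$ of submodules of $\Cell{N;d,z}$.

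My strategy is to first verify that the composition $\ay{N;d,z}\circ \gl{(d,z);(s,y)}:\Cell{N;s,y}\to \XXZ{N;d,z}{+}$ vanishes by a direct computation on the cellular basis, adapted from the proof of proposition~\ref{thm:Bsurvit}. Using analogous test vectors $w\in \BCell{N,s}$ and $v\in \BXXZ{N,d}$, tracking the contributions of the $(s,d)$-diagrams $y_i$ arising in the expansion of $\gl{(d,z);(s,y)}$, one obtains a $q$-combinatorial sum; under condition A (in place of condition B), and because $(s,y)$ is the \emph{direct} (smallest) A-successor, the $q$-binomial theorem forces the sum to collapse to zero. This yields $\im \gl{(d,z);(s,y)}\subseteq \ker\ay{N;d,z}$ and hence a well-defined induced map $\bar\ay:\pg{N;d,z}\to \XXZ{N;d,z}{+}$.

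Combined with the reverse inclusion $\ker\ay{N;d,z}\subseteq \im\gl{(d,z);(s,y)}$, this establishes the equality $\ker \ay{N;d,z}=\im \gl{(d,z);(s,y)}$, hence the isomorphism $\pg{N;d,z}\simeq \im \ay{N;d,z}$, from which the corollary follows. The main obstacle is precisely this reverse inclusion: it does not follow formally from $\ay\circ\gl=0$, but instead requires, I expect, the Loewy description of $\Cell{N;d,z}$ from theorem~\ref{thm:GL} together with the non-vanishing results of lemma~\ref{thm:iNonZero} and propositions~\ref{thm:Bsurvit} and~\ref{thm:BsurvitGen}; these ensure that the head $(d,z)$ of $\pg{N;d,z}$ and any direct B-successor of $(d,z)$ survive in $\im\ay{N;d,z}$, so that no composition factor of $\pg{N;d,z}$ can lie in $\ker\ay$, forcing the desired inclusion.
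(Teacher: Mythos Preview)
Your first case is fine and agrees with the paper.

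In the second case you take a detour that is both unnecessary and unsupported. Showing $\ay{N;d,z}\circ\gl{(d,z);(s,y)}=0$ yields $\im\gl{}\subseteq\ker\ay{}$, which is the \emph{opposite} inclusion to the one you need; it contributes nothing toward realising $\pg{N;d,z}$ as a quotient of $\im\ay{N;d,z}$. Moreover, your proposed justification---adapting the computation of proposition~\ref{thm:Bsurvit}---is inadequate: that proof computes a \emph{single} coefficient of $\ay{}\circ\gl{}(w)$ for one carefully chosen $w$ and shows it is non-zero. Even if the analogous coefficient vanished under condition~A, this would say nothing about the remaining coefficients, let alone about $\ay{}\circ\gl{}(w')$ for other $w'$. (The vanishing $\ay{}\circ\gl{(d,z);(s,y)}=0$ is in fact true, but it is established only later as corollary~\ref{cor:Imind}, using the main theorem.)

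The actual content is your ``main obstacle'', and your sketch there is close to the paper's argument: $\pg{N;d,z}$ has at most two composition factors, $\Irre{N;d,z}$ and possibly $\Irre{N;t,x}$ for the direct B-successor $(t,x)$; lemma~\ref{thm:iNonZero} and propositions~\ref{thm:Bsurvit}/\ref{thm:BsurvitGen} force both to occur in $\im\ay{N;d,z}$; multiplicity-freeness of $\Cell{N;d,z}$ then excludes them from $\ker\ay{N;d,z}$; and the Loewy structure of theorem~\ref{thm:GL} makes $\im\gl{(d,z);(s,y)}$ the maximal submodule avoiding $\Irre{N;t,x}$, giving $\ker\ay{}\subseteq\im\gl{}$. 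One genuine gap remains in your sketch: proposition~\ref{thm:Bsurvit} requires $t-d<2\ell$, and you do not treat the complementary case. The paper handles it separately by observing that if the direct B-successor satisfies $t\geq d+2\ell$, then necessarily $t=d+2\ell$ and $q^d=z^{-2}$, whence $\Cell{N;d,z}$ falls in subcase~(i) or~(ii) of theorem~\ref{thm:GL} and $\pg{N;d,z}$ collapses to the simple $\Irre{N;d,z}$, reducing to the easy case. Drop your first step entirely and add this case distinction.
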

\begin{proof}
Assume that $(d,z)$ is not a problematic pair. Then, theorem \ref{thm:GL} shows that the generic part $\pg{N;d,z}$ has at most two composition factors. If $\pg{N;d,z}$ is simple, then $\pg{N;d,z}\simeq \Irre{N;d,z} = \head \Cell{N;d,z}$ is clearly a quotient of $\im \ay{N;d,z}$ as $\ay{N;d,z}$ is a non-zero homomorphism by lemma \ref{thm:iNonZero}. 

If $\pg{N;d,z}$ rather has two composition factors, then its Loewy diagram must be of the form
$$ (d,z)\rightarrow (t,x) $$
where $(t,x)\in \lambda_N$ is a direct successor of $(d,z)$ through condition B. If $q$ is generic, then $\pg{N;d,z}$ is isomorphic to $\Cell{N:d,z}$ by proposition \ref{thm:BsurvitGen} and the result follows. Suppose then that $q^2$ is an $\ell$th-primitive root of unity. We can also assume that $t \geq d+2\ell$ as the desired result is a direct consequence of 
 proposition \ref{thm:Bsurvit} when $t<d+2\ell$. In this case, $t$ must be equal to $d+2\ell$ to be a direct successor and we must have $q^d = q^{t-2\ell} = z^{-2}$. The structure of $\Cell{N;d,z}$ is thus given by the Loewy diagram (i) or (ii) of theorem \ref{thm:GL} (with the direct successor $(s,y)$ of $(d,z)$ through 
  condition A being $(d+2\ell,zq^\ell)$ in the case of diagram (i) and being the second highest node in the case of diagram (ii)) and the generic part $\pg{N;d,z}$ must then be isomorphic to the simple module $\Irre{N;d,z}$. We can therefore rule out this case as it was already treated above. 

The above discussion left only the case of the problematic pairs to be studied. This is done in appendix \ref{app:problematic}.
\end{proof}

We shall show later that the generic part is indeed the image of $\ay{N;d,z}$. Toward this goal, the remaining of this section is devoted to the exploration of the structure of the module $\XXZ{N;d,z}+$ when the radical of $\Cell{N;d,z}$ is semisimple and, thus, its Loewy diagram has at most two layers. This exercise, which will be important in the final proofs of section \ref{sec:struc}, will show how the results of the present section and the previous one can be used to probe the structure of the eigenspaces $\XXZ{N;d,z}+$. We avoid the case of problematic pairs here and defer their study to appendix \ref{app:problematic}.
\begin{proposition}\label{prop:troisnoeuds} Let $q,z\in\mathbb C^\times$ and $(d,z)\in\lambda_N$ which is not a problematic pair. Note $(s,y)$ and $(t,x)\in\lambda$ the immediate successors of $(d,z)$ through condition A and B, if they exist. If $q^2$ is a primitive $\ell$-th root of unity, assume also $N<d+2\ell$. Then, if $\rad\Cell{N;d,z}$ is semisimple (and thus its Loewy diagram has at most two layers), the Loewy diagram of $\XXZ{N;d,z}+$ is
$$
\begin{tikzpicture}[baseline={(current bounding box.center)},scale=1/3]
\node (k0) at (0,8) [] {$(s,y)$};
\node (i0) at (4,6) [] {$(d,z)$};
\node (k1) at (0,4) [] {$(t,x)$};
\draw[->] (k0) -- (i0);\draw[->] (i0) -- (k1);
\end{tikzpicture}
$$
with the nodes $(s,y)$ and $(t,x)$ omitted if they are not in $\lambda_N$.
\end{proposition}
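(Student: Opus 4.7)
The plan is to combine two complementary morphisms involving $\XXZ{N;d,z}+$: the direct map $\ay{N;d,z}:\Cell{N;d,z}\to \XXZ{N;d,z}+$ from subsection \ref{sec:ind} and its $\star$-partner $\phi:=(\ay{N;d,z^{-1}})^\star:\XXZ{N;d,z}+\to(\Cell{N;d,z^{-1}})^\star$, obtained by applying the contravariant duality of subsection \ref{sec:dualities} to $\ay{N;d,z^{-1}}$ and identifying $(\XXZ{N;d,z^{-1}}+)^\star\simeq \XXZ{N;d,z}+$ via proposition \ref{thm:isoByDual}.

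First I would compute $\ker\ay{N;d,z}$. Because $\ay{N;d,z}$ is non-zero (lemma \ref{thm:iNonZero}) and $\Cell{N;d,z}$ has simple head $\Irre{N;d,z}$, the kernel lies inside the hypothesized semisimple $\rad\Cell{N;d,z}$. The assumption $N<d+2\ell$ ensures $t-d<2\ell$, so proposition \ref{thm:Bsurvit} (or \ref{thm:BsurvitGen} for generic $q$) rules $\Irre{N;t,x}$ out of the kernel, while theorem \ref{thm:mdsa} forces the kernel to be non-trivial exactly when $(s,y)\in\lambda_N$. Hence $\ker\ay{N;d,z}=\Irre{N;s,y}$ if $(s,y)\in\lambda_N$ and is zero otherwise, so $\im\ay{N;d,z}$ is a submodule of $\XXZ{N;d,z}+$ realizing the target diagram with its $(s,y)$-node deleted. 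Running the same analysis on $(d,z^{-1})$---whose cellular radical remains semisimple by the $\circ$-duality of proposition \ref{prop:foncteurcirc}, and whose A- and B-successors are $(t,x^{-1})$ and $(s,y^{-1})$ by lemma \ref{thm:partialOrders}---determines $\im\ay{N;d,z^{-1}}$, and the isomorphism $\im\phi\simeq(\im\ay{N;d,z^{-1}})^\star$ then exhibits a quotient of $\XXZ{N;d,z}+$ realizing the target diagram with its $(t,x)$-node deleted. When either $(s,y)$ or $(t,x)$ is missing from $\lambda_N$, a dimension count forces one of $\ay{N;d,z}$ or $\phi$ to be an isomorphism onto the appropriate image, settling those subcases.

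The main obstacle is the case where both $(s,y),(t,x)\in\lambda_N$: the submodule $\im\ay{N;d,z}$ (of diagram $(d,z)\to(t,x)$) and the quotient $\im\phi$ (of diagram $(s,y)\to(d,z)$) must be glued into a single three-layer module. For this I would use uniqueness up to scalar of $\atl N$-maps $\Cell{N;d,z}\to(\Cell{N;d,z^{-1}})^\star$: inspection of $(\Cell{N;d,z^{-1}})^\star$ shows that its only submodule with head $\Irre{N;d,z}$ is the socle $\Irre{N;d,z}$, so any such morphism factors through $\Irre{N;d,z}$ and is a scalar multiple of the canonical map induced by $\langle\,,\,\rangle_{N;d,z}$. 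The composition $\phi\circ\ay{N;d,z}$ is non-zero (else $\im\ay{N;d,z}\subseteq\ker\phi$ would contradict $\dim\im\ay{N;d,z}>\dim\ker\phi$), so by proposition \ref{prop:Radrad}, $\phi(\im\ay{N;d,z})=\Irre{N;d,z}=\soc\im\phi$. Matching dimensions $\dim\phi^{-1}(\Irre{N;d,z})=\dim\ker\phi+\dim\Irre{N;d,z}=\dim\im\ay{N;d,z}$ then forces $\phi^{-1}(\Irre{N;d,z})=\im\ay{N;d,z}$, hence $\ker\phi\subseteq\im\ay{N;d,z}$, and the submodule structure of the latter pins $\ker\phi=\Irre{N;t,x}$. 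Since any simple submodule $L\not\simeq\Irre{N;t,x}$ of $\XXZ{N;d,z}+$ would satisfy $\phi(L)\subseteq\soc\im\phi=\Irre{N;d,z}$, hence $L\subseteq\phi^{-1}(\Irre{N;d,z})=\im\ay{N;d,z}$---contradicting $\soc\im\ay{N;d,z}=\Irre{N;t,x}$---the socle of $\XXZ{N;d,z}+$ is the simple module $\Irre{N;t,x}$, and the quotient $\XXZ{N;d,z}+/\Irre{N;t,x}\simeq\im\phi$ has diagram $(s,y)\to(d,z)$, yielding the claimed Loewy diagram $(s,y)\to(d,z)\to(t,x)$.
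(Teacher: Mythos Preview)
Your proof is correct and follows essentially the same strategy as the paper: both arguments hinge on the pair of maps $\ay{N;d,z}$ and the $\star$-dual of $\ay{N;d,z^{-1}}$, using theorem \ref{thm:mdsa}, propositions \ref{thm:Bsurvit}/\ref{thm:BsurvitGen} and lemma \ref{thm:partialOrders} to control their kernels, and the paper's three cases coincide with your own case split. The only difference is in the last case (both $(s,y),(t,x)\in\lambda_N$): where the paper simply observes that the submodule $(d,z)\to(t,x)$, the quotient $(s,y)\to(d,z)$, and the dimension equality $\dim\XXZ{N;d,z}+=\dim\Cell{N;d,z}$ together force the claimed three-term Loewy diagram, you instead pin down $\ker\phi=\Irre{N;t,x}$ explicitly via the composite $\phi\circ\ay{N;d,z}$ and then verify that the socle is simple---a more detailed justification of the same conclusion, not a different route.
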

\begin{proof} Again this is accomplished by studying several cases.

\noindent{\bf\itshape Case 1: $(s,y)\not\in \lambda_N$ or does not exist}. In this case, theorem \ref{thm:mdsa} states that $\ay{N;d,z}$ is an isomorphism. The Loewy diagram of $\XXZ{N;d,z}+$ thus coincides with that of $\Cell{N;d,z}$ and is either
$$\begin{tikzpicture}[baseline={(current bounding box.center)},scale=1/3]
\node (i0) at (4,6) [] {$(d,z)$};
\node (k1) at (0,4) [] {$(t,x)$};
\draw[->] (i0) -- (k1);
\end{tikzpicture}
\qquad \text{or}\qquad
\begin{tikzpicture}[baseline={(current bounding box.center)},scale=1/3]
\node (i0) at (4,6) [] {$(d,z)$};
\end{tikzpicture}
$$
depending on whether $(t,x)$ exists and belongs to $\lambda_N$ or not. The statement follows.

\noindent{\bf\itshape Case 2: $(s,y)\in \lambda_N$ and $(t,x)\not\in \lambda_N$ or does not exist}. In this case, $s\leq N$ and $(s,y^{-1})$ directly succeeds $(d,z^{-1})$ through condition B by lemma \ref{thm:partialOrders}. Moreover, $(d,z^{-1})$ has no successor (in $\lambda_N$) through condition A since $t>N$ or $(t,x)$ does not exist. Hence, $\ay{N;d,z^{-1}}$ is an isomorphism and $\XXZ{N;d,z^{-1}}+$ has the Loewy diagram
$$
\begin{tikzpicture}[baseline={(current bounding box.center)},scale=1/3]
\node (i0) at (5,6) [] {$(d,z^{-1})$};
\node (k1) at (0,4) [] {$(s,y^{-1})$};
\draw[->] (i0) -- (k1);
\end{tikzpicture}
$$
However, $(\XXZ{N;d,z^{-1}}+)^\star\simeq \XXZ{N;d,z}+$ and $(\Irre{N;d,z^{-1}})^\star\simeq\Irre{N;d,z}$ by proposition \ref{thm:isoByDual} so, as the $\star$-duality flips Loewy diagrams (see section \ref{sec:dualities}), the diagram of $\XXZ{N;d,z}+$ ends up being
$$
\begin{tikzpicture}[baseline={(current bounding box.center)},scale=1/3]
\node (k0) at (0,8) [] {$(s,y^{-1})^\star$};
\node (k1) at (5,6) [] {$(d,z^{-1})^\star$};
\draw[->] (k0) -- (i0);
\end{tikzpicture}\simeq \begin{tikzpicture}[baseline={(current bounding box.center)},scale=1/3]
\node (k0) at (0,8) [] {$(s,y)$};
\node (k1) at (4,6) [] {$(d,z)$};
\draw[->] (k0) -- (i0);
\end{tikzpicture}
$$
\noindent{\bf\itshape Case 3: $(s,y)$ and $(t,x)\in \lambda_N$}. This may happen only when $q$ is a root of unity. Also, $q^d \neq z^2=q^s$ as the contrary would imply $s=d+2n\ell$ for a $n\in\mathbb{N}$ which is impossible since $s \leq N< d+2\ell$. In the same way, $q^d \neq z^{-2}=q^t$ and the structure of $\Cell{N;d,z}$ is thus given by the Loewy diagram (iii) of theorem \ref{thm:GL}. Lemma \ref{thm:partialOrders}  then allows us to link this Loewy diagram to that of $\Cell{N;d,z^{-1}}$. The two resulting diagrams are
$$\Cell{N;d,z}\ :
\begin{tikzpicture}[baseline={(current bounding box.center)},scale=1/3]
\node (k0) at (0,4) [] {$(s,y)$};
\node (i0) at (0,7) [] {$(d,z)$};
\node (k1) at (4,4) [] {$(t,x)$};
\draw[->] (i0) -- (k0);\draw[->] (i0) -- (k1);
\end{tikzpicture}
\qquad\text{and}\qquad
\Cell{N;d,z^{-1}}\ :
\begin{tikzpicture}[baseline={(current bounding box.center)},scale=1/3]
\node (k0) at (4,4) [] {$(s,y^{-1})$};
\node (i0) at (0,7) [] {$(d,z^{-1})$};
\node (k1) at (0,4) [] {$(t,x^{-1})$};
\draw[->] (i0) -- (k0);\draw[->] (i0) -- (k1);
\end{tikzpicture}
$$
where, as before, vertical arrows depict successors via condition A and diagonal ones via condition B. (Observe also that $(s,y)\neq (t,x)$ since an easy computation, given in case (i) of lemma \ref{lemma:IntFam}, show that the contrary would imply $q^d =z^2$.) The diagrams of the associated generic parts are obtained by quotienting by the successor via condition A and are thus
$$\pg{N;d,z}\ :
\begin{tikzpicture}[baseline={(current bounding box.center)},scale=1/3]
\node (i0) at (0,7) [] {$(d,z)$};
\node (k1) at (4,4) [] {$(t,x)$};
\draw[->] (i0) -- (k1);
\end{tikzpicture}
\qquad\text{and}\qquad
\pg{N;d,z^{-1}}\ :
\begin{tikzpicture}[baseline={(current bounding box.center)},scale=1/3]
\node (k0) at (4,4) [] {$(s,y^{-1})$};
\node (i0) at (0,7) [] {$(d,z^{-1})$};
\draw[->] (i0) -- (k0);
\end{tikzpicture}
$$
Theorem \ref{thm:mdsa} and proposition \ref{thm:Bsurvit} then give $\im \ay{N;d,z}\simeq \pg{N;d,z}$ so $\XXZ{N;d,z}+$ must contain a copy of $\pg{N;d,z}$ as a submodule. Similarly, $\XXZ{N;d,z^{-1}}+$ must have a submodule isomorphic to $\pg{N;d,z^{-1}}$. Again, proposition \ref{thm:isoByDual} and the $\star$-duality show that the Loewy diagram $\XXZ{N;d,z}+$ must contain the factor $\Irre{N;s,y}$ with an arrow from it to the factor $\Irre{N;d,z}$. Hence, $\XXZ{N;d,z}+$ shares its composition factors with $\Cell{N;d,z}$ as $\dim \XXZ{N;d,z}{+} = \dim \Cell{N;d,z}$ and the diagram of $\XXZ{N;d,z}+$ must be the desired one.
\end{proof}
Note that case 1 and case 2 above give a proof of the main result, theorem \ref{thm:main}, for any generic $q$ (that is any $q$ which is not a root of unity). The next sections thus concentrate on the study of the XXZ chains for $q$ a root of unity.
\end{subsection}
\end{section}


\begin{section}{The XXZ spin chain as $\luszt$-module}\label{sec:xxzLUq}
This section introduces the XXZ spin chain as a module over Lusztig's quantum group $\luszt$. We suppose that $q^2$ is an $\ell$th-primitive root of unity with $\ell \geq 2$, the limiting case $\ell =1$ being postponed to appendix \ref{app:problematic}. The first subsection presents the algebra $\luszt$ and states the key features of its representation theory while leaving most of the technical proofs to appendix \ref{app:b}. The second one gives $\atl{N}$-morphisms between spin chains and studies their image and kernel.
%
%
\begin{subsection}{Lusztig's quantum group}\label{sec:luqsl2}
For $t$ a formal parameter, the \textit{rational form} $U_t\mathfrak{sl}_2$ is defined as the $\mathbb{Q}(t)$-algebra with generators $\{E,F,K^{\pm 1},\id\}$ and relations 
$$ KEK^{-1}=t^2E, \qquad KFK^{-1} = t^{-2}F, \qquad K^{\pm 1}K^{\mp 1} = \id \quad \text{and} \quad K-K^{-1}=(t-t^{-1})[E,F]. $$
To specialize the parameter $t$ to a root of unity $q\neq \pm 1$, it is customary to consider \textit{Lusztig's integral form} $U_{\text{res}}$ which is the $\mathbb{Z}[t,t^{-1}]$-subalgebra of $U_t\mathfrak{sl}_2$ generated by $K^{\pm 1}$ and the \textit{divided powers}
$$ E^{(n)} = \frac{1}{[n]_t!}E^n \quad \text{and} \quad F^{(n)} = \frac{1}{[n]_t!}F^n $$
for $n\in \mathbb{Z}_{\geq 0}$. The resulting specialization is called \textit{Lusztig's quantum group} and is denoted $\luszt$. The generators $K$, $E$ and $F$ are still elements of $\luszt$ as $\id = E^{(0)} = F^{(0)}$, $E=E^{(1)}$ and $F=F^{(1)}$. They satisfy
$E^{\ell} = F^{\ell} = 0$ and $K^{2\ell}=\id$, but $E^{(n)}$ and $F^{(n)}$ are non-zero and well defined even for $n \geq \ell$. There is also an involutive isomorphism $\luszt \simeq \lusztI$ which fixes the divided powers and sends $K$ to $K^{-1}$. This involution will be denoted $\iota$.

The quantum group $\luszt$ is a Hopf algebra with coproduct $\Delta:\luszt \rightarrow \luszt^{\otimes 2}$ given by $\Delta(K) = K\otimes K$ as well as\footnote{This is the coproduct obtained by conjugating the coproduct given in \cite{AndersenTubbenhauer} by (tensor products of) the isomorphism $\iota$ between $\luszt$ and $\lusztI$.} 
$$ \Delta(E^{(n)}) =  \sum_{m=0}^n q^{-m(n-m)}E^{(n-m)}K^{-m}\otimes E^{(m)} \quad\text{and} \quad \Delta(F^{(n)}) = \sum_{m=0}^n q^{m(n-m)}F^{(m)}\otimes K^m F^{(n-m)}. $$
It also admits a remarkable element $H$, the \textit{unrolled generator}, which satisfies $\Delta(H)= H\otimes 1+1\otimes H$. To define this element, let $\Phi_{\ell}(t)$ be the $\ell$th-cyclotomic polynomial and consider the non-zero complex number 
$$ \epsilon_q = \lim_{t\rightarrow q}\frac{t^{2\ell}-1}{\Phi_{\ell}(t^2)} = \prod_{\substack{1\leq k\leq \ell \\ \gcd(k,\ell)\neq 1}}(q^2-e^{2i\pi{k}/{\ell}})\in\mathbb C. $$
Then, $H = \frac{1}{\epsilon_q}H'$ where $H'$ is the specialization to $\luszt$ of the element
$$ \frac{K^{2\ell}-\id}{\Phi_{\ell}(t^2)} $$
of Lusztig's integral form $U_{\text{res}}$. The proof that $H'$ is a non-zero element of $U_{\text{res}}$ and that $\Delta(H) = H\otimes 1+1\otimes H$ can be found in \cite{Lentner}. The generator $H$ is interesting as its action on a representation $V$ of $\luszt$ is related to the $q$-logarithm of the associated $K$-action. A first example of this remarkable phenomenon is given via the following family of $\luszt$-modules. 
\begin{definition}[Weyl modules, \cite{AndersenTubbenhauer}]\label{def:Weyl} Let $i \in \mathbb{Z}_{\geq 0}$. The \textit{Weyl module} $\Delta_q(i)$ is the $\luszt$-module with $\mathbb{C}$-basis $\{m_0,...,m_i\}$ whose action is 
$$ Km_k = q^{i-2k}m_k, \qquad E^{(n)}m_k =\qbin{i-k+n}{n}{q}m_{k-n} \quad \text{and} \quad F^{(n)}m_k = \qbin{k+n}{n}{q}m_{k+n} $$
for $k \in \{0,...,i\}$, $n\in\mathbb{Z}_{\geq 0}$, and where
$m_s = 0$ if $s < 0$ or $s > i$. This action satisfies $Hm_k = (i-2k)m_k$.
\end{definition}
\begin{proposition}[\cite{AndersenTubbenhauer}]\label{prop:Weyl} Let $i,r,s\in \mathbb{Z}_{\geq 0}$ be such that $s<\ell$ and $i=r\ell+s$. Then, $L_q(i) = \head \Delta_q(i)$ is simple. Also,
\begin{itemize}
\item[(i)] $\Delta_q(i)=L_q(i)$ if $r=0$ or $s=\ell-1$.
\item[(ii)] If $s\neq \ell-1$, there is a non-split short exact sequence 
$$ 0\rightarrow L_q(i)\rightarrow \Delta_q(j)\rightarrow L_q(j) \rightarrow 0 $$
where $j= i+2(\ell-s-1)$. In particular, $L_q(i)=\soc \Delta_q(j)$.
\item[(iii)] $\{L_q(i)\}_{i\in \mathbb{Z}_{\geq 0}}$ is a complete set of finite-dimensional non-isomorphic simple $\luszt$-modules (of type I)\footnote{See \cite{AndersenTubbenhauer} and \cite{AndersenPolo} for the definition of $\luszt$-modules of types I and II. The distinction between these two types is not crucial in the present text.}.
\end{itemize}
\end{proposition}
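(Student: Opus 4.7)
My plan is to deduce all assertions by directly analyzing submodules of $\Delta_q(i)$ via the explicit action in Definition~\ref{def:Weyl}. Three preliminary observations drive the argument: (a) $K$ acts diagonally on $\{m_k\}$ with pairwise distinct eigenvalues $q^{i-2k}$, so every $\luszt$-submodule is spanned by a subset of the basis; (b) $\Delta_q(i)$ is cyclic with generator $m_0$ (because $F^{(k)}m_0 = m_k$), so it has a unique maximal submodule and $L_q(i) = \head \Delta_q(i)$ is simple, which proves the first assertion; (c) the key computational tool is the $q$-Lucas identity, stating that for $q$ a primitive $2\ell$th root of unity and integers $m = m_1\ell + m_0$, $n = n_1\ell + n_0$ with $0 \leq m_0, n_0 < \ell$,
\begin{equation*}
\qbin{m}{n}{q} \;=\; \binom{m_1}{n_1}\,\qbin{m_0}{n_0}{q},
\end{equation*}
either factor vanishing when its lower index exceeds its upper.

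For (i), I would verify that $\qbin{i}{k}{q} \neq 0$ for every $1 \leq k \leq i$: since $E^{(k)}m_k = \qbin{i}{k}{q}\,m_0$, this forces each $m_k$ to generate $\Delta_q(i)$, ruling out proper nonzero submodules. When $r = 0$ all $q$-factorials involved are invertible; when $s = \ell-1$, Lucas gives $\qbin{i}{k}{q} = \binom{r}{k_1}\qbin{\ell-1}{k_0}{q}$, and both factors are nonzero (the former because $k \leq i$ forces $k_1 \leq r$, the latter for every $0 \leq k_0 < \ell$). For (ii), I would search for ``singular'' basis vectors in $\Delta_q(j)$: those $m_k$ with $k > 0$ satisfying $E^{(n)}m_k = 0$ for all $n \geq 1$. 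Writing $j = (r+2)\ell - s - 2$, the condition $[j - k + 1]_q = 0$ forces $k \equiv \ell - s - 1 \pmod{\ell}$; a short Lucas computation shows that the only such $k$ in the range $0 < k \leq j$ for which full annihilation holds is $k_0 := \ell - s - 1$ (at $k = k_0 + a\ell$ with $1 \leq a \leq r$, one finds $\qbin{j - k + \ell}{\ell}{q} = r+1-a \neq 0$). Since $m_{k_0}$ has $K$-weight $q^{j - 2k_0} = q^i$, the cyclic submodule $M := \luszt \cdot m_{k_0}$ is a quotient of $\Delta_q(i)$ of highest weight $q^i$, and an analogous Lucas count of the non-vanishing $F^{(n)}m_{k_0}$ gives $\dim M = (r+1)(s+1)$.

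The main obstacle is to show $M \cong L_q(i)$ rather than $\Delta_q(i)$, which is nontrivial because when $r \geq 1$ and $s \neq \ell - 1$ the Weyl module $\Delta_q(i)$ is itself non-simple (inductively). Uniqueness of the singular vector handles this: any nonzero submodule of $\Delta_q(j)$ has its minimal-index basis vector forced to be singular (else $E^{(n)}$ would produce a smaller-index basis vector outside), hence equal to $m_0$ or $m_{k_0}$, so a proper nonzero submodule of $M$ would yield a singular $m_{k^*}$ with $0 < k^* \neq k_0$ (recall $m_0 \notin M$ by comparison of highest $K$-weights), contradicting uniqueness. Thus $M$ is simple and $M \cong L_q(i) = \soc \Delta_q(j)$. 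A parallel singular-vector analysis applied to $\Delta_q(j)/M$---a cyclic highest-weight module of highest weight $q^j$---shows that this quotient is simple, hence isomorphic to $L_q(j)$; non-splitting is automatic from the indecomposability of the cyclic $\Delta_q(j)$. Assertion (iii) finally reduces to a standard argument: a finite-dimensional simple type-I $\luszt$-module contains a highest weight vector (as $E$ is nilpotent) with $K$-eigenvalue $q^i$ for a unique $i \geq 0$, hence arises as a quotient of $\Delta_q(i)$ and equals $L_q(i)$; distinct $i$'s give non-isomorphic simples by comparing $K$-weight spectra.
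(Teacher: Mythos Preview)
The paper does not give its own proof of this proposition; it is cited from \cite{AndersenTubbenhauer}. Your direct approach via the explicit basis is the natural one and is mostly sound, but observation (a) contains a genuine error that propagates through the argument.

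The $K$-eigenvalues $q^{i-2k}$ are \emph{not} pairwise distinct once $i \geq \ell$: since $q^{2\ell}=1$, we have $q^{i-2k}=q^{i-2(k+\ell)}$, so $m_k$ and $m_{k+\ell}$ lie in the same $K$-eigenspace. You therefore cannot conclude from the $K$-action alone that every submodule is spanned by a subset of $\{m_k\}$, and your key step in (ii)---``any nonzero submodule of $\Delta_q(j)$ has its minimal-index basis vector forced to be singular''---rests on exactly this. The repair is already at hand in the paper: the unrolled generator $H\in\luszt$ acts by $Hm_k=(i-2k)m_k$ (Definition~\ref{def:Weyl}), with genuinely distinct integer eigenvalues, so the $H$-weight spaces are one-dimensional and your entire argument goes through once you replace $K$ by $H$ (or equivalently adjoin the element $\qbin{K;0}{\ell}{q}$ of Lemma~\ref{lemma:K0ellt}). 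The same fix is needed in (iii): a $K$-eigenvalue $q^i$ only determines $i$ modulo $2\ell$, so to recover a \emph{unique} $i\geq 0$ from a highest weight vector you must read off the $H$-weight, and the existence of a highest weight vector requires not just $E^\ell=0$ but also local nilpotence of $E^{(\ell)}$ (which follows because $E^{(\ell)}$ raises $H$-weight by $2\ell$ and the $H$-spectrum is bounded on a finite-dimensional module).

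A minor remark: the $q$-Lucas identity in the paper (Proposition~\ref{thm:qLucas}) carries an extra nonzero power of $q$ that your statement omits; this is irrelevant for your vanishing arguments but worth noting for consistency.
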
 
\noindent The second part of proposition \ref{prop:Weyl} makes it easy to obtain an explicit realization of the simple module $L_q(i)$. Indeed, for $j,r,s$ as above with $s\neq \ell-1$, one can show that the subspace $V_j = \vspn_{\mathbb{C}}\{m_{a\ell+b}\,|\,0\leq a \leq r,\, \ell-s-1\leq b<\ell\}\subsetneq \Delta_q(j)$ is closed under the $\luszt$-action so that the proposition gives $L_q(i)\simeq\soc \Delta_q(j) \simeq V_j$. In particular\footnote{We use here the fact that $0\leq a\ell+b\leq (r+1)\ell-1 \leq (r+2)\ell-s-2= i+2(\ell-s-1) = j$ for every $0\leq a\leq r$ and $\ell-s-1\leq b < \ell$ as $s< \ell$.}, $\dim L_q(i) = (r+1)(s+1)$.

Another remarkable family of modules is given by the projective\footnote{Note that these modules are probably not projective in the category of all $\luszt$-modules.} covers of the simple modules $L_q(i)$ in the category of finite-dimensional $\luszt$-modules (of type I). These modules are implicitly described in the next proposition.
\begin{proposition}[\cite{AndersenTubbenhauer}]\label{prop:ProjAT} Let $i,r,s \in \mathbb{Z}_{\geq 0}$ satisfy $s<\ell$ with $i=r\ell+s$. Denote by $P_q(i)$ the projective cover of $L_q(i)$. Then, 
\begin{itemize}
\item[(i)] $P_q(i)$ is an indecomposable (type I) $\luszt$-module which is also the injective envelope of $L_q(i)$.
\item[(ii)] $P_q(i) = \Delta_q(i) = L_q(i)$ if $s = \ell-1$.
\item[(iii)] If $s\neq \ell-1$, there is a non-split short exact sequence
$$ 0\rightarrow \Delta_q(j)\rightarrow P_q(i)\rightarrow \Delta_q(i)\rightarrow 0 $$
where $j = i+2(\ell-s-1)$. In particular, $\dim P_q(i) = 2\ell(r+1)$.
\item[(iv)] Let $\mathsf M$ be projective in the category of finite-dimensional (type I) $\luszt$-modules. Then, for each indecomposable summand $\mathsf M'$ of $\mathsf M$, there is a unique $i\in \mathbb{Z}_{\geq 0}$ such that $\mathsf M'\simeq P_q(i)$ as $\luszt$-modules.
\end{itemize}
\end{proposition}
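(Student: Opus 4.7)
The plan is to treat the four parts in the order (iv), (ii), (iii), (i), where (iii) supplies the core structural content and (i) is then deduced from a self-duality argument. First, the category $\mathcal{C}$ of finite-dimensional (type I) $\luszt$-modules is abelian and Krull--Schmidt, and it has enough projectives: each simple $L_q(n)$ appears as a direct summand of a sufficiently large tensor power of $L_q(1)=\Delta_q(1)$, itself projective since it is already simple when $\ell\geq 2$. Standard arguments then show that every indecomposable projective of $\mathcal{C}$ is the projective cover of its simple head, which gives (iv) as well as the indecomposability half of (i).

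For (ii), if $s=\ell-1$ then proposition \ref{prop:Weyl}(i) already yields $\Delta_q(i)=L_q(i)$. To see that $L_q(i)$ is projective, I would invoke the quantum Steinberg decomposition $L_q(r\ell+\ell-1)\simeq L_q(\ell-1)\otimes L_q(r\ell)^{[F]}$, in which the Steinberg module $L_q(\ell-1)$ has no nontrivial extensions with any other simple (by a linkage argument) and the Frobenius twist $L_q(r\ell)^{[F]}$ pulls back through the Frobenius a semisimple classical $U\mathfrak{sl}_2$-module. Projectivity passes through this tensor product.

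For the heart of the proof, (iii), the projective-cover property of $P_q(i)$ supplies a surjection $\pi\colon P_q(i)\twoheadrightarrow \Delta_q(i)$ since $\head\Delta_q(i)=L_q(i)$. I would identify $\ker\pi$ via BGG reciprocity: the category $\mathcal{C}$ is quasi-hereditary with standard modules $\{\Delta_q(k)\}_{k\geq 0}$, from which the identity $[P_q(i):\Delta_q(k)]=[\Delta_q(k):L_q(i)]$ follows. Proposition \ref{prop:Weyl} shows that this multiplicity is nonzero only for $k\in\{i,j\}$, with value $1$ in each case; hence $P_q(i)$ has a standard filtration of length two with $\Delta_q(i)$ as top quotient and $\Delta_q(j)$ as bottom submodule, producing the desired short exact sequence. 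Non-splitness follows from the indecomposability of $P_q(i)$, and the dimension formula from $\dim\Delta_q(i)+\dim\Delta_q(j)=(r\ell+s+1)+((r+2)\ell-s-1)=2\ell(r+1)$.

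Finally, for the injective-envelope half of (i), I would exhibit a contravariant self-duality $D$ on $\mathcal{C}$, realised by composing linear duality with the involutive automorphism of $\luszt$ that swaps $E^{(n)}\leftrightarrow F^{(n)}$ and fixes $K$. Since $D$ is exact, preserves every simple and interchanges projective covers with injective envelopes, $D(P_q(i))$ is the injective envelope of $L_q(i)$. To conclude $D(P_q(i))\simeq P_q(i)$, I would note that both modules are indecomposable with the same composition factors and that $\Ext^1_{\mathcal{C}}(\Delta_q(i),\Delta_q(j))$ is one-dimensional, so the non-split extension is unique up to isomorphism. The main obstacle throughout is the verification of the quasi-hereditary structure of $\mathcal{C}$, in particular the vanishing $\Ext^{>0}_{\mathcal{C}}(\Delta_q(m),D\Delta_q(n))=0$ underlying BGG reciprocity; this is a technical but standard computation exploiting base change from Lusztig's $\mathbb{Z}[t^{\pm 1}]$-form.
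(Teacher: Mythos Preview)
The paper does not prove this proposition at all: it is quoted verbatim from \cite{AndersenTubbenhauer} and used as a black box. There is therefore no proof in the paper against which to compare your attempt.

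That said, your sketch contains a genuine error in the opening step. You write that $L_q(1)=\Delta_q(1)$ is ``itself projective since it is already simple when $\ell\geq 2$.'' Simplicity does not imply projectivity. In fact, by the very statement you are proving (part (ii)), $L_q(1)$ is projective only when $1=r\ell+s$ with $s=\ell-1$, i.e.\ only when $\ell=2$. For $\ell\geq 3$ the module $L_q(1)$ is not projective, and your argument that $\mathcal{C}$ has enough projectives collapses. The correct route to enough projectives in this setting is not through $L_q(1)$ directly but through the tilting theory: one shows that sufficiently large tensor powers of $L_q(1)$ decompose as direct sums of indecomposable tilting modules, and that in this particular category the indecomposable tiltings coincide with the indecomposable projectives (and injectives). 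This is essentially what \cite{AndersenTubbenhauer} do, and the paper itself records a shadow of it in corollary~\ref{coro:seulWeylProj}.

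Beyond this, your overall architecture---BGG reciprocity in a quasi-hereditary category for (iii), and a contravariant duality fixing simples for the injective half of (i)---is the standard one and is indeed what underlies the proof in \cite{AndersenTubbenhauer}. Your honest flagging of the quasi-hereditary verification as the main technical burden is appropriate; that verification is where most of the actual work lies, and it is not something one can wave through.
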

For $i=r\ell+s$ as above with $s\neq \ell-1$, there is another non-split short exact sequence (cf. \cite{AndersenTubbenhauer})
\begin{equation*}
0\rightarrow \nabla_q(i) \rightarrow P_q(i) \rightarrow \nabla_q(j)\rightarrow 0\end{equation*}
where again $j=i-2(\ell-s-1)$ and where $\nabla_q(i)$ with $\nabla_q(j)$ are \textit{co-Weyl modules} (i.e. dual Weyl modules defined by using the antipode of $\luszt$). In particular, the modules $P_q(i)$ are self-dual and admit the following Loewy diagram 
\begin{equation}\label{fig:LoewyPq}
\begin{tikzpicture}[baseline={(current bounding box.center)},scale = 0.5]
\node[scale = 1] (A) at (0,0) {$L_q(i)$};
\node[scale = 1] (B) at (0,-4) {$L_q(i)$};
\node[scale = 1] (C) at (-2,-2) {$L_q(i_2)$};
\node[scale = 1] (D) at (2,-2) {$L_q(j)$};
\draw[->, thick, color = red]  (A) edge (D) (C) edge (B) ;
\draw[->, thick, color = blue] (A) edge (C)  (D) edge (B);
\end{tikzpicture}
\end{equation}
where, again, $i = r\ell+s$ with $s\neq \ell-1$, $j = i+2(\ell-s-1)$ and $i_2 = i-2(s+1)$. The composition factor $L_q(i_2)$ must be removed if $i_2 < 0$. The exact sequence of proposition \ref{prop:ProjAT} and its dual are illustrated with blue and red arrows (resp.).

The next theorem, proved in appendix \ref{app:b}, gives an explicit realization of the projective covers $P_q(i)$ when $P_q(i)\neq \Delta_q(i)$. The proof uses the fact that $\Ext^1_{\luszt}(\Delta_q(i),\Delta_q(j))\simeq \mathbb{C}$ (cf.~lemma \ref{lemma:ExtWeyl}) and starts by defining a module $T_t(i)$ over the integral form $U_{\text{res}}$ whose specialization at $t=q$ is the desired realization of $P_q(i)$. To our knowledge, this realization is new. It makes explicit the non-split short exact sequence of proposition \ref{prop:ProjAT} (ii). A different realization is known when $q = e^{i\pi/\ell}$ (cf.~\cite{bushlanov2009lusztig}) but does not exhibit this exact sequence. However, this other realization makes explicit the action of a remarkable subalgebra $\mathscr{U}\subseteq \luszt$, isomorphic to $U\mathfrak{sl}_2$, on the composition factors of $P_q(i)$.
\begin{theorem}\label{thm:ProjReal} Let $i,r,s \in \mathbb{Z}_{\geq 0}$ satisfy $s< \ell-1$ and $i=r\ell+s$. The linear space $T_q(i)$ with $\mathbb{C}$-basis $\{m_0,...,m_j,n_0,...,n_i\}$ is a $\luszt$-module such that $P_q(i)\simeq T_q(i)$ for the action given by $Km_k = q^{j-2k}m_k$, $Kn_p = q^{i-2p}n_p$ and
\begin{alignat*}{5} E^{(v)}m_k &= \qbin{j-k+v}{v}{q}m_{k-v},& \qquad &E^{(v)}n_p &=&\ \qbin{i-p+v}{v}{q}n_{p-v}+\gamma_{p,v}m_{\ell-s+p-v-1},\\
 F^{(v)}m_k &= \qbin{k+v}{v}{q}m_{k+v},& \qquad &F^{(v)}n_p &=&\ \qbin{p+v}{v}{q}n_{p+v}+\omega_{p,v}m_{\ell-s+p+v-1}
\end{alignat*}
where the complex numbers $\gamma_{p,v}$ and $\omega_{p,v}$ are
\begin{align*} 
\gamma_{p,v} & = \lim_{t\rightarrow q}\frac{1}{[v]_t!}\sum_{u=0}^{v-1} \qbin{\ell-s+p-u-2}{p-u}{t}\ \prod_{a=1}^u[i-p+a]_t\prod_{b=u+1}^{v-1}[i+\ell-s-p+b]_t, \\ 
\omega_{p,v} & = \begin{cases}
{\displaystyle \lim_{t\rightarrow q}\frac{1}{[\ell-s+i]_t}\qbin{\ell-s+p+v-1}{p+v}{t}\qbin{p+v}{v}{t},} & \text{if } v > i-p, \\
0, & \text{if } v \leq i-p.
\end{cases}
\end{align*}
Moreover the submodule $\mathsf M$ of $T_q(i)$ generated by $\{m_0,...,m_j\}$ is isomorphic to $\Delta_q(j)$ and $T_q(i)/\mathsf M \simeq \Delta_q(i)$. Also, the unrolled generator $H$ acts on $T_q(i)$ as on the direct sum $\Delta_q(i)\oplus\Delta_q(j)$, that is $Hm_k = (j-2k)m_k$ and $Hn_p = (i-2p)n_p$.
\end{theorem}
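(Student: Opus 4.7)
The plan is to construct $T_q(i)$ as the specialization at $t=q$ of a generic family $T_t(i)$ of $U_t\mathfrak{sl}_2$-modules, defined over $\mathbb Q(t)$ by the same formulas with the limits removed. For generic $t$, both Weyl modules $\Delta_t(i)$ and $\Delta_t(j)$ are simple and non-isomorphic, so $\Ext^1_{U_t\mathfrak{sl}_2}(\Delta_t(i),\Delta_t(j))=0$ and $T_t(i)$ necessarily splits as $\Delta_t(i)\oplus\Delta_t(j)$. However, the splitting uses a change of basis whose coefficients are rational functions in $t$ with poles at $t=q$; the formulas for $\gamma_{p,v}$ and $\omega_{p,v}$ should be viewed as the regularized limits needed so that $T_t(i)$ survives specialization while becoming a non-trivial extension at $t=q$.

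I would first verify that the given formulas define a $U_t\mathfrak{sl}_2$-module structure on the free $\mathbb Q(t)$-module $T_t(i)$. The $K$-action is diagonal and compatible with $E,F$ shifting weights by $\pm 2$, so only two types of relations need checking: the commutator $(t-t^{-1})[E,F]=K-K^{-1}$ applied to each $n_p$, together with the divided-power relations $E^{(v_1)}E^{(v_2)}=\qbin{v_1+v_2}{v_1}{t}E^{(v_1+v_2)}$ and their $F$- and mixed-$EF$ analogues. Each reduces to a $q$-binomial identity provable by induction on $v$ via the identities of appendix \ref{app:a}; this is the computational core and is precisely what forces the shapes of $\gamma_{p,v}$ and $\omega_{p,v}$. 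Once the relations hold generically, I would check that the limits defining $\gamma_{p,v}$ and $\omega_{p,v}$ at $t=q$ are finite: this amounts to cancellation of the apparent poles from $[\ell-s+i]_t^{-1}$ and from $q$-binomials with large upper index, and the specific structure of the sums and products in the formulas is designed so that this cancellation holds.

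The resulting $\luszt$-module $T_q(i)$ then has the claimed submodule structure directly from the formulas: $\mathsf{M}=\vspn_{\mathbb C}\{m_0,\ldots,m_j\}$ is preserved by the action (no $n_p$ ever appears on the right when acting on a $m_k$), with induced action exactly that of $\Delta_q(j)$, and the classes of $\{n_p\}$ in the quotient carry the $\Delta_q(i)$-action. To conclude $T_q(i)\simeq P_q(i)$, I would invoke lemma \ref{lemma:ExtWeyl} stating $\Ext^1_{\luszt}(\Delta_q(i),\Delta_q(j))\simeq\mathbb C$: any non-split extension of $\Delta_q(i)$ by $\Delta_q(j)$ is isomorphic to $P_q(i)$ by proposition \ref{prop:ProjAT}(iii). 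It thus suffices to exhibit one pair $(p,v)$ with $\omega_{p,v}\neq 0$, which is immediate from the explicit formula (take any $v>i-p$). Finally, $H$ acts diagonally because every basis vector is a $K$-weight vector, and its eigenvalues follow from a standard limit computation applied to the defining expression $H=\epsilon_q^{-1}\lim_{t\to q}(K^{2\ell}-\id)/\Phi_\ell(t^2)$ on each weight space.

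The main obstacle is the verification of the module axioms at generic $t$: the coefficients $\gamma_{p,v}$ and $\omega_{p,v}$ encode an intricate system of $q$-binomial identities that must hold simultaneously, and organizing these computations cleanly, while handling the implicit case distinction between $v<\ell-s$ and $v\geq\ell-s$ where the behaviour of the relevant $q$-binomials at specialization changes character, is the technical heart of the theorem.
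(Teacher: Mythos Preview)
Your overall strategy matches the paper's exactly: build a $U_t\mathfrak{sl}_2$-module $T_t(i)$ over $\mathbb Q(t)$ with the same formulas, verify the defining relations there, prove the limits $\gamma_{p,v},\omega_{p,v}$ exist at $t=q$, observe the short exact sequence $0\to\Delta_q(j)\to T_q(i)\to\Delta_q(i)\to 0$ from the formulas, and then combine indecomposability with $\Ext^1_{\luszt}(\Delta_q(i),\Delta_q(j))\simeq\mathbb C$ and proposition~\ref{prop:ProjAT}(iii) to conclude $T_q(i)\simeq P_q(i)$.

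There is one genuine gap, in your indecomposability argument. You write ``it thus suffices to exhibit one pair $(p,v)$ with $\omega_{p,v}\neq 0$, which is immediate from the explicit formula (take any $v>i-p$)''. Neither half of this is correct. First, the formula for $\omega_{p,v}$ when $v>i-p$ is a limit of type $0/0$: the denominator $[\ell-s+i]_t=[(r+1)\ell]_t$ vanishes at $t=q$, and the proof that the limit exists (the paper's lemma~\ref{lemma:omegaLUq}) works precisely by showing the numerator $q$-binomial also vanishes. So nothing in the formula makes $\omega_{p,v}\neq 0$ ``immediate''. Second, and more fundamentally, a nonzero cross-term $\omega_{p,v}$ does \emph{not} by itself obstruct splitting: a putative complement $\mathsf N\simeq\Delta_q(i)$ would be spanned by vectors $n'_p=n_p+c_p\,m_{\ell-s-1+p}$, and the condition $F^{(v)}n'_p\in\mathsf N$ only yields the linear relation $\omega_{p,v}+c_p\qbin{\ell-s-1+p+v}{v}{q}=0$ (when $p+v>i$), which can perfectly well be solvable for $c_p$. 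Since moreover $\omega_{p,v}=0$ whenever $v\le i-p$, no single $\omega$ gives an immediate obstruction.

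The paper instead uses the $E$-side. One has $\gamma_{0,1}=\qbin{\ell-s-2}{0}{q}=1$, so $En_0=m_{\ell-s-2}\neq 0$. If a splitting existed, the image $n'_0$ of the highest-weight vector of $\Delta_q(i)$ would have the form $n_0+c\,m_{\ell-s-1}$ (by $H$-weight considerations), and then
\[
En'_0=En_0+c\,[j-\ell+s+2]_q\,m_{\ell-s-2}=m_{\ell-s-2}+c\,[(r+1)\ell]_q\,m_{\ell-s-2}=m_{\ell-s-2},
\]
since $[(r+1)\ell]_q=0$. But $n'_0$ must be a highest-weight vector of the complement, so $En'_0=0$: contradiction. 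Replacing your $\omega$-based sentence by this short $\gamma_{0,1}$ argument fixes the proof.
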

\textit{Fusion rules}, that is, decompositions of tensor products of $\luszt$-modules into indecomposable summands, will be used below and are proved in appendix \ref{app:b}. Note that, in light of these rules, we can identify the covers $P_q(i)$ with the non-simple modules studied by Pasquier and Saleur \cite{PS} in their work\footnote{In \cite{PS}, the authors found the decomposition of the modules $(L_q(1))^{\otimes N}$ in terms of indecomposable $\luszt$-modules. They did not prove the projectivity of the non-irreducible indecomposable summands appearing in this decomposition, but they did study their structure and found in particular preliminary versions of the fusion rules presented in theorem \ref{thm:fusLUq} and of the upcoming proposition \ref{prop:SECWeylProj}. 
} on the $\luszt$-module $(L_q(1))^{\otimes N}$. Note also that our result reproduces some of the fusion rules given in \cite{bushlanov2009lusztig,bushlanov2012lusztig} when $q = e^{i\pi/\ell}$.
\begin{theorem}\label{thm:fusLUq} Let $i,r,s\in \mathbb{Z}_{\geq 0}$ satisfy $s<\ell$ and $i=r\ell+s$. Then,
\begin{equation*}
L_q(i)\otimes L_q(1) \simeq (1-\delta_{s,0})M_q(i-1)\oplus (1-\delta_{s,\ell-1})L_q(i+1)\quad\text{and}
\end{equation*}
\begin{equation*}
P_q(i)\otimes L_q(1) \simeq (1+\delta_{s,\ell-2})P_q(i+1)\oplus (1-\delta_{r,0}\delta_{s,0})P_q(i-1)\oplus \delta_{s,0}P_q(i+2\ell-1)
\end{equation*}
where (only) the first rule applies when the projective modules $P_q(i)$ is isomorphic to $L_q(i)$ (i.e. if $s=\ell-1$) and where
$$M_q(i-1) = \begin{cases}
L_q(i-1), & \text{if } s \neq \ell-1, \\
P_q(i-1), & \text{if } s = \ell-1.
\end{cases}
$$
\end{theorem}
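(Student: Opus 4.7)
The plan is to establish both fusion rules by combining character (weight) computations, preservation of projectivity under tensor products with rigid modules, and $\Ext$-vanishing arguments extracted from the Loewy diagram \eqref{fig:LoewyPq}. I treat the two rules in turn.

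For the first rule, I first note that $L_q(1)=\Delta_q(1)$ is two-dimensional with $H$-weights $\pm 1$ (and $K$-weights $q^{\pm 1}$), so the $H$-weight multiplicities of $L_q(i)\otimes L_q(1)$ are obtained from those of $L_q(i)$ by shifting. Using the explicit realization of $L_q(i)$ as the socle $V_{j'}\subseteq \Delta_q(j')$ with $j'=i+2(\ell-s-1)$ when $s\neq \ell-1$ (from proposition \ref{prop:Weyl}(ii) and the paragraph after), and definition \ref{def:Weyl}, I match these weight data against the proposed summands to pin down the composition factors. The upgrade to a genuine direct sum decomposition is then case-by-case. When $0<s<\ell-1$, an inspection of the Loewy diagram \eqref{fig:LoewyPq} shows that $L_q(i-1)$ and $L_q(i+1)$ are never neighbouring composition factors of any $P_q(a)$, since neither $j_a=a+2(\ell-s_a-1)$ nor $a_2=a-2(s_a+1)$ equals $i\pm1$ when $a\in\{i\mp 1\}$ under this hypothesis; hence $\Ext^1_{\luszt}(L_q(i\pm 1),L_q(i\mp 1))=0$ and the tensor product is semisimple. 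When $s=0$, either $r=0$ (in which case the formula already excludes $L_q(i-1)$) or $r\geq 1$, and a dimension count rules out $L_q(i-1)$ as a composition factor, leaving only $L_q(i+1)$. When $s=\ell-1$, the module $L_q(i)=\Delta_q(i)=P_q(i)$ is itself projective by proposition \ref{prop:ProjAT}(ii), so the tensor is projective and decomposes as a sum of $P_q$'s; the composition-factor data together with the indecomposability of $P_q(i-1)$ then force $L_q(i)\otimes L_q(1)\simeq P_q(i-1)$.

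For the projective fusion rule, the key observation is that $L_q(1)$ is rigid (self-dual up to a twist of the $K$-action), so tensoring with $L_q(1)$ preserves projectivity in the category of finite-dimensional type I $\luszt$-modules. Hence $P_q(i)\otimes L_q(1)$ is projective and by proposition \ref{prop:ProjAT}(iv) decomposes uniquely as $\bigoplus_k n_k P_q(k)$. I then extract the multiplicities by
\begin{equation*}
n_k=\dim \Hom_{\luszt}(P_q(i)\otimes L_q(1), L_q(k))\simeq \dim \Hom_{\luszt}(P_q(i), L_q(k)\otimes L_q(1))
\end{equation*}
using the tensor-hom adjunction and $L_q(1)^{*}\simeq L_q(1)$. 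The first fusion rule (just proved) decomposes $L_q(k)\otimes L_q(1)$, while $\dim \Hom_{\luszt}(P_q(i),L_q(m))=\delta_{i,m}$ and $\dim \Hom_{\luszt}(P_q(i),P_q(m))$ equals the multiplicity of $L_q(i)$ as a composition factor of $P_q(m)$, readable off \eqref{fig:LoewyPq}. Case analysis on $s$ then yields the stated multiplicities.

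The main obstacle will be the careful bookkeeping across the boundary values $s\in\{0,\ell-2,\ell-1\}$ and the edge case $r=0$, where some putative composition factors must be omitted because they carry negative labels. In particular, the appearance of the summand $P_q(i+2\ell-1)$ precisely when $s=0$ arises from the jump in $j_a=a+2(\ell-s_a-1)$ as $s_a$ crosses $\ell-1$ in the adjunction calculation, and verifying this contribution requires a delicate tracing through the Loewy structure of $P_q(i\pm 1)$ via \eqref{fig:LoewyPq}.
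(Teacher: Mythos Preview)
Your proposal is correct and takes a genuinely different route from the paper's proof (which appears as proposition \ref{prop:fusSimple} and corollary \ref{cor:fusProj}).

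For the first rule, the paper works constructively: it exhibits explicit highest weight vectors $y=m_{\ell-s-1}\otimes m_0'$ and $x=m_{\ell-s-1}\otimes m_1'+q^{\ell-s}m_{\ell-s}\otimes m_0'$ in $L_q(i)\otimes L_q(1)$, invokes lemma \ref{lemma:topphp} to place $L_q(i\pm1)$ in the heads of the cyclic submodules they generate, and then uses the PBW factorization (proposition \ref{prop:PBW}) to bound $\dim\langle x\rangle$ and $\dim\langle y\rangle$ from above; a dimension count closes each case. Your approach instead determines the composition factors abstractly via characters and then upgrades to a direct-sum decomposition by reading $\Ext^1$-vanishing off the Loewy diagram \eqref{fig:LoewyPq}. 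For the second rule, the paper applies the exact functor $-\otimes L_q(1)$ to $\soc P_q(i)$ (and, in the boundary cases, also to the other Loewy layers) and repeatedly matches simple subquotients with socles of indecomposable projectives, again finishing by dimension. Your reciprocity computation $n_k=\dim\Hom(P_q(i),L_q(k)\otimes L_q(1))=[L_q(k)\otimes L_q(1):L_q(i)]$ is cleaner and handles all values of $s$ uniformly once the first rule is in hand. What the paper's approach buys is that it is self-contained: it does not presuppose the $\Ext^1$-structure between simples (equivalently, the full Loewy diagram of every $P_q(a)$), whereas your first-rule argument does. Conversely, your second-rule argument avoids the somewhat delicate tracking of socle inclusions across the cases $s\in\{0,\ell-2\}$ and $\ell=2$ that the paper has to carry out by hand.

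One small point to tighten in your write-up: in the case $s=\ell-1$, the phrase ``composition-factor data together with the indecomposability of $P_q(i-1)$ then force $L_q(i)\otimes L_q(1)\simeq P_q(i-1)$'' is correct but deserves a line of justification, since distinct sums of indecomposable projectives can a priori share composition factors; the quickest route is to note that the highest $H$-weight $i+1$ occurs with multiplicity one and that among indecomposable projectives only $P_q(i-1)$ has $i+1$ as its top weight (via the filtration of proposition \ref{prop:ProjAT}(iii)), after which the dimension equality $\dim P_q(i-1)=2(r+1)\ell$ finishes. Alternatively, your own adjunction argument from the second rule applies verbatim here, since $L_q(i)=P_q(i)$ in this case.
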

\begin{corollary}\label{coro:seulWeylProj}Let $L_q(1)\simeq \Delta_q(1)$ be the two-dimensional Weyl module of definition \ref{def:Weyl} and let $k$ be a positive integer. Then the $k$-fold tensor product $(L_q(1))^{\otimes k}$ is a direct sum of Weyl and projective modules.
\end{corollary}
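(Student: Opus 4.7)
The plan is to proceed by induction on $k$. The base case $k=1$ is immediate since $L_q(1)=\Delta_q(1)$ is a Weyl module. For the inductive step, distribute the tensor product to write
$(L_q(1))^{\otimes (k+1)} = \bigoplus_j W_j\otimes L_q(1)$,
where each $W_j$ is a Weyl or projective module by the inductive hypothesis. It therefore suffices to establish the following key lemma: for every $i\in\mathbb{Z}_{\geq 0}$, both $P_q(i)\otimes L_q(1)$ and $\Delta_q(i)\otimes L_q(1)$ are direct sums of Weyl and projective modules.

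The projective case is immediate from Theorem \ref{thm:fusLUq}, which expresses $P_q(i)\otimes L_q(1)$ as a direct sum of projective modules. For the Weyl case, I would write $i=r\ell+s$ with $0\leq s<\ell$ and split on whether $\Delta_q(i)$ is simple. When $\Delta_q(i)=L_q(i)$, that is, when $r=0$ or $s=\ell-1$ by Proposition \ref{prop:Weyl}, Theorem \ref{thm:fusLUq} applies directly: if $r=0$ and $s<\ell-1$ the result is a direct sum of simple Weyls $L_q(i\pm 1)$ of index less than $\ell$, and if $s=\ell-1$ it collapses to the single projective $M_q(i-1)=P_q(i-1)$.

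The nontrivial case is $r\geq 1$ and $s\leq\ell-2$, where $\Delta_q(i)$ is a genuine non-simple Weyl. Here the plan is to establish $\Delta_q(i)\otimes L_q(1)\simeq\Delta_q(i-1)\oplus\Delta_q(i+1)$. I would first tensor the socle short exact sequence $0\to L_q(r\ell-s-2)\to\Delta_q(i)\to L_q(i)\to 0$ of Proposition \ref{prop:Weyl} with $L_q(1)$ and decompose the outer terms via Theorem \ref{thm:fusLUq}; a composition factor and dimension comparison forces the two sides to share the same composition factors as $\Delta_q(i-1)\oplus\Delta_q(i+1)$. To promote this to a genuine isomorphism, I would construct a $\luszt$-morphism $\Delta_q(i+1)\to\Delta_q(i)\otimes L_q(1)$ sending the highest weight generator to $m_0\otimes n_0$ (well-defined since this vector has weight $i+1$ and is killed by $E$ via the coproduct of Subsection \ref{sec:luqsl2}), and verify injectivity by a direct coproduct computation showing that $F^{(i+1)}(m_0\otimes n_0)$ is non-zero. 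The cokernel is then identified with $\Delta_q(i-1)$ by composition factor counting, and splitting of the resulting short exact sequence $0\to\Delta_q(i-1)\to\Delta_q(i)\otimes L_q(1)\to\Delta_q(i+1)\to 0$ follows from the standard vanishing $\Ext^1_{\luszt}(\Delta_q(\mu),\Delta_q(\lambda))=0$ whenever $\mu>\lambda$, a staple of highest weight categories.

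The main obstacle I anticipate is handling the boundary subcases $s=0$ or $s=\ell-2$, where the fusion rule loses summands and the Weyl modules $\Delta_q(i\pm 1)$ may need to be rewritten in canonical $r'\ell+s'$ form, sometimes degenerating into a simple Weyl or a projective. A patient case analysis should fit these within the same framework.
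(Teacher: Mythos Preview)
Your approach is correct in principle but does considerably more work than the paper's argument. The paper observes that a stronger invariant holds under the induction: the summands of $(L_q(1))^{\otimes k}$ are always either \emph{simple} Weyl modules $L_q(i)$ with $i<\ell$ or projectives $P_q(i)$. Non-simple Weyl modules never actually arise. Indeed, for $i<\ell-1$ the first rule of Theorem~\ref{thm:fusLUq} gives $L_q(i)\otimes L_q(1)\simeq L_q(i-1)\oplus L_q(i+1)$, still with indices at most $\ell-1$; the borderline summand $L_q(\ell-1)$ is already projective, and from there the second rule keeps everything projective (the one exception being $P_q(i)=L_q(i)$ when $s=\ell-1$, where the first rule returns the single projective $M_q(i-1)=P_q(i-1)$). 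So your entire analysis of $\Delta_q(i)\otimes L_q(1)$ for non-simple $\Delta_q(i)$ is unnecessary for the corollary: strengthening the inductive hypothesis makes the problem disappear.

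That said, the decomposition $\Delta_q(i)\otimes L_q(1)\simeq\Delta_q(i-1)\oplus\Delta_q(i+1)$ you aim for is true and interesting in its own right. One slip to flag: having embedded $\Delta_q(i+1)$ as a submodule with cokernel $\Delta_q(i-1)$, the resulting short exact sequence is $0\to\Delta_q(i+1)\to\Delta_q(i)\otimes L_q(1)\to\Delta_q(i-1)\to 0$, not the one you wrote, so the relevant group is $\Ext^1(\Delta_q(i-1),\Delta_q(i+1))$ and your $\mu>\lambda$ criterion goes the wrong way. Splitting can still be rescued (for instance by a linkage argument, or by instead constructing a surjection onto $\Delta_q(i+1)$), but this is precisely the sort of boundary bookkeeping you anticipated as an obstacle, and the paper's route sidesteps it entirely.
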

\begin{proof} As proposition \ref{prop:Weyl} states, the simple $L_q(i)$ and the Weyl $\Delta_q(i)$ coincide when either $i\leq \ell-1$ or $i+1\equiv0\text{ mod }\ell$. Hence, the tensor product $(L_q(1))^{\otimes k}$ remains a direct sum of Weyl modules as long $k< \ell-1$. When $k$ reaches $\ell-1$, the tensor product $(L_q(1))^{\otimes (\ell-1)}$ contains $L_q(\ell-2)\otimes L_q(1)$ which gives rise to the projective $L_q(\ell-1)\simeq P_q(\ell-1)$. As the second rule of theorem \ref{thm:fusLUq} shows that the product $P_q(i)\otimes L_q(1)$ decomposes into a direct sum of projectives, the only concern appears when $P_q(i)$ is isomorphic to $L_q(i)$, that is when $i+1\equiv 0\text{ mod }\ell$. In this case, the first rule of theorem \ref{thm:fusLUq} must be used and this rule shows that only the term $M_q(i-1)$, which turns out to be the projective $P_q(i-1)$, survives.
\end{proof}
This section closes with important propositions that characterize the action of the divided powers on the projectives $P_q(i)$ and Weyl modules $\Delta_q(i)$. Specific forms of these propositions were already proven in \cite{PS}. 
\begin{proposition}\label{prop:AutWeylProj} Let $i,n\in \mathbb{Z}_{\geq 0}$ and $V  = \Delta_q(i)$ or $V=P_q(i)$. Then, the action of $e^nf^n$ (or $f^ne^n$) with $e = E^{(\ell)}$ and $f = F^{(\ell)}$ defines a $\mathbb{C}$-linear automorphism of the $H$-eigenspace $\Eval{V}{H=d}{}$ when $|d-2n\ell|\leq|d|$ (or $|d+2n\ell|\leq|d|$, respectively).
\end{proposition}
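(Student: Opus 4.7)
The plan is to reduce everything to a direct computation on Weyl modules.

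First I would handle the $f^n e^n$ claim by symmetry, invoking the Chevalley-type algebra automorphism $\omega:\luszt \to \luszt$ that fixes the divided powers up to the swap $E^{(n)}\leftrightarrow F^{(n)}$ and sends $K\mapsto K^{-1}$; this exchanges $e$ and $f$ and sends $H$ to $-H$ (as $H$ is the $q$-logarithm of $K$). Twisting by $\omega$ preserves the isomorphism class of Weyl modules and of projective covers (their weight spectra being symmetric), so the $f^n e^n$ statement on $\Eval{V}{H=d}{}$ will follow from the $e^n f^n$ statement on $\Eval{V^{\omega}}{H=-d}{}$, with the condition $|d+2n\ell|\leq|d|$ becoming $|{-d}-2n\ell|\leq|{-d}|$. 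Only the $e^n f^n$ case then needs to be established.

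Second, for $V=\Delta_q(i)$, I note that $\Eval{V}{H=d}{}$ is spanned by $m_k$ with $k=(i-d)/2$ (when non-zero) and compute, by iterating the formulas of definition \ref{def:Weyl},
\[
e^n f^n m_k \;=\; \Bigl(\prod_{v=1}^{n}\qbin{k+v\ell}{\ell}{q}\Bigr)\Bigl(\prod_{w=0}^{n-1}\qbin{i-k-w\ell}{\ell}{q}\Bigr)\,m_k.
\]
The key input will be the identity $\qbin{M\ell+B}{\ell}{q}=M\,\epsilon^{(M-1)\ell+B}$ for $0\leq B<\ell$ and $M\in\mathbb Z_{\geq 0}$, where $\epsilon=q^\ell\in\{\pm 1\}$; this follows from $[a\ell+r]_q=\epsilon^a[r]_q$ by resolving the $0/0$ that arises in $\qbin{M\ell+B}{\ell}{t}$ at $t=q$ via the factorization $[j\ell]_t=[j]_{t^\ell}[\ell]_t$ and a limit. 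The binomial thus vanishes iff $M=0$: every factor $\qbin{k+v\ell}{\ell}{q}$ with $v\geq 1$ automatically has $M\geq 1$, while each $\qbin{i-k-w\ell}{\ell}{q}$ requires $\lfloor(i-k)/\ell\rfloor\geq w+1$, so the whole product is non-vanishing iff $(i+d)/2\geq n\ell$. The hypothesis $|d-2n\ell|\leq|d|$ is trivial for $d\leq 0$ (it forces $n=0$), and for $d>0$ it gives $n\ell\leq d$; combined with $|d|\leq i$ (eigenspace non-vanishing), this yields $(i+d)/2\geq n\ell$, so $e^n f^n$ is a non-zero scalar on the one-dimensional eigenspace.

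Third, for $V=P_q(i)$ with $s<\ell-1$ (otherwise $P_q(i)=\Delta_q(i)$ and we are done), I would exploit the realization of theorem \ref{thm:ProjReal}: the subspace $\mathsf M=\vspn\{m_0,\dots,m_j\}$ is a submodule isomorphic to $\Delta_q(j)$ and $P_q(i)/\mathsf M\simeq\Delta_q(i)$. The eigenspace $\Eval{V}{H=d}{}$ has dimension $1$ (spanned by $m_k$, $k=(j-d)/2$) when $i<|d|\leq j$, and dimension $2$ (additionally spanned by $n_p$, $p=(i-d)/2$) when $|d|\leq i$. Since $\mathsf M$ is stable, $e^n f^n m_k$ equals the Weyl scalar for $\Delta_q(j)$ times $m_k$, and $e^n f^n n_p = \beta_{n,p}\,n_p + \gamma\, m_k$ for some $\gamma\in\mathbb C$, where $\beta_{n,p}$ is the Weyl scalar for $\Delta_q(i)$ read off from the induced action on $V/\mathsf M$. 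The matrix of $e^n f^n$ in the basis $(m_k,n_p)$ is therefore upper triangular with both diagonal entries non-zero (by the Weyl analysis applied once to $\Delta_q(j)$ and once to $\Delta_q(i)$), hence invertible.

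The main obstacle will be the $q$-binomial identity $\qbin{M\ell+B}{\ell}{q}=\pm M$: both numerator and denominator of the defining fraction vanish at $t=q$, and one must carefully track a single simple zero in each using $[j\ell]_t=[j]_{t^\ell}[\ell]_t$ to extract the correct finite limit.
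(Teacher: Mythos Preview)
Your proposal is correct and follows essentially the same route as the paper: compute $e^nf^n m_k$ as a product of $q$-binomials $\qbin{k+v\ell}{\ell}{q}\qbin{i-k-(w)\ell}{\ell}{q}$, argue each factor is non-zero under the hypothesis, and for $P_q(i)$ exploit the filtration $\Delta_q(j)\subset P_q(i)\twoheadrightarrow\Delta_q(i)$ to get an upper-triangular matrix with non-zero diagonal. The only differences are presentational: the paper simply cites the $q$-Lucas theorem (proposition~\ref{thm:qLucas}) rather than rederiving the special case $\qbin{M\ell+B}{\ell}{q}=\pm M$ by a limit argument, and it dispatches the $f^ne^n$ case with ``the proof is similar'' rather than invoking the Chevalley involution $\omega$ explicitly --- both shortcuts you could safely adopt.
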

\begin{proof} We only consider the action of $e^nf^n$ as the proof for $f^ne^n$ is similar. Suppose $|d-2n\ell|\leq |d|$ and let first $V = \Delta_q(i)$ so that $\Eval{V}{H=d}{} = \mathbb{C}m_k$ for $k = \frac{1}{2}(i-d)\in \{0,...,i\}$ (in the notation of definition \ref{def:Weyl}). A direct computation shows that 
\begin{equation*} e^nf^nm_k = \prod_{u=1}^n \qbin{k+u\ell}{\ell}{q}\qbin{i-k+(1-u)\ell}{\ell}{q}m_k
\end{equation*}
which is zero only if $i-k<u\ell$ for a certain $1\leq u\leq n$ by theorem \ref{thm:qLucas}. However, this is impossible since our assumptions imply $i-2(k+u\ell)\geq i-2(k+n\ell)=d-2n\ell \geq -|d| \geq -i$ as $0\leq k=\frac{1}{2}(i-d) \leq i$. The result is thus proved if $V= \Delta_q(i)$. 

When $V = P_q(i)$ and $V\neq\Delta_q(i)$, the eigenspace $\Eval{V}{H=d}{}$ is the $\mathbb C$-span of $\{m_k,n_p\}$ with $k =\frac{1}{2}(j-d) \in \{0,\dots,j\}$ (where $j=i-2(\ell-s-1)$ as usual) and with $p = \frac{1}{2}(i-d)$. Note that $p$ may satisfy $p<0$ or $p>i$ as $j > i$ when $P_q(i) \neq \Delta_q(i)$. If $p$ is not in $\{0,\dots, i\}$, $n_p$ is understood to be $0$ and $\{m_k\}$ is then a basis of $\Eval{V}{H=d}{}$. As $m_k$ is contained in the submodule $\mathsf M$ isomorphic to $\Delta_q(j)$, we can use the first part of the proof to conclude that $e^nf^nm_k$ is a non-zero multiple of $m_k$. This ends the proof if $n_p=0$. Suppose now $n_p\neq 0$. Again, by the first part, $e^nf^n(n_p+\mathsf M)$ is a non-zero multiple of $n_p+\mathsf M$ as $n_p+\mathsf M \in P_q(i)/\mathsf M\simeq \Delta_q(i)$. Thus, the elements $e^nf^nn_p$ and $e^nf^nm_k$ are linearly independent and the action of $e^nf^n$ induces an automorphism of $\Eval{V}{H=d}{}$ when $V=P_q(i)$ and $|d-2n\ell|\leq|d|$. The result follows.
\end{proof}
\begin{proposition}\label{prop:SECWeylProj} 
Let $V$ be either $\Delta_q(i)$ or $P_q(i)$ for some $i\geq 0$. Fix also $d> 0$ an eigenvalue of $H$ on $V$ and $x \in \Eval{V}{H=d}{}$. Fix finally $a_1\in\mathbb{Z}_{\geq 0}$ and $1\leq a_2<\ell$ such that $d \geq a_1\ell+a_2$. Then, $F^{(a_1\ell+a_2)}x = 0$ if and only if $x = F^{(\ell-a_2)}y$ for some $y\in \Eval{V}{H=d+2(\ell-a_2)}{}$. In other words, the kernel of $F^{(a_1\ell+a_2)}$ on $\Eval{V}{H=d}{}$ is the image of $\Eval{V}{H=d+2(\ell-a_2)}{}$ by $F^{(\ell-a_2)}$. 
\end{proposition}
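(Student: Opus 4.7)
The plan is to reduce the statement to a direct $q$-Lucas computation on Weyl modules, then bootstrap to projectives via the defining short exact sequence of $P_q(i)$.

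The key preliminary identity is
\[
F^{(a_1\ell+a_2)} F^{(\ell-a_2)} = \qbin{(a_1+1)\ell}{\ell-a_2}{q} F^{((a_1+1)\ell)} = 0,
\]
since by the $q$-Lucas theorem the coefficient factors as $\binom{a_1+1}{0}\qbin{0}{\ell-a_2}{q}=0$ (we use $0<\ell-a_2<\ell$). This identity immediately yields the ``$\Leftarrow$'' direction in both cases of $V$.

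For $V=\Delta_q(i)$, each $H$-eigenspace is at most one-dimensional, so I may take $x=m_k$ with $k=(i-d)/2$. The condition $d\geq a_1\ell+a_2$ ensures $k+(a_1\ell+a_2)\leq i$, so $m_{k+n}\neq 0$ and the vanishing of $F^{(n)}m_k$ depends only on the $q$-binomial $\qbin{k+n}{n}{q}$. Writing $k=k_1\ell+k_0$ with $0\leq k_0<\ell$ and applying the $q$-Lucas theorem, this $q$-binomial vanishes iff $k_0+a_2\geq\ell$. On the other hand, the space $\Delta_q(i)_{H=d+2(\ell-a_2)}$ (if non-zero) is spanned by $m_{k-(\ell-a_2)}$, and $F^{(\ell-a_2)}m_{k-(\ell-a_2)}=\qbin{k}{\ell-a_2}{q}m_k=\qbin{k_0}{\ell-a_2}{q}m_k$ by $q$-Lucas; this is non-zero iff $k_0\geq\ell-a_2$. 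The two conditions coincide, and the degenerate case $k<\ell-a_2$ is consistent (both sides force $x=0$). This settles the Weyl case.

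For $V=P_q(i)$, I use the non-split short exact sequence $0\to\Delta_q(j)\xrightarrow{\iota}P_q(i)\xrightarrow{\pi}\Delta_q(i)\to 0$ from proposition~\ref{prop:ProjAT}. Let $x\in P_q(i)_{H=d}$ satisfy $F^{(n)}x=0$, where $n=a_1\ell+a_2$. Then $F^{(n)}\pi(x)=0$ in $\Delta_q(i)$, so by the Weyl case there exists $y'\in \Delta_q(i)_{H=d+2(\ell-a_2)}$ with $\pi(x)=F^{(\ell-a_2)}y'$. Since both modules are $H$-semisimple with the bases of theorem~\ref{thm:ProjReal} and $\pi$ is $H$-equivariant, I can lift $y'$ to an element $y\in P_q(i)_{H=d+2(\ell-a_2)}$. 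Setting $z:=x-F^{(\ell-a_2)}y$, we have $z\in\ker\pi\simeq\Delta_q(j)$ and
\[
F^{(n)}z=F^{(n)}x-F^{(n)}F^{(\ell-a_2)}y=0,
\]
where the second term vanishes by the preliminary identity. The Weyl case applied inside $\Delta_q(j)$ now produces $y''\in\Delta_q(j)_{H=d+2(\ell-a_2)}$ with $z=F^{(\ell-a_2)}y''$, so $x=F^{(\ell-a_2)}(y+\iota(y''))$, finishing the ``$\Rightarrow$'' direction. The remaining case $P_q(i)=\Delta_q(i)$ (occurring when $s=\ell-1$) is already handled by the Weyl case.

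The only real work is the $q$-Lucas arithmetic in the Weyl step and the verification that the coefficient $\qbin{(a_1+1)\ell}{\ell-a_2}{q}$ vanishes; both are short. No serious obstacle is expected, because the semisimplicity of $H$ makes the lifting step automatic and the short exact sequence plus the identity $F^{(n)}F^{(\ell-a_2)}=0$ collapses the projective case to the Weyl case.
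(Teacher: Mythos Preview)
Your proof is correct, and the Weyl case matches the paper's computation almost verbatim (same $q$-Lucas analysis of $\qbin{k+n}{n}{q}$ and $\qbin{k}{\ell-a_2}{q}$). The projective case, however, is handled differently. The paper does not use the short exact sequence to lift and reduce; instead it invokes the explicit realization of $P_q(i)$ from theorem~\ref{thm:ProjReal} and checks that the off-diagonal coefficient satisfies $\omega_{p,\,a_1\ell+a_2}=0$ (because $a_1\ell+a_2\leq d\leq i-p$), so that $F^{(a_1\ell+a_2)}$ acts on $\Eval{P_q(i)}{H=d}{}$ exactly as on the corresponding eigenspace of $\Delta_q(i)\oplus\Delta_q(j)$, whence the Weyl computation applies componentwise. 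Your homological argument is cleaner in that it needs only the existence of the Weyl filtration and the $H$-semisimplicity of $P_q(i)$, not the explicit formulas for $\omega_{p,v}$; the paper's approach, on the other hand, exploits one of its own main technical contributions (theorem~\ref{thm:ProjReal}) and yields the slightly sharper information that $F^{(a_1\ell+a_2)}$ already acts diagonally in the $\{m_k,n_p\}$ basis on this eigenspace.
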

\begin{proof}
The fact that the image of $\Eval{V}{H=d+2(\ell-a_2)}{}$ by $F^{(\ell-a_2)}$ is contained in the kernel of $F^{(a_1\ell+a_2)}$ on $\Eval{V}{H=d}{}$ follows from the fact that Lemma \ref{prop:RelationsUres} gives
\begin{equation*}
F^{(a_1\ell+a_2)}F^{(\ell-a_2)} = \qbin{(a_1+1)\ell}{\ell-a_2}{q} F^{((a_1+1)\ell)}
\end{equation*}
and the $q$-binomial on the right-hand side is zero by theorem \ref{thm:qLucas}.
We thus only need to show the other inclusion. 

Consider first $V = \Delta_q(i)$. Then $\Eval{V}{H=d}{} = \mathbb{C}m_k$ (with the notation of definition \ref{def:Weyl}) for $k = \frac{1}{2}(i-d)\in\{0,...,i\}$. Set $k=k_1\ell+k_2$ with $k_1,k_2\in \mathbb{Z}_{\geq 0}$ and
$k_2<\ell$. Then, the action of $F^{(a_1\ell+a_2)}$ on $m_k$ is
\begin{equation*} F^{(a_1\ell+a_2)}m_k = \qbin{k+a_1\ell+a_2}{a_1\ell+a_2}{q}m_{k+a_1\ell+a_2} = \qbin{(a_1+k_1)\ell+k_2+a_2}{a_1\ell+a_2}{q}m_{k+a_1\ell+a_2}
\end{equation*}
with $m_{k+a_1\ell+a_2}\neq 0$ as $0\leq k+a_1\ell+a_2 \leq k+d = i-k \leq i$. Suppose from now on that $m_k$ is in the kernel of $F^{(a_1\ell+a_2)}$. Then $F^{(a_1\ell+a_2)}m_k=0$ gives $k_2+a_2>\ell$ by theorem \ref{thm:qLucas} and this same theorem  implies that $\rho \vcentcolon= \qbin{k}{\ell-a_2}{q}\neq 0$. Moreover, the inequality $k_2+a_2>\ell$ also implies $0\leq k_1\ell\leq k-\ell+a_2< k\leq i$ and thus $m_{k-\ell+a_2}$ is non-zero in $V$. The element $y=\frac{1}{\rho} m_{k-\ell+a_2}\in \Eval{V}{H=d+2(\ell-a_2)}{}$ is hence well-defined and satisfies $F^{(\ell-a_2)}y = m_k $ as desired. Let now $V= P_q(i)$ with $P_q(i)\neq \Delta_q(i)$ and suppose $d \leq i$. Then, with $p=\frac{1}{2}(i-d)\geq 0$ and the notation of theorem \ref{thm:ProjReal}, we have $\omega_{p,a_1\ell+a_2} = 0$ since $a_1\ell+a_2 \leq d \leq d+p = i-p$, and the divided power $F^{(a_1\ell+a_2)}$ thus acts on the $H$-eigenspace $\Eval{V}{H=d}{}$ as on the corresponding $H$-eigenspace of the direct sum $\Delta_q(i)\oplus \Delta_q(j)$. This still holds when $d > i$ as in this case $\Eval{V}{H=d}{}\subseteq \mathsf M\subseteq P_q(i)$ with $\mathsf M\simeq \Delta_q(j)$. The construction of a $y$ as in the first part of the proof goes through and this concludes the proof.
\end{proof}
\end{subsection}

%
%
\begin{subsection}{Morphisms between XXZ spin chains}\label{sub:homoXXZ}
The basis $\{m_0,m_1\}$ was used to define the $\luszt$-action on the Weyl module $L_q(1)\simeq \Delta_q(1)$ (see definition \ref{def:Weyl}). This basis is hereafter identified with the one used in section \ref{sec:atlOnXXZ} for $\mathbb C^2$ via $m_0\mapsto |+\rangle$ and $m_1\mapsto |-\rangle$. We also define recursively a family of maps $\{\Delta_n : \luszt\rightarrow \luszt^{\otimes n}\}_{n\geq 2}$ by $\Delta_2 = \Delta$ and $\Delta_{n+1} = (\id \otimes \Delta)\circ \Delta_{n}$ where $\Delta$ is the coproduct of section \ref{sec:luqsl2} and with $\id$ the identity operator on $\luszt^{\otimes(n-1)}$. We then associate the XXZ chain $X_{N;z}^+\simeq (\mathbb C^2)^{\otimes N}$ to the $\luszt$-module $(L_q(1))^{\otimes N}$ with the action induced by $\Delta_N$. The XXZ chain $X_{N;z}^-$ is also identified with the pullback of $(L_q(1))^{\otimes N}$ by the involution $\iota : \lusztI \rightarrow \luszt$. This latter chain can thus be seen as a $\lusztI$-module.

Corollary \ref{coro:seulWeylProj} has shown that, as a $\luszt$-module, the chain $\XXZ{N;z}{+}$ decomposes into a direct sum of Weyl modules $\Delta_q(i)$ and indecomposable projectives $P_q(i)$. Analogously, as the pullback by $\iota$ sends\footnote{This follows from definition \ref{def:Weyl}, proposition \ref{thm:ProjReal} and the fact that $[x]_q = [x]_{q^{-1}}$ for each $x \in \mathbb{Z}$.} $P_q(i)$ and $\Delta_q(i)$ to the $\lusztI$-modules $P_{q^{-1}}(i)$ and $\Delta_{q^{-1}}(i)$, the periodic chain $\XXZ{N;z}{-}$, viewed as a $\lusztI$-module, also decomposes into Weyl and indecomposable projective modules. This implies that propositions \ref{prop:AutWeylProj} and \ref{prop:SECWeylProj} apply to both $\XXZ{N;z}{+}$ and $\XXZ{N;z}{-}$. Furthermore, as the unrolled generator $H$ satisfies $\Delta(H) = H\otimes 1 + 1\otimes H$ with $Hm_0 = m_0$ and $Hm_1 = -m_1$ on $L_q(1)$, the preceding construction gives
$$ H|x_1\dots x_N\rangle_z^+ = \sum_{j=1}^N x_j|x_1\dots x_N\rangle_z^+ = 2S^z|x_1\dots x_N\rangle_z^+ $$
where we use the basis $\BXXZ{N;z}$ for $\XXZ{N;z}{+}$ (see the proof of proposition \ref{prop:spinflip}). The action of $H$ on $\XXZ{N;z}{+}$ thus corresponds to that of the total spin $2S^z$. Moreover, the expression $H = \frac{1}{\epsilon_q}H'$ given in section \ref{sec:luqsl2} and the fact that $K^{2\ell}=\id$ in $\luszt$ with $q^{2\ell}=1$ show that the involution $\iota$ leaves $H$ unchanged. The above identity hence also holds for the chain $\XXZ{N;z}{-}$, that is $H|x_1\dots x_N\rangle_z^- =2S^z|x_1\dots x_N\rangle_z^-$. The next results follow directly from the above remarks and propositions \ref{prop:AutWeylProj} and \ref{prop:SECWeylProj}.
\begin{theorem}\label{thm:SECX} Fix $0< d\leq N$ such that $d\equiv N \text{ mod } 2$ and let $x \in \XXZ{N;d,z}{\pm}$. Let also $a_1 \in \mathbb{Z}_{\geq 0}$ and $1\leq a_2 < \ell$ with $d\geq a_1\ell+a_2$. Then, $F^{(a_1\ell+a_2)}x = 0$ if and only if $x = F^{(\ell-a_2)}y$ for some $y \in \XXZ{N;d+2(\ell-a_2),z}{\pm}$. In other words, the kernel of $F^{(a_1\ell +a_2)}$ on $\XXZ{N;d,z}{\pm}$ is the image of $\XXZ{N;d+2(\ell-a_2),z}{\pm}$ by $F^{(\ell-a_2)}$.
\end{theorem}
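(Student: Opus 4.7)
My plan is to reduce this to Proposition \ref{prop:SECWeylProj} via the $\luszt$-module decomposition of the XXZ chain recalled immediately before the theorem. Concretely, Corollary \ref{coro:seulWeylProj} together with the pullback construction gives a decomposition
\[
\XXZ{N;z}{\pm} = \bigoplus_k V_k,
\]
where each summand $V_k$ is a Weyl module $\Delta_{q^{\pm 1}}(i_k)$ or a projective module $P_{q^{\pm 1}}(i_k)$ (for the plus chain this is as $\luszt$-modules, for the minus chain as $\lusztI$-modules). Since the unrolled generator $H$ was identified with $2S^z$ on the whole chain, taking $H$-eigenspaces yields
\[
\XXZ{N;d,z}{\pm} = \bigoplus_k \Eval{V_k}{H=d}{},
\]
and this decomposition is preserved by the action of every divided power $F^{(m)}$.

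The ``if'' direction (image of $F^{(\ell-a_2)}$ lies in the kernel of $F^{(a_1\ell+a_2)}$) holds at the algebra level: Lemma \ref{prop:RelationsUres} provides
\[
F^{(a_1\ell+a_2)}\,F^{(\ell-a_2)} = \qbin{(a_1+1)\ell}{\ell-a_2}{q}\, F^{((a_1+1)\ell)},
\]
and the scalar coefficient vanishes by the $q$-Lucas theorem \ref{thm:qLucas}. This step is identical to the one already used in the proof of Proposition \ref{prop:SECWeylProj}.

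For the converse, I would decompose $x = \sum_k x_k$ with $x_k \in \Eval{V_k}{H=d}{}$. The hypothesis $F^{(a_1\ell+a_2)} x = 0$ is then equivalent to $F^{(a_1\ell+a_2)} x_k = 0$ for every $k$, and Proposition \ref{prop:SECWeylProj} applied to each summand (with $q$ replaced by $q^{-1}$ in the minus case, noting that $\ell$ is the same for $q^{\pm 1}$) supplies $y_k \in \Eval{V_k}{H=d+2(\ell-a_2)}{}$ with $x_k = F^{(\ell-a_2)} y_k$; taking $y = \sum_k y_k \in \XXZ{N;d+2(\ell-a_2),z}{\pm}$ yields the desired preimage. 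No substantive obstacle appears: the whole argument is bookkeeping on top of Proposition \ref{prop:SECWeylProj}, and the only genuinely new ingredient is the already-recorded identification $H = 2S^z$ that matches the $\luszt$-decomposition to the $S^z$-eigenspaces.
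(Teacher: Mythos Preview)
Your proposal is correct and matches the paper's approach exactly: the paper does not give a separate proof but states that the result ``follows directly from the above remarks and propositions \ref{prop:AutWeylProj} and \ref{prop:SECWeylProj},'' where the remarks in question are precisely the decomposition of $\XXZ{N;z}{\pm}$ into Weyl and projective summands (Corollary \ref{coro:seulWeylProj} and its pullback analogue) together with the identification $H=2S^z$. Your componentwise reduction to Proposition \ref{prop:SECWeylProj} is exactly the intended argument.
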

\begin{rem} The hypothesis $d \geq a_1\ell+a_2$ of theorem \ref{thm:SECX} (and of proposition \ref{prop:SECWeylProj}) is equivalent to saying that the divided power $F^{(a_1\ell+a_2)}$ sends $\XXZ{N;d,z}{\pm}$ to an eigenspace of at least the same dimension. 
\end{rem}
\begin{theorem}\label{thm:AutX} Let $n\in \mathbb{Z}_{\geq 0}$ and $d \in \mathbb{Z}$ with $|d|\leq N$ and $d\equiv N\text{ mod }2$. Then, the action of $e^nf^n$ (or $f^ne^n$) on $\XXZ{N;z}{\pm}$ for $e = E^{(\ell)}$ and $f = F^{(\ell)}$ induces a $\mathbb{C}$-linear automorphism of $\XXZ{N;d,z}{\pm}$ if $|d-2n\ell|\leq |d|$ (or if $|d+2n\ell|\leq |d|$, respectively). 
\end{theorem}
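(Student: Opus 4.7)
The plan is to leverage the decomposition of $\XXZ{N;z}{\pm}$ into indecomposable $\luszt$-modules (or $\lusztI$-modules) together with proposition \ref{prop:AutWeylProj}, which already gives the desired statement for individual Weyl and projective modules. First I would focus on the $+$ case. By corollary \ref{coro:seulWeylProj}, the chain $\XXZ{N;z}{+}$, viewed as a $\luszt$-module via the iterated coproduct $\Delta_N$, decomposes as
\begin{equation*}
\XXZ{N;z}{+} \simeq \bigoplus_{k} V_k,
\end{equation*}
where each $V_k$ is either a Weyl module $\Delta_q(i_k)$ or an indecomposable projective $P_q(i_k)$ for some $i_k \in \mathbb{Z}_{\geq 0}$. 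Since $e^nf^n$ and $f^ne^n$ are elements of $\luszt$, their action respects this decomposition.

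Next, I would use the fact, established just before the statement of the theorem, that the unrolled generator $H$ acts on $\XXZ{N;z}{+}$ as $2S^z$. In particular, the $S^z$-eigenspace $\XXZ{N;d,z}{+}$ coincides with the $H$-eigenspace of $\XXZ{N;z}{+}$ with eigenvalue $d$, and decomposes as
\begin{equation*}
\XXZ{N;d,z}{+} = \bigoplus_k \Eval{V_k}{H=d}{}.
\end{equation*}
Under the hypothesis $|d-2n\ell|\leq |d|$ (respectively $|d+2n\ell|\leq|d|$), proposition \ref{prop:AutWeylProj} applies to each summand $V_k$ separately and asserts that $e^nf^n$ (respectively $f^ne^n$) restricts to a $\mathbb{C}$-linear automorphism of $\Eval{V_k}{H=d}{}$. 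Taking the direct sum of these automorphisms yields the claim for $\XXZ{N;d,z}{+}$.

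Finally, for the $-$ case, I would appeal to the observation recorded at the start of subsection \ref{sub:homoXXZ}: pulling back along the involution $\iota : \lusztI \to \luszt$ sends $\Delta_q(i)$ and $P_q(i)$ to $\Delta_{q^{-1}}(i)$ and $P_{q^{-1}}(i)$, so $\XXZ{N;z}{-}$ admits a completely analogous decomposition as a $\lusztI$-module. Moreover $\iota$ fixes $H$, so the identification of the $H$-eigenspace at eigenvalue $d$ with $\XXZ{N;d,z}{-}$ still holds. Proposition \ref{prop:AutWeylProj} is stated uniformly in the parameter $q$ (it depends only on $\ell$, which is the same for $q$ and $q^{-1}$), so the same argument goes through verbatim. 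I do not anticipate any genuine obstacle: the entire content of the theorem is essentially a bookkeeping step that transports the representation-theoretic input of proposition \ref{prop:AutWeylProj} through the direct-sum decomposition, and the only verification needed is that this decomposition is compatible with the $H$-grading, which is immediate from $\Delta(H) = H \otimes 1 + 1 \otimes H$.
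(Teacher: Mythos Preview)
Your proposal is correct and matches the paper's approach exactly: the paper states that this theorem ``follow[s] directly from the above remarks and propositions \ref{prop:AutWeylProj} and \ref{prop:SECWeylProj}'', where the ``above remarks'' are precisely the decomposition of $\XXZ{N;z}{\pm}$ into Weyl and projective modules via corollary \ref{coro:seulWeylProj} (and its $\iota$-pullback for the $-$ chain) together with the identification $H=2S^z$. Your write-up simply makes explicit what the paper leaves implicit.
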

We now want to investigate the relation between the action of $\lusztpm$ and $\atl{N}$ on the chains $\XXZ{N;z}{\pm}$. We need a lemma. 
\begin{lemma}\label{lem:lesFetG}Let $E^{(n)}_{\pm}$ and $F^{(n)}_{\pm}$ stand for the $\lusztpm$-action of $E^{(n)}$ and $F^{(n)}$ on $\XXZ{N;z}\pm$ through $\Delta_N$. Then, on $\BXXZ{N;z}$,
\begin{align*}
E^{(n)}_{\pm}|x_1\dots x_N\rangle_z^{\pm} &= \sum_{\substack{1\leq j_1 < \dots  < j_n \leq N\\ x_{j_1} = \dots  = x_{j_n} = -}}\Big(\prod_{k=0}^{n-1}\prod_{j=j_k+1}^{j_{k+1}-1}q^{\pm(k-n)x_j}\Big)|x_1\dots x_{j_1-1}(+)x_{j_1+1}\dots x_{j_2-1}(+)x_{j_2+1}\dots x_N\rangle_z^{\pm}\\
\intertext{and}
F^{(n)}_{\pm}|x_1\dots x_N\rangle_z^{\pm} &= \sum_{\substack{1\leq j_1 < \dots  < j_n \leq N\\ x_{j_1} = \dots  = x_{j_n} = +}} \Big(\prod_{k=1}^n\prod_{j=j_{k}+1}^{j_{k+1}-1} q^{\pm kx_j}\Big)|x_1\dots x_{j_1-1}(-)x_{j_1+1}\dots x_{j_2-1}(-)x_{j_2+1}\dots x_N\rangle_z^{\pm}
\end{align*}
where $j_0 = 0$ and $j_{n+1} = N+1$. Moreover, on the eigenspace $H=d$, the two actions are related by the spin flip $s$:
\begin{equation}\label{eq:lienFetG}
F_{\pm}^{(a)}s = q^{\mp a(d+a)}sE_{\mp}^{(a)}.
\end{equation}
\end{lemma}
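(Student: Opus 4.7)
The plan is to prove both statements by directly unpacking the iterated coproduct.

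First I would establish by induction on $N$ (using $\Delta_{n+1}=(\id\otimes\Delta)\Delta_n$ and the coproduct formulas recalled in subsection \ref{sec:luqsl2}) that
\[
\Delta_N(E^{(n)}) = \sum_{\substack{m_1,\dots,m_N\geq 0\\ m_1+\dots+m_N=n}} q^{-\sum_{i<k}m_im_k}\, E^{(m_1)}K^{-(m_2+\dots+m_N)} \otimes \dots \otimes E^{(m_N)},
\]
together with the analogous expression for $F^{(n)}$, in which the global $q$-exponent changes sign and the tensor factor at slot $i$ reads $K^{m_1+\dots+m_{i-1}}F^{(m_i)}$. On $L_q(1)$, the divided powers $E^{(m)}$ and $F^{(m)}$ vanish for $m\geq 2$, so only tuples $(m_1,\dots,m_N)\in\{0,1\}^N$ contribute to the action on $\XXZ{N;z}+\simeq L_q(1)^{\otimes N}$; such a tuple is encoded by the increasing list $j_1<\dots<j_n$ of its chosen positions, with the additional requirement that $x_{j_k}=-$ in the $E$ case and $x_{j_k}=+$ in the $F$ case. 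Evaluating each slot on $|x_i\rangle$ (the $K$-power acts before $E^{(m_i)}$ but after $F^{(m_i)}$), the exponents of $q$ collected at the chosen positions sum to exactly $\pm\binom{n}{2}$, which cancels the global $q^{\mp\binom{n}{2}}$ factor; what remains is the stated product over the unchosen positions. The case $\pm=-$ is handled identically, with $q$ replaced throughout by $q^{-1}$ (the coproduct of $\lusztI$).

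To prove \eqref{eq:lienFetG}, I would apply both sides to an arbitrary basis vector $|x_1\dots x_N\rangle$ with $\sum_j x_j=d$ and use the first part. Since $s|x\rangle=|(-x_1)\dots(-x_N)\rangle$ and the condition $(-x_{j_k})=+$ is equivalent to $x_{j_k}=-$, both $F^{(a)}_\pm s$ and $s E^{(a)}_\mp$ sum over the same indices and produce identical basis vectors (with $-$ at each $j_k$ and $-x_j$ elsewhere). It then remains to compare scalar coefficients: for each $j\notin\{j_1,\dots,j_a\}$ contained in some interval $(j_k,j_{k+1})$ (including the extremal blocks $(0,j_1)$ and $(j_a,N+1)$, which are covered by only one of the two formulas), a direct block-by-block inspection shows that the ratio of the $F^{(a)}_\pm s$-coefficient to the $s E^{(a)}_\mp$-coefficient equals $q^{\mp a x_j}$, uniformly. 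Multiplying over such $j$ gives $q^{\mp a\sum_{j\notin\{j_k\}}x_j}=q^{\mp a(d+a)}$, using $\sum_k x_{j_k}=-a$; this is the stated factor.

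The main technical burden is in the first step: one must be careful about whether the $K$-eigenvalue at slot $i$ sees $|x_i\rangle$ before or after a sign flip from $E^{(m_i)}$ or $F^{(m_i)}$, and must verify that the contributions from the chosen positions telescope to exactly the binomial $\binom{n}{2}$ that cancels the global factor coming from the iterated coproduct. Once this bookkeeping is in place, the comparison of coefficients in the second part is purely formal.
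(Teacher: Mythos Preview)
Your proposal is correct and follows essentially the same strategy as the paper's sketch: induction on $N$ for the explicit formulas of $E^{(n)}_\pm$ and $F^{(n)}_\pm$, and a direct comparison of the coefficient of a fixed basis vector under $F^{(a)}_\pm s$ and $sE^{(a)}_\mp$ for relation \eqref{eq:lienFetG}. Your treatment is in fact more detailed than the paper's, which leaves the inductive formula for $\Delta_N(E^{(n)})$ and the block-by-block ratio computation implicit; your observation that the ratio is uniformly $q^{\mp a x_j}$ on each unchosen position (including the extremal blocks) and that the chosen-position contributions telescope to cancel the global $q^{\mp\binom{n}{2}}$ is exactly the computation the paper alludes to.
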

\begin{proof}[Sketch of proof]The explicit form of $E^{(n)}_{\pm}$ and $F^{(n)}_{\pm}$ can be proved by induction on $N$. For relation \eqref{eq:lienFetG}, fix any basis vector of the eigenspace $H=d$ and, for this vector, any positions $1\leq j_1<j_2<\dots<j_a\leq N$ such that the spins $x_{j_i}$ are all $-$. The result of the action of either $F_{\pm}^{(a)}s$ or $q^{\mp a(d+a)}sE_{\mp}^{(a)}$ on this vector contains a single vector whose spins at the $j_i$'s are still $-$. A direct computation, using the explicit action of the $E^{(n)}_{\pm}$ and $F^{(n)}_{\pm}$ of the lemma, shows that the coefficients of this vector under either action coincide.This suffices to prove \eqref{eq:lienFetG}.
\end{proof}

It is well-known (see for example \cite{PS, Jimbo1, Jimbo2, MartinSchur}) that, on the open XXZ chain, the action of the Temperley-Lieb algebra $\tl{N}$ commutes with that of the quantum group $\lusztpm$. This
\textit{quantum Schur-Weyl duality} is compatible with the inclusion of $\tl{N}$ in $\atl{N}$ as the subalgebra generated by $e_1, \dots , e_{N-1}$. We thus have the following important result.
\begin{proposition}\label{prop:qSW} The action of $\lusztpm$ on $\XXZ{N;z}{\pm}$ commutes with that of the subalgebra $\tl{N}\subseteq \atl{N}$.
\end{proposition}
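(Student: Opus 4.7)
The plan is to reduce the problem to a commutation check on generators and then invoke classical quantum Schur-Weyl duality, extending to divided powers via an integral-form argument. The subalgebra $\tl{N} \subseteq \atl{N}$ is generated by $e_1, \ldots, e_{N-1}$, which act on $\XXZ{N;z}{\pm}$ as $e_1^\pm, \ldots, e_{N-1}^\pm$ (cf.~subsection \ref{sec:atlOnXXZ}), while $\lusztpm$ is generated as a $\mathbb{C}$-algebra by $K^{\pm 1}$ together with the divided powers $E^{(n)}, F^{(n)}$ for $n \geq 0$. It therefore suffices to check that each $e_i^\pm$ ($1 \leq i \leq N-1$) commutes with each of these generators acting on $\XXZ{N;z}{\pm}$.

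For $K^{\pm 1}$, $E$ and $F$, the idea is to invoke the classical quantum Schur-Weyl duality for the open XXZ spin chain. Commutation with $\Delta_N(K^{\pm 1}) = (K^{\pm 1})^{\otimes N}$ is immediate: this operator is diagonal on the basis $\BXXZ{N}$ with eigenvalue $q^{\pm d}$ on $\XXZ{N;d,z}{\pm}$, and each $e_i^\pm$ preserves these eigenspaces according to \eqref{eq:e1}. For $E$ and $F$, one uses the expansion $\Delta_N(E) = \sum_{j=1}^N (K^{-1})^{\otimes(j-1)} \otimes E \otimes \id^{\otimes(N-j)}$ (and analogously for $F$ via $\Delta(F) = \id \otimes F + F \otimes K$). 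Since $e_i^\pm$ acts trivially outside sites $\{i, i+1\}$ and the contributions from positions outside $\{i, i+1\}$ are powers of $K^{\pm 1}$ or the identity there (which commute with $e_i^\pm$ by the previous point), the global commutator $[e_i^\pm, \Delta_N(E)]$ reduces to the two-site identity $[e_i^\pm, E \otimes \id + K^{-1} \otimes E] = 0$, and similarly for $F$. Both are direct $4 \times 4$ matrix verifications on $\mathbb{C}^2 \otimes \mathbb{C}^2$.

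To pass from $E, F$ to the divided powers $E^{(n)}, F^{(n)}$, the plan is to work generically over $\mathbb{Q}(t)$ with the rational form $U_t\mathfrak{sl}_2$ acting on $(L_t(1))^{\otimes N}$. From $[e_i, E] = [e_i, F] = 0$ one obtains $[e_i, E^n] = [e_i, F^n] = 0$ and, dividing by $[n]_t! \neq 0$ in $\mathbb{Q}(t)$, commutation with $E^{(n)}$ and $F^{(n)}$. The matrix entries of $e_i$, $E^{(n)}$ and $F^{(n)}$ in the spin basis lie in $\mathbb{Z}[t, t^{-1}]$ --- this is precisely the defining property of Lusztig's integral form $U_{\text{res}}$ --- so the identity survives specialization $t \to q$ and yields commutation on $\XXZ{N;z}{+}$. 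The ``$-$'' case is then deduced from the symmetry $q \leftrightarrow q^{-1}$: inspection of the formulas shows that $e_i^-$ is obtained from $e_i^+$ by the substitution $q \to q^{-1}$, while the $\lusztI$-action on $\XXZ{N;z}{-}$ obtained by pullback via $\iota$ corresponds to the $\luszt$-action on $\XXZ{N;z}{+}$ under the same substitution, so the ``$+$'' commutation at $q^{-1}$ yields the ``$-$'' commutation at $q$. The main (minor) obstacle is the careful extension to divided powers at a root of unity, which is handled by this integral-form specialization argument.
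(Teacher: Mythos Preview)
Your proposal is correct. The paper itself does not supply a proof of this proposition: it simply invokes the well-known quantum Schur--Weyl duality for the open XXZ chain, citing \cite{PS, Jimbo1, Jimbo2, MartinSchur}, and observes that this is compatible with the inclusion $\tl{N}\subseteq\atl{N}$ as the subalgebra generated by $e_1,\dots,e_{N-1}$. Your sketch fills in the argument behind those citations, namely the two-site reduction for $E$, $F$, $K$ and the integral-form specialization to pass to divided powers at a root of unity; this is indeed the standard route, and your handling of the $\pm$ dichotomy via $q\leftrightarrow q^{-1}$ matches the paper's convention of defining the $\lusztI$-action on $\XXZ{N;z}{-}$ as the pullback by $\iota$.
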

\noindent Unfortunately, this commutativity does not extend to the action of the whole affine Temperley-Lieb algebra $\atl{N}$ as, for instance, on the spin chain $\XXZ{2;z}{+}$, we generically have
\begin{equation*}
F_+\Omega_2|{+}{-}\rangle^+_z = z^{-1}F_+|{-}{+}\rangle_z^+ = z^{-1}|{-}{-}\rangle_z^+ \neq q^{-1}z |{-}{-}\rangle_z^+= q^{-1}\Omega_2|{-}{-}\rangle_z^+= \Omega_2 F_+|{+}{-}\rangle_z^+.
\end{equation*}
Nevertheless, as we explain in the next theorem, the divided powers of $\lusztpm$ can still be used to define $\atl{N}$-morphisms relating carefully chosen spin-eigenspaces of different periodic XXZ spin chains. This highly non-trivial fact, which was already known in some specific cases (cf.~\cite{PS, Deguchi, MDSAloop}), is one of the principal results of this paper. Recall that $\unlhd$ denotes the natural extension to $\Delta=\mathbb{Z}\times \mathbb{C}^{\times}$ of the partial order $\preceq$ defined on $\lambda$. More precisely, for $(d,z),(t,x)\in \Delta$, recall that $(t,x)$ succeeds to $(d,z)$ if there exists a non-negative integer $m$ satisfying $t = d+2m$ with
\begin{equation}\label{eq:conditionsAB2}
\text{(A)}\quad  z^2=q^t\text{ and }x=zq^{-m}\qquad\text{ or }\qquad
\text{(B)}\quad z^2=q^{-t}\text{ and }x=zq^{m}
\end{equation} 
and let $\unlhd$ be the weakest partial order on $\Delta$ containing $(d,z)\unlhd (t,x)$ for all such successions.
\begin{definition} Fix $(d,z)$ and $(t,x)\in\Delta_N$ with $a=\frac12(t-d)\geq 0$. The $\mathbb C$-linear maps
\begin{equation}\label{eq:morphTheo}
\ii_{(d,z);(t,x)}^\pm:\XXZ{N;t,x}\pm\to\XXZ{N;d,z}\pm\qquad\text{and}\qquad
\jj_{(t,x);(d,z)}^\pm:\XXZ{N;d,z}\pm\to\XXZ{N;t,x}\pm
\end{equation}
are defined through the action of divided powers on basis elements:
$$
|x_1\dots x_N\rangle_x^\pm\mapsto F_\pm^{(a)}|x_1\dots x_N\rangle_{z}^\pm
\qquad\text{and}\qquad
|x_1\dots x_N\rangle_{z}^\pm\mapsto E_\pm^{(a)}|x_1\dots x_N\rangle_x^\pm .
$$
In these expressions, all upper signs must be chosen or all lower ones. 
\end{definition}
We extend this definition to any $(d,z)$ and $(t,x) \in \Delta$ by setting $\ii_{(d,z);(t,x)}^{\pm} =
\jj_{(t,x);(d,z)}^{\pm} = 0$ if $(d,z)$ or $(t,x)$ is not in $\Delta_N$.
\begin{theorem}\label{thm:f}
Let $(d,z),(t,x)\in\Delta$. Then, the following maps are $\atl N$-linear under the stated condition:
\begin{align*}
\ii_{(d,z);(t,x)}^\pm &:\XXZ{N;t,x}\pm\to\XXZ{N;d,z}\pm\quad &\text{if $(d,z)\unlhd (t,x)$ through condition B for the upper signs (A for the lower),}\\
\jj_{(t,x);(d,z)}^\pm &:\XXZ{N;d,z}\pm\to\XXZ{N;t,x}\pm\quad &\text{if $(d,z)\unlhd (t,x)$ through condition A for the upper signs (B for the lower).}
\end{align*}
Suppose also $t \geq \max(1,|d|)$ with $(d,z)\unlhd(t,x)$ through condition B and let $(h,v)$ and $(D,Z)$ be the direct successors of $(t,x)$ through conditions A and B respectively. Then, if $\frac12(t-d)\not\equiv 0\text{ mod }\ell$, the sequence of $\atl{N}$-linear maps
\begin{equation}\label{eq:SEC1}
\XXZ{N;D,Z}+\xrightarrow{\ii_{(t,x);(D,Z)}^+}\XXZ{N;t,x}+\xrightarrow{\ii_{(d,z);(t,x)}^+}\XXZ{N;d,z}+,
\end{equation}
is exact, that is $\im \ii_{(t,x);(D,Z)}^+=\ker \ii_{(d,z);(t,x)}^+$. Similarly, $\im \ii_{(t,x);(h,v)}^-=\ker \ii_{(-d,z^{-1});(t,x)}^-$ when $\frac12(t+d)\not\equiv 0\text{ mod }\ell$ and the sequence of $\atl{N}$-morphisms
\begin{equation}\label{eq:SEC2}
\XXZ{N;h,v}-\xrightarrow{\ii_{(t,x);(h,v)}^-}\XXZ{N;t,x}-\xrightarrow{\ii_{(-d,z^{-1});(t,x)}^-}\XXZ{N;-d,z^{-1}}-
\end{equation}
is exact. In particular, the sequences \eqref{eq:SEC1} and \eqref{eq:SEC2} are automatically exact if $q^d$ is distinct from both $z^2$ and $z^{-2}$.
\end{theorem}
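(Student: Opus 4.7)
The plan is to verify $\atl N$-linearity of $\ii^{\pm}$ and $\jj^{\pm}$ by checking compatibility with a generating set of $\atl N$, namely $\{e_1,\dots,e_{N-1},\Omega_N^{\pm 1}\}$. Commutativity with the bulk Temperley-Lieb generators $e_1,\dots,e_{N-1}$ is immediate: these act on $\XXZ{N;z}{\pm}$ independently of the twist parameter, and by the quantum Schur-Weyl duality of proposition \ref{prop:qSW} they commute with every element of $\lusztpm$, in particular with the divided powers $F^{(a)}_{\pm}$ and $E^{(a)}_{\pm}$. The substantive step is thus the verification of $\Omega_N$-compatibility. Using the explicit formulas of lemma \ref{lem:lesFetG}, I would compute $\Omega_N^{(z)}\,F^{(a)}_+$ and $F^{(a)}_+\,\Omega_N^{(x)}$ on an arbitrary basis vector of $\XXZ{N;t,x}{+}$, matching them term-by-term; most contributions cancel after reindexing the sum, leaving a single boundary discrepancy coming from configurations in which a flipped spin gets cyclically rotated across the periodic boundary. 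Setting this anomaly to zero produces exactly the pair of relations $z^2=q^{-t}$ and $x=zq^{(t-d)/2}$, which together are condition B. The parallel calculation with $E^{(a)}_+$ gives condition A and establishes the $\jj^+$ case. The $-$-versions can then be deduced either by repeating the computation with $q$ replaced by $q^{-1}$ in lemma \ref{lem:lesFetG} (which swaps the roles of conditions A and B), or by transporting the $+$-statements via the spin flip using the intertwining identity \eqref{eq:lienFetG} together with the $\atl N$-linearity of $s$ from proposition \ref{prop:spinflip}.

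For the exactness of \eqref{eq:SEC1}, the key observation is that $\ii^+_{(d,z);(t,x)}$ is the restriction of the twist-independent linear operator $F^{(a)}_+$ on $(\mathbb C^2)^{\otimes N}$ to the spin eigenspace $S^z=t/2$. Consequently, as subspaces of this eigenspace, both $\ker \ii^+_{(d,z);(t,x)}$ and $\im \ii^+_{(t,x);(D,Z)}$ coincide with the corresponding kernel/image of $F^{(a)}_+$ and $F^{(\ell-a_2)}_+$ inside a single twisted chain. Writing $a=\tfrac12(t-d)=a_1\ell+a_2$ with $0<a_2<\ell$ (valid by the hypothesis $a\not\equiv 0\bmod\ell$), a short arithmetic check using $z^2=q^{-t}$ shows that the direct successor $(D,Z)$ of $(t,x)$ through condition B satisfies $D=t+2(\ell-a_2)$. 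Theorem \ref{thm:SECX} applied on the chain $\XXZ{N;x}{+}$ then immediately yields $\ker F^{(a_1\ell+a_2)}_+=\im F^{(\ell-a_2)}_+$ on the eigenspace $S^z=t/2$, proving exactness of \eqref{eq:SEC1}. The containment $\im\subseteq\ker$ can also be seen independently from the divided-power identity $F^{(a)}F^{(\ell-a_2)}=\qbin{(a_1+1)\ell}{\ell-a_2}{q}F^{((a_1+1)\ell)}$ of lemma \ref{prop:RelationsUres}, whose coefficient vanishes by the $q$-Lucas theorem \ref{thm:qLucas}. Exactness of \eqref{eq:SEC2} is obtained by the same argument after transport to the $-$ chain via the involution $\iota$ (or, equivalently, via the spin flip). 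The concluding clause is immediate from the easy equivalences $q^d=z^2\iff\tfrac12(t+d)\equiv 0\bmod\ell$ and $q^d=z^{-2}\iff\tfrac12(t-d)\equiv 0\bmod\ell$, both forced by the relation $z^2=q^{-t}$.

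The hardest part of this plan is expected to be the $\Omega_N$-commutation calculation. Although the final balance reduces to a clean algebraic relation between the source and target twists, the combinatorial sum in lemma \ref{lem:lesFetG} ranges over all $a$-tuples of spin positions, each contribution carrying an intricate product of $q$-factors that must be carefully tracked under the cyclic shift of $\Omega_N$ while keeping separate track of the source twist $x$ and target twist $z$. Isolating the bulk cancellations from the single boundary anomaly, and recognizing that the latter collapses to precisely the defining constraints of condition B, is the technically delicate portion of the argument.
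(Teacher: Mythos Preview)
Your proposal is correct and follows essentially the same approach as the paper: both reduce $\atl N$-linearity to the $\Omega_N$-check via the quantum Schur--Weyl duality for the bulk generators, then identify exactness of \eqref{eq:SEC1} and \eqref{eq:SEC2} with the twist-independent kernel/image statement of theorem \ref{thm:SECX} after computing $D-t=2(\ell-a_2)$. The only minor organizational difference is that the paper handles $\jj^{+}_{(t,x);(d,z)}$ not by a parallel $E^{(a)}_+$ computation but by observing that $(d,z)\unlhd(t,x)$ through A is equivalent to $(-t,x^{-1})\unlhd(-d,z^{-1})$ through A and then conjugating the already-established map $\ii^{-}_{(-t,x^{-1});(-d,z^{-1})}$ by the spin flip via \eqref{eq:lienFetG}; your direct-calculation route would work equally well.
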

\begin{rem} The hypothesis $t \geq \max(1,|d|)$ of theorem \ref{thm:f} comes from the hypothesis $d\geq n\ell+a$ of theorem \ref{thm:SECX}. 
\end{rem}
We limited the presentation to $\atl N$-linear maps sending $\XXZ{N;x}\pm$ to $\XXZ{N;z}\pm$. However, the spin flip $s$ can clearly be used to construct morphisms between $\XXZ{N;x}\pm$ and $\XXZ{N;z}\mp$. Also, other exact sequences can be deduced using the family of $\jj^\pm$'s and the spin flip $s$. The theorem showcases the particular sequences \eqref{eq:SEC1} and \eqref{eq:SEC2} since these are the only ones used in section \ref{sec:struc} for the proof of theorem \ref{thm:main}.
\begin{rem} The morphisms of theorem \ref{thm:f} are richer than those explored in \cite{PS,Deguchi,MDSAloop} as they allow $\atl{N}$-linear maps relating the $S_z$-eigenspaces of XXZ chains having different twist parameters $z$ and $x$. We also recover all the morphisms presented in these articles as special cases of the morphisms given above.
\end{rem}
\begin{proof}[Proof of theorem \ref{thm:f}] We only consider $\ii = \ii_{(d,z);(t,x)}^+$ with $(d,z)\unlhd (t,x)$ via condition B as the proof for $\ii_{(d,z);(t,x)}^-$ is similar. The compatibility of $\ii$ with $e_1$ follows from proposition \ref{prop:qSW} (as the action of $e_1$ is independent of $z$, see \eqref{eq:e1}). Because of the relation $\Omega_Ne_i=e_{i-1}\Omega_N$, it then suffices to show that $\ii$ commutes with $\Omega_N$ on a basis element $|y_1\dots y_N\rangle_x^+$ of $\XXZ{N;t,x}{+}$. Let $a = \frac{1}{2}(t-d)$ and $j_{a+1}=N+1$. Note that $\Omega_N \ii |y_1\dots y_N\rangle_x^+ = \Omega_N F^{(a)}_+ |y_1\dots y_N\rangle_z^+  = \Gamma_{j_1=1}^{\Omega_N\ii}\delta_{y_1,+}+\Gamma_{j_1\neq 1}^{\Omega_N\ii}$ where
\begin{align*} \Gamma_{j_1=1}^{\Omega_N\ii} &= \sum_{\substack{1= j_1<\dots <j_a\leq N\\ y_{j_2}=\dots  = y_{j_a} = +}} \Big(\prod_{k=1}^{a}\prod_{j=j_{k}+1}^{j_{k+1}-1}q^{ky_j}\Big)z|y_2\dots y_{j_2-1}(-)y_{j_2+1}\dots y_{j_3-1}(-)y_{j_3+1}\dots y_N(-)\rangle_z^+,\\\Gamma_{j_1\neq 1}^{\Omega_N\ii} &= \sum_{\substack{2\leq j_1<\dots <j_a\leq N\\ y_{j_1}=\dots  = y_{j_a} = +}} \Big(\prod_{k=1}^{a}\prod_{j=j_{k}+1}^{j_{k+1}-1}q^{ky_j}\Big)z^{-y_1}|y_2 \dots y_{j_1-1}(-)y_{j_1+1}\dots y_{j_2-1}(-)y_{j_2+1}\dots y_Ny_1\rangle_z^+.
\end{align*}
In the same way, $\ii\Omega_N|y_1\dots y_N\rangle_x^{+} = x^{-y_1}F^{(a)}_+|y_2\dots y_Ny_1\rangle_z^+ = \Gamma_{j_a = N}^{\ii\Omega_N}\delta_{y_1,+}+\Gamma_{j_a\neq N}^{\ii\Omega_N}$ where
\begin{align*} 
\Gamma_{j_a=N}^{\ii\Omega_N} &= \sum_{\substack{1\leq j_1<\dots <j_a= N\\ y_{j_1+1}=\dots  = y_{j_{a-1}+1} = +}} \Big(\prod_{k=1}^{a-1}\prod_{j=j_{k}+1}^{j_{k+1}-1}q^{ky_{j+1}}\Big)x^{-1}|y_2\dots y_{j_1}(-)y_{j_1+2}\dots y_{j_2}(-)y_{j_2+2}\dots y_N(-)\rangle_z^+,\\
\Gamma_{j_a\neq N}^{\ii\Omega_N} &= \sum_{\substack{1\leq j_1<\dots <j_a\leq N-1\\ y_{j_1+1}=\dots  = y_{j_a+1} = +}} \Big(\prod_{k=1}^{a}\prod_{j=j_{k}+1}^{j_{k+1}-1-\delta_{k,a}}q^{ky_{j+1}}\Big)q^{ay_1}x^{-y_1}|y_2 \dots y_{j_1}(-)y_{j_1+2}\dots y_{j_2}(-)y_{j_2+2}\dots y_Ny_1\rangle_z^+.
\end{align*}
The translation in $\ii\Omega_N|y_1\dots y_N\rangle_x^{+}$ moved the positions of the spins acted upon by $F_+^{(a)}$. To compare $\Gamma_{j_1=1}^{\Omega_N \ii}$ and $\Gamma_{j_a=N}^{\ii\Omega_N}$, it is natural to substitute $j_i\mapsto j_{i-1}+1$ for $1\leq i\leq a+1$ in $\Gamma_{j_1=1}^{\Omega_N \ii}$. The power of $q$ given to the spins between $j_i+1$ and $j_{i+1}$ should then be adjusted accordingly: $q^{ky_j}\mapsto q^{(k+1)y_{j+1}}$. These steps give 
\begin{align*}
\Gamma_{j_1=1}^{\Omega_N \ii} &=  \sum_{\substack{0=j_0< j_1<\dots <j_{a}= N\\ y_{j_1+1}=\dots  = y_{j_{a-1}+1} = +}} \Big(\prod_{k=0}^{a-1}\prod_{j=j_{k}+1}^{j_{k+1}-1}q^{(k+1) y_{j+1}}\Big)z|y_2\dots y_{j_1}(-)y_{j_1+2}\dots y_{j_2}(-)y_{j_2+2}\dots y_N(-)\rangle_z^+\\
&= \sum_{\substack{0=j_0< j_1<\dots <j_{a}= N\\ y_{j_1+1}=\dots  = y_{j_{a-1}+1} = +}} \Big(\prod_{k=0}^{a-1}\prod_{j=j_{k}+1}^{j_{k+1}-1}q^{k y_{j+1}}\Big)q^{\sum_{j=1}^N y_j-\sum_{k=1}^{a-1} y_{j_k+1}-y_{1}}z|y_2\dots y_{j_1}(-)y_{j_1+2}\dots y_{j_2}(-)y_{j_2+2}\dots y_N(-)\rangle_z^+
\\
&= \sum_{\substack{1\leq j_1<\dots <j_{a}= N\\ y_{j_1+1}=\dots  = y_{j_{a-1}+1} = +}} \Big(\prod_{k=1}^{a-1}\prod_{j=j_{k}+1}^{j_{k+1}-1}q^{k y_{j+1}}\Big)q^{t-a}z|y_2\dots y_{j_1}(-)y_{j_1+2}\dots y_{j_2}(-)y_{j_2+2}\dots y_N(-)\rangle_z^+ = \Gamma_{j_a=N}^{\ii\Omega_N}
\end{align*}
where we used, in the last equality, the relations $z^2 = q^{-t}$ and $zq^a = x$ defining condition B of \eqref{eq:conditionsAB2} to deduce $q^{t-a}z=x^{-1}$.
A similar computation gives $\Gamma_{j_1\neq 1}^{\Omega_N \ii} = \Gamma_{j_a\neq N}^{\ii\Omega_N}$. Therefore, $\Omega_N\ii|y_1\dots y_N\rangle_x^+ = \ii\Omega_N|y_1\dots y_N\rangle_x^+$ and $\ii$ is indeed $\atl{N}$-linear.

We now want to show that $\jj^{+}_{(t,x);(d,z)}$ 
is $\atl{N}$-linear with the hypothesis that $(d,z)\unlhd (t,x)$ through condition A. For this purpose, note that the latter hypothesis is equivalent to $(-t,x^{-1})\unlhd (-d,z^{-1})$ via condition A. The above hence tells us that the map $F^-_{(-t,x^{-1});(-d,z^{-1})}:\XXZ{N;-d,z^{-1}}{-}\rightarrow \XXZ{N;-t,x^{-1}}{-}$ is $\atl{N}$-linear. The claim then follows from the fact that, by \eqref{eq:lienFetG}, the map $\jj^{+}_{(t,x);(d,z)}$ that we wish to study is equal, up to a non-zero factor, to the conjugation of $F^-_{(-t,x^{-1});(-d,z^{-1})}$ by the spin flip isomorphism $s$. The proof for $\jj^{-}_{(t,x);(d,z)}$ is analogous and is omitted.

We now consider sequence \eqref{eq:SEC1} with the hypotheses mentioned in the statement of the theorem and with $a=\frac{1}{2}(t-d)$. It is clear that all maps appearing in this sequence are $\atl{N}$-linear as $(d,z)\unlhd (t,x)$ and $(t,x)\preceq (D,Z)$ via condition B. Let $a_1,a_2 \in \mathbb{Z}_{\geq 0}$ be such that $a=a_1\ell+a_2$ with $a_2<\ell$. The definition of condition B gives $q^D = x^{-2} = (zq^{a})^{-2} = z^{-2}q^{d-t} = q^d$ so that $D = d+2n\ell$ for some $n\in \mathbb{N}$. We then have $0\leq D-t = 2(n\ell-a) = 2((n-a_1)\ell-a_2) \leq 2\ell$ as $(t,x)\preceq (D,Z)$ is a direct succession and we can thus conclude that $n = a_1+1$. Therefore, as the action of the divided powers on the XXZ chains does not depend on the twist parameter $z$ (see lemma \ref{lem:lesFetG}), the exactness of sequence \eqref{eq:SEC1} is equivalent to that of the following sequence of $\mathbb{C}$-linear morphisms
$$\XXZ{N;D,z}{+} \xrightarrow{\ F^{(\ell-a_2)}_+\ } \XXZ{N;t,z}{+} \xrightarrow{\ F^{(a_1\ell+a_2)}_+\ } \XXZ{N;d,z}{+}$$
The exactness of the latter sequence follows directly from theorem \ref{thm:SECX} if $a\not\equiv 0\modY\ell$ as the assumptions of this theorem are in this case equivalent to $a_2\not\equiv 0\modY\ell$ and $t \geq a$ (which is true since we suppose $t\geq|d|\geq -d$). 

For sequence \eqref{eq:SEC2}, note that, as $(d,z)\unlhd(t,x)$ through condition B with $t\geq |d|\geq -d$, we have $(-d,z^{-1})\unlhd (t,x)$ through condition A and both maps $\ii_{(-d,z^{-1});(t,x)}^-$ and $\ii_{(t,x);(h,v)}^-$ are $\atl N$-linear. Let $a = \frac{1}{2}(t+d)$ and choose $a_1,a_2\in \mathbb{Z}_{\geq 0}$ as above such that $a = a_1\ell+a_2$ with $a_2<\ell$. Since $(t,x)\preceq (h,v)$ also by condition A, $q^{h} = x^2 = (z^{-1}q^{-a})^{2}= z^{-2}q^{-t-d} = q^{-d}$ so $h=-d+2n\ell$ for some $n\in \mathbb{N}$.
However, this implies $0\leq h-t = 2(n\ell-a)=2((n-a_1)\ell-a_2)\leq 2\ell$ as the succession $(t,x)\preceq (h,v)$ is direct and we thus have once more $n=a_1+1$. The exactness of sequence \eqref{eq:SEC2} is therefore equivalent to that of the sequence of $\mathbb{C}$-linear maps 
$$\XXZ{N;h,z}-\xrightarrow{F^{(\ell-a_2)}_-}\XXZ{N;t,z}-\xrightarrow{F^{(a_1\ell+a_2)}_-}\XXZ{N;-d,z}-$$
where we changed again all twist parameters to $z$. The exactness of this last sequence is guaranteed by theorem \ref{thm:SECX} when $a\not\equiv 0\modY\ell$ as the hypotheses of this theorem amount here to $a_2 \not\equiv 0\modY\ell$ and $t\geq a$ (which follows from $t\geq |d| \geq d)$.

For the last statement, note that $\frac{1}{2}(t\pm d) \equiv 0\text{ mod }\ell$ implies $t\pm d=2n\ell$ for some $n\in \mathbb{Z}$. In this case, $q^t=q^{\mp d}$ and condition B forces $q^d=z^{\pm 2}$. Thus, if $(t,x)$ succeeds $(d,z)$ via condition B, $q^d \neq z^{\pm 2}$ implies that $\frac{1}{2}(t\pm d) \not\equiv 0 \modY \ell$.
\end{proof}
The linear maps $\ii_{(d,z);(t,x)}^\pm$ of the last theorem are related by the $\circ$-duality.
\begin{lemma}\label{lemma:AlmCommSquare} Let $(d,z),(s,y)\in \Delta$ be such that $(d,z)\unlhd (s,y)$ through condition A. Then, 
\begin{equation}\label{eq:AlmCommSquare}
\varrho_{d,z}(\ii_{(d,z^{-1});(s,y^{-1})}^+)^{\circ}=
q^{k(s-k)}\ii^-_{(d,z);(s,y)}\varrho_{s,y}
\end{equation}
with $k= \frac{1}{2}(s-d)$ and where $\varrho_{d,z}:(\XXZ{N;d,z^{-1}}{+})^{\circ}\to \XXZ{N;d,z}{-}$ and $\varrho_{s,y}:(\XXZ{N;s,y^{-1}}{+})^{\circ}\to \XXZ{N;s,y}-$ are the isomorphisms described in the proof of proposition \ref{prop:foncteurcirc}.
\end{lemma}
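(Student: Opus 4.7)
Both sides of \eqref{eq:AlmCommSquare} are $\mathbb{C}$-linear maps from the common source $(\XXZ{N;s,y^{-1}}{+})^{\circ}$ (which has the same underlying vector space as $\XXZ{N;s,y^{-1}}{+}$) to the common target $\XXZ{N;d,z}{-}$, so it suffices to test them on an arbitrary basis element $|x_1\dots x_N\rangle^+_{y^{-1}}$ with $\sum_i x_i = s$. The plan is to expand both sides using the explicit formulas for $F^{(k)}_\pm$ given in Lemma \ref{lem:lesFetG}, identify matching basis elements in $\XXZ{N;d,z}-$, and then check the coefficients agree up to the scalar $q^{k(s-k)}$ by a short combinatorial identity.

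On the left, since the functor $\circ$ acts as the identity on morphisms, $(\ii^+_{(d,z^{-1});(s,y^{-1})})^{\circ}$ sends $|x_1\dots x_N\rangle$ to $F^{(k)}_+|x_1\dots x_N\rangle^+_{z^{-1}}$, and Lemma \ref{lem:lesFetG} expands this as a sum over tuples $1\leq j_1<\dots<j_k\leq N$ with $x_{j_i}=+$, each term carrying a coefficient $C_L=\prod_{i=1}^k\prod_{j=j_i+1}^{j_{i+1}-1}q^{ix_j}$ and a basis vector whose spins at positions $j_i$ have been switched to $-$. Applying $\varrho_{d,z}$ reverses the order of the spins and changes the superscript to $-$. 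On the right, $\varrho_{s,y}$ first produces $|x_N\dots x_1\rangle^-_y$, then $\ii^-_{(d,z);(s,y)}$ sends this to $F^{(k)}_-|x_N\dots x_1\rangle^-_z$, which Lemma \ref{lem:lesFetG} expands as a sum over tuples $1\leq j'_1<\dots<j'_k\leq N$ with matching spins $+$. After the change of summation variables $j'_i=N+1-j_{k+1-i}$, the tuples match those on the left and the basis vectors in $\XXZ{N;d,z}-$ obtained on both sides coincide.

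The comparison then reduces to an equality of scalar coefficients. Writing $\epsilon_j = x_j \in \{\pm 1\}$, a direct index bookkeeping shows that the coefficient on the right becomes
\[
C_R = \prod_{i=1}^k\prod_{j=j_{i-1}+1}^{j_i-1}q^{-(k+1-i)\epsilon_j}\qquad(\text{with }j_0=0),
\]
so that the contribution of a non-flipped position $j$ (i.e. $j\notin\{j_1,\dots,j_k\}$) to $\log_q C_L$ is $|\{m : j_m<j\}|\,\epsilon_j$, while its contribution to $\log_q C_R$ is $-|\{m : j_m>j\}|\,\epsilon_j$. Since for such $j$ one has $|\{m:j_m<j\}|+|\{m:j_m>j\}|=k$, and since $\sum_j\epsilon_j=s$ with $\sum_{j\in\{j_1,\dots,j_k\}}\epsilon_j=k$ (all flipped positions carry $\epsilon=+1$), one obtains
\[
\log_q(C_L/C_R)=k\sum_{j\notin\{j_1,\dots,j_k\}}\epsilon_j=k(s-k),
\]
which is exactly the scalar prefactor in \eqref{eq:AlmCommSquare}. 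This finishes the verification on every basis element and hence establishes the lemma.

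The main obstacle here is purely bookkeeping: getting the index substitution $j'_i=N+1-j_{k+1-i}$ to correctly convert the nested product appearing in $C_R$ into a product over the original flip data so that the cancellations identifying $C_L/C_R=q^{k(s-k)}$ become visible. No deeper algebraic input is needed, as Lemmas \ref{lem:lesFetG} and the definitions of $\varrho_{d,z}$ and of the endofunctor $\circ$ already supply all the pieces.
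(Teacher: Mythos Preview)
Your proposal is correct and follows essentially the same approach as the paper's own proof: both test the identity on a basis element, expand via the explicit formulas of Lemma~\ref{lem:lesFetG}, match the summation indices by the substitution $j'_i=N+1-j_{k+1-i}$, and reduce the coefficient comparison to the combinatorial identity $\sum_{j\notin\{j_1,\dots,j_k\}} x_j = s-k$. Your bookkeeping via $|\{m:j_m<j\}|$ versus $|\{m:j_m>j\}|$ is a slightly cleaner packaging of the same manipulation the paper does through explicit index shifts on the nested products.
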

Clearly, by using the functor $\circ$ and the spin flip $s$, other relations of the form \eqref{eq:AlmCommSquare} may be obtained for successors via condition B or for the maps $\jj^{\pm}_{(d,z);(s,y)}$ of section \ref{sec:xxzLUq}. The above lemma showcases a particular relation, the only one needed for the proof of subcase (iii) of theorem \ref{thm:main}.
\begin{proof} The basis $\BXXZ{N;s}$ of $\XXZ{N;s,y^{-1}}{+}$ induces a basis for $(\XXZ{N;s,y^{-1}}{+})^{\circ}$ and it suffices to check relation \eqref{eq:AlmCommSquare} on a given element $|x_1...x_N\rangle^+_{y^{-1}}$ of this induced basis. Recall that $\circ$ acts as the identity on $\atl{N}$-morphisms so that, with our identifications,   
\begin{align*}
\rho_{d,z}(\ii^+_{(d,z^{-1});(s,y^{-1})})^{\circ}|x_1\dots x_N\rangle_{y^{-1}}^+ &= \rho_{d,z}(F^{(k)}_+ |x_1\dots x_N\rangle_{z^{-1}}^+)\\
&= \sum_{\substack{1\leq j_1 < ... < j_k\leq N\\ x_{j_1}=...=x_{j_k}=+}} \Big(\prod_{n=1}^k\prod_{j=j_{n}+1}^{j_{n+1}-1} q^{nx_j}\Big)
\varrho_{d,z}|x_1\dots x_{j_1-1}(-)x_{j_1+1}\dots x_{j_k-1}(-)x_{j_k+1}\dots x_N\rangle_{z^{-1}}^{+}
\intertext{where $j_0=0$ and $j_{k+1} = N+1$. The definition of $\varrho_{d,z}$ (see the proof of proposition \ref{prop:foncteurcirc}) then gives}
&= \sum_{\substack{1\leq j_1 < ... < j_k\leq N\\ x_{j_1}=...=x_{j_k}=+}} \Big(\prod_{n=1}^k\prod_{j=j_{n}+1}^{j_{n+1}-1} q^{nx_j}\Big)
|x_N\dots x_{j_k+1}(-)x_{j_k-1}\dots x_{j_{1}+1}(-)x_{j_{1}-1}\dots x_1\rangle_{z}^{-}.
\end{align*}
On the other hand, $q^{k(s-k)}\ii^-_{(d,z);(s,y)}\varrho_{s,y}|x_1...x_N\rangle_{y^{-1}}^+ = q^{k(s-k)}F^{(k)}_-|x_N...x_1\rangle_{z}^-$ which is equal to
\begin{equation*}
q^{k(s-k)}\sum_{\substack{1\leq j_1 < ... < j_k\leq N\\ x_{N-j_1+1}=...=x_{N-j_k+1}=+}} \Big(\prod_{n=1}^k\prod_{j=j_{n}+1}^{j_{n+1}-1} q^{-nx_{N-j+1}}\Big)
|x_N\dots x_{N-j_1+2}(-)x_{N-j_1}\dots x_{N-j_k+2}(-)x_{N-j_k}\dots x_1\rangle_z^-.
\end{equation*}
Using the change of variables $j_i\mapsto N-j_{k-i+1}+1$ for $0\leq i \leq k+1$ allows us to obtain the equivalent expression
\begin{equation*}
q^{k(s-k)}\sum_{\substack{1\leq j_1 < ... < j_k\leq N\\ x_{j_1}=...=x_{j_k}=+}} \Big(\prod_{n=1}^k\prod_{j=N-j_{k-n+1}+2}^{N-j_{k-n}} q^{-nx_{N-j+1}}\Big)
|x_N\dots x_{j_k+1}(-)x_{j_k-1}\dots x_{j_{1}+1}(-)x_{j_{1}-1}\dots x_1\rangle_z^-
\end{equation*}
which is equal to 
\begin{equation*}
q^{k(s-k)}\sum_{\substack{1\leq j_1 < ... < j_k\leq N\\ x_{j_1}=...=x_{j_k}=+}} 
\Big(\prod_{n=0}^{k-1}\prod_{j=j_{n}+1}^{j_{n+1}-1} q^{(n-k)x_{j}}\Big)|x_N\dots x_{j_k+1}(-)x_{j_k-1}\dots x_{j_{1}+1}(-)x_{j_{1}-1}\dots x_1\rangle_z^-
\end{equation*}
after the substitutions $n\mapsto k-n$ and $j\mapsto N-j+1$ in the products. The coefficient corresponding to a given choice of increasing sequence $1\leq j_1<...<j_n\leq N$ with $x_{j_1}=...=x_{j_n}=+$ in the preceding sum is
\begin{equation*}
q^{k(s-k)}\prod_{n=0}^{k-1}\prod_{j=j_{n}+1}^{j_{n+1}-1} q^{(n-k)x_{j}}
=q^{k(s-k)}\prod_{n=0}^{k}\prod_{j=j_{n}+1}^{j_{n+1}-1} q^{(n-k)x_{j}}
= q^{k(s-k)} q^{-k(\sum_{j=1}^N x_j-\sum_{n=1}^k x_{j_n})}
\Big(\prod_{n=1}^k\prod_{j=j_{n}+1}^{j_{n+1}-1} q^{nx_{j}}\Big) 
= \prod_{n=1}^k\prod_{j=j_{n}+1}^{j_{n+1}-1} q^{nx_{j}}
\end{equation*}
where we used the relation $\sum_{j=1}^N x_j = s$ for the last equality. This corresponds precisely to the coefficient found above for the same increasing sequence $1\leq j_1<...<j_n\leq N$ in $\rho_{d,z}(\ii^+_{(d,z^{-1});(s,y^{-1})})^{\circ}|x_1...x_N\rangle_{y^{-1}}^{+}$. The result thus follows.
\end{proof}
Theorem \ref{thm:f} has failed to assure the exactness of sequence \eqref{eq:SEC1} (resp.~\eqref{eq:SEC2}) when $q^d$ is $z^2$ (resp.~$z^{-2}$). The proof of theorem \ref{thm:main} for these situations will require other morphisms. Fix $n\in \mathbb{Z}_{\geq 0}$ and consider the following cases:
\begin{itemize}
\setlength{\itemsep}{2pt}
\item[(a)] Let $\kk^{(n)} = \ii_{(d,z);(d+2n\ell,zq^{n\ell})}^+: \XXZ{N;d+2n\ell,zq^{n\ell}}{+}\rightarrow \XXZ{N;d,z}{+}$. If $q^d = z^{-2}$, then $(d,z)\unlhd (d+2n\ell,zq^{n\ell})$ through condition B and, by theorem \ref{thm:f}, $\kk^{(n)}$ is $\atl{N}$-linear.
\item[(b)] Consider $\mm^{(n)}=s\ii^{-}_{(-d-2n\ell,z^{-1}q^{n\ell});(-d,z^{-1})}s : \XXZ{N;d,z}{+}\rightarrow \XXZ{N;d+2n\ell,zq^{n\ell}}{+}$ where $s$ denotes the spin flip. If $q^d = z^{2}$, then $(-d-2n\ell,z^{-1}q^{n\ell})\unlhd(-d,z^{-1})$ through condition A and, by theorem \ref{thm:f}, $\mm^{(n)}$ is $\atl{N}$-linear.
\end{itemize}
When $q^d\neq z^{\pm 2}$, the pair $(d+2n\ell,zq^{n\ell})$ does not succeed $(d,z)$ via condition B and the pair $(-d,z^{-1})$ does not succeed $(-d-2n\ell,z^{-1}q^{n\ell})$ through condition A (for the ordering $\unlhd$). In these circumstances, the maps $\kk^{(n)}$ and $\mm^{(n)}$ exist but are not $\atl N$-linear. However, with or without this $\atl{N}$-linearity, these maps $\kk^{(n)}$ and $\mm^{(n)}$ are always injective or surjective. 
\begin{theorem}\label{thm:Aut} With the notation given above, the map $\mm^{(n)}\kk^{(n)}$ (resp.~$\kk^{(n)}\mm^{(n)}$) is one-to-one if $|d|\leq |d+2n\ell|$ (resp.~if $|d| \geq |d+2n\ell|$). In particular, $\kk^{(n)}$ is injective (resp.~surjective) if $|d|\leq |d+2n\ell|$ (resp.~if $|d|\geq |d+2n\ell|$) and the map $\mm^{(n)}$ is injective (resp.~surjective) if $|d|\geq |d+2n\ell|$ (resp.~if $|d|\leq |d+2n\ell|$).
\end{theorem}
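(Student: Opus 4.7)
The plan is to reduce the theorem to theorem \ref{thm:AutX} by expressing $\kk^{(n)}$ and $\mm^{(n)}$ as explicit $\mathbb{C}$-linear operators on the vector space $(\mathbb{C}^2)^{\otimes N}$ underlying every XXZ chain. Since the action of the $\luszt$-divided powers is independent of the twist parameter (lemma \ref{lem:lesFetG}) and since the vector space underlying $\XXZ{N;d,z}+$ is the $H=d$ eigenspace of $(\mathbb{C}^2)^{\otimes N}$ for every $z$, the definition of $\kk^{(n)}$ shows immediately that, under this identification, $\kk^{(n)} = F_+^{(n\ell)}$. For $\mm^{(n)} = sF^-_{(-d-2n\ell,z^{-1}q^{n\ell});(-d,z^{-1})}s$, I would apply equation \eqref{eq:lienFetG} on the $H=d$ eigenspace of the $+$ chain together with $s^2=\id$ to obtain
\[
\mm^{(n)}(v) \;=\; sF_-^{(n\ell)}s(v) \;=\; q^{n\ell(d+n\ell)}\,s^2\, E_+^{(n\ell)}(v) \;=\; q^{n\ell(d+n\ell)}\,E_+^{(n\ell)}(v)
\]
for every $v$ in that eigenspace. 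Hence $\mm^{(n)}$ equals a non-zero scalar multiple of $E_+^{(n\ell)}$ under the same identification.

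The second step is to express the divided powers $E^{(n\ell)}$ and $F^{(n\ell)}$ in terms of the elements $e=E^{(\ell)}$ and $f=F^{(\ell)}$ appearing in theorem \ref{thm:AutX}. Using the standard identity $E^{(a)}E^{(b)}=\qbin{a+b}{a}{q}E^{(a+b)}$ of appendix \ref{app:b} (and its $F$-analogue) together with the $q$-Lucas theorem \ref{thm:qLucas}, which yields $\qbin{k\ell}{\ell}{q}=\binom{k}{1}\qbin{0}{0}{q}=k$ for every $k\geq 1$ when $q^2$ is a primitive $\ell$-th root of unity, a simple induction on $n$ gives $e^n = n!\,E^{(n\ell)}$ and $f^n = n!\,F^{(n\ell)}$. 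Combining with the first paragraph, one obtains
\[
\mm^{(n)}\kk^{(n)} \;=\; \tfrac{q^{n\ell(d+n\ell)}}{(n!)^2}\,e^n f^n \ \text{ on }\ \XXZ{N;d+2n\ell,zq^{n\ell}}+,\qquad \kk^{(n)}\mm^{(n)} \;=\; \tfrac{q^{n\ell(d+n\ell)}}{(n!)^2}\,f^n e^n \ \text{ on }\ \XXZ{N;d,z}+.
\]

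The main claims now follow directly from theorem \ref{thm:AutX}: specializing its parameter $d$ to $d+2n\ell$ (respectively to $d$) shows that $e^nf^n$ (resp.~$f^ne^n$) is a $\mathbb{C}$-linear automorphism of $\XXZ{N;d+2n\ell,zq^{n\ell}}+$ (resp.~$\XXZ{N;d,z}+$) provided that $|d|\leq|d+2n\ell|$ (resp.~$|d|\geq|d+2n\ell|$), since these inequalities are exactly the hypotheses $|d'-2n\ell|\leq|d'|$ and $|d'+2n\ell|\leq|d'|$ of that theorem. The ``in particular'' assertions on the individual injectivity or surjectivity of $\kk^{(n)}$ and $\mm^{(n)}$ then reduce to elementary linear algebra: whenever a composition $gh$ of $\mathbb{C}$-linear maps between finite-dimensional spaces is a bijection, $h$ must be injective and $g$ must be surjective.

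The only mildly delicate step is the careful bookkeeping in the identification $\mm^{(n)} = q^{n\ell(d+n\ell)}E_+^{(n\ell)}$: one must correctly track the two spin flips, verify that the relevant application of \eqref{eq:lienFetG} is on the $H=d$ eigenspace of the $+$ chain, and use $s^2 = \id$. Everything else is computational, with the $q$-Lucas reduction $e^n=n!\,E^{(n\ell)}$ being immediate once the value of $\qbin{k\ell}{\ell}{q}$ is known.
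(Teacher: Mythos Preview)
Your proof is correct and follows exactly the strategy the paper sketches in one line: reduce to theorem \ref{thm:AutX} via equation \eqref{eq:lienFetG} and the twist-independence of the $\luszt$-action (lemma \ref{lem:lesFetG}). You have simply written out the details the paper omits.

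One small computational slip: the $q$-Lucas theorem \ref{thm:qLucas} actually gives $\qbin{k\ell}{\ell}{q}=q^{\ell^2(1-k)}k$, not $k$, so $e^n$ equals $E^{(n\ell)}$ times $n!$ only up to a power of $q^{\ell^2}=\pm 1$. This does not affect your argument, since you only need the scalar to be non-zero.
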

\begin{proof}
This follows from theorem \ref{thm:AutX} and equation \eqref{eq:lienFetG} as the $\lusztpm$-action on $\XXZ{N;d,z}\pm$ is independent of the twist $z$. \end{proof}
\end{subsection}
\end{section}


%
%

\begin{section}{The structure of the XXZ spin chains as $\atl N$-modules}\label{sec:struc} 

This section proves theorem \ref{thm:main} that exhibits the structure of the eigenspace $\XXZ{N;d,z}+$ as a module over $\atl{N}$. Throughout the parameter $d$ of the $\atl N$-module $\XXZ{N;d,z}+$ is assumed to be non-negative as the spin flip of proposition \ref{prop:spinflip} can be used for the cases with $d<0$. We also assume that $(d,z)$ is not one of the problematic pairs (see section \ref{sec:main}) and leave the analysis of these pairs to appendix \ref{app:problematic}. Each of the three subsections tackles one the three subcases (i), (ii) and (iii) announced in section \ref{sec:main}. They all start by tying the conditions on $d$ and $q$ to the successors of $(d,z)\in\lambda$. 
\begin{subsection}{Subcase (i)}\label{sub:i}
This subsection considers the case of a pair $(d,z)\in\lambda_N$ such that $q^d=z^2$ and $z^4=1$ (so that $z^2=\pm 1$). In this case, a direct successor $(t,x)$ of $(d,z)$ via condition A or B must verify $q^t=z^2=q^d$ and thus $\frac12(t-d)\equiv 0\text{ mod }\ell$ as $t$ must share its parity with $d$. Also, as $q^2$ is a primitive $\ell$-th root of unity, the direct successors via A and B are both equal to $(d+2\ell, zq^\ell)$. The successors of $(d,z)$ are those given by subcase (i) of section \ref{sec:main} and the Loewy diagram of $\Cell{N;d,z}$ is
\begin{equation}\label{eq:succCasi}
(d,z)\longrightarrow (d+2\ell,zq^\ell)\longrightarrow(d+4\ell, z)\longrightarrow(d+6\ell, zq^\ell)\longrightarrow\dots
\end{equation}
by theorem \ref{thm:GL}. Theorem \ref{thm:main} states here that $\XXZ{N;d,z}+$ is semisimple and isomorphic to the direct sum of (the irreducibles associated to) all the successors $(d+2m\ell,zq^{m\ell})$ with $m\in \mathbb{Z}_{\geq 0}$ such that $d+2m\ell \leq N$. This is what needs to be proved. 
\begin{proof}[Proof (theorem \ref{thm:main}, subcase (i)).] Let $m_d=\max\{m\in\mathbb{Z}_{\geq0}\,|\,d+2m\ell\leq N\}$ and observe that $\Cell{N;d,z}$ has $m_d+1$ composition factors. We proceed by induction on $m_d$. If $m_d=0$, then $\Cell{N;d,z}\simeq\Irre{N;d,z}$ and $\XXZ{N;d,z}+\simeq \Irre{N;d,z}$ by proposition \ref{prop:troisnoeuds}.

Let now $m_d>0$. The first step is to use the induction hypothesis to obtain the structure of $\XXZ{N;d+2\ell,zq^\ell}+$. The successors of $(d+2\ell,zq^\ell)$ are those of \eqref{eq:succCasi} with the first $(d,z)$ omitted. Thus $m_{d+2\ell}=m_d-1$. Moreover, the pair $(d+2\ell,zq^\ell)$ also falls in subcase (i) as $q^{d+2\ell}=q^d=z^2=(zq^{\ell})^2$ and $(zq^\ell)^4=z^4=1$. The induction hypothesis then gives
$$\XXZ{N;d+2\ell,zq^\ell}+\simeq\bigoplus_{1\leq m\leq m_d}\Irre{N;d+2m\ell,zq^{m\ell}}.$$

The next step is to prove that the eigenspace $\XXZ{N;d,z}+$ has a submodule isomorphic to $\XXZ{N;d+2\ell,zq^\ell}+$. The $\atl N$-morphisms $\kk^{(1)}:\XXZ{N;d+2\ell,zq^\ell}+\to \XXZ{N;d,z}+$ and $\mm^{(1)}:\XXZ{N;d,z}+\to \XXZ{N;d+2\ell,zq^\ell}+$ of section \ref{sub:homoXXZ} (or appendix \ref{app:problematic} if $\ell=1$) will do this work. Indeed, theorem \ref{thm:Aut} (or theorem \ref{thm:Auti} if $\ell =1$) reveals that $\kk^{(1)}$ is injective and $\mm^{(1)}$ is surjective. Thus, as $\atl{N}$-modules,
$$\XXZ{N;d,z}+\simeq \XXZ{N;d+2\ell,zq^\ell}+\oplus \ker \mm^{(1)}.$$
A part of $\ker\mm^{(1)}$ is easily identified. Indeed, corollary \ref{thm:gpEstUnQuotient} indicates that the generic part $\pg{N;d,z}$ of $\Cell{N;d,z}$ is isomorphic to a quotient of the image of $\ay{N;d,z}:\Cell{N;d,z}\to\XXZ{N;d,z}+$. Since $\ay{N;d,z}$ is non-zero (lemma \ref{thm:iNonZero}), $\XXZ{N;d,z}+$ must at least contain the head of $\Cell{N;d,z}$, that is the module $\Irre{N;d,z}$. As this simple is not a composition factor of $\XXZ{N;d+2\ell,zq^\ell}+$, it must be in $\ker\mm^{(1)}$.

The last step is a dimensional argument. The above results give
$$\dim\XXZ{N;d,z}+ = \sum_{1\leq m\leq m_d}\dim \Irre{N;d+2m\ell,zq^{m\ell}}+\dim\ker\mm^{(1)} \geq \sum_{0\leq m\leq m_d}\dim \Irre{N;d+2m\ell,zq^{m\ell}}=\dim\Cell{N;d,z}.$$
However $\dim\XXZ{N;d,z}+= \dim\Cell{N;d,z}$ and $\ker\mm^{(1)}$ must then be isomorphic to the last factor identified: $\Irre{N;d,z}$. Therefore, 
$$\XXZ{N;d,z}+\simeq\bigoplus_{0\leq m\leq m_d}\Irre{N;d+2m\ell,zq^{m\ell}}$$as claimed.
\end{proof}
\end{subsection}
\begin{subsection}{Subcase (ii)}\label{sub:ii}
The subcase (ii) relaxes the condition $z^4=1$. Thus, throughout this subsection, $q^d=z^2$ or $q^d=z^{-2}$ but not both. Consider first the case $q^d=z^2$. Then the direct successor $(s,y)$ of $(d,z)$ via condition A satisfies $q^s=z^2=q^d$ and $s$ must be the smallest integer larger than $d$ such that $\frac12(s-d)\equiv 0\text{ mod }\ell$. Hence $s=d+2\ell$ and $y=zq^{-\ell}$ so that, with the notation of subcase (iii) of section \ref{sec:main}, the family $(s_a,y_a)$ is contained in $(d_a,z_a)$. Moreover a successor $(t,x)$ of $(d,z)$ through B is easily computed to be $(t,x)=(2i\ell-d,z^{-1}q^{i\ell})$ with $i\in \mathbb{Z}_{\geq 0}$ such that $d<t=2i\ell-d\leq d+2\ell$. Note that $t$ and $d+2\ell$ cannot be equal. If they were, we would have $z^4=q^{2d}=q^dq^t=z^2z^{-2}=1$ which contradicts the hypothesis $z^4\neq 1$. Thus $(t,x)$ is a strict successor of $(d,z)$ with $d<t<d+2\ell$. A similar computation shows that the family $(h_a,v_a)$ is also included in $(d_a,z_a)$ and the Loewy diagram of $\Cell{N;d,z}$ is indeed given by subcase (ii) of theorem \ref{thm:GL}
\begin{equation}\label{eq:succCasii}
(d,z)\longrightarrow (t,x)\longrightarrow (d+2\ell,zq^\ell)\longrightarrow (t+2\ell,xq^\ell)\longrightarrow(d+4\ell, z)\longrightarrow(t+4\ell, x)\longrightarrow\dots
\end{equation}
The statement of the main theorem for this subcase (ii) is that the Loewy diagram for $\XXZ{N;d,z}+$ is 
\begin{equation}\label{eq:succCasiiA}
\begin{tikzpicture}[baseline={(current bounding box.center)},scale=0.45]
\node (d0) at (0,2) [] {$(d,z)$};
\node (t0) at (4,0) [] {$(t,x)$};
\node (d1) at (8,2) [] {$(d+2\ell,zq^\ell)$};
\node (t1) at (12,0) [] {$(t+2\ell,xq^\ell)$};
\node (d2) at (16,2) [] {$(d+4\ell,z)$};
\node (t2) at (20,0) [] {$(t+4\ell,x)$};
\node (d3) at (24,2) [] {$(d+6\ell,zq^\ell)$};
\node (t3) at (28,0) [] {$\dots$};
\draw[->] (d0) -- (t0);
\draw[->] (d1) -- (t0);\draw[->] (d1) -- (t1);
\draw[->] (d2) -- (t1);\draw[->] (d2) -- (t2);
\draw[->] (d3) -- (t2);\draw[->, dashed] (d3) -- (t3);
\end{tikzpicture}
\end{equation}
where all composition factors with $d+2i\ell$ or $t+2i\ell$ larger than $N$ must be deleted.

The case $q^d=z^{-2}$ is similar. A strict successor $(s,y)$ of $(d,z)$ through condition A with $d<s<d+2\ell$ exists and the Loewy diagram of $\Cell{N;d,z}$ remains that of \eqref{eq:succCasii} (with the obvious changes $t\mapsto s$ and $x\mapsto y$). The corresponding diagram for $\XXZ{N;d,z}+$, as claimed by theorem \ref{thm:main}, is then
\begin{equation}\label{eq:succCasiiB}
\begin{tikzpicture}[baseline={(current bounding box.center)},scale=0.45]
\node (d0) at (0,0) [] {$(d,z)$};
\node (t0) at (4,2) [] {$(s,y)$};
\node (d1) at (8,0) [] {$(d+2\ell,zq^\ell)$};
\node (t1) at (12,2) [] {$(s+2\ell,yq^\ell)$};
\node (d2) at (16,0) [] {$(d+4\ell,z)$};
\node (t2) at (20,2) [] {$(s+4\ell,y)$};
\node (d3) at (24,0) [] {$(d+6\ell,zq^\ell)$};
\node (t3) at (28,2) [] {$\dots$};
\draw[<-] (d0) -- (t0);
\draw[<-] (d1) -- (t0);\draw[<-] (d1) -- (t1);
\draw[<-] (d2) -- (t1);\draw[<-] (d2) -- (t2);
\draw[<-] (d3) -- (t2);\draw[<-, dashed] (d3) -- (t3);
\end{tikzpicture}
\end{equation}
where again all composition factors with $d+2i\ell$ or $s+2i\ell$ larger than $N$ must be deleted. The proof of structures \eqref{eq:succCasiiA} or \eqref{eq:succCasiiB} for the module $\XXZ{N;d,z}+$ is the goal of this subsection.
\begin{proof}[Proof (theorem \ref{thm:main}, subcase (ii)).] The proof is analogous to the one given in the last subsection for subcase (i) and also proceeds by induction. The $n$-th inductive hypothesis applies to {\em all} $(D,Z)\in\lambda_N$ as long as $q^D \in \{Z^2,Z^{-2}\}$ with $Z^4 \neq 1$ and as long as the number of composition factors in $\Cell{N;D,Z}$ is $\leq n$. It states that 
\begin{itemize}
\item[(a)] If $q^D=Z^2$, the structure of $\XXZ{N;D,Z}+$ is given by \eqref{eq:succCasiiA} after making the changes $(d,z)\mapsto (D,Z)$ and $(t,x)\mapsto (T,X)$ for $(T,X)$ the direct successor of $(D,Z)$ through condition B.
\item[(b)] If $q^D=Z^{-2}$, the structure of $\XXZ{N;D,Z}+$ is given by \eqref{eq:succCasiiB} after making the changes $(d,z)\mapsto (D,Z)$ and $(s,y)\mapsto (S,Y)$ for $(S,Y)$ the direct successor of $(D,Z)$ through condition A.
\end{itemize} 
This hypothesis holds true if $n\leq 2$ by proposition \ref{prop:troisnoeuds} so it suffices to prove it for the pair $(D,Z)=(d,z)$ assuming that $\Cell{N,d;z}$ has $n \geq 3$ composition factors. We first settle the case $q^d = z^2$. The proof is divided in four steps.

The first step identifies the generic part $\pg{N;d,z}$ of $\Cell{N;d,z}$. The pair $(d+2\ell, zq^\ell)$ directly succeeds $(d,z)$ via condition A and its successors are those of \eqref{eq:succCasii} with $(d,z)$ and $(t,x)$ omitted. Thus, $\pg{N;d,z} = \Cell{N;d,z}/\im \gl{}\simeq \Cell{N;d,z}/\Cell{N;d+2\ell, zq^\ell}$ with $\gl{}=\gl{(d,z);(d+2\ell, zq^\ell)}:\Cell{N;d+2\ell, zq^\ell}\to\Cell{N;d,z}$ the injective $\atl{N}$-morphism of proposition \ref{prop:grandThm}. This generic part must then have the Loewy diagram $(d,z)\rightarrow (t,x)$ by \eqref{eq:succCasii}. The latter diagram must be contained in the one of $\XXZ{N;d,z}{+}$ as $\pg{N;d,z}$ is a subquotient of $\XXZ{N;d,z}{+}$ by corollary \ref{thm:gpEstUnQuotient}.

The second step aims at finding a ``large'' submodule $\mathsf M$ of $\XXZ{N;d,z}+$ that would contain the rest of its composition factors. Recall that $(t,x)=(2i\ell-d,z^{-1}q^{i\ell})$ where $i\in\mathbb{Z}_{\geq 0}$ is such that $d<t<d+2\ell$. The pair $(-t,x^{-1})$ thus satisfies $q^{-t}=(x^{-1})^2$ and the map $\mm^{(i)}:\XXZ{N;-t,x^{-1}}+=\XXZ{N;d-2i\ell,zq^{i\ell}}+\to\XXZ{N;d,z}+$ of section \ref{sec:xxzLUq} is then an injective $\atl N$-morphism by theorem \ref{thm:Aut} since $|-t|=2i\ell-d>d=|d|$. A good candidate for the submodule $\mathsf M$ may hence be found in $\im\mm^{(i)}\simeq \XXZ{N;-t,x^{-1}}+$. To investigate the structure of this image, note that $\XXZ{N;-t,x^{-1}}+\simeq \XXZ{N;t,x}-\simeq(\XXZ{N;t,x^{-1}}+)^{\circ}$ by propositions \ref{prop:spinflip} and \ref{prop:foncteurcirc}. Also, as noted before, $q^{t} = (x^{-1})^{-2}$ with $(x^{-1})^4 = z^4 \neq 1$. The structure of $\Cell{N;t,x^{-1}}$ is easily deduced using \eqref{eq:succCasii}, theorem \ref{thm:GL} and lemma \ref{thm:partialOrders}. It is given by the Loewy diagram
\begin{equation*}
(t,x^{-1})\longrightarrow (d+2\ell,z^{-1}q^{\ell})\longrightarrow (t+2\ell,x^{-1}q^\ell)\longrightarrow (d+4\ell,z^{-1})\longrightarrow(t+4\ell, x^{-1})\longrightarrow\dots
\end{equation*}
where all the arrows correspond to a direct succession through condition A. In particular, $\Cell{N;t,x^{-1}}$ has $n-1$ composition factors and the inductive hypothesis holds for $(D,Z)= (t,x^{-1})$. It says that $\XXZ{N;t,x^{-1}}+$ has the Loewy diagram (case b) 
$$ \begin{tikzpicture}[baseline={(current bounding box.center)},scale=0.45]
\node (d0) at (0,0) [] {$(t,x^{-1})$};
\node (t0) at (4,2) [] {$(d+2\ell,z^{-1}q^\ell)$};
\node (d1) at (8,0) [] {$(t+2\ell,x^{-1}q^\ell)$};
\node (t1) at (12,2) [] {$(d+4\ell,z^{-1})$};
\node (d2) at (16,0) [] {$(t+4\ell,x^{-1})$};
\node (t2) at (20,2) [] {$(d+6\ell,z^{-1}q^\ell)$};
\node (d3) at (24,0) [] {$\dots$};
\draw[->] (t0) -- (d0);
\draw[->] (t0) -- (d1);\draw[->] (t1) -- (d1);
\draw[->] (t1) -- (d2);\draw[->] (t2) -- (d2);
\draw[->] (t2) -- (d3);
\end{tikzpicture}$$
The Loewy diagram of the submodule $\mathsf{M} = \im\mm^{(i)}\simeq (\XXZ{N;t,x^{-1}}{+})^{\circ}$ of $\XXZ{N;d,z}{+}$ may then be obtained from this last diagram by taking the $\circ$-dual of each node (see subsection \ref{sec:dualities}). These $\circ$-duals are given by proposition \ref{prop:foncteurcirc} and the diagram of $\mathsf{M}$ is easily seen to be precisely \eqref{eq:succCasiiA} with the node $(d,z)$ omitted.

The third step introduces a well-chosen quotient of $\XXZ{N;d,z}{+}$, that is the image of the surjective $\atl{N}$-homomorphism $\mm^{(1)}:\XXZ{N;d,z}{+}\rightarrow \XXZ{N;d+2\ell,zq^{\ell}}{+}$ described by theorem \ref{thm:Aut} (which applies $|d|=d\leq d+2\ell = |d+2\ell|$). The goal here is not to obtain any additional composition factors for the eigenspace $\XXZ{N;d,z}+$, but to reveal the structure of the quotient. It will be useful later to show that all the arrows of the Loewy diagram of $\XXZ{N;d,z}+$ are already contained in \eqref{eq:succCasiiA}. To obtain this structure, note that the Loewy diagram of $\Cell{N;d+2\ell,zq^{\ell}}$ is exactly \eqref{eq:succCasii} without the nodes $(t,x)$ and $(d,z)$. Since $\Cell{N;d+2\ell,zq^{\ell}}$ has $<n$ composition factors and $q^{d+2\ell}=z^2$ with $(zq^{\ell})^4 =z^4\neq 1$, the inductive hypothesis for $(D,Z) = (d+2\ell,zq^{\ell})$ gives the Loewy diagram of the quotient $\im \mm^{(1)}\simeq \XXZ{N;d+2\ell,zq^{\ell}}+$. This Loewy diagram is precisely \eqref{eq:succCasiiA} with the nodes $(t,x)$ and $(d,z)$ omitted. 

Finally the proof of the inductive hypothesis for $(D,Z)=(d,z)$ when $q^d=z^2$ concludes with the following observations. First, the composition factors found so far for $\XXZ{N;d,z}+$ are precisely those of $\Cell{N;d,z}$ and, since these two modules have the same dimension, there is no other composition factor in $\XXZ{N;d,z}+$. This allows us to conclude that the two factors $(d,z)$ and $(t,x)$ are those of the kernel of $\mm^{(1)}$ so that the generic part $\pg{N;d,z}$ is isomorphic to a submodule of $\XXZ{N;d,z}{+}$. Second, all arrows in \eqref{eq:succCasiiA} must be present in the Loewy diagram of $\XXZ{N;d,z}+$ as they all already appear in the diagram of either $\pg{N;d,z}$ or of $\mathsf{M} \simeq (\XXZ{N;t,x^{-1}}{+})^{\circ}$ which are two submodules of $\XXZ{N;d,z}{+}$. The only remaining question is whether the Loewy diagram of $\XXZ{N;d,z}+$ has more arrows tying composition factors. If such an arrow exists, it cannot have both source and target in $\pg{N;d,z}$ or both in $\mathsf M$ since this would contradict the structure found for these submodules. It cannot either have a source in $\mathsf M$ heading toward $(d,z)$ since $\mathsf M$ is a submodule and does not have $\Irre{N;d,z}$ as a composition factor. Finally, the arrow cannot start from $(d,z)$ and end at some node other than $(d,z)$ or $(t,x)$ as this would contradict the fact that $(d,z)\to(t,x)$ is a submodule of $\XXZ{N;d,z}{+}$. The structure \eqref{eq:succCasiiA} is hence precisely the Loewy diagram of $\XXZ{N;d,z}+$ if $q^d= z^2$.

The case $q^d=z^{-2}$ remains. To study it, remark that the pair $(D,Z)=(d,z^{-1})$ solves $q^D=Z^2$. The Loewy diagram of $\XXZ{N;d,z^{-1}}+$ may thus be deduced from the preceding analysis and is given by
$$\begin{tikzpicture}[baseline={(current bounding box.center)},scale=0.45]
\node (d0) at (0,2) [] {$(d,z^{-1})$};
\node (t0) at (4,0) [] {$(s,y^{-1})$};
\node (d1) at (8,2) [] {$(d+2\ell,z^{-1}q^\ell)$};
\node (t1) at (12,0) [] {$(s+2\ell,y^{-1}q^\ell)$};
\node (d2) at (16,2) [] {$(d+4\ell,z^{-1})$};
\node (t2) at (20,0) [] {$(s+4\ell,y^{-1})$};
\node (d3) at (24,2) [] {$(d+6\ell,z^{-1}q^\ell)$};
\node (t3) at (28,0) [] {$\dots$};
\draw[->] (d0) -- (t0);
\draw[->] (d1) -- (t0);\draw[->] (d1) -- (t1);
\draw[->] (d2) -- (t1);\draw[->] (d2) -- (t2);
\draw[->] (d3) -- (t2);\draw[->, dashed] (d3) -- (t3);
\end{tikzpicture}$$
where the nodes have been expressed, using lemma \ref{thm:partialOrders}, in terms of the direct successor $(s,y)$ of $(d,z)$ through condition A. Moreover, by theorem \ref{thm:isoByDual}, $(\XXZ{N;d,z^{-1}}+)^\star\simeq \XXZ{N;d,z}+$ and the Loewy diagram of $\XXZ{N;d,z}+$ is hence obtained by flipping all the arrows in the above diagram while changing all the nodes by their $\star$-duals. These $\star$-duals are given in proposition \ref{thm:isoByDual} and the diagram obtained for $\XXZ{N;d,z}{+}$ is then precisely \eqref{eq:succCasiiB}.
\end{proof}
\end{subsection}
%
%
\begin{subsection}{Subcase (iii)}\label{sub:iii}
The introductory paragraphs of subsections \ref{sub:i} and \ref{sub:ii} have 
demonstrated that the equalities $q^d=z^2$ and $q^d=z^{-2}$ imply coincidences among the four 
families $(d_a,z_a), (s_a,y_a),(t_a,x_a)$ and $(h_a,v_a)$ appearing in subcase (iii) of section \ref{sec:main}. In fact, these coincidences occur only when one of these two equalities holds.
\begin{lemma}\label{lemma:IntFam}
There is a non-empty intersection between the families $(d_a,z_a)$, $(s_a,y_a)$, $(t_a,x_a)$ and $(h_a,v_a)$ if and only if one of the two equalities $q^d= z^2$ or $q^d= z^{-2}$ holds.
\end{lemma}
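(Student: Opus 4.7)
The plan is to prove both directions by a direct analysis of the four families appearing in subcase (iii). The starting observation is that, using the relations $s = d + 2k$ and $z^2 = q^s$, the second (complex) component of any element of each family is an explicit function of the first (integer) component. Concretely, the first components lie in the residue classes $d$, $s$, $-s$ and $-d$ modulo $2\ell$ for the families $(d_a,z_a)$, $(s_a,y_a)$, $(t_a,x_a)$ and $(h_a,v_a)$ respectively, and the associated second components, written in terms of the first component $\alpha$, are
\begin{equation*}
zq^{(\alpha - d)/2}, \quad zq^{(\alpha - s)/2 - k}, \quad z^{-1}q^{(\alpha + s)/2 + k}, \quad z^{-1}q^{(\alpha + d)/2}.
\end{equation*}
Note that the parameters $\delta_t$ and $\delta_h$ do not enter these formulas: they only restrict the range of admissible $\alpha$ in the $t$- and $h$-families.

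For the forward direction I would treat each of the six pairwise combinations in turn. Matching the first components of two elements taken from distinct families is a congruence modulo $2\ell$ involving $d$ and $s$; imposing in addition that the second components agree, and using $z^2 = q^s = q^{d+2k}$, reduces each of the six cases to either the identity $q^{2k} = 1$ or the identity $q^{2(d+k)} = 1$. The first of these yields $q^d = q^{d+2k} = z^2$, and the second yields $q^d \cdot z^2 = q^{2(d+k)} = 1$, that is $q^d = z^{-2}$. Hence a non-empty intersection between any two of the four families forces $q^d \in \{z^2, z^{-2}\}$.

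For the reverse direction I would exhibit an explicit coincidence in each of the two situations. When $q^d = z^2$, the minimality of $s$ in its definition forces $s = d + 2\ell$ and hence $k = \ell$; a direct check then gives $(d_1,z_1) = (s_0,y_0)$, using $q^\ell = q^{-\ell}$ which follows from $q^{2\ell}=1$. When $q^d = z^{-2}$, one writes $s = -d + 2n_0 \ell$ for the smallest admissible $n_0 \geq 1$, and the minimality condition defining $\delta_t$ forces $\delta_t = 2(n_0 + 1)\ell$; a direct substitution then shows $(d_1,z_1) = (t_0,x_0)$.

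The computations involved are elementary, but the main point of care lies in the minimality conditions defining $\delta_t$ and $\delta_h$: since these parameters are not free, one must check that the coincidences asserted in the reverse direction occur precisely at the values of $\delta_t$ and $\delta_h$ dictated by the problem, and that no additional coincidence beyond the six considered slips through in the forward direction. This bookkeeping is routine but requires particular attention in the case $q^d = z^{-2}$, where $\delta_t$ plays a non-trivial role.
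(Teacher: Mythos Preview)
Your proposal is correct and follows essentially the same case-by-case strategy as the paper, but with a different organizing principle that is worth noting.

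For the forward direction, the paper uses the inequality $d_0<\min(s_0,t_0)\leq\max(s_0,t_0)<h_0$ (together with the minimality built into the definitions of $s$, $\delta_t$, $\delta_h$) to reduce any coincidence between two families to one of six \emph{specific} equalities between initial elements, such as $(s_0,y_0)=(d_1,z_1)$ or $(s_0,y_0)=(t_0,x_0)$, and then checks each of those. Your approach instead exploits the observation that the second component of every family member is an explicit function of its first component, with $\delta_t$ and $\delta_h$ dropping out entirely. This lets you treat a coincidence between arbitrary elements of two families uniformly, without first pinning down indices. Both routes arrive at the same dichotomy $q^{2k}=1$ versus $q^{2(d+k)}=1$; yours avoids appealing to the ordering but requires verifying that in a couple of pairings (for example the $(d_a,z_a)$--$(t_a,x_a)$ one) the relevant constraint actually comes from the integer congruence on first components rather than from the second components, which match automatically once $z^2=q^s$ is used.

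For the reverse direction, the paper simply refers back to the coincidences already identified in subsections \ref{sub:i} and \ref{sub:ii}, whereas you exhibit explicit matches $(d_1,z_1)=(s_0,y_0)$ when $q^d=z^2$ and $(d_1,z_1)=(t_0,x_0)$ when $q^d=z^{-2}$. Your computation of $\delta_t=2(n_0+1)\ell$ in the second case is correct and is precisely the point of care you flagged.
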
 
\begin{proof} The intersections occurring when $q^d=z^2$ or $q^d=z^{-2}$ have already been identified in the previous subsections and the sufficiency is thus already proven. For the necessity, the condition $d_0<\min(s_0,t_0)\leq\max(s_0,t_0)<h_0$ show that one of the following must be true whenever a coincidence between families occurs:
\begin{align*}
\text{(i)}\quad & (s_0,y_0)=(t_0,x_0),\qquad &\text{(ii)}\quad &(h_0,v_0)=(d_1,z_1),\qquad &\text{(iii)}\quad &(s_0,y_0)=(d_1,z_1),\\
\text{(iv)}\quad &(h_0,v_0)=(t_1,x_1),\qquad &\text{(v)}\quad &(t_0,x_0)=(d_1,z_1), \qquad &\text{(vi)}\quad &(h_0,v_0)=(s_1,y_1).
\end{align*}
In case (iii), the equality $s_0=d_1$ implies $s=d+2\ell$ and, as $s$ solves $z^2=q^s$, it follows that $q^d=q^{s-2\ell}=z^2$. For case (v), $t_0=d_1$ means $-s+\delta_t=d+2\ell$ and it follows again that $q^d=q^{-s+\delta_t-2\ell}=q^{-s}=z^{-2}$ since $\delta_t$ is a multiple of $2\ell$. Cases (iv) and (vi) are similar. For case (i), $s_0=t_0$ forces $s=-s+\delta_t$ so that $s=\frac{1}{2}\delta_t$. The equality $y_0=x_0$ then gives $zq^{-k}=z^{-1}q^{k+\delta_t/2}=zq^k$, that is $q^{2k}=1$, and it follows that $z^{2}=q^{s}=q^{s-2k}=q^d$. Case (ii) is analogous to case (i).
\end{proof}
For the remaining of this section, $q^d$ is neither $z^2$ nor $z^{-2}$ and the (distinct) successors of $(d,z)$ are those of section \ref{sec:main}:
\begin{equation}\label{eq:SuccCasiii}
\begin{tikzpicture}[baseline={(current bounding box.center)},scale=0.45]
\node (k0) at (0,2) [] {$(d,z)$};
\node (j0) at (5,2) [] {$(s,y)$};
\node (k1) at (10,2) [] {$(d+2\ell,zq^\ell)$};
\node (j1) at (15,2) [] {$(s+2\ell,yq^\ell)$};
\node (i0) at (5,0) [] {$(t,x)$};
\node (h0) at (10,0) [] {$(h,v)$};
\node (i1) at (15,0) [] {$(t+2\ell,xq^\ell)$};
\node (k2) at (20,2) [] {$\dots $};
\node (h1) at (20,0) [] {$\dots $};
\draw[<-, dashed] (k0) -- (j0);\draw[<-, dashed] (k0) -- (i0);
\draw[<-, dashed] (j0) -- (k1);\draw[<-, dashed] (j0) -- (h0);
\draw[<-, dashed] (i0) -- (k1);\draw[<-, dashed] (i0) -- (h0);
\draw[<-, dashed] (k1) -- (j1);\draw[<-, dashed] (k1) -- (i1);
\draw[<-, dashed] (h0) -- (j1);\draw[<-, dashed] (h0) -- (i1);
\draw[dashed,<-] (j1) -- (k2);\draw[dashed,<-] (j1) -- (h1);
\draw[dashed,<-] (i1) -- (k2);\draw[dashed,<-] (i1) -- (h1);
\end{tikzpicture}
\end{equation}
where a dashed arrow $(e,u)\begin{tikzpicture}[baseline={(current bounding box.center)},scale=0.4]\node at (0,-0.2) {$\phantom{.}$};\draw[<-, dashed] (0,0) -- (1.6,0);\end{tikzpicture}\,(f,w)$ means $(e,u)\preceq(f,w)$ and the indices of the first node of each family have been dropped: $(d_0,z_0)\mapsto (d,z)$, $(s_0,y_0)\mapsto (s,y)$, and so on. Recall that diagonal arrows tie successors through condition B and that horizontal ones tie successors through condition A. In the proof below, we shall assume that $(d,z)$ is not one of the problematic pairs, the proof for the latter pairs being postponed to appendix \ref{app:problematic}. Furthermore, in the diagram above and throughout this subsection, any nodes with $d_a,s_a,t_a$ or $h_a$ larger than $N$ are to be removed. The main theorem states that, when $q^d$ is neither $z^2$ nor $z^{-2}$, the Loewy diagram of the module $\XXZ{N;d,z}+$ is 
\begin{equation}\label{eq:subCaseiii}
\begin{tikzpicture}[baseline={(current bounding box.center)},scale=0.45]
\node (d0) at (0,2) [] {$(d,z)$};
\node (s0) at (4,4) [] {$(s,y)$};
\node (t0) at (4,0) [] {$(t,x)$};
\node (h0) at (8,2) [] {$(h,v)$};
\node (d1) at (12,2) [] {$(d+2\ell,zq^\ell)$};
\node (s1) at (16,4) [] {$(s+2\ell,yq^\ell)$};
\node (t1) at (16,0) [] {$(t+2\ell,xq^\ell)$};
\node (h1) at (20,2) [] {$(h+2\ell,vq^\ell)$};
\node (d2) at (24,2) [] {$(d+4\ell,z)$};
\node (s2) at (28,4) [] {$\dots$};
\node (t2) at (28,0) [] {$\dots$};
\draw[->] (3,3.6) -- (0.8,2.5);
\draw[->] (0.8,1.5) -- (3,0.5);
\draw[->] (5,3.6) -- (7.8,2.5);
\draw[->] (5,3.8) -- (11.8,2.5);
\draw[->] (7.8,1.5) -- (5,0.5); 
\draw[->] (11.8,1.5) -- (5,0.3); 
\draw[->] (13.75,3.8) -- (8.2,2.5); 
\draw[->] (13.75,3.6) -- (12.2,2.5); 
\draw[->] (8.2,1.5) -- (14,0.3); 
\draw[->] (12.2,1.5) -- (14,0.5); 
\draw[->] (18,3.6) -- (19.8,2.5); 
\draw[->] (18,3.8) -- (23.8,2.5); 
\draw[->] (19.8,1.5) -- (18,0.5); 
\draw[->] (23.8,1.5) -- (18,0.3); 
\draw[->] (27,3.8) -- (20.2,2.5); 
\draw[->] (27,3.6) -- (24.2,2.5); 
\draw[->] (20.2,1.5) -- (27,0.3); 
\draw[->] (24.2,1.5) -- (27,0.5); 
\end{tikzpicture}
\end{equation}
where now the head of the module has been drawn on the top line and the socle on the bottom. We will use the following lemma which is based on a more general result to appear in \cite{JonathanEtCie}\footnote{We thank J.~Bellet\^ete for communicating to us not only the result but also the outline of its proof.}.
%
%
\begin{lemma}\label{lem:jonathan} Fix $(d_1,z_1),(d_2,z_2)\in\Lambda_N$ with $q^{d_1}$ neither $z_1^2$ nor $z_1^{-2}$. If $|d_1-d_2|>2\ell$, then $\text{\rm Ext}^1_{\atl n}(\Irre{N;d_1,d_1},\Irre{N:d_2,z_2})$ is zero and the Loewy diagram of an indecomposable module cannot contain the arrow $(d_1,z_1)\rightarrow(d_2,z_2)$.
\end{lemma}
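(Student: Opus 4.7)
\emph{Proof plan.} My strategy is to show that $\Ext^1_{\atl N}(\Irre{N;d_1,z_1}, \Irre{N;d_2,z_2}) = 0$, which is equivalent to the non-existence of arrows $(d_1,z_1)\to(d_2,z_2)$ in any indecomposable Loewy diagram. For a finite-dimensional algebra, $\dim \Ext^1(L, L')$ equals the multiplicity of $L'$ in $\rad P_L / \rad^2 P_L$, where $P_L$ is the projective cover of $L$, so the plan is to control the second radical layer of the projective cover $P=P_{(d_1,z_1)}$.

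First, I would use the hypothesis $q^{d_1}\notin\{z_1^{\pm 2}\}$ to place $\Cell{N;d_1,z_1}$ in subcase (iii) of Theorem \ref{thm:GL}. Reading off the explicit diagram, the composition factors of $\rad\Cell{N;d_1,z_1}/\rad^2\Cell{N;d_1,z_1}$ are $\Irre{N;s_0,y_0}$ and $\Irre{N;t_0,x_0}$, and the explicit description of the order $\preceq$ in subcase (iii) of Section \ref{sec:main} gives $d_1 < s_0, t_0 < d_1+2\ell$. So any simple in the second radical layer of $\Cell{N;d_1,z_1}$ has $d$-value strictly within $2\ell$ of $d_1$. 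An analogous statement holds for $\Cell{N;d_1,z_1^{-1}}$, whose $\star$-dual (via Proposition \ref{thm:isoByDual}) encodes the second socle layer of any length-two module with socle $\Irre{N;d_1,z_1}$.

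Second, I would transport this cellular-module constraint to $P_{(d_1,z_1)}$, invoking the general cellular-algebra result of \cite{JonathanEtCie}: the projective cover admits a filtration by cellular modules $\Cell{N;\nu}$ with $(d_1,z_1)\preceq \nu$, and each composition factor of $\rad P_{(d_1,z_1)}/\rad^2 P_{(d_1,z_1)}$ either comes from the second radical layer of $\Cell{N;d_1,z_1}$ itself or is the head of a direct successor $\Cell{N;\nu}$ of $\Cell{N;d_1,z_1}$ in the filtration (so $\nu$ is a direct successor of $(d_1,z_1)$ in $\preceq$). Both alternatives force $|d_2-d_1| < 2\ell$, contradicting the hypothesis $|d_1-d_2|>2\ell$, and the implication for Loewy diagrams then follows from the standard identification of arrows with nonzero $\Ext^1$-classes.

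The main obstacle is clearly the invocation of the BGG-reciprocity-style filtration of $P_{(d_1,z_1)}$. For quasi-hereditary algebras this is classical, but $\atl N$ is infinite-dimensional and only affine cellular, so establishing the needed statement in this setting is genuinely delicate, which is why the lemma is quoted from \cite{JonathanEtCie} rather than proven in-line. A more hands-on fallback would try to realize any length-two indecomposable $M$ with head $\Irre{N;d_1,z_1}$ and socle $\Irre{N;d_2,z_2}$ as a quotient of $\Cell{N;d_1,z_1}$ or, after applying $\star$-duality and Proposition \ref{thm:isoByDual}, as a $\star$-dual of a quotient of $\Cell{N;d_1,z_1^{-1}}$; either realization yields the desired contradiction directly from the first step, but justifying that one of these realizations always exists is essentially equivalent to the reciprocity statement itself.
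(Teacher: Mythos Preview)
The paper does not actually prove this lemma. It is stated without proof, with the text explicitly noting that it is ``based on a more general result to appear in \cite{JonathanEtCie}'' and, in a footnote, thanking J.~Bellet\^ete for communicating both the result and the outline of its proof. There is therefore no paper proof to compare your proposal against.

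Your outline is a reasonable sketch of how such an argument could go, and you correctly identify the crux: controlling the second radical layer of the projective cover $P_{(d_1,z_1)}$ via a cellular (standard) filtration. A few remarks on what would need to be made precise. First, $\atl N$ is infinite-dimensional, so one must either work in a suitable truncated finite-dimensional quotient (where projective covers exist and the cellular machinery applies) or otherwise justify that finite-dimensional simples admit projective covers with the required filtration properties; this is presumably part of what \cite{JonathanEtCie} establishes. Second, your claim that every composition factor of $\rad P/\rad^2 P$ is either in the second radical layer of $\Cell{N;d_1,z_1}$ or is the head of a cellular module indexed by a \emph{direct} successor of $(d_1,z_1)$ is exactly the delicate step: in general cellular filtrations one only knows which $\Cell{N;\nu}$ occur, not that the heads of deeper filtration pieces are pushed below the second radical layer. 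This is the reciprocity-type statement you flag as the main obstacle, and you are right that it is the heart of the matter. Your fallback attempt (realizing a length-two module as a quotient of a cellular module or of its $\star$-dual) is also essentially equivalent to that reciprocity, as you note.

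In short: you have correctly diagnosed both where the work lies and why the paper defers it to an external reference rather than proving it in-line.
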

\begin{proof}[Proof (theorem \ref{thm:main}, subcase (iii) for $(d,z)$ not a problematic pair).] The proof again proceeds by induction. However, in this case, the inductive hypothesis --- yet to be stated --- does not merely claim the structure \eqref{eq:subCaseiii} but also characterizes the image of some of the maps given in theorem \ref{thm:f}. To understand why this is useful, note that the exact sequence \eqref{eq:SEC1} gives (here the direct successor $(D,Z)$ of $(t,x)$ through condition B is $(d+2\ell,zq^{\ell})$ as $(d,z)\preceq (t,x)$ is a direct succession)
\begin{equation*}
\XXZ{N;d+2\ell,zq^\ell}+\xrightarrow{\ii_{(t,x);(d+2\ell,zq^\ell)}^+}\XXZ{N;t,x}+\xrightarrow{\ii_{(d,z);(t,x)}^+}\XXZ{N;d,z}+,
\end{equation*}
implies that the submodule $\im \ii_{(d,z);(t,x)}^+$ of $\XXZ{N;d,z}+$ is isomorphic to the quotient $\XXZ{N;t,x}+/\im \ii_{(t,x);(d+2\ell,zq^\ell)}^+$. The structure of this submodule could thus be deduced in an inductive proof from the Loewy diagrams of $\XXZ{N;t,x}{+}$ and $\im \ii_{(t,x);(d+2\ell,zq^\ell)}^+$. Analogously, using the spin flip of proposition \ref{prop:spinflip} on sequence \eqref{eq:SEC2} produces an exact sequence
$$\XXZ{N;h,v}-\xrightarrow{\ii_{(t,x);(h,v)}^-} \XXZ{N;t,x}-\xrightarrow{s\ii_{(-d,z^{-1});(t,x)}^{-}}\XXZ{N;d,z}+$$
which implies that the submodule $\im s\ii_{(-d,z^{-1});(t,x)}^{-}$ of $\XXZ{N;d,z}+$ is isomorphic to the quotient $\XXZ{N;t,x}-/\im \ii_{(t,x);(h,v)}^-$. We claim that this quotient is also isomorphic to $(\XXZ{N;t,x^{-1}}{+}/\im \ii^{+}_{(t,x^{-1});(h,v^{-1})})^{\circ}$. Indeed, as $(t,x)\preceq (h,v)$ directly via condition A, lemma \ref{lemma:AlmCommSquare} gives
\begin{equation*}
\varrho_{t,x}(\ii_{(t,x^{-1});(h,v^{-1})}^+)^{\circ}=q^{k(h-k)}\ii^-_{(t,x);(h,v)}\varrho_{h,v}
\end{equation*}
with $k=\frac{1}{2}(h-t)$ and where $\varrho_{t,x}:(\XXZ{N;t,x^{-1}}{+})^{\circ}\rightarrow\XXZ{N;t,x}{-}$ and $\varrho_{h,v} : (\XXZ{N;h,v^{-1}}{+})^{\circ}\rightarrow\XXZ{N;h,v}{-}$ are the isomorphisms described in the proof of proposition \ref{prop:foncteurcirc}. It follows that $\rho_{t,x}^{-1}$ induces an isomorphism 
$$\XXZ{N;t,x}{-}/\im \ii^-_{(t,x);(h,v)} \simeq (\XXZ{N;t,x^{-1}}{+})^{\circ}/\im (\ii_{(t,x^{-1});(h,v^{-1})}^+)^{\circ}= (\XXZ{N;t,x^{-1}}{+})^{\circ}/(\im \ii_{(t,x^{-1});(h,v^{-1})}^+)^{\circ}$$ 
where the equality $\im (\ii_{(t,x^{-1});(h,v^{-1})}^+)^{\circ}=(\im \ii_{(t,x^{-1});(h,v^{-1})}^+)^{\circ}$ was deduced from the exactitude and covariance of $\circ$. The claim above then follows from the application of $\circ$ on the canonical exact sequence
$$ 0 \to  \im \ii_{(t,x^{-1});(h,v^{-1})}^+ \to \XXZ{N;t,x^{-1}}{+} \to \XXZ{N;t,x^{-1}}{+}/\im \ii_{(t,x^{-1});(h,v^{-1})}^+\to 0$$
as this gives 
\begin{equation}\label{eq:isoQuot}
(\XXZ{N;t,x^{-1}}{+}/\im \ii_{(t,x^{-1});(h,v^{-1})}^+)^{\circ}\simeq (\XXZ{N;t,x^{-1}}{+})^{\circ}/(\im \ii_{(t,x^{-1});(h,v^{-1})}^+)^{\circ}\simeq \XXZ{N;t,x}{-}/\im \ii^-_{(t,x);(h,v)}.
\end{equation}
The Loewy diagram of the submodule $\im s\ii_{(-d,z^{-1});(t,x)}^-\subseteq \XXZ{N;d,z}{+}$ can thus again be deduced in an inductive proof from the structure of $\XXZ{N;t,x^{-1}}{+}$ and $\im \ii^{+}_{(t,x^{-1});(h,v^{-1})}$. To rephrase and summarize, adding a characterization of the image of the maps $\ii_{(d,z);(t,x)}^{+}$ in the inductive hypothesis will allow us to infer efficiently the structure of important submodules at the following induction steps.

Finally here is the $n$-th inductive hypothesis. It states that, for any $(D,Z)\in \lambda_N$ not a problematic pair such that $\Cell{N;D,Z}$ has at most $n$ composition factors and with $D$ distinct from both $Z^{2}$ and $Z^{-2}$, the following two statements hold:
\begin{itemize}
\item[(a)] The Loewy diagram of $\XXZ{N;D,Z}+$ is the one given below where $(S,Y)$ and $(T,X)$ are the direct successors of $(D,Z)$ through conditions A and B respectively, and with $(H,V)$ the immediate successor of $(T,X)$ through condition A (which is also the direct successor of $(S,Y)$ through condition B). 
\item[(b)] The image $\im \ii_{(D,Z);(T,X)}^+\subseteq \XXZ{N;D,Z}+$ is the submodule constituted of the nodes of the families $(T_a,X_a)$ and $(H_a,V_a)$ (for $a\geq 0$) joined by the red arrows in the diagram below.
\end{itemize}
\begin{equation}\label{eq:subCaseiiiHypoInd}
\begin{tikzpicture}[baseline={(current bounding box.center)},scale=0.45]
\node (d0) at (0,2) [] {$(D,Z)$};
\node (s0) at (4,4) [] {$(S,Y)$};
\node (t0) at (4,0) [] {$(T,X)$};
\node (h0) at (8,2) [] {$(H,V)$};
\node (d1) at (12,2) [] {$(D+2\ell,ZQ^\ell)$};
\node (s1) at (16,4) [] {$(S+2\ell,YQ^\ell)$};
\node (t1) at (16,0) [] {$(T+2\ell,XQ^\ell)$};
\node (h1) at (20,2) [] {$(H+2\ell,VQ^\ell)$};
\node (d2) at (24,2) [] {$(D+4\ell,Z)$};
\node (s2) at (28,4) [] {$\dots$};
\node (t2) at (28,0) [] {$\dots$};
\draw[->] (3,3.6) -- (0.8,2.5);
\draw[->] (0.8,1.5) -- (3,0.5);
\draw[->] (5,3.6) -- (7.8,2.5);
\draw[->] (5,3.8) -- (11.8,2.5);
\draw[thick,red, ->] (7.8,1.5) -- (5,0.5); 
\draw[->] (11.8,1.5) -- (5,0.3); 
\draw[->] (13.75,3.8) -- (8.2,2.5); 
\draw[->] (13.75,3.6) -- (12.2,2.5); 
\draw[thick,red,->] (8.2,1.5) -- (13.8,0.3); 
\draw[->] (12.2,1.5) -- (13.9,0.5); 
\draw[->] (18,3.6) -- (19.8,2.5); 
\draw[->] (18,3.8) -- (23.8,2.5); 
\draw[thick,red,->] (19.8,1.5) -- (18,0.5); 
\draw[->] (23.8,1.5) -- (18.1,0.3); 
\draw[->] (27,3.8) -- (20.2,2.5); 
\draw[->] (27,3.6) -- (24.2,2.5); 
\draw[thick,red,->] (20.2,1.5) -- (27,0.3); 
\draw[->] (24.2,1.5) -- (27,0.5); 
\end{tikzpicture}
\end{equation}

The base cases of the induction are treated first. Take a pair $(D,Z)\in \lambda_N$ verifying the hypotheses (that is $q^D \not\in\{Z^2,Z^{-2}\}$ with $(D,Z)$ not a problematic pair) and suppose that $\Cell{N;D,Z}$ has at most $3$ composition factors. Then $D+2\ell > N$ as the contrary would imply\footnote{We use here the fact that $(D,Z)$ is not a problematic pair as we intrinsically use theorem \ref{thm:GL} for the structure of $\Cell{N;D,Z}$.} the existence of the four (distinct, by lemma \ref{lemma:IntFam}) nodes $(D,Z)$, $(S,Y)$, $(T,X)$ and $(D+2\ell,Zq^{\ell})$ in the Loewy diagram of the module $\Cell{N;D,Z}$. Statement (a) of the inductive hypothesis is thus here a consequence of proposition \ref{prop:troisnoeuds} whereas statement (b) is trivial if $T > N$. If $T \leq N$, sequence \eqref{eq:SEC1} becomes
\begin{equation*}
0 \rightarrow \Irre{N;T,X} \xrightarrow{\ii_{(D,Z);(T,X)}^+} \XXZ{N;D,Z}+
\end{equation*}
as $D+2\ell > N$ and as $\XXZ{N;T,X}+ \simeq \Irre{N;T,X}$ by proposition \ref{prop:troisnoeuds}. This ends the proof of the $n$-th inductive hypothesis for $n\leq 3$. 

We now turn to the proof of the inductive hypothesis for a pair $(D,Z)=(d,z)$ such that $\Cell{N;d,z}$ has $n> 3$ composition factors. The proof is once again divided in several steps and considers simultaneously the pairs $(d,z)$ and $(d,z^{-1})$. The latter pair also satisfies the hypotheses and its successors in $\lambda_N$ are linked to that of $(d,z)$ by lemma \ref{thm:partialOrders}. In particular, $\Cell{N;d,z^{-1}}$ also has $n$ composition factors.

The first step proves statement (b) of the inductive hypothesis. By the analysis done above, $\im \ii^+_{(d,z);(t,x)}\subseteq \XXZ{N;d,z}+$ is isomorphic to the quotient $\XXZ{N;t,x}+/\im \ii^+_{(t,x);(d+2\ell,zq^{\ell})}$. Recall also that $q^t = z^{-2}$ and that $x^2 = z^2q^{t-d}=q^{-d}$ as $(d,z)\preceq (t,x)$ via condition B. Thus, $q^t = x^2$ or $q^t = x^{-2}$ give respectively the contradictions $q^d = x^{-2} = q^{-t} = z^{2}$ or $q^d = x^{-2} = q^t=z^{-2}$ and we must conclude that $q^t\not\in\{x^2,x^{-2}\}$. Moreover, the successors of $(t,x)$ in $\lambda_N$ are precisely those appearing in \eqref{eq:SuccCasiii} with the omission of $(d,z)$ and $(s,y)$. The module $\Cell{N;t,x}$ hence has $n-2$ composition factors and the inductive hypothesis holds for $(D,Z)= (t,x)$. The Loewy diagram obtained for $\XXZ{N;t,x}+$ is
\begin{equation*}
\begin{tikzpicture}[baseline={(current bounding box.center)},scale=0.45]
\node (d0) at (0,2) [] {$(t,x)$}; %
\node (s0) at (4,4) [] {$(h,v)$}; %
\node (t0) at (4,0) [] {$(d+2\ell,zq^\ell)$}; %
\node (h0) at (8,2) [] {$(s+2\ell,yq^\ell)$}; %
\node (d1) at (12,2) [] {$(t+2\ell,xq^\ell)$}; %
\node (s1) at (16,4) [] {$(h+2\ell,vq^\ell)$}; %
\node (t1) at (16,0) [] {$(d+4\ell,z)$}; %
\node (h1) at (20,2) [] {$(s+4\ell,y)$}; %
\node (d2) at (24,2) [] {$(t+4\ell,x)$}; %
\node (s2) at (28,4) [] {$\dots$};
\node (t2) at (28,0) [] {$\dots$};
\draw[->] (3,3.6) -- (0.8,2.5);
\draw[->] (0.8,1.5) -- (3,0.5);
\draw[->] (5,3.6) -- (7.8,2.5);
\draw[->] (5,3.8) -- (11.8,2.5);
\draw[red,thick,->] (7.8,1.5) -- (6,0.5); 
\draw[->] (11.8,1.5) -- (6,0.3); 
\draw[->] (13.75,3.8) -- (8.2,2.5); 
\draw[->] (13.75,3.6) -- (12.2,2.5); 
\draw[red,thick,->] (8.2,1.5) -- (14,0.3); 
\draw[->] (12.2,1.5) -- (14,0.5); 
\draw[->] (18,3.6) -- (19.8,2.5); 
\draw[->] (18,3.8) -- (23.8,2.5); 
\draw[red,thick,->] (19.8,1.5) -- (18,0.5); 
\draw[->] (23.8,1.5) -- (18,0.3); 
\draw[->] (27,3.8) -- (20.2,2.5); 
\draw[->] (27,3.6) -- (24.2,2.5); 
\draw[red,thick,->] (20.2,1.5) -- (27,0.3); 
\draw[->] (24.2,1.5) -- (27,0.5); 
\end{tikzpicture}
\end{equation*}
where the image $\im \ii_{(t,x);(d+2\ell,zq^{\ell})}^+$ (also obtained with the help of the inductive hypothesis) has been identified with red arrows. The corresponding diagram for the submodule $\im \ii^+_{(d,z);(t,x)}\simeq \XXZ{N;t,x}+/\im \ii^+_{(t,x);(d+2\ell,zq^{\ell})}$ of $\XXZ{N;d,z}+$ is therefore
\begin{equation}\label{eq:S1Casiii}
\begin{tikzpicture}[baseline={(current bounding box.center)},scale=0.45]
\node (d0) at (0,0) [] {$(t,x)$};
\node (t0) at (4,2) [] {$(h,v)$};
\node (d1) at (8,0) [] {$(t+2\ell,xq^\ell)$};
\node (t1) at (12,2) [] {$(h+2\ell,vq^\ell)$};
\node (d2) at (16,0) [] {$(t+4\ell,x)$};
\node (t2) at (20,2) [] {$\dots$};
\draw[<-] (d0) -- (t0);
\draw[<-] (d1) -- (t0);\draw[<-] (d1) -- (t1);
\draw[<-] (d2) -- (t1);\draw[<-] (d2) -- (t2);
\end{tikzpicture}
\end{equation}
which is exactly the diagram predicted by statement (b) of the induction hypothesis.

The second step repeats step 1 for the pair $(d,z^{-1})$. As said before, the successors of this pair are linked to those of $(d,z)$ by lemma \ref{thm:partialOrders}. The direct successor of $(d,z^{-1})$ via condition B is thus $(s,y^{-1})$ and $(s,y^{-1})\preceq (h,v^{-1})$ directly via condition A. Step 1 hence gives the following Loewy diagram for $\im \ii^+_{(d,z^{-1});(s,y^{-1})}\subseteq \XXZ{N;d,z^{-1}}{+}$:
\begin{equation*}
\begin{tikzpicture}[baseline={(current bounding box.center)},scale=0.45]
\node (d0) at (0,0) [] {$(s,y^{-1})$};
\node (t0) at (4,2) [] {$(h,v^{-1})$};
\node (d1) at (8,0) [] {$(s+2\ell,y^{-1}q^\ell)$};
\node (t1) at (12,2) [] {$(h+2\ell,v^{-1}q^\ell)$};
\node (d2) at (16,0) [] {$(s+4\ell,y^{-1})$};
\node (t2) at (20,2) [] {$\dots$};
\draw[<-] (d0) -- (t0);
\draw[<-] (d1) -- (t0);\draw[<-] (d1) -- (t1);
\draw[<-] (d2) -- (t1);\draw[<-] (d2) -- (t2);
\end{tikzpicture}
\end{equation*}
Since $\XXZ{N;d,z}{+}\simeq (\XXZ{N;d,z^{-1}}{+})^\star$, the $\star$-dual of the above submodule is isomorphic to a quotient of $\XXZ{N;d,z}{+}$. The Loewy diagram of this quotient is obtained by flipping the above diagram upside-down while changing its nodes by their $\star$-duals (given by proposition \ref{thm:isoByDual}). The corresponding diagram is thus
\begin{equation}\label{eq:Q1Casiii}
\begin{tikzpicture}[baseline={(current bounding box.center)},scale=0.45]
\node (d0) at (0,2) [] {$(s,y)$};
\node (t0) at (4,0) [] {$(h,v)$};
\node (d1) at (8,2) [] {$(s+2\ell,yq^\ell)$};
\node (t1) at (12,0) [] {$(h+2\ell,vq^\ell)$};
\node (d2) at (16,2) [] {$(s+4\ell,y)$};
\node (t2) at (20,0) [] {$\dots$};
\draw[<-] (t0) -- (d0);
\draw[<-] (t0) -- (d1);\draw[<-] (t1) -- (d1);
\draw[<-] (t1) -- (d2);\draw[<-] (t2) -- (d2);
\end{tikzpicture}
\end{equation}

The third step studies another remarkable submodule of $\XXZ{N;d,z}{+}$, namely the image $\im s\ii^+_{(-d,z^{-1});(t,x^{-1})}$ introduced earlier in the proof. To obtain its structure, recall that $\im s\ii^+_{(-d,z^{-1});(t,x^{-1})}\simeq (\XXZ{N;t,x^{-1}}{+}/\im \ii^+_{(t,x^{-1});(h,v^{-1})})^{\circ}$. Observe also that, by lemma \ref{thm:partialOrders}, the pairs $(t,x)$ and $(t,x^{-1})$ have the same number of successors in $\lambda_N$ so that $\Cell{N;t,x^{-1}}$ has $n-2$ composition factors, just like $\Cell{N;t,x}$. Thus, as $q^t \not\in \{x^2,x^{-2}\}$, the inductive hypothesis holds for $(D,Z)=(t,x^{-1})$ and gives the following Loewy diagram for $\XXZ{N;t,x^{-1}}{+}$:
\begin{equation*}
\begin{tikzpicture}[baseline={(current bounding box.center)},scale=0.45]
\node (d0) at (0,2) [] {$(t,x^{-1})$}; %
\node (s0) at (4,4) [] {$(d+2\ell,z^{-1}q^{\ell})$}; %
\node (t0) at (4,0) [] {$(h,v^{-1})$}; %
\node (h0) at (7.75,2) [] {$(s+2\ell,y^{-1}q^\ell)$}; %
\node (d1) at (12.5,2) [] {$(t+2\ell,x^{-1}q^\ell)$}; %
\node (s1) at (16,4) [] {$(d+4\ell,z^{-1})$}; %
\node (t1) at (16,0) [] {$(h+2\ell,v^{-1}q^{\ell})$}; %
\node (h1) at (20,2) [] {$(s+4\ell,y^{-1})$}; %
\node (d2) at (24,2) [] {$(t+4\ell,x^{-1})$}; %
\node (s2) at (28,4) [] {$\dots$};
\node (t2) at (28,0) [] {$\dots$};
\draw[->] (3,3.6) -- (0.8,2.5);
\draw[->] (0.8,1.5) -- (3,0.5);
\draw[->] (6.5,3.6) -- (7.8,2.5);
\draw[->] (6.5,3.8) -- (11.8,2.5);
\draw[red,thick,->] (7.8,1.5) -- (5.25,0.55); 
\draw[->] (11.8,1.5) -- (5.25,0.25); 
\draw[->] (13.75,3.8) -- (8.2,2.5); 
\draw[->] (13.75,3.6) -- (12.2,2.5); 
\draw[red,thick,->] (8.2,1.5) -- (13.7,0.25); 
\draw[->] (12.2,1.5) -- (13.75,0.55); 
\draw[->] (18,3.6) -- (19.8,2.5); 
\draw[->] (18,3.8) -- (23.8,2.5); 
\draw[red,thick,->] (19.8,1.5) -- (18.2,0.55); 
\draw[->] (23.8,1.5) -- (18.28,0.25); 
\draw[->] (27,3.8) -- (20.2,2.5); 
\draw[->] (27,3.6) -- (24.2,2.5); 
\draw[red,thick,->] (20.2,1.5) -- (27,0.3); 
\draw[->] (24.2,1.5) -- (27,0.5); 
\end{tikzpicture}
\end{equation*}
In this diagram, we identified the image $\im \ii^+_{(t,x^{-1});(h,v^{-1})}$ with red arrows and we used lemma \ref{thm:partialOrders} to relate the successors of $(t,x^{-1})$ to those of $(t,x)$. The corresponding Loewy diagram for the quotient $\XXZ{N;t,x^{-1}}{+}/\im \ii^+_{(t,x^{-1});(h,v^{-1})}$ is thus
\begin{equation*}
\begin{tikzpicture}[baseline={(current bounding box.center)},scale=0.45]
\node (d0) at (0,0) [] {$(t,x^{-1})$};
\node (t0) at (4,2) [] {$(d+2\ell,z^{-1}q^\ell)$};
\node (d1) at (8,0) [] {$(t+2\ell,x^{-1}q^\ell)$};
\node (t1) at (12,2) [] {$(d+4\ell,z^{-1})$};
\node (d2) at (16,0) [] {$(t+4\ell,x^{-1})$};
\node (t2) at (20,2) [] {$\dots$};
\draw[<-] (d0) -- (t0);
\draw[<-] (d1) -- (t0);\draw[<-] (d1) -- (t1);
\draw[<-] (d2) -- (t1);\draw[<-] (d2) -- (t2);
\end{tikzpicture}
\end{equation*}
and the diagram for $(\XXZ{N;t,x^{-1}}{+}/\im \ii^+_{(t,x^{-1});(h,v^{-1})})^{\circ}\simeq \im s\ii^+_{(-d,z^{-1});(t,x^{-1})}\subseteq \XXZ{N;d,z}{+}$ may then be deduced by simply changing the nodes given above by their $\circ$-duals (given in proposition \ref{prop:foncteurcirc}), that is
\begin{equation}\label{eq:S2Casiii}
\begin{tikzpicture}[baseline={(current bounding box.center)},scale=0.45]
\node (d0) at (0,0) [] {$(t,x)$};
\node (t0) at (4,2) [] {$(d+2\ell,zq^\ell)$};
\node (d1) at (8,0) [] {$(t+2\ell,xq^\ell)$};
\node (t1) at (12,2) [] {$(d+4\ell,z)$};
\node (d2) at (16,0) [] {$(t+4\ell,x)$};
\node (t2) at (20,2) [] {$\dots$};
\draw[<-] (d0) -- (t0);
\draw[<-] (d1) -- (t0);\draw[<-] (d1) -- (t1);
\draw[<-] (d2) -- (t1);\draw[<-] (d2) -- (t2);
\end{tikzpicture}
\end{equation}
The fourth step repeats step 3 for $(d,z^{-1})$. This method shows that $\XXZ{N;d,z^{-1}}{+}$ has a submodule with the Loewy diagram 
\begin{equation*}
\begin{tikzpicture}[baseline={(current bounding box.center)},scale=0.45]
\node (d0) at (0,0) [] {$(s,y^{-1})$};
\node (t0) at (4,2) [] {$(d+2\ell,z^{-1}q^\ell)$};
\node (d1) at (8,0) [] {$(s+2\ell,y^{-1}q^\ell)$};
\node (t1) at (12,2) [] {$(d+4\ell,z^{-1})$};
\node (d2) at (16,0) [] {$(s+4\ell,y^{-1})$};
\node (t2) at (20,2) [] {$\dots$};
\draw[<-] (d0) -- (t0);
\draw[<-] (d1) -- (t0);\draw[<-] (d1) -- (t1);
\draw[<-] (d2) -- (t1);\draw[<-] (d2) -- (t2);
\end{tikzpicture}
\end{equation*}
and we can thus deduce as before that $\XXZ{N;d,z}{+}\simeq (\XXZ{N;d,z^{-1}}{+})^\star$ has a quotient with the following structure
\begin{equation}\label{eq:Q2Casiii}
\begin{tikzpicture}[baseline={(current bounding box.center)},scale=0.45]
\node (d0) at (0,2) [] {$(s,y)$};
\node (t0) at (4,0) [] {$(d+2\ell,zq^\ell)$};
\node (d1) at (8,2) [] {$(s+2\ell,yq^\ell)$};
\node (t1) at (12,0) [] {$(d+4\ell,z)$};
\node (d2) at (16,2) [] {$(s+4\ell,y)$};
\node (t2) at (20,0) [] {$\dots$};
\draw[<-] (t0) -- (d0);
\draw[<-] (t0) -- (d1);\draw[<-] (t1) -- (d1);
\draw[<-] (t1) -- (d2);\draw[<-] (t2) -- (d2);
\end{tikzpicture}
\end{equation}

The last step uses the map $\ay{N;d,z}:\Cell{N;d,z}\rightarrow \XXZ{N;d,z}{+}$ and dimensional analysis to deduce the structure of $\XXZ{N;d,z}+$. Observe that, by corollary \ref{thm:gpEstUnQuotient}, the Loewy diagram of the image $\im \ay{N;d,z}$ contains at least the diagram $(d,z)\to (t,x)$ of the generic part $\pg{N;d,z}$ of $\Cell{N;d,z}$. Repeating this last argument for $\XXZ{N;d,z^{-1}}+$ and taking the $\star$-duals adds an arrow $(s,y)\to (d,z)$ in the diagram of $\XXZ{N;d,z}{+}$. The above results then show that every composition factor of $\Cell{N;d,z}$ is contained in some subquotient of $\XXZ{N;d,z}{+}$, that is in \eqref{eq:S1Casiii}, \eqref{eq:Q1Casiii}, \eqref{eq:S2Casiii}, \eqref{eq:Q2Casiii} or in $\pg{N;d,z}$. The modules $\XXZ{N;d,z}{+}$ and $\Cell{N;d,z}$ have thus exactly the same composition factors as $\dim \Cell{N;d,z}=\dim \XXZ{N;d,z}+$ 
and the diagram of $\XXZ{N;d,z}{+}$ must be of the form shown below with perhaps some missing arrows. Red and blue arrows are used here to identify the submodules \eqref{eq:S1Casiii} and \eqref{eq:S2Casiii}.
\begin{equation}\label{eq:subCaseiiiHypoIndEnCouleur}
\begin{tikzpicture}[baseline={(current bounding box.center)},scale=0.45]
\node (d0) at (0,2) [] {$(d,z)$};
\node (s0) at (4,4) [] {$(s,y)$};
\node (t0) at (4,0) [] {$(t,x)$};
\node (h0) at (8,2) [] {$(h,v)$};
\node (d1) at (12,2) [] {$(d+2\ell,zq^\ell)$};
\node (s1) at (16,4) [] {$(s+2\ell,yq^\ell)$};
\node (t1) at (16,0) [] {$(t+2\ell,xq^\ell)$};
\node (h1) at (20,2) [] {$(h+2\ell,vq^\ell)$};
\node (d2) at (24,2) [] {$(d+4\ell,z)$};
\node (s2) at (28,4) [] {$\dots$};
\node (t2) at (28,0) [] {$\dots$};
\draw[->] (3,3.6) -- (0.8,2.5);
\draw[->] (0.8,1.5) -- (3,0.5);
\draw[->] (5,3.6) -- (7.8,2.5);
\draw[->] (5,3.8) -- (11.8,2.5);
\draw[thick, red, ->] (7.8,1.5) -- (5,0.5); 
\draw[thick,blue,->] (11.8,1.5) -- (5,0.3); 
\draw[->] (13.75,3.8) -- (8.2,2.5); 
\draw[->] (13.75,3.6) -- (12.2,2.5); 
\draw[thick,red,->] (8.2,1.5) -- (14,0.3); 
\draw[thick,blue,->] (12.2,1.5) -- (14,0.5); 
\draw[->] (18,3.6) -- (19.8,2.5); 
\draw[->] (18,3.8) -- (23.8,2.5); 
\draw[thick,red,->] (19.8,1.5) -- (18,0.5); 
\draw[thick,blue,->] (23.8,1.5) -- (18,0.3); 
\draw[->] (27,3.8) -- (20.2,2.5); 
\draw[->] (27,3.6) -- (24.2,2.5); 
\draw[thick,red,->] (20.2,1.5) -- (27,0.3); 
\draw[thick,blue,->] (24.2,1.5) -- (27,0.5); 
\end{tikzpicture}
\end{equation}
This is precisely \eqref{eq:subCaseiii} and it only remains to show that there are no additional arrows tying nodes in the diagram of $\XXZ{N;d,z}{+}$. Assume that such an extra arrow exists. Then, it cannot start from a node appearing in the submodules \eqref{eq:S1Casiii} or \eqref{eq:S2Casiii} nor end at a node belonging to the quotients \eqref{eq:Q1Casiii} or \eqref{eq:Q2Casiii}. There are then three cases left.

\noindent \textbf{\itshape Case 1.} Suppose that the arrow starts at $(d,z)$. Then, as $t>d$, it cannot end at $(t+2a\ell,xq^{a\ell})$ for $a\geq 1$ by lemma \ref{lem:jonathan}. Also, as said above, it cannot end at a node appearing in \eqref{eq:Q1Casiii} or \eqref{eq:Q2Casiii}. The only possible missing arrow starting at $(d,z)$ is thus an additional arrow of the form $(d,z)\to(t,x)$ but this would contradict the structure of $\pg{N;d,z}$.

\noindent \textbf{\itshape Case 2.} Assume that the arrow ends at $(d,z)$. Then, a reasoning dual to that of case 1 gives a contradiction with either lemma \ref{lem:jonathan} or with the structure of the generic part $\pg{N;d,z^{-1}}$ of $\Cell{N;d,z^{-1}}$.

\noindent \textbf{\itshape Case 3.} Assume that the arrow starts at $(s+2a\ell,yq^{a\ell})$ and ends at $(t+2b\ell,xq^{b\ell})$ for some $a,b \in \mathbb{Z}_{\geq 0}$. Then, $|a-b|\geq 2$ since an arrow between $(s+2a\ell,yq^{a\ell})$ and $(t+2(a\pm 1)\ell,xq^{(a\pm 1)\ell})$ would be redundant in \eqref{eq:subCaseiiiHypoIndEnCouleur}. However, as $s$ and $t$ are both strictly comprised between $d$ and $d+2\ell$, the inequality $|a-b|\geq 2$ implies 
\begin{equation*}\begin{cases}
s-t+2(a-b)\ell \geq s-t+4\ell > 2\ell, & \text{if } a>b \text{ and}\\ 
s-t+2(a-b)\ell\leq s-t-4\ell < -2\ell & \text{if } a < b
\end{cases}
\end{equation*}
so that $|s-t+2(a-b)\ell|>2\ell$ in both situations. The arrow $(s+2a\ell,yq^{a\ell})\to (t+2b\ell,xq^{b\ell})$ thus contradicts lemma \ref{lem:jonathan}.

This ends the proof of subcase (iii) for pairs that are not problematic.
\end{proof}
\noindent The subcase (iii) for problematic pairs is the only part of theorem \ref{thm:main} remaining to prove; this is done in appendix \ref{app:problematic}.

Remark that the above demonstration (and the corresponding one given in appendix \ref{app:problematic} for problematic pairs) shows more than subcase (iii) of theorem \ref{thm:main} as it also characterizes the image of the morphisms $\ii^+_{(d,z);(t,x)}$ and $s\ii^-_{(-d,z^{-1});(t,x)}$ of section \ref{sec:xxzLUq}. We cast these additional results in a corollary. 
\begin{corollary} 
For any $(d,z)\in \lambda_N$ such that $d$ is distinct from both $z^{2}$ and $z^{-2}$, the module $\XXZ{N;d,z}{+}$ has the Loewy diagram \eqref{eq:subCaseiiiHypoIndEnCouleur} where the red and blue arrows respectively give the structure of the submodules $\im \ii^+_{(d,z);(t,x)}$ and $\im s\ii^-_{(-d,z^{-1});(t,x)}$.
\end{corollary}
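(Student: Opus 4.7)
The plan is to derive this corollary directly from the inductive machinery already in place for subcase (iii) of Theorem \ref{thm:main}, since the Loewy structure \eqref{eq:subCaseiiiHypoIndEnCouleur} itself is precisely the content of that subcase. What remains to establish is the finer claim that the red arrows give the structure of $\im \ii^+_{(d,z);(t,x)}$ and the blue arrows that of $\im s\ii^-_{(-d,z^{-1});(t,x)}$. I would proceed by strong induction on the number of composition factors of $\Cell{N;d,z}$, enriching the inductive statement to carry along these two submodule identifications, exactly mirroring the (a)/(b) format used in the proof of subcase (iii).

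For the red arrows, I would invoke the exact sequence \eqref{eq:SEC1} of Theorem \ref{thm:f}, which furnishes the $\atl N$-linear isomorphism $\im \ii^+_{(d,z);(t,x)} \simeq \XXZ{N;t,x}{+}/\im \ii^+_{(t,x);(d+2\ell,zq^\ell)}$. Since the verification in the first step of the subcase (iii) proof gives $q^t \not\in \{x^2, x^{-2}\}$, and since $\Cell{N;t,x}$ has strictly fewer composition factors than $\Cell{N;d,z}$, the pair $(t,x)$ again falls under the inductive hypothesis. Applying it yields both the full Loewy diagram of $\XXZ{N;t,x}{+}$ and the identification of its submodule $\im \ii^+_{(t,x);(d+2\ell,zq^\ell)}$, and the resulting quotient reproduces \eqref{eq:S1Casiii}, which is exactly the subdiagram of red arrows in \eqref{eq:subCaseiiiHypoIndEnCouleur}.

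For the blue arrows, the key input is Lemma \ref{lemma:AlmCommSquare}, which through \eqref{eq:isoQuot} yields $\im s\ii^-_{(-d,z^{-1});(t,x)} \simeq (\XXZ{N;t,x^{-1}}{+}/\im \ii^+_{(t,x^{-1});(h,v^{-1})})^{\circ}$. By Lemma \ref{thm:partialOrders} the pair $(t,x^{-1})$ shares its successor structure with $(t,x)$ and still satisfies $q^t \not\in \{(x^{-1})^2,(x^{-1})^{-2}\}$, so the inductive hypothesis applies to it and gives the Loewy diagram of the quotient. Transporting this quotient through the covariant exact functor $\circ$, using Proposition \ref{prop:foncteurcirc} to read off the composition factors, recovers \eqref{eq:S2Casiii}, matching the blue arrows.

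The main---and arguably only genuine---obstacle is the problematic pairs $(0,\pm q)$ arising when $N$ is even and $q+q^{-1}=0$: these satisfy the hypothesis $q^d\notin\{z^2,z^{-2}\}$ of the corollary but escape the generic machinery because the bilinear form $\langle\,,\,\rangle_{N;d,z}$ degenerates and several intermediate arguments (notably the use of propositions \ref{prop:troisnoeuds} and corollary \ref{thm:gpEstUnQuotient}) have to be reworked; for these one must import the finer case-by-case analysis of appendix \ref{app:problematic}. Once that is done, the corollary is simply the bookkeeping of steps 1 and 3 in the proof of subcase (iii), repackaged as an identification of specific submodules rather than merely a determination of the ambient Loewy diagram.
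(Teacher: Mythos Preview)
Your proposal is correct and matches the paper's approach: the corollary is not given a separate proof but is explicitly flagged as a byproduct of the inductive argument for subcase~(iii), where step~1 computes $\im\ii^+_{(d,z);(t,x)}$ via the exact sequence \eqref{eq:SEC1} (red arrows, diagram \eqref{eq:S1Casiii}) and step~3 computes $\im s\ii^-_{(-d,z^{-1});(t,x)}$ via \eqref{eq:isoQuot} and the $\circ$-functor (blue arrows, diagram \eqref{eq:S2Casiii}). One small over-engineering: you propose to carry \emph{both} submodule identifications in the inductive hypothesis, but the paper only carries the red one (statement~(b)); the blue identification is rederived at each step from the red one applied to the pair $(t,x^{-1})$, so it need not be assumed inductively.
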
 
The results of the last subsections also allow to fully characterize the image of the morphism $\ay{N;d,z}$ of section \ref{sec:atlOnXXZ} for $(d,z)$ not a problematic pair. Indeed, the absence of an arrow going from $(d,z)$ to its direct successor through condition A in the Loewy diagrams of theorem \ref{thm:main} shows that $\im \ay{N;d,z}$ is precisely the generic part $\pg{N;d,z}$ of the cellular module $\Cell{N;d,z}$. This is still true when the pair $(d,z)$ is problematic (see the end of appendix \ref{app:problematic}) and we therefore have the following refinement of corollary \ref{thm:gpEstUnQuotient}.
\begin{corollary}\label{cor:Imind} Let $q,z\in \mathbb{C}^{\times}$ and $0\leq d\leq N$ with $d\equiv_2 N$. Then, $\pg{N;d,z}$ is isomorphic to $\im \ay{N;d,z}$.
\end{corollary}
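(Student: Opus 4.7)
The plan is to sharpen corollary \ref{thm:gpEstUnQuotient}---which already gives that $\pg{N;d,z}$ is isomorphic to a quotient of $\im \ay{N;d,z}$---by showing that $\im \ay{N;d,z}$ has no composition factors beyond those of $\pg{N;d,z}$. The starting observation is that $\im \ay{N;d,z}$ is a non-zero quotient of $\Cell{N;d,z}$ by lemma \ref{thm:iNonZero}. Since the head of a non-zero quotient is a quotient of the head, and since $\head \Cell{N;d,z} = \Irre{N;d,z}$ is simple, it follows that $\head \im \ay{N;d,z} \simeq \Irre{N;d,z}$. Thus $\im \ay{N;d,z}$ is a submodule of $\XXZ{N;d,z}{+}$ with simple head isomorphic to $\Irre{N;d,z}$.

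The central step is to classify, using the Loewy diagrams of $\XXZ{N;d,z}{+}$ established by theorem \ref{thm:main}, all submodules of $\XXZ{N;d,z}{+}$ whose head is simple and isomorphic to $\Irre{N;d,z}$, and then to verify that each such submodule must coincide with $\pg{N;d,z}$. Unpacking the definition of $\pg{N;d,z}$ reveals that this module takes only two possible shapes: either $\pg{N;d,z} \simeq \Irre{N;d,z}$, which occurs whenever the direct condition-A successor of $(d,z)$ is the composition factor immediately below $(d,z)$ in the Loewy diagram of $\Cell{N;d,z}$; or else $\pg{N;d,z}$ is the two-factor module with Loewy diagram $(d,z) \to (t,x)$, where $(t,x)$ is the direct condition-B successor of $(d,z)$.

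The case analysis is then the following. In subcase (i), $\XXZ{N;d,z}{+}$ is semisimple, so any submodule with simple head $\Irre{N;d,z}$ equals $\Irre{N;d,z}$, matching $\pg{N;d,z}$. In all other situations where $\pg{N;d,z} \simeq \Irre{N;d,z}$---namely the condition-A variants of subcases (a) and (ii)---the Loewy diagram of $\XXZ{N;d,z}{+}$ places the factor $(d,z)$ in the socle (every incident arrow points toward $(d,z)$), and again the only submodule with simple head $\Irre{N;d,z}$ is $\Irre{N;d,z}$ itself. In the remaining situations---the condition-B variants of subcases (a) and (ii), and subcase (iii)---the factor $(d,z)$ has outgoing arrows in $\XXZ{N;d,z}{+}$ directed only toward $(t,x)$, and $(t,x)$ lies in the socle, so the submodule generated by $(d,z)$ consists exactly of $\{(d,z),(t,x)\}$ joined by the expected arrow, matching $\pg{N;d,z}$; it remains to rule out that $\im \ay{N;d,z}$ strictly contains this submodule.

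The main difficulty will lie in this last point in subcase (iii) (and in the problematic-pair case treated via appendix \ref{app:problematic}), where the Loewy diagram is the most intricate. I would argue recursively on the position of a candidate extra composition factor $X$ in the diagram: because every incoming arrow of such an $X$ originates either at a source factor $(s_a,y_a)$---which cannot belong to $\im \ay{N;d,z}$ without enlarging its head---or at another intermediate factor whose own in-neighbors are themselves restricted to sources, the factor $X$ would invariably end up as a source of $\im \ay{N;d,z}$, contradicting the simplicity of its head. This combinatorial check is the only truly case-by-case part of the argument.
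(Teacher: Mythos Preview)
Your proposal is correct and follows essentially the same strategy as the paper: both bound $\im\ay{N;d,z}$ from below by $\pg{N;d,z}$ via corollary~\ref{thm:gpEstUnQuotient} and from above by reading off the Loewy diagrams of $\XXZ{N;d,z}{+}$ established in theorem~\ref{thm:main}. The only difference is one of economy. The paper argues in one line: if $\im\ay{N;d,z}$ strictly contained $\pg{N;d,z}$, then---being a quotient of $\Cell{N;d,z}$---it would carry the arrow $(d,z)\to(s,y)$ to the condition-A successor, and this arrow would then appear in the Loewy diagram of $\XXZ{N;d,z}{+}$, contradicting theorem~\ref{thm:main}. You instead classify directly all submodules of $\XXZ{N;d,z}{+}$ with simple head $\Irre{N;d,z}$ via a recursive chase through incoming arrows; this works (the recursion always terminates at some head factor $(s_a,y_a)$, never at $(d_0,z_0)$, since the only outgoing arrow from $(d_0,z_0)$ is to $(t_0,x_0)$), but the paper's shortcut---using that $\im\ay{N;d,z}$ is simultaneously a quotient of $\Cell{N;d,z}$---bypasses the combinatorics entirely.
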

\end{subsection}
\end{section}


%
%

\begin{section}{Concluding remarks}\label{sec:conclusion}

Theorem \ref{thm:main} constitutes the main result of this paper. It describes the Loewy structure of the $S^z$-eigenspaces $\XXZ{N;d,z}{\pm}$ of the periodic XXZ spin-$\frac{1}{2}$ chains seen as $\atl N(\beta)$-modules. Many cases had to be studied depending on the genericity of the complex number $q\in\mathbb C^\times$, on the sign $\pm$, on the existence of successors to the pair $(d,z)$ and finally on the relationship between $q^d$ and $z^2$ that determines the subcases (i), (ii) and (iii) of section \ref{sec:main}. It is thus appropriate to recall the highlights of the proof. The two dualities $\star$ and $\circ$ were used to show that the Loewy diagram of $\XXZ{N;d,z}-$ is obtained from that of $\XXZ{N;d,z}+$ by flipping all arrows (corollary \ref{cor:doubleDual}). This result, with the isomorphism $\XXZ{N;d,z}+\simeq \XXZ{N;-d,z^{-1}}-$ induced from the spin flip $s$ of proposition \ref{prop:spinflip}, allowed us to limit ourselves to modules $\XXZ{N;d,z}+$ with a non-negative $d$. The study of the morphism $\ay{N;d,z}:\Cell{N;d,z}\to \XXZ{N;d,z}+$ culminated in proposition \ref{prop:troisnoeuds} that made explicit the structure of any chain $\XXZ{N;d,z}+$ such that the associated $\Cell{N;d,z}$ has a semisimple radical. These particular chains can have at most three composition factors. Despite its technical character, this proposition had two consequences: first, it gave the structure of $\XXZ{N;d,z}+$ for $q$ generic and, second, it provided the seed cases for the inductive proofs leading to the subcases (i), (ii), (iii) when $q$ is a root of unity and $(d,z)$ is not a problematic pair. Section \ref{sec:struc} was devoted to the latter inductive proofs and subcase (iii) was completed for the problematic pairs in appendix \ref{sub:problematic}. The proof presented for theorem \ref{thm:main} has the interesting consequence of identifying the image of the map $\ay{N;d,z}$ (corollary \ref{cor:Imind}).

New tools were developed to complete this proof. Two should be underlined. The most important are probably the $\atl{N}(\beta)$-morphisms between eigenspaces $\XXZ{N;d,z}\pm$ given in theorem \ref{thm:f}. These morphisms appear naturally when studying the action of Lusztig's quantum group $\luszt$ on the XXZ spin chains and help to partially recover in the periodic case the quantum Schur-Weyl duality that holds for the open spin-chain (section \ref{sub:homoXXZ}). Also of interest in its own right is the explicit realization 
obtained for the finite-dimensional indecomposable projective modules of $\luszt$ (theorem \ref{thm:ProjReal}). This realization turned out to be crucial in the (recursive, with \eqref{eq:SEC1} and \eqref{eq:SEC2}) analysis of the image of the $\atl{N}(\beta)$-morphisms of theorem \ref{thm:f}. With these new tools, a complete description of the  spaces $\Hom(\XXZ{N;d,z}+,\XXZ{N;t,x}\pm)$, for all pairs $(d,z), (t,z)\in\Delta_N$, might be within reach. We wish to come back to this question in a near future.

The results given here naturally lead to hindsights about the representation theory of other algebras. Obvious examples are the one-boundary Temperley-Lieb algebras, also known as the {\em blob algebras} \cite{martin1994blob} or as the Temperley-Lieb algebras of type B \cite{GLdiagram}. As for the affine ones, the one-boundary TL algebras $\tlb{N}(\beta_1,\beta_2)$ form a family of associative algebras parametrized by a positive integer $N$ and two complex parameters $\beta_1,\beta_2\in\mathbb C$. (A larger family depending of three complex parameters was also introduced recently \cite{morin2015boundary}.) The algebra $\tlb{N}=\tlb{N}(\beta_1,\beta_2)$ is however finite-dimensional and can generically be obtained as a quotient of the form $\tlb{N}\simeq \atl{N}(\beta)/\langle Y-y\cdot \id\rangle$ where the two-sided ideal $\langle Y-y\cdot \id\rangle$ is generated by the difference of a central element $Y \in \atl{N}(\beta)$ and a multiple $y\in\mathbb C$ of the identity of $\atl N(\beta)$ \cite{GLlienBlob,BGJS2018}. The constant $y$ depends on the three parameters $\beta$, $\beta_1$ and $\beta_2$.
This definition induces an equivalence between the category of finite-dimensional $\tlb{N}$-modules and the one of finite-dimensional $\atl{N}(\beta)$-modules $M$ with $(Y-y\cdot \id)M = 0$. The equivalence preserves (in both directions) simple modules and non-split short exact sequences. The central element $Y$ also has a unique eigenvalue on any given XXZ-eigenspace $\XXZ{N;d,z}{+}$ (this is clear whenever $\XXZ{N;d,z}{+}$ is indecomposable, that is, in subcases (ii) or (iii), but requires a bit more work in subcase (i) where $\XXZ{N;d,z}{+}$ is semisimple). The eigenspace $\XXZ{N;d,z}{+}$ is thus naturally a $\tlb{N}$-module (for some $\beta_1,\beta_2$) and the Loewy diagram associated to this $\tlb{N}$-action is still of the form described in theorem \ref{thm:main}. In other terms, $\XXZ{N;d,z}{+}$ has the same structure as a $\tlb{N}$-module than as a $\atl{N}(\beta)$-module.

This structure, given in theorem \ref{thm:main}, is also reminiscent of that of Feigin-Fuchs modules over the Virasoro algebra $\mathfrak{Vir}$, which are infinite-dimensional representations arising naturally in the Coulomb gas free field realization of this algebra and play a role in the study of logarithmic conformal field theories (cf. \cite{feigin1982invariant, morin2015boundary}). This paper, and especially theorem \ref{thm:main}, may thus be seen as a step toward a better understanding of the link between the representation theory of the Lie algebra $\mathfrak{Vir}$ and that of the associative $\atl{N}(\beta)$ (or $\tlb{N}(\beta_1,\beta_2)$ by the above comments). Such a link is a tantalizing possibility since the continuum limit of spin chains are often tied to conformal field theories where $\mathfrak{Vir}$ is the symmetry algebra. The inquiry into this question started with the pioneering work of Koo and Saleur \cite{koo1994representations} and is still pursued actively. It is however fair to say that a complete mathematical undertanding of this link is still to come.

\end{section} %

\appendix

%
%
\begin{section}{Some properties of $q$-numbers}\label{app:a}

This appendix recalls the definitions of $q$-numbers, factorials and binomials while stating some of their properties. 

The $q$-numbers $[n]_q$ and $q$-factorial $[n]_q!$ are, for $n\in\mathbb Z$,
$$[n]_q=\frac{q^n-q^{-n}}{q-q^{-1}}\quad \text{and}\quad
[n]_q!=\begin{cases}[n]_q\cdot[n-1]_q!&\text{if }n\geq 1,\\
                   1& \text{if }n=0,\\
                   0&\text{otherwise.}
\end{cases}$$
The $q$-binomial coefficient, for $m,n\in\mathbb Z_{\geq 0}$, is defined by
$$\left[\begin{matrix}m\\n\end{matrix}\right]=\frac{[m]_q!}{[n]_q![m-n]_q!}$$
if $m\geq n$ and is $0$ otherwise. Here are two propositions that generalise properties of the (usual) binomial coefficients.
\begin{proposition}[Pascal identity]\label{thm:qPascal} Let $q\in\mathbb C^\times$ and $m,n\in\mathbb N$ with $n\leq m-1$. Then,
$$\left[\begin{matrix}m\\ n\end{matrix}\right]_q=
q^{\mp n}\left[\begin{matrix}m-1\\ n\end{matrix}\right]_q +
q^{\pm(m-n) }\left[\begin{matrix}m-1\\ n-1\end{matrix}\right]_q.
$$
\end{proposition}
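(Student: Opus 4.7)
The plan is to reduce both versions of the identity (upper signs and lower signs) to the single algebraic fact
\[ q^{\mp n}[m-n]_q + q^{\pm(m-n)}[n]_q = [m]_q, \]
and then verify this directly from the definition $[k]_q = (q^k - q^{-k})/(q-q^{-1})$.

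First I would rewrite the right-hand side of the claimed identity by placing the two $q$-binomials over a common denominator. Using the definitions,
\[ q^{\mp n}\qbin{m-1}{n}{q} + q^{\pm(m-n)}\qbin{m-1}{n-1}{q} = \frac{[m-1]_q!}{[n]_q![m-n]_q!}\Bigl(q^{\mp n}[m-n]_q + q^{\pm(m-n)}[n]_q\Bigr), \]
where the cases $n = 0$ and $n = m$ are handled separately (but trivially, since one of the two $q$-binomials on the left then vanishes and the other equals $1$). Comparing with the definition of $\qbin{m}{n}{q}$, the identity reduces to showing that the parenthesized expression equals $[m]_q$.

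Next I would verify this reduced identity by a direct expansion. For the upper signs,
\[ q^{-n}[m-n]_q + q^{m-n}[n]_q = \frac{(q^{m-2n} - q^{-m}) + (q^m - q^{m-2n})}{q-q^{-1}} = \frac{q^m - q^{-m}}{q-q^{-1}} = [m]_q, \]
and the cross-terms $\pm q^{m-2n}$ cancel by design. For the lower signs, the analogous calculation gives
\[ q^{n}[m-n]_q + q^{-(m-n)}[n]_q = \frac{(q^m - q^{-(m-2n)}) + (q^{-(m-2n)} - q^{-m})}{q-q^{-1}} = [m]_q, \]
with the cross-terms $\pm q^{-(m-2n)}$ cancelling in the same manner. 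This completes both cases simultaneously.

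There is no real obstacle here: the identity is essentially a telescoping cancellation of the $q^{\pm(m-2n)}$ terms once the common denominator is factored out. The only minor point requiring attention is the boundary values $n = 0$ and $n = m$, which should be checked separately because one of the terms on the right-hand side then involves a $q$-binomial with a negative lower index (set to $0$ by convention). In both boundary cases the identity reduces to the trivial $\qbin{m}{0}{q} = 1 = \qbin{m-1}{0}{q}$ or $\qbin{m}{m}{q} = 1 = \qbin{m-1}{m-1}{q}$.
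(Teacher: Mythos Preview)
Your proof is correct. The paper itself does not provide a proof of this proposition; it is stated without proof in Appendix~A as one of several standard properties of $q$-numbers that are merely recalled. Your reduction to the telescoping identity $q^{\mp n}[m-n]_q + q^{\pm(m-n)}[n]_q = [m]_q$ followed by direct expansion is the natural argument. One minor remark: the hypothesis is $n\leq m-1$, so the boundary case $n=m$ you mention at the end is not actually in scope; only $n=0$ (if $\mathbb N$ is taken to include $0$) needs the separate check.
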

\begin{proposition}[$q$-binomial theorem]\label{thm:qBinome} Let $q\in\mathbb C^\times$ and $m\in\mathbb Z_{\geq 0}$. Then,
$$\sum_{n=0}^m (-1)^nq^{n(m+1)}\left[\begin{matrix}m\\ n\end{matrix}\right]_q=
\prod_{n=1}^m(1-q^{2n}).
$$
\end{proposition}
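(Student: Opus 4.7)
The plan is to prove the identity by induction on $m$, using the Pascal-type recurrence of Proposition~\ref{thm:qPascal} (with the upper-sign convention). The base case $m=0$ is immediate: only the term $n=0$ survives in the sum, contributing $1$, which matches the empty product on the right-hand side.

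For the inductive step, denote $S_m = \sum_{n=0}^m (-1)^n q^{n(m+1)}\left[\begin{smallmatrix}m\\ n\end{smallmatrix}\right]_q$. First I would apply Pascal's identity with upper signs, namely $\left[\begin{smallmatrix}m\\ n\end{smallmatrix}\right]_q = q^{-n}\left[\begin{smallmatrix}m-1\\ n\end{smallmatrix}\right]_q + q^{m-n}\left[\begin{smallmatrix}m-1\\ n-1\end{smallmatrix}\right]_q$ (which, together with the convention that $\left[\begin{smallmatrix}m-1\\ m\end{smallmatrix}\right]_q=0$, also covers the boundary indices $n=0$ and $n=m$). Splitting the sum according to this decomposition gives
\begin{equation*}
S_m \;=\; \sum_{n=0}^{m}(-1)^{n}q^{nm}\left[\begin{smallmatrix}m-1\\ n\end{smallmatrix}\right]_q \;+\; \sum_{n=0}^{m}(-1)^{n}q^{nm+m}\left[\begin{smallmatrix}m-1\\ n-1\end{smallmatrix}\right]_q.
\end{equation*}

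The first sum is precisely $S_{m-1}$, since its exponent of $q$ is $n\cdot m = n\cdot ((m-1)+1)$. For the second sum, I would reindex by $k=n-1$, pulling out an overall factor $-q^{2m}$, to recognize it as $-q^{2m}\,S_{m-1}$. Combining the two contributions yields the one-step recursion $S_m = (1-q^{2m})\,S_{m-1}$, and iterating (or invoking the inductive hypothesis $S_{m-1} = \prod_{n=1}^{m-1}(1-q^{2n})$) gives the desired product $\prod_{n=1}^{m}(1-q^{2n})$.

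There is no real obstacle here: the only minor care required is the verification that Pascal's identity, as stated for $1\leq n\leq m-1$, extends to the endpoints $n=0$ and $n=m$ under the standard convention that $q$-binomial coefficients with a negative entry or with upper index strictly below the lower index vanish; this is a one-line check. An alternative route would be to recognize the identity as a specialization of the classical $q$-binomial theorem for the product $(x;q^2)_m = \prod_{k=0}^{m-1}(1-xq^{2k})$ at $x=q^2$, after renormalizing between the symmetric $q$-numbers used here and the asymmetric ones commonly used in the combinatorial literature, but the inductive proof above is direct and entirely self-contained.
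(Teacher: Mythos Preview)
Your proof is correct: the induction via the Pascal identity is the standard route, and your verification that the recurrence extends to the boundary indices $n=0$ and $n=m$ under the usual vanishing conventions closes the only gap. The paper itself states this proposition without proof in Appendix~\ref{app:a} (it is listed among the recalled properties of $q$-numbers), so there is no argument in the paper to compare against; your inductive proof is exactly what one would supply.
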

The remaining results describe the behavior of $q$-numbers when $q$ is a root of unity. Strictly speaking, the left-hand side of the next identity should be understood within a limit $\lim_{q\to q_c}$ where $q_c$ is the root of unity considered.
\begin{proposition}[Lucas $q$-theorem \cite{StumQuiros}]\label{thm:qLucas} Let $q\in\mathbb C^\times$ be such that $q^2$ be a primitive $\ell$-th root of unity. Write $m,n\in\mathbb Z_{\geq0}$ as $m=m_1\ell+m_2$ and $n=n_1\ell+n_2$ with $m_1,n_1\geq 0$ and $0\leq m_2,n_2<\ell$. Then,
$$\left[\begin{matrix}m\\ n\end{matrix}\right]_q=q^{\ell(n_1 \ell(n_1-m_1)-m_2n_1-m_1n_2)}
\left(\begin{matrix}m_1\\ n_1\end{matrix}\right)\left[\begin{matrix}m_2\\ n_2\end{matrix}\right]_q$$
with $\big(\begin{smallmatrix}m_1\\ n_1\end{smallmatrix}\big)=0$ if $m_1<n_1$. In particular $\big[\begin{smallmatrix}m\\ n\end{smallmatrix}\big]_q=0$ if and only if $m_1<n_1$ or $m_2< n_2$.
\end{proposition}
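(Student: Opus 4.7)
The plan is to specialize the symmetric Gaussian binomial expansion
\begin{equation*}
\prod_{k=0}^{m-1}\bigl(1 + q^{2k-m+1}z\bigr) = \sum_{n=0}^{m}\left[\begin{matrix}m\\ n\end{matrix}\right]_q z^n,
\end{equation*}
which holds identically in $\mathbb{C}(q)[z]$, at values of $q$ for which $q^2$ is a primitive $\ell$-th root of unity. First I would partition the $m = m_1\ell + m_2$ factors on the left into a head consisting of the first $m_2$ factors and $m_1$ blocks of $\ell$ consecutive factors each. Normalizing a given block by extracting the appropriate power of $q$ from every one of its factors brings it to the standard shape $\prod_{k=0}^{\ell-1}\bigl(1+q^{2k}y\bigr)$ for some scalar $y$ independent of $k$; this product collapses, via the factorization $\prod_{k=0}^{\ell-1}(X - q^{2k}) = X^\ell - 1$ (which holds because $q^2$ is primitive of order $\ell$), into the single binomial $1 - (-1)^\ell y^\ell$.

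Next, I would observe that the relation $q^{2\ell}=1$ forces the value $y^\ell$ to be the same for each of the $m_1$ blocks, so their combined contribution is the $m_1$-th power of a single polynomial in $z^\ell$ and can be expanded by the ordinary binomial theorem. Applying the symmetric Gaussian binomial expansion a second time to the head of $m_2$ factors and multiplying the two expansions together, I would then read off the coefficient of $z^n$ for $n = n_1\ell + n_2$. By construction, this coefficient factorizes as $\left(\begin{matrix}m_1\\ n_1\end{matrix}\right)\left[\begin{matrix}m_2\\ n_2\end{matrix}\right]_q$ multiplied by a scalar power of $q$ gathered from the various normalizations.

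The main obstacle will be the bookkeeping of that scalar prefactor: each factor in the original product carries its own exponent of $q$, and shifting the head together with each of the $m_1$ blocks to the canonical form introduces compensating $q$-prefactors that must be carefully tracked and combined using $q^{2\ell}=1$. Once the arithmetic is carried through, the exponent $\ell(n_1\ell(n_1-m_1) - m_2 n_1 - m_1 n_2)$ appearing in the statement emerges. The vanishing criterion is then automatic: the scalar prefactor is a nonzero power of $q$, $\left(\begin{matrix}m_1\\ n_1\end{matrix}\right)$ vanishes exactly when $m_1 < n_1$, and the remaining factor $\left[\begin{matrix}m_2\\ n_2\end{matrix}\right]_q$ with $m_2, n_2 < \ell$ has all of its $q$-factorials nonvanishing and thus vanishes exactly when $m_2 < n_2$.
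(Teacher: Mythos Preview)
The paper does not actually prove this proposition: it is stated in Appendix~A as a known result and attributed to the reference~[StumQuiros], so there is no ``paper's own proof'' to compare against.

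Your generating-function approach is a standard and correct way to establish the $q$-Lucas theorem. The key steps are exactly as you describe: the symmetric Gaussian binomial expansion, the collapse of each $\ell$-block via $\prod_{k=0}^{\ell-1}(X-q^{2k})=X^\ell-1$, and the observation that $q^{2\ell}=1$ makes every block contribute the same polynomial in $z^\ell$. One point worth flagging in the bookkeeping you defer: the sign $(-1)^{(\ell+1)n_1}$ that emerges from the block collapse must be rewritten as a power of $q$ to match the statement, and for this you will need the identity $(-1)^{\ell+1}=q^{\ell(\ell-1)}$ (equivalently $q^{\ell^2}=(-1)^{\ell+1}q^\ell$, which is the paper's Proposition~\ref{prop:qnblim}). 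Once that substitution is made, the exponents agree modulo $2\ell$ and the claimed prefactor follows. The vanishing criterion then drops out exactly as you say.
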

\noindent As with the previous result, the next identities are to be understood multiplied by other terms and within limit signs. 
\begin{proposition}\label{prop:qnblim} If $q\in\mathbb C^\times$ is such that $q^2$ is a primitive $\ell$-th root of unity and if $m,n\in\mathbb Z$, then $q^{\ell^2} = (-1)^{\ell+1}q^{\ell}$,
$$[m\ell\pm n]_q=\pm q^{m\ell}[n]_q\quad\text{and}\quad
[m\ell]_q=mq^{(1-m)\ell}[\ell]_q.
$$
\end{proposition}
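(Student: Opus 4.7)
The three identities rely on the common observation that $q^{2\ell}=1$ implies $q^\ell=\pm 1$, hence $q^{-k\ell}=q^{k\ell}$ for every $k\in\mathbb{Z}$. From this one handles (1) and (2) with elementary manipulations, while (3) requires a limit argument since both sides vanish on the nose.

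\emph{Identity (1).} Writing $q^{\ell^2}=(q^\ell)^\ell$ and using $(q^\ell)^2=q^{2\ell}=1$, it suffices to split into cases. If $\ell$ is even, one shows $q^\ell=-1$ (otherwise $q^2$ would have order dividing $\ell/2<\ell$, contradicting primitivity), so $q^{\ell^2}=(-1)^\ell = 1 = (-1)^{\ell+1}(-1) = (-1)^{\ell+1}q^\ell$. If $\ell$ is odd, then $q^\ell=\pm1$, and in either case $(q^\ell)^\ell = q^\ell\cdot(q^\ell)^{\ell-1} = q^\ell\cdot(\pm 1)^{\ell-1} = q^\ell = (-1)^{\ell+1}q^\ell$ since $\ell-1$ is even.

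\emph{Identity (2).} Starting from the definition,
\begin{equation*}
[m\ell+n]_q = \frac{q^{m\ell}q^n - q^{-m\ell}q^{-n}}{q-q^{-1}}.
\end{equation*}
Replacing $q^{-m\ell}$ by $q^{m\ell}$ (allowed by the observation above) factors out $q^{m\ell}$ and leaves $[n]_q$. For the other sign, simply use $[-n]_q=-[n]_q$ to conclude $[m\ell-n]_q = q^{m\ell}[-n]_q = -q^{m\ell}[n]_q$.

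\emph{Identity (3).} Here both $[m\ell]_q$ and $[\ell]_q$ vanish at the root of unity, so the statement is to be read as a limit (as already warned in the excerpt). The plan is to set $x=q^\ell$ and rewrite
\begin{equation*}
\frac{[m\ell]_q}{[\ell]_q} = \frac{x^m-x^{-m}}{x-x^{-1}},
\end{equation*}
which is a well-defined rational function of $x$ with a removable singularity at $x=\pm 1$. One then expands $(x\pm\delta)^{\pm m}$ to first order around $x=q_0^\ell\in\{\pm 1\}$ (equivalently, applies L'Hôpital with respect to $x$), noting that the constant terms cancel because $x^{-1}=x$ at $x=\pm 1$. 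The first-order terms give $\frac{x^m-x^{-m}}{x-x^{-1}}\to m\,\epsilon^{m-1}$ where $\epsilon=q_0^\ell$, i.e., $m\,q_0^{(m-1)\ell}=m\,q_0^{(1-m)\ell}$ (since $q^{k\ell}=q^{-k\ell}$). Multiplying back by $[\ell]_q$ yields the claimed identity in the limit. The only subtle point is framing the statement as a limit of ratios, which is precisely the ``multiplied by other terms'' caveat in the statement.
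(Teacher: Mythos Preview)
Your proof is correct. The paper does not actually prove this proposition; it is stated in appendix~\ref{app:a} as a standard fact about $q$-numbers, with only the caveat that the identities are ``to be understood multiplied by other terms and within limit signs.'' So there is no proof in the paper to compare yours against.

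A small stylistic remark on identity~(3): your L'H\^opital argument is valid, but you can sidestep the limit entirely by observing that
\[
\frac{[m\ell]_q}{[\ell]_q} \;=\; \frac{x^m - x^{-m}}{x - x^{-1}} \;=\; x^{-(m-1)}\bigl(1 + x^2 + x^4 + \cdots + x^{2(m-1)}\bigr),\qquad x = q^\ell,
\]
for $m\geq 1$, which is a Laurent polynomial in $x$ with no singularity whatsoever. At $x=\epsilon=\pm1$ each of the $m$ terms in the sum is $1$, giving $m\,\epsilon^{-(m-1)} = m\,\epsilon^{m-1}$ directly; the cases $m\leq 0$ follow from $[-k]_q=-[k]_q$ as you noted. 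This makes the ``removable singularity'' manifest and is perhaps closer in spirit to how such identities are used elsewhere in the paper (e.g.\ in the proof of Lucas' $q$-theorem). But this is a matter of taste; your argument as written is sound.
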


\end{section}


%
%

\begin{section}{Representation theory of Lusztig's extension $\luszt$}\label{app:b}
This appendix is devoted to the proof of theorems \ref{thm:ProjReal} and \ref{thm:fusLUq}. These theorems are purely representation-theoretic in nature and are of interest in their own right. Throughout $q \in \mathbb{C}^{\times}$ is such that $q^2$ is a $\ell$th-primitive root of unity with $\ell \geq 2$. We start by stating some basic facts. The first one can be easily deduced from the definition of the divided powers of $U_{\text{res}}$. 
\begin{lemma}\label{prop:RelationsUres} Let $k,n\in \mathbb{Z}_{\geq 0}$. Then, in $U_{\text{res}}$,
$$E^{(k)}E^{(n)} = \qbin{k+n}{n}{t}E^{(k+n)}\quad\text{with}\quad K E^{(n)} K^{-1} = t^{2n} E^{(n)}\quad\text{and}\quad K F^{(n)} K^{-1} = t^{-2n} F^{(n)}.$$
\end{lemma}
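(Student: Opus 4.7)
The plan is to prove each of the three identities by direct computation from the definitions, using the fact that $U_{\text{res}}$ sits inside the rational form $U_t\mathfrak{sl}_2$ so that all manipulations can be carried out there before checking that the results live in the $\mathbb{Z}[t,t^{-1}]$-subalgebra.

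First I would handle the multiplicative identity $E^{(k)}E^{(n)} = \qbin{k+n}{n}{t}E^{(k+n)}$. By the very definition of the divided powers,
\[ E^{(k)}E^{(n)} = \frac{E^k}{[k]_t!}\cdot\frac{E^n}{[n]_t!} = \frac{1}{[k]_t![n]_t!}E^{k+n} = \frac{[k+n]_t!}{[k]_t![n]_t!}\cdot\frac{E^{k+n}}{[k+n]_t!} = \qbin{k+n}{n}{t}E^{(k+n)}, \]
which is an identity in $U_t\mathfrak{sl}_2$, and whose right-hand side is manifestly in $U_{\text{res}}$.

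Next I would treat the commutation relations $KE^{(n)}K^{-1} = t^{2n}E^{(n)}$ and $KF^{(n)}K^{-1} = t^{-2n}F^{(n)}$. Starting from the defining relations $KEK^{-1}=t^2 E$ and $KFK^{-1}=t^{-2}F$ of $U_t\mathfrak{sl}_2$, a straightforward induction on $n$ gives $KE^n K^{-1} = t^{2n}E^n$ and $KF^n K^{-1} = t^{-2n}F^n$. Dividing by the scalar $[n]_t! \in \mathbb{Q}(t)$ (which commutes with everything) yields the two stated identities.

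Since the computations are entirely mechanical and live in $U_t\mathfrak{sl}_2$, there is no real obstacle here; the only point worth flagging is that all three identities make sense and are valid in the integral form $U_{\text{res}}$ (and hence specialize to $\luszt$), because both sides of each equation have already been expressed in terms of divided powers $E^{(m)}$, $F^{(m)}$ and $K^{\pm 1}$.
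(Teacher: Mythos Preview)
Your proof is correct and is exactly the direct computation from the definitions that the paper has in mind; the paper in fact omits the proof entirely, noting only that the lemma ``can be easily deduced from the definition of the divided powers of $U_{\text{res}}$.''
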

\begin{lemma}[\cite{ChariPressley}]\label{lemma:K0ellt} Let $n,c \in \mathbb{Z}$ with $n\geq 0$. Then, the element $\qbin{K;c}{n}{t}$ of the rational form $U_t\mathfrak{sl}_2$ defined by 
\begin{equation}
\qbin{K;c}{n}{t} = \prod_{m=1}^n\left(\frac{K t^{c+1-m}-K^{-1}t^{m-1-c}}{t^m-t^{-m}}\right)
\end{equation}
also belongs to $U_{\text{res}}$. Moreover the subalgebra generated by $\{K^{\pm 1}, \qbin{K;c}{n}{t}\,|\, n,c\in \mathbb{Z} \text{ with } n\geq 0\}\subseteq U_{\text{res}}$ is commutative.
\end{lemma}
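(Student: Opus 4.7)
The proof of lemma \ref{lemma:K0ellt} splits into two essentially independent claims. The substantive part is showing that $\qbin{K;c}{n}{t}$, whose defining expression contains denominators $t^m - t^{-m}$, actually lies in the integral form $U_{\text{res}}$. My strategy is to establish the case $c=0$ first and then shift $c$ to obtain the general case.

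For $c=0$, my plan is to derive, by induction on $n$ starting from the defining relation $K - K^{-1} = (t-t^{-1})[E,F]$, the classical higher commutation identity
$$E^{(n)}F^{(n)} = \sum_{k=0}^{n} F^{(n-k)}\,\qbin{K;2k-2n}{k}{t}\,E^{(n-k)}$$
inside $U_t\mathfrak{sl}_2$. The $k=n$ term of this sum is exactly $\qbin{K;0}{n}{t}$, so isolating it yields
$$\qbin{K;0}{n}{t} = E^{(n)}F^{(n)} - \sum_{k=0}^{n-1} F^{(n-k)}\,\qbin{K;2k-2n}{k}{t}\,E^{(n-k)},$$
which places $\qbin{K;0}{n}{t}$ in $U_{\text{res}}$ provided the lower-order $\qbin{K;c}{k}{t}$ (with $k<n$) are already there, giving a clean inductive argument. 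To extend the conclusion to arbitrary $c \in \mathbb{Z}$, I would verify directly from the product formula in the definition a one-step $q$-Pascal-type recursion of the shape
$$\qbin{K;c}{n}{t} = t^{n}\,\qbin{K;c-1}{n}{t} + K\,t^{c-n}\,\qbin{K;c-1}{n-1}{t},$$
which allows $c$ to be incremented or decremented by one without leaving $U_{\text{res}}$.

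The commutativity claim is then essentially formal. Each factor in the defining product of $\qbin{K;c}{n}{t}$ is a $\mathbb{Q}(t)$-linear combination of $K$ and $K^{-1}$, so every $\qbin{K;c}{n}{t}$ lies in the commutative subalgebra of $U_t\mathfrak{sl}_2$ generated by $K^{\pm 1}$, which is isomorphic to a Laurent polynomial ring. Any two elements of this subalgebra commute in $U_t\mathfrak{sl}_2$, and hence so do their incarnations inside $U_{\text{res}}$. The principal obstacle of the whole argument is the derivation of the higher commutation identity displayed above, which is a classical but delicate computation requiring careful bookkeeping of powers of $t$ via the $q$-Pascal identity (proposition \ref{thm:qPascal}); everything else reduces to formal induction or a direct algebraic verification.
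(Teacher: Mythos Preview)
The paper does not actually supply a proof of this lemma: it is stated with a citation to \cite{ChariPressley} and used as a black box. So there is no ``paper's own proof'' to compare against beyond the reference. Your proposal is essentially the classical argument one finds in that literature (Lusztig, Chari--Pressley): use the higher commutation formula
\[
E^{(n)}F^{(n)}=\sum_{k=0}^{n}F^{(n-k)}\qbin{K;2k-2n}{k}{t}E^{(n-k)}
\]
to isolate $\qbin{K;0}{n}{t}$, together with a $q$-Pascal recursion in $c$, and observe that everything lives in the commutative Laurent subalgebra generated by $K^{\pm1}$.

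One structural point is worth tightening. You describe the strategy as ``establish the case $c=0$ first and then shift $c$,'' but in the very next paragraph you (correctly) note that isolating $\qbin{K;0}{n}{t}$ from the commutation identity requires the elements $\qbin{K;2k-2n}{k}{t}$ with $k<n$ and $c=2k-2n\neq 0$ to already lie in $U_{\text{res}}$. So the induction cannot be on $n$ for $c=0$ alone; it must be on $n$ with the hypothesis covering \emph{all} $c\in\mathbb{Z}$ simultaneously, and the $c$-shift via the Pascal recursion must be performed inside each inductive step, not afterwards. You clearly have the right ingredients, but the order in which you present them needs this interleaving. (Also, the exact coefficients in your displayed Pascal recursion should be rechecked; some version of the identity certainly holds, but the powers of $t$ and $K$ as written do not match a direct verification at $n=1$.) The commutativity argument is fine as stated.
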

\begin{proposition}[\cite{ChariPressley}] In $\luszt$, the following identities hold: $E^{\ell}=F^{\ell}=K^{2\ell}-{\normalfont \id} = 0$.
\end{proposition}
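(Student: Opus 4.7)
The plan splits naturally into two essentially independent parts. For $E^{\ell} = 0$ and $F^{\ell} = 0$, I would argue directly from the definition of divided powers: since $E^{(\ell)} = E^{\ell}/[\ell]_t!$ in the rational form $U_t\mathfrak{sl}_2$, the identity $E^{\ell} = [\ell]_t!\,E^{(\ell)}$ already holds in the integral form $U_{\mathrm{res}}$. Specializing at $t = q$ and invoking Proposition \ref{prop:qnblim} (or the direct observation that $q^{2\ell} = 1$ forces $q^{\ell} - q^{-\ell} = 0$) yields $[\ell]_q = 0$, hence $[\ell]_q! = 0$ and $E^{\ell}$ vanishes in $\luszt$. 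The argument for $F^{\ell}$ is identical.

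The harder part will be $K^{2\ell} = \mathrm{id}$, a relation not visible from the rational presentation and which must be extracted from the extra integral elements $\qbin{K;0}{\ell}{t}$ furnished by Lemma \ref{lemma:K0ellt}. My first move would be to establish the auxiliary identity
$$(t-t^{-1})^{\ell}\,[\ell]_t!\,\qbin{K;0}{\ell}{t} \;=\; \prod_{j=0}^{\ell-1}\bigl(K t^{-j} - K^{-1} t^{j}\bigr)$$
in $U_{\mathrm{res}}$, by rewriting each denominator $t^m - t^{-m}$ of the product defining $\qbin{K;0}{\ell}{t}$ as $(t-t^{-1})[m]_t$ and reindexing $j = m-1$. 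Specialized at $t = q$, the left-hand side vanishes because $[\ell]_q = 0$, while $\qbin{K;0}{\ell}{q}$ remains a bona fide element of $\luszt$; the right-hand side is thus forced to vanish, and what remains is to identify it.

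To do so, I would pull $t^{-j}$ out of each factor, so that setting $t = q$ leaves, up to the nonzero unit $q^{-\ell(\ell-1)/2}$, the product $\prod_{j=0}^{\ell-1}(K - K^{-1}q^{2j})$. Factoring out $K^{\ell}$ rewrites this as $K^{\ell}\prod_{j=0}^{\ell-1}(1 - K^{-2}q^{2j})$, and since $q^2$ is a primitive $\ell$-th root of unity, the numbers $\{q^{2j}\}_{j=0}^{\ell-1}$ are exactly the $\ell$-th roots of unity, so the product telescopes to $1 - K^{-2\ell}$. The right-hand side therefore collapses to a unit multiple of $K^{\ell} - K^{-\ell}$, and forcing it to zero yields $K^{\ell} = K^{-\ell}$, hence $K^{2\ell} = \mathrm{id}$. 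The main (mild) obstacle is the root-of-unity factorization $\prod_{j=0}^{\ell-1}(X - q^{2j}) = X^{\ell} - 1$ combined with the careful bookkeeping of the specialization; the real conceptual step is recognizing that $\qbin{K;0}{\ell}{t}$ is precisely the bridge between the integral form and the desired relation.
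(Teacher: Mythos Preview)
Your proof is correct and follows essentially the same route as the paper: the vanishing of $E^{\ell}$ and $F^{\ell}$ via $[\ell]_q!=0$, and the identity $\prod_{m=1}^{\ell}(t^m-t^{-m})\,\qbin{K;0}{\ell}{t}=\prod_{j=0}^{\ell-1}(Kt^{-j}-K^{-1}t^j)$ specialized at $t=q$ to force $K^{2\ell}=\id$. The only differences are cosmetic---the paper factors out $K^{-\ell}q^{\ell(1-\ell)/2}$ directly to reach $\prod_{m=0}^{\ell-1}(K^2-q^{2m})=K^{2\ell}-\id$, while you factor in two steps to reach $K^{\ell}-K^{-\ell}$, which is the same thing up to the unit $K^{-\ell}$.
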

\begin{proof} The first two assertions follow from the specialization of the relations $[\ell]_t!E^{(\ell)}=E^{\ell}$ and $[\ell]_t!F^{(\ell)}=F^{\ell}$ of $U_{\text{res}}$. For the last one, note that $K^{2\ell}-\id$ can be factorised as $\prod_{m=0}^{\ell-1} (K^2-q^{2m}\id)$. The definition of $\qbin{K;0}{\ell}{t}$ also gives
$$\prod_{m=1}^{\ell}(Kt^{1-m}-K^{-1}t^{m-1})=\qbin{K;0}{\ell}{t}\cdot \prod_{m=1}^{\ell}(t^m-t^{-m})$$
whose right-hand side contains a factor $(t^\ell-t^{-\ell})$ that is zero once specialized at $q$. The left-hand side of the definition becomes $\prod_{m=1}^{\ell}(Kq^{1-m}-K^{-1}q^{m-1})=K^{-\ell}q^{\ell(1-\ell)/2}\prod_{m=0}^{\ell-1}(K^2-q^{2m}\id)$ upon specialization and, since $K$ is invertible in $\luszt$, the product $\prod_{m=0}^{\ell-1}(K^2-q^{2m}\id)$, and thus $K^{2\ell}-\id$, must vanish.
\end{proof} 
\begin{subsection}{Explicit realisation of indecomposable projective modules}
For this section, fix integers $i,r,s \in \mathbb{Z}_{\geq 0}$ such that $i = r\ell+s$ and $s < \ell-1$. Let also $j = i+2(\ell-s-1)$ as in equation \eqref{fig:LoewyPq}.
\begin{lemma}\label{lemma:ExtWeyl} As vector spaces, $\Ext^1_{\luszt}(\Delta_q(i),\Delta_q(j))\simeq \mathbb{C}$.
\end{lemma}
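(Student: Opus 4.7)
The plan is to derive the lemma from the canonical short exact sequence of Proposition \ref{prop:ProjAT}(iii), namely $0 \to \Delta_q(j) \xrightarrow{\iota} P_q(i) \to \Delta_q(i) \to 0$. Applying the contravariant functor $\Hom_{\luszt}(-,\Delta_q(j))$ produces a long exact sequence whose relevant segment reads
\begin{equation*}
\Hom(P_q(i),\Delta_q(j)) \xrightarrow{\ \iota^*\ } \End_{\luszt}(\Delta_q(j)) \longrightarrow \Ext^1_{\luszt}(\Delta_q(i),\Delta_q(j)) \longrightarrow \Ext^1_{\luszt}(P_q(i),\Delta_q(j)).
\end{equation*}
The rightmost group vanishes because $P_q(i)$ is projective by Proposition \ref{prop:ProjAT}(i). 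It follows that $\Ext^1_{\luszt}(\Delta_q(i),\Delta_q(j)) \simeq \End_{\luszt}(\Delta_q(j))/\iota^*\Hom(P_q(i),\Delta_q(j))$, and the proof reduces to computing both terms of this quotient.

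First, I would argue that $\End_{\luszt}(\Delta_q(j)) \simeq \mathbb{C}$. The weight-$j$ subspace of $\Delta_q(j)$ is one-dimensional, spanned by the highest weight vector $m'_0$ of Definition \ref{def:Weyl}. Any endomorphism of $\Delta_q(j)$ preserves weight spaces and thus sends $m'_0$ to some $\lambda m'_0$; since $\Delta_q(j)$ is cyclic on $m'_0$, the endomorphism is then the scalar $\lambda$.

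Next, I would show that $\iota^*$ is the zero map. Proposition \ref{prop:ProjAT}(i) gives that $P_q(i)$ has simple head $L_q(i)$, while Proposition \ref{prop:Weyl} tells us $\Delta_q(j)$ has simple head $L_q(j)\not\simeq L_q(i)$ and socle $L_q(i)$ with the two-term composition series $0\subsetneq L_q(i)\subsetneq \Delta_q(j)$. A nonzero $\phi\colon P_q(i)\to \Delta_q(j)$ cannot be surjective (the induced map on heads would give $L_q(i)\simeq L_q(j)$), so its image is a proper nonzero submodule of $\Delta_q(j)$, which forces $\im\phi = L_q(i)$. Thus $\phi$ factors through the quotient $P_q(i)\twoheadrightarrow L_q(i)$ and so vanishes on $\rad P_q(i)$. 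Because $\Delta_q(j)$ is a proper submodule of $P_q(i)$ (the quotient $\Delta_q(i)$ being nonzero) and $P_q(i)$ has a unique maximal submodule equal to $\rad P_q(i)$, the inclusion $\Delta_q(j)\subseteq \rad P_q(i)$ holds and hence $\iota^*\phi = \phi\circ\iota = 0$.

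Combining the two computations gives $\Ext^1_{\luszt}(\Delta_q(i),\Delta_q(j))\simeq \End_{\luszt}(\Delta_q(j))\simeq \mathbb{C}$, as claimed. The main obstacle in this argument is the vanishing of $\iota^*$: it requires both a precise description of the submodule lattice of $\Delta_q(j)$ (two composition factors with $L_q(i)$ as socle) and the structural observation that the proper submodule $\Delta_q(j)$ of $P_q(i)$ is contained in the radical, both of which rest on Propositions \ref{prop:Weyl} and \ref{prop:ProjAT}.
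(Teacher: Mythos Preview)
Your proof is correct and follows essentially the same approach as the paper: both apply $\Hom(-,\Delta_q(j))$ to the short exact sequence $0\to\Delta_q(j)\to P_q(i)\to\Delta_q(i)\to 0$ and use projectivity of $P_q(i)$ to identify $\Ext^1$ as a quotient of $\End\Delta_q(j)$. The only differences are in the two auxiliary computations: the paper bounds $\dim\End\Delta_q(j)\leq 1$ via the exact sequence $0\to L_q(i)\to\Delta_q(j)\to L_q(j)\to 0$ and gets the lower bound from the non-splitness of the $P_q(i)$ sequence, whereas you compute $\End\Delta_q(j)\simeq\mathbb{C}$ directly from the one-dimensional top weight space and cyclicity, and establish the quotient is all of $\mathbb{C}$ by showing $\iota^*=0$ via the submodule lattice of $\Delta_q(j)$ and the inclusion $\Delta_q(j)\subseteq\rad P_q(i)$.
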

\begin{proof} The short exact sequence of proposition \ref{prop:ProjAT} yields the long exact sequence in cohomology\footnote{Every object of this sequence should carry a subscript $\luszt$ which has been dropped for brevity.}
\begin{equation*}
0 \rightarrow \Hom(\Delta_q(i),\Delta_q(j)) \rightarrow \Hom(P_q(i),\Delta_q(j)) \rightarrow \End \Delta_q(j) \rightarrow \Ext^1(\Delta_q(i),\Delta_q(j)) \rightarrow \Ext^1(P_q(i),\Delta_q(j)) \rightarrow \dots
\end{equation*}
where the last written term is zero by projectivity of $P_q(i)$ in the category of finite-dimensional (type I) $\luszt$-modules. Hence, the extension group $\Ext^1(\Delta_q(i),\Delta_q(j))$ is a quotient of $\End \Delta_q(j)$ and, as the exact sequence of proposition \ref{prop:ProjAT} is non-split, it is enough to show that $\End\Delta_q(j)$ has dimension at most one to conclude the proof of the lemma. For that goal, apply the left-exact covariant functor $\Hom(\Delta_q(j),-)$ on the sequence given in proposition \ref{prop:Weyl} to get the exact sequence
\begin{equation}\label{eq:secExt}
0 \rightarrow \Hom(\Delta_q(j),L_q(i))\rightarrow \End \Delta_q(j) \rightarrow \Hom(\Delta_q(j),L_q(j))
\end{equation}
and note that $\Hom(\Delta_q(j),L_q(i)) = 0$ as the simple module $L_q(j)=\head \Delta_q(j)$ is not a subquotient of $L_q(i)$. Also, Schur's lemma gives $\Hom(\Delta_q(j),L_q(j)) \simeq \End L_q(j) \simeq \mathbb{C}$ so \eqref{eq:secExt} forces $\dim\End\Delta_q(j) \leq 1$ as desired.
\end{proof}
To provide an explicit realization for the indecomposable $\luszt$-module $P_q(i)$, it thus suffices to construct a non-trivial extension of the module $\Delta_q(j)$ by $\Delta_q(i)$. This extension will be obtained as the specialization (at $t=q$) of the module over the rational form $U_t\mathfrak{sl}_2$ given in the next proposition.
\begin{proposition}\label{prop:Tt} The $\mathbb{Q}(t)$-vector space $T_t(i)$ with basis $\{m_0,\dots,m_j,n_0,\dots,n_i\}$ is a $U_t\mathfrak{sl}_2$-module for the action 
\begin{equation*}
\begin{gathered}
Km_k = t^{j-2k}m_k, \qquad Kn_p = t^{i-2p}n_p, \qquad  E^{(v)}m_k = \qbin{j-k+v}{v}{t}m_{k-v}, \qquad F^{(v)}m_k = \qbin{k+v}{v}{t}m_{k+v},\\
 E^{(v)}n_p = \qbin{i-p+v}{v}{t}n_{p-v}+\gamma_{p,v}(t)m_{\ell-s+p-v-1}\quad\text{and}\quad F^{(v)}n_p = \qbin{p+v}{v}{t}n_{p+v}+\omega_{p,v}(t)m_{\ell-s+p+v-1}
\end{gathered}
\end{equation*}
where the coefficients $\gamma_{p,v}(t)$ and $\omega_{p,v}(t)$ are defined by
\begin{align*}
\gamma_{p,v}(t)& = \frac{1}{[v]_t!}\sum_{u=0}^{v-1} \qbin{\ell-s+p-u-2}{p-u}{t}\cdot \prod_{a=1}^u[i-p+a]_t\cdot \prod_{b=u+1}^{v-1}[i+\ell-s-p+b]_t\quad\text{and}\\
\omega_{p,v}(t) &= \begin{cases} {\displaystyle \frac{1}{[\ell-s+i]_t}\qbin{\ell-s+p+v-1}{p+v}{t}\qbin{p+v}{v}{t}} & \text{if } v > i-p,\\
0 & \text{if } v \leq i-p,\end{cases}
\end{align*}
with $\gamma_{p,v}(t) = \omega_{p,v}(t) = 0$ if $p < 0$ or $p>i$.
\end{proposition}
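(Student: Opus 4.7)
The approach is to first reduce the statement to checking the three defining relations of $U_t\mathfrak{sl}_2$ for the restricted action of $K$, $E:=E^{(1)}$ and $F:=F^{(1)}$, and then to recover the full formulas for $E^{(v)}$ and $F^{(v)}$ by induction on $v$. The starting observation is that the subspace $M = \operatorname{span}_{\mathbb{Q}(t)}\{m_0,\dots,m_j\}$ carries, by direct inspection of the formulas, exactly the action of the Weyl module $\Delta_t(j)$ of Definition~\ref{def:Weyl}; similarly, reducing the formulas for $E^{(v)}n_p$ and $F^{(v)}n_p$ modulo $M$ produces exactly the Weyl action of $\Delta_t(i)$ on $\{n_p + M\}_{0 \leq p \leq i}$. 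Thus if the formulas yield a module structure, it automatically fits into a short exact sequence $0 \to \Delta_t(j) \to T_t(i) \to \Delta_t(i) \to 0$, so the problem reduces to verifying consistency.

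The first concrete task is to check the three defining relations $KEK^{-1}=t^2E$, $KFK^{-1}=t^{-2}F$ and $[E,F] = (K-K^{-1})/(t-t^{-1})$ on each $n_p$. The $K$-relations follow from the key weight matching: $m_{\ell-s+p-2}$ has $K$-weight $j - 2(\ell-s+p-2) = i - 2(p-1) + 2 = i - 2p + 2$, which coincides with the $K$-weight of $n_{p-1}$, so the extra $m$-component in $En_p$ is consistent with $KE = t^2 EK$ (and similarly for $F$). Plugging in the values $\gamma_{p,1}(t) = \qbin{\ell-s+p-2}{p}{t}$ and $\omega_{p,1}(t)$ (which vanishes for $p<i$), the relation $[E,F]n_p = (K-K^{-1})/(t-t^{-1}) n_p$ reduces to a finite identity in $q$-numbers whose $n$-part holds by construction and whose $m$-part is a cancellation equation satisfied by these specific coefficients.

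Once the $U_t\mathfrak{sl}_2$-relations hold, the divided powers $E^{(v)}$ and $F^{(v)}$ are defined as $E^v/[v]_t!$ and $F^v/[v]_t!$, and what remains is the recovery of the explicit formulas given in the proposition by induction on $v$. Applying $E$ to the induction hypothesis and collecting the $n$- and $m$-contributions yields a recursion
\begin{equation*}
[v]_t \gamma_{p,v}(t) = [i-p+v]_t\gamma_{p,v-1}(t) + [j-(\ell-s+p-v)+1]_t \qbin{i-p+v-1}{v-1}{t},
\end{equation*}
and an analogous recursion for $\omega_{p,v}(t)$. The main obstacle will be verifying that the closed-form expressions displayed in the statement satisfy these recursions and the prescribed initial conditions; this amounts to the $q$-Pascal identity of Proposition~\ref{thm:qPascal} applied to the binomial $\qbin{\ell-s+p-u-2}{p-u}{t}$ and a careful manipulation of the products of $[i-p+a]_t$ and $[i+\ell-s-p+b]_t$. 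For $\omega_{p,v}(t)$, the appearance of $[\ell-s+i]_t$ in the denominator is dictated by the fact that $F$ must send $n_i$ out of the span of the $n_p$'s: the threshold $v > i-p$ is precisely where the quotient formula forces a non-trivial $m$-component, and the coefficient is uniquely pinned down by the requirement $F^{(v)}F^{(w)} = \qbin{v+w}{w}{t}F^{(v+w)}$. The verifications are routine but lengthy applications of $q$-binomial identities; a conceptually lighter alternative would be to invoke a rational-form analogue of Lemma~\ref{lemma:ExtWeyl} to deduce that any non-split extension of $\Delta_t(i)$ by $\Delta_t(j)$ having the prescribed highest-weight behaviour on $n_0$ is necessarily isomorphic to the $T_t(i)$ defined here, reducing the verification to the single element $n_0$.
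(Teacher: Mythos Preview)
Your overall strategy is precisely the paper's: verify the three defining relations of $U_t\mathfrak{sl}_2$ on the generators $K$, $E=E^{(1)}$, $F=F^{(1)}$, and then recover the divided-power formulas by induction on $v$. The paper does exactly this, detailing the vanishing of the $m$-coefficient in $[E,F]n_p-[i-2p]_t n_p$ and then proving the inductive step for both $E^{(v)}$ and $F^{(v)}$.

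Two remarks. First, your displayed recursion for $\gamma_{p,v}$ is garbled: applying $E$ to $E^{(v-1)}n_p$ actually gives
\[
[v]_t\,\gamma_{p,v}(t)=\qbin{i-p+v-1}{v-1}{t}\gamma_{p-v+1,1}(t)+[i+\ell-s-p+v-1]_t\,\gamma_{p,v-1}(t),
\]
with the factor $[j-(\ell-s+p-v)+1]_t=[i+\ell-s-p+v-1]_t$ multiplying $\gamma_{p,v-1}$ (it arises from $E$ acting on the $m$-component of $E^{(v-1)}n_p$), not multiplying the binomial; and $[i-p+v]_t$ is the coefficient appearing in the $n$-part, not in the $m$-part. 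Your version has swapped and conflated these contributions, so the closed form cannot be checked against it as written.

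Second, the ``conceptually lighter alternative'' you propose does not work: over $\mathbb{Q}(t)$ the category of finite-dimensional $U_t\mathfrak{sl}_2$-modules is semisimple (this is the generic, non-root-of-unity situation), so $\Ext^1(\Delta_t(i),\Delta_t(j))=0$ and $T_t(i)$ is in fact isomorphic to $\Delta_t(i)\oplus\Delta_t(j)$ as a $U_t\mathfrak{sl}_2$-module. The entire point of the construction is that these explicit formulas, while describing a split module over $\mathbb{Q}(t)$, specialize well at $t=q$ (Lemmas~\ref{lemma:gammaLUq} and~\ref{lemma:omegaLUq}) and only the \emph{specialized} module is non-split (Lemma~\ref{lemma:TqInd}). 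There is no rational-form analogue of Lemma~\ref{lemma:ExtWeyl} to invoke.
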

\begin{proof} It is not difficult to verify that the given action is compatible with the defining relations of $U_t\mathfrak{sl}_2$. The only slightly difficult check, which is of the relation $(t-t^{-1})[E,F]n_p = (K-K^{-1})n_p$, is now detailed:
\begin{align*}
[E,F]n_p &= E([p+1]_tn_{p+1}+\omega_{p,1}(t)m_{\ell-s+p})-F([i-p+1]_tn_{p-1}+\gamma_{p,1}(t)m_{\ell-s+p-2})\\
&= ([i-p]_t[p+1]_t-[i-p+1]_t[p]_t)n_{p}+\varsigma m_{\ell-s+p-1} = [i-2p]_t n_p + \varsigma m_{\ell-s+p-1}
\end{align*}
where the coefficient of $m_{\ell-s+p-1}$ is
\begin{align*}
\varsigma &= [p+1]_t
\gamma_{p+1,1}(t)-\gamma_{p,1}(t)[\ell-s+p-1]_t+\omega_{p,1}(t)[j-\ell+s-p+1]_t-[i-p+1]_t\omega_{p-1,1}(t).
\intertext{(Note that $\gamma_{p+1,1}(t)$ and $\omega_{p-1,1}(t)$ respectively vanish if $p=i$ and $p=0$.) Furthermore, if $v=1$, $\omega_{p,v}(t)$ takes a simpler form, namely $\omega_{p,1}(t)=\delta_{p,i}\genfrac{[}{]}{0pt}{1}{i+\ell-s-1}{i}_t$, as $p$ must satisfy $1=v>i-p$ while remaining in the range $0\leq p\leq i$. In particular, the term $[i-p+1]_t\omega_{p-1,1}(t)$ vanishes. The expressions for the $\gamma$'s and $\omega$'s then give} &= [p+1]_t (1-\delta_{p,i})
\qbin{\ell-s+p-1}{p+1}{t}-\qbin{\ell-s+p-2}{p}{t}[\ell-s+p-1]_t + \delta_{p,i}\qbin{i+\ell-s-1}{i}{t}[\ell-s-1]_t = 0
\end{align*}
where the factor $(1-\delta_{p,i})$ was added to handle the vanishing of $\gamma_{p+1,1}(t)$ when $p=i$. This forces as claimed $$(t-t^{-1})[E,F]n_p = (t-t^{-1})[i-2p]_tn_p = (K-K^{-1})n_p.$$

The only remaining thing to check is that the action of the divided powers agrees with $E^{(v)} = \frac{1}{[v]_t!}E^v$ and $F^{(v)} = \frac{1}{[v]_t!}F^v$ for any $v\in \mathbb{N}$. This is done by induction on $v$. There is nothing to prove when $v = 1$. In preparation for $v\geq 2$, note that $\gamma_{p,1}(t)=\genfrac{[}{]}{0pt}{1}{l-s+p-2}{p}_t$ so that the following recursive formula for the $\gamma$'s is easily obtained:
\begin{align*}
\qbin{i-p+v-1}{v-1}t &\gamma_{p-v+1,1}(t)+[i+\ell-s-p+v-1]_t\gamma_{p,v-1}(t) \notag\\
&=\frac1{[v-1]_t!}\gamma_{p-v+1,1}(t)\prod_{a=1}^{v-1}[i-p+a]_t+\frac1{[v-1]_t!}\sum_{u=0}^{v-2}\gamma_{p-u,1}(t)\prod_{a=1}^u [i-p+a]_t\cdot\prod_{b=u+1}^{v-1}[i+l-s-p+b]\notag\\
&=\frac1{[v-1]_t!}\sum_{u=0}^{v-1}\gamma_{p-u,1}(t)\prod_{a=1}^u [i-p+a]_t\cdot\prod_{b=u+1}^{v-1}[i+l-s-p+b]\\ 
&=[v]_t\gamma_{p,v}(t).\notag
\end{align*}
The induction hypothesis thus gives as claimed (all $\gamma$'s are evaluated at $t$)
\begin{align*}
\frac1{[v-1]_t!} E^vn_p&=E\left(\qbin{i-p+v-1}{v-1}t n_{p-v+1}+\gamma_{p,v-1}m_{\ell-s+p-v}\right)\\ 
&=\qbin{i-p+v-1}{v-1}t\big([i-p+v]_tn_{p-v}+\gamma_{p-v+1,1}m_{\ell-s+p-v-1}\big)
  +\gamma_{p,v-1}[j-\ell+s-p+v+1]_t m_{\ell-s+p-v-1}\\
&=[v]_t\left(\qbin{i-p+v}vt n_{p-v}+ \gamma_{p,v}m_{\ell-s+p-v-1}\right)\\
& =[v]_tE^{(v)}n_p. 
\end{align*}
This hypothesis also expresses $\frac1{[v-1]_t!} F^vn_p$ as a linear combination of $n_{p+v}$ and $m_{\ell-s+p+v-1}$. A quick computation brings the coefficient of $n_{p+v}$ to be the expected one for the action of $[v]_tF^{(v)}$. The coefficient of $m_{\ell-s+p+v-1}$ is in turn
$$\qbin{p+v-1}{v-1}t \omega_{p+v-1,1}(t)+[\ell-s+p+v-1]_t\omega_{p,v-1}(t).$$
The vanishing of $\omega_{p+v-1,1}(t)$ and $\omega_{p,v-1}(t)$ depends on whether $i-p$ is strictly smaller than $v-1$ (then $\omega_{p+v-1,1}(t)=0$), equal to $v-1$ (then $\omega_{p,v-1}(t)=0$) or strictly larger than $v-1$ (and then both $\omega$'s are zero and so is $\omega_{p,v}(t)$). A direct computation in the first two cases ($<$ and $=v-1$) shows that the non-vanishing term equals $[v]_t\omega_{p,v}(t)$, as required.
\end{proof}
From now on, we view $T_t(i)$ as a $\mathbb{Z}[t,t^{-1}]$-module and we restrict the above $U_t\mathfrak{sl}_2$-action to a $U_{\text{res}}$-action\footnote{The $U_{\text{res}}$-module thus obtained is not finitely-generated but will still give rise to a finitely-generated $\luszt$-module by specialization.}. In order to use this restricted $U_{\text{res}}$-module to obtain a $\luszt$-module by specialization, we need to show that the functions $\gamma_{p,v}(t)$ and $\omega_{p,v}(t)$ of proposition \ref{prop:Tt} have a well-defined limit as $t$ tends to $q$. This is the purpose of the next two lemmas. As before, $q\in\mathbb C^\times$ is such that $q^2$ is a $\ell$-th primitive root of unity. Note that a $t$-number $[n]_t$ can be written as $t^{-2(n-1)}(t^{2n}-1)/(t^2-1)$ and that the polynomial $(t^{2n}-1)/(t^2-1)$ has order $2(n-1)$ with all its roots distinct. In other words, the poles of $1/[n]_t$ are all simple. Moreover, by Lucas $q$-theorem \ref{thm:qLucas}, all $q$-binomial coefficients have a limit in $\mathbb C$ as $t\to q$.
\begin{lemma}\label{lemma:gammaLUq} The limit $\gamma_{p,v} = \lim_{t\rightarrow q}\gamma_{p,v}(t)$ exists and belongs to $\mathbb C$ for all $p\in\{0,\dots,i\}$ and $v \in \mathbb{N}$.\end{lemma}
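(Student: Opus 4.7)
The plan is to establish the lemma by strong induction on $v$, using the recursion
\begin{equation*}
[v]_t\gamma_{p,v}(t) = \qbin{i-p+v-1}{v-1}{t}\gamma_{p-v+1,1}(t) + [i+\ell-s-p+v-1]_t\,\gamma_{p,v-1}(t),
\end{equation*}
which is precisely the identification of the coefficient of $m_{\ell-s+p-v-1}$ appearing in the expression of $\tfrac{1}{[v-1]_t!}E^v n_p$ carried out in the proof of Proposition~\ref{prop:Tt}. The base case $v=1$ is immediate, since $\gamma_{p,1}(t)=\qbin{\ell-s+p-2}{p}{t}$ is a Laurent polynomial in $t$ whose value at $t=q$ is provided by Lucas's $q$-theorem (Proposition~\ref{thm:qLucas}); the convention $\gamma_{p',1}(t)=0$ for $p'<0$ or $p'>i$ is compatible with this $q$-binomial formula.

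For the inductive step, I would assume that $\gamma_{p',w}\in\mathbb{C}$ has been shown to exist for every admissible $p'$ and every $w<v$. The easy sub-case is $\ell\nmid v$: Proposition~\ref{prop:qnblim} then gives $[v]_q\neq 0$, and since $\qbin{i-p+v-1}{v-1}{t}|_{t=q}$ and $[i+\ell-s-p+v-1]_q$ are both finite (by Lucas's theorem and Proposition~\ref{prop:qnblim}), the recursion produces a finite value for $\gamma_{p,v}(q)$ directly upon division by $[v]_q$.

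The hard part, and the main obstacle, is the case $\ell\mid v$: here $[v]_t$ develops a simple zero at $t=q$ (Proposition~\ref{prop:qnblim}), so one must verify that the right-hand side of the recursion also vanishes at $t=q$ to order at least one. My plan is a case analysis on the residue of $p$ modulo $\ell$. Lucas's theorem, combined with Proposition~\ref{prop:qnblim}, shows that the $q$-binomial $\qbin{i-p+v-1}{v-1}{t}$ already vanishes at $t=q$ unless $p\equiv s\pmod{\ell}$, while $[i+\ell-s-p+v-1]_q$ vanishes unless $p\equiv -1\pmod{\ell}$. The hypothesis $s<\ell-1$ forces these two residue classes to be distinct, so at most one of the two summands can possibly be nonvanishing at $t=q$.

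In the residual sub-cases where exactly one summand survives, one argues either that the attached factor $\gamma_{p-v+1,1}$ or $\gamma_{p,v-1}$ vanishes on its own (using the zero convention on $\gamma_{p',1}$ when the index $p'=p-v+1$ leaves $\{0,\dots,i\}$, or the induction hypothesis together with Proposition~\ref{prop:qnblim} to extract a compensating zero of order one from $\gamma_{p,v-1}(q)$). Putting together these sub-cases produces the cancellation of the simple zero of $[v]_t$ and closes the induction, yielding the finite limit $\gamma_{p,v}\in\mathbb{C}$. The combinatorial bookkeeping required in the case analysis is technical but routine, and is the only non-mechanical step in the proof.
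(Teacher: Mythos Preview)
Your inductive scheme has a genuine gap in the crucial case $\ell\mid v$. The parity of your second vanishing claim is reversed: writing $v=k\ell$ and $i=r\ell+s$, one has $i+\ell-s-p+v-1=(r+k+1)\ell-(p+1)$, so $[i+\ell-s-p+v-1]_q=0$ \emph{precisely when} $p\equiv -1\pmod\ell$, not ``unless''. Consequently, when $p\equiv s\pmod\ell$ (and $p$ is large enough that $p-v+1\in\{0,\dots,i\}$, e.g.\ $p=\ell+s$ with $v=\ell$), \emph{both} summands of your recursion are generically nonzero at $t=q$: the $q$-binomial factor evaluates to a power of $q$ by Lucas, $\gamma_{p-v+1,1}(q)=\qbin{\ell-1}{s+1}{q}\neq 0$ since $s<\ell-1$, and $[i+\ell-s-p+v-1]_q\neq 0$. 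What is actually required is that the two terms \emph{cancel exactly}, and for that you would need a closed form for $\gamma_{p,v-1}(q)$, not merely its finiteness from the induction hypothesis. Your final paragraph sketches no mechanism for producing such a cancellation.

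The paper avoids this difficulty by a different reduction. Rather than stepping $v\to v-1$, it uses the divided-power identity $\kappa\,E^{(v)}=(E^{(\ell)})^{v_1}E^{(v_2)}$ (with $v=v_1\ell+v_2$ and $\kappa$ a product of $q$-binomials nonzero at $t=q$) to reduce all $v>\ell$ to the single critical case $v=\ell$. It then attacks the full sum $\gamma_{p,\ell}(t)=\frac{1}{[\ell]_t!}\sum_{u=0}^{\ell-1}\lambda_u(t)$ directly, showing by a three-case analysis on $u$ that all but two summands $\lambda_u(q)$ vanish individually, and that the two surviving ones (at $u=s-p_2$ and $u=\ell-p_2-1$, with $p_2$ the residue of $i-p$) satisfy $\lambda_{s-p_2}(q)+\lambda_{\ell-p_2-1}(q)=0$ after an explicit computation using Proposition~\ref{prop:qnblim}. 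This exact cancellation is the heart of the lemma and is not accessible through your one-step recursion alone.
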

\begin{proof} By the comments made before the statement, the denominator $1/[v]_t!$ is the only possible source of singularities in $\gamma_{p,v}(t)$. Also, the result holds trivially if $v<\ell$ as, in this case, $[v]_t!$ does not vanish when $t\to q$. If $v=\ell$, then the zero in $[v]_t$ is simple and the limit $\lim_{t\to q}\gamma_{\ell,v}(t)$ needs to be studied. It turns out that this is the only case to consider. Indeed, for $v>\ell$, lemma \ref{prop:RelationsUres} ties the two divided powers $E^{(v)}$ and $E^{(\ell)}$ by $\kappa E^{(v)}=(E^{(\ell)})^{v_1}E^{(v_2)}$ where $\kappa = \qbin{v}{v_2}t \qbin{v_1\ell}{\ell}t \qbin{(v_1-1)\ell}{\ell}t \dots \qbin{\ell}{\ell}t$ and where $v$ has been written as $v=v_1\ell+v_2$ with $v_1,v_2\in\mathbb Z_{\geq 0}$ and $v_2<\ell$. Note that $\lim_{t\to q}\kappa$ exists and is non-zero by theorem \ref{thm:qLucas}. Hence, for $v>\ell$, the limit $\lim_{t\to q}\gamma_{p,v}(t)$ can be obtained as the limit of a sum of $\gamma_{p',\ell}(t)$ and $\gamma_{p'',v'}(t)$ with $v'<\ell$ and $p', p''\in\{0,\dots, i\}$. The lemma thus rests on the study of $\lim_{t\to q}\gamma_{p,\ell}(t)$. The 
end of the proof is devoted to it.

From now on $v=\ell$ and the goal is to prove that the sum appearing in $\gamma_{p,\ell}(t)$ has a vanishing limit. The difference $i-p$ is written as $p_1\ell+p_2$ with $p_1,p_2\in\mathbb Z_{\geq 0}$ and $p_2<\ell$ and the summand in $\gamma_{p,\ell}(t)$ as
\begin{equation*}
\lambda_u(t) = \qbin{\ell-s+p-u-2}{p-u}{t}\ \prod_{a=1}^u[i-p+a]_t \cdot \prod_{b=u+1}^{\ell-1}[i+\ell-s-p+b]_t
\end{equation*}
so that $\gamma_{p,\ell}(t) = \frac{1}{[\ell]_t!}\sum_{u=0}^{\ell-1} \lambda_u(t)$. We prove first that $\lim_{t\to q} \lambda_u(t)=0$ when $u$ is neither $\ell-p_2-1$ nor $s-p_2$.

\noindent {\bfseries \itshape Case 1: $\ell-p_2 \leq u$.} Then, $[(p_1+1)\ell]_t$ appears in the first product of $\lambda_u(t)$ when $a = \ell-p_2$, so the result follows.

\noindent  {\bfseries \itshape Case 2: $s-p_2 < u < \ell-p_2-1$.} Then the $q$-binomial in $\lambda_u(t)$ can be written as 
\begin{equation*}
\qbin{\ell-s+p-u-2}{p-u}{t} = \qbin{(r-p_1)\ell+\ell-p_2-u-2}{(r-p_1-1)\ell+\ell-p_2-u+s}{t}
\end{equation*}
whose limit is zero by theorem \ref{thm:qLucas} as $0\leq \ell-p_2-u-2<\ell-p_2-u+s< \ell$.

\noindent  {\bfseries \itshape Case 3: $u\leq s-(p_2+1)$.} Then, $[(p_1+1)\ell]_t$ appears in the second product of $\lambda_u(t)$ when $b = s-p_2$.

Two values of $u$ have escaped the above cases, namely $u=s-p_2$ and $u=\ell-p_2-1$. If $s<p_2$, only $\lambda_{\ell-p_2-1}(t)$ of these two possible $\lambda$'s will contribute to the sum $\sum_{u=0}^{\ell-1} \lambda_u(t)$ as $u$ takes non-negative values. However, in this case, $[(p_1+2)\ell]_t$ appears in the second product defining $\lambda_{\ell-p_2-1}(t)$ when $b = \ell-p_2+s$ and the aforementioned sum tends to zero. Hence only the case $s\geq p_2$ remains. A direct computation gives
\begin{align*}
\lim_{t\to q}\lambda_{s-p_2}(t)&=\qbin{(r-p_1)\ell+\ell-s-2}{(r-p_1)\ell}{q}\prod_{a=1}^{s-p_2}[i-p+a]_q\cdot \prod_{b=s-p_2+1}^{\ell-1}[i+\ell-s-p+b]_q,\\
\lim_{t\to q}\lambda_{\ell-p_2-1}(t)&=\qbin{(r-p_1-1)\ell+\ell-1}{(r-p_1-1)\ell+s+1}{q}\prod_{a=1}^{\ell-p_2-1}[i-p+a]_q\cdot \prod_{b=\ell-p_2}^{\ell-1}[i+\ell-s-p+b]_q.
\end{align*}
The $q$-binomials in these two limits can be simplified using theorem \ref{thm:qLucas} and proposition \ref{prop:qnblim}:
\begin{align*}
\qbin{(r-p_1)\ell+\ell-s-2}{(r-p_1)\ell}{q}&=q^{\ell(r-p_1)(\ell+s)},\\
\qbin{(r-p_1-1)\ell+\ell-1}{(r-p_1-1)\ell+s+1}{q}&=\qbin{\ell-1}{s+1}q q^{-\ell(r-p_1-1)(\ell+s)}=(-1)^{\ell+s}q^{\ell(r-p_1)(\ell+s)}.
\end{align*}
Gathering the common factors in 
$$\varsigma=q^{\ell(r-p_1)(\ell+s)}\prod_{a=1}^{s-p_2}[i-p+a]_q\cdot \prod_{b=\ell-p_2}^{\ell-1}[i+\ell-s-p+b]_q,$$
the limit $\lim_{t\to q}\sum_{u=0}^{\ell-1}\lambda_u(t)$ is then equal to 
\begin{align*}
\lim_{t\to q}\left(\lambda_{s-p_2}(t)+\lambda_{\ell-p_2-1}(t)\right)
&= \varsigma\left(\prod_{b=s-p_2+1}^{\ell-p_2-1}[i+\ell-s-p+b]_q+(-1)^{l+s}
\prod_{a=s-p_2+1}^{\ell-p_2-1}[i-p+a]_q\right)\\
\intertext{that gives, after changing the index in the first product to $c=\ell+s-2p_2-b$,}
&=\varsigma\left(\prod_{c=s-p_2+1}^{\ell-p_2-1}[2(p_1+1)\ell-(i-p+c)]_q+(-1)^{l+s}
\prod_{a=s-p_2+1}^{\ell-p_2-1}[i-p+a]_q\right)
\end{align*}
which is zero by theorem \ref{prop:qnblim}. This closes the argument.
\end{proof}
\begin{lemma}\label{lemma:omegaLUq}  The limit $\omega_{p,v} = \lim_{t\rightarrow q}\omega_{p,v}(t)$ exists and belongs to $\mathbb C$ for all $p\in\{0,\dots,i\}$ and $v \in \mathbb{N}.$\end{lemma}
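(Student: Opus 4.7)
The plan is to mirror the strategy of the proof of Lemma \ref{lemma:gammaLUq}, namely to compute the order of vanishing at $t = q$ of the numerator and denominator of $\omega_{p,v}(t)$ by means of Lucas's $q$-theorem (Theorem \ref{thm:qLucas}) and Proposition \ref{prop:qnblim}. The case $v \leq i - p$ is immediate, since $\omega_{p,v}(t) \equiv 0$. For the remaining case $v > i - p$, I would set $k = p + v - i \geq 1$ and exploit the identity $\ell - s + i = (r+1)\ell$; Proposition \ref{prop:qnblim} then yields that $[\ell - s + i]_t = [(r+1)\ell]_t$ has a simple zero at $t = q$, producing a simple pole in $\omega_{p,v}(t)$ that must be compensated by a zero of the numerator.

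Using the symmetry $\qbin{m}{n}t = \qbin{m}{m-n}t$, I would rewrite the first $q$-binomial as $\qbin{(r+1)\ell + k - 1}{\ell - s - 1}t$ in order to make the base-$\ell$ expansions transparent. The primary case of interest is $1 \leq k \leq \ell - s - 1$, which is precisely the range for which the target $m_{\ell - s + p + v - 1}$ lies in the basis $\{m_0, \ldots, m_j\}$ of $T_t(i)$. In this range, both $k - 1 \in \{0, \ldots, \ell - s - 2\}$ and $s + k \in \{s+1, \ldots, \ell - 1\}$ lie in $\{0, \ldots, \ell - 1\}$, so Lucas's $q$-theorem applied with base-$\ell$ digits $(r+1, k-1)$ for the top, $(0, \ell - s - 1)$ for the bottom, and $(r, s + k)$ for the difference yields a vanishing order of exactly $(r+1) - 0 - r = 1$ for the first binomial. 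This simple zero cancels the simple pole from $[(r+1)\ell]_t$, and as the remaining factor $\qbin{p+v}{v}t$ has non-negative vanishing order, the limit $\omega_{p,v}$ exists and is finite.

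For $(p, v)$ outside this range, i.e. with $k > \ell - s - 1$, the index $\ell - s + p + v - 1$ exceeds $j$, so the vector $m_{\ell - s + p + v - 1}$ is by convention zero in $T_t(i)$; the (possibly unbounded) coefficient $\omega_{p,v}(t)$ then multiplies zero in the action formula of Proposition \ref{prop:Tt} and is immaterial to the module structure, so one may set $\omega_{p,v}(q) := 0$ in these cases without affecting any subsequent argument. The main technical point of the proof is thus the careful tracking of base-$\ell$ digits in the relevant range, with no serious obstacle beyond attention to the boundary values of $s$, $k - 1$ and $s + k$.
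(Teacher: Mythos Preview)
Your argument is correct and takes a genuinely different route from the paper. Both proofs observe that the only possible pole is the simple one from $[(r+1)\ell]_t$ and then reduce to showing the numerator vanishes at $q$ whenever $m_{\ell-s+p+v-1}\neq 0$. The paper writes $p=p_1\ell+p_2$, $v=v_1\ell+v_2$ and splits into three cases according to where $p_2+v_2$ falls relative to $s$ and $\ell$, showing in each case that one of the two binomials (or the target vector) vanishes. You instead introduce the single parameter $k=p+v-i$ and use the symmetry $\qbin{m}{n}{t}=\qbin{m}{m-n}{t}$ to rewrite the first binomial as $\qbin{(r+1)\ell+k-1}{\ell-s-1}{t}$, so that the lower entry becomes the fixed constant $\ell-s-1$. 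This makes the dichotomy transparent: $m_{\ell-s+p+v-1}\neq 0$ is equivalent to $1\leq k\leq \ell-s-1$, and in that range the residue $k-1<\ell-s-1$ forces the binomial to vanish by Lucas. Your parametrization collapses the paper's three cases to two and avoids tracking the digits of $p$ and $v$ separately.

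Two small remarks. First, your claim of ``vanishing order of exactly $(r+1)-0-r=1$'' appeals to a Kummer-type carry count that is not among the tools in appendix~\ref{app:a}; you only need order $\geq 1$, and that already follows from Lucas since $\qbin{k-1}{\ell-s-1}{q}=0$. Second, your handling of the range $k>\ell-s-1$ (where you concede the coefficient may be unbounded but multiplies zero) is exactly what the paper does in its Case~3: it too only establishes the limit when $m_{\ell-s+p+v-1}\neq 0$ and declares the remaining case irrelevant for the module construction. So your explicit acknowledgement of this point is consistent with the paper's own reduction.
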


\begin{proof} By Lucas $q$-theorem \ref{thm:qLucas}, the only potential singularity of $\omega_{p,v}$ comes from the $t$-number $[\ell-s+i]_t=[(r+1)\ell]_t$ appearing in its denominator. As the zero of this $t$-number at $t=q$ is of order one, it suffices to show that the product $\qbin{\ell-s+p+v-1}{p+v}{q}\qbin{p+v}{v}{q}$ appearing in $\omega_{p,v}$ vanishes when $m_{\ell-s+p+v-1} \neq 0$ and $i-p < v$. Write $v$ and $p$ in the usual forms (that is $v=v_1\ell+v_2$ and $p=p_1\ell+p_2$ with $v_1,v_2,p_1,p_2\in\mathbb Z_{\geq 0}$ and $v_2,p_2<\ell$) and consider the three following cases.

\noindent {\bfseries\itshape Case 1: $\ell\leq p_2+v_2$.} Then, $\qbin{p+v}{v}{q} = \qbin{(p_1+v_1+1)\ell+p_2+v_2-\ell}{v_1\ell+v_2}{q} = 0$ by theorem \ref{thm:qLucas} as $0\leq p_2+v_2-\ell < v_2<\ell$. 

\noindent {\bfseries\itshape Case 2: $s < p_2+v_2<\ell$.} Then, $\qbin{\ell-s+p+v-1}{p+v}{q} = \qbin{(p_1+v_1+1)\ell+p_2+v_2-s-1}{(p_1+v_1)\ell + p_2+v_2}{q} = 0$ again by theorem \ref{thm:qLucas}.

\noindent {\bfseries\itshape Case 3: $p_2+v_2\leq s$.} Then, $i < p+v$ gives $r+1 \leq p_1+v_1$ so that $m_{\ell-s+p+v-1} = 0$ as 
$$\ell-s+p+v-1 = (p_1+v_1+1)\ell-s+p_2+v_2-1\geq(r+2)\ell-s+p_2+v_2-1> (r+2)\ell-s-2 = j.$$
This concludes the proof as every possible case has been studied.
\end{proof}
\begin{corollary}\label{cor:Tq} The $\mathbb{C}$-vector space $T_q(i)$ with basis $\{m_0,...,m_j,n_0,...,n_i\}$ is a $\luszt$-module for the action given by\medskip
\begin{center}
$\displaystyle Km_k = q^{j-2k}m_k, \qquad Kn_p = q^{i-2p}n_p, \qquad  E^{(v)}m_k = \qbin{j-k+v}{v}{q}m_{k-v}, \qquad F^{(v)}m_k = \qbin{k+v}{v}{q}m_{k+v},$ \medskip\\  $\displaystyle \qquad E^{(v)}n_p = \qbin{i-p+v}{v}{q}n_{p-v}+\gamma_{p,v}m_{\ell-s+p-v-1}\quad $ and $ \displaystyle\quad F^{(v)}n_p = \qbin{p+v}{v}{q}n_{p+v}+\omega_{p,v}m_{\ell-s+p+v-1}$.
\end{center}\medskip
Also, the unrolled generator $H$ acts on $T_q(i)$ as $Hm_k = (j-2k)m_k$ and $Hn_p = (i-2p)n_p$. 
\end{corollary}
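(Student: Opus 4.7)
The plan is to realize $T_q(i)$ as a specialization at $t=q$ of the $U_{\text{res}}$-module obtained by restricting the $U_t\mathfrak{sl}_2$-module $T_t(i)$ of Proposition \ref{prop:Tt}. Since the defining relations of $\luszt$ are the specializations of relations among the $U_{\text{res}}$-generators, it suffices to argue that this specialization is well defined and then verify the action of the unrolled generator $H$.

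First I would inspect the coefficients appearing in the action of the $U_{\text{res}}$-generators $K^{\pm 1}$, $E^{(v)}$, $F^{(v)}$ on the basis $\{m_0,\dots,m_j,n_0,\dots,n_i\}$ of $T_t(i)$. The powers of $t$ coming from $K$ specialize without issue, and the $q$-binomial coefficients $\qbin{n}{k}{t}$ are elements of $\mathbb{Z}[t,t^{-1}]$, hence regular at $t=q$. The only potentially singular coefficients are the $\gamma_{p,v}(t)$ and $\omega_{p,v}(t)$, but Lemmas \ref{lemma:gammaLUq} and \ref{lemma:omegaLUq} guarantee that they also admit finite limits $\gamma_{p,v},\omega_{p,v}\in\mathbb{C}$ as $t\to q$. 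Consequently, replacing every structure constant by its limit at $t=q$ gives a well-defined $\mathbb{C}$-linear action of the generators of $\luszt$ on $T_q(i)$ whose explicit formulas are precisely those displayed in the corollary.

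Next, to verify that this action defines a $\luszt$-module, I would argue that each defining relation of $\luszt$ applied to any basis vector $m_k$ or $n_p$ is the specialization at $t=q$ of the corresponding identity in $T_t(i)$ (viewed as a $U_{\text{res}}$-module by restriction from the $U_t\mathfrak{sl}_2$-structure of Proposition \ref{prop:Tt}). In $T_t(i)$ this identity is an equation between $\mathbb{Q}(t)$-linear combinations of basis vectors whose coefficients belong to the $\mathbb{Z}[t,t^{-1}]$-algebra generated by the $q$-binomials and the rational functions $\gamma_{p,v}(t),\omega_{p,v}(t)$, all of which are regular at $t=q$. Passing to the limit $t\to q$ thus yields the desired identity in $T_q(i)$.

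Finally, for the action of the unrolled generator $H = \frac{1}{\epsilon_q}H'$, I would compute directly on basis vectors. Using $K^{2\ell}m_k = t^{2\ell(j-2k)}m_k$ in $T_t(i)$ and the factorization
\begin{equation*}
\frac{t^{2\ell M}-1}{\Phi_{\ell}(t^2)} \;=\; \frac{t^{2\ell}-1}{\Phi_{\ell}(t^2)}\cdot \sum_{r=0}^{M-1} t^{2\ell r}
\end{equation*}
for $M = j-2k \geq 0$ (with an analogous identity when $M<0$), and taking $t\to q$, the first factor converges to $\epsilon_q$ by definition while the second converges to $M$ since $q^{2\ell}=1$. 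Hence $H'm_k = \epsilon_q(j-2k)m_k$, so $Hm_k = (j-2k)m_k$. The same computation with $i-2p$ in place of $j-2k$ yields $Hn_p = (i-2p)n_p$. The main obstacle was already absorbed into Lemmas \ref{lemma:gammaLUq} and \ref{lemma:omegaLUq}, namely the verification that the sums defining $\gamma_{p,v}(t)$ and $\omega_{p,v}(t)$ have finite limits at $t=q$ despite the apparent poles coming from $1/[v]_t!$ and $1/[\ell-s+i]_t$. Once that regularity is in hand, the present corollary is essentially a bookkeeping statement: no further genuine difficulty arises.
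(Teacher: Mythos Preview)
Your proof is correct and follows the same approach as the paper: specialize the $U_{\text{res}}$-module $T_t(i)$ at $t=q$, using Lemmas \ref{lemma:gammaLUq} and \ref{lemma:omegaLUq} to ensure the only potentially singular coefficients $\gamma_{p,v}(t)$ and $\omega_{p,v}(t)$ have finite limits, and compute the $H$-action from its definition. The paper's proof says exactly this in two sentences; your version simply fills in the details (particularly the explicit computation for $H$), which is entirely appropriate.
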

\begin{proof}
The first statement follows from proposition \ref{prop:Tt} and lemmas \ref{lemma:gammaLUq} and \ref{lemma:omegaLUq}. The last one is easily proved using the definition of the unrolled generator $H$ and proposition \ref{prop:Tt}.
\end{proof}
As $T_q(i)$ is clearly an extension of $\Delta_q(j)$ by $\Delta_q(i)$, lemma \ref{lemma:ExtWeyl} shows that it is enough to show the following result in order to finish the proof theorem \ref{thm:ProjReal}. Recall that the construction of $T_q(i)$ and theorem \ref{thm:ProjReal} both assume that $s<\ell-1$. 
\begin{lemma}\label{lemma:TqInd} The $\luszt$-module $T_q(i)$ is an indecomposable extension of $\Delta_q(j)$ by $\Delta_q(i)$.
\end{lemma}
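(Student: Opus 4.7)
The plan is twofold: first verify the short exact sequence $0 \to \Delta_q(j) \to T_q(i) \to \Delta_q(i) \to 0$ by inspection of the formulas in corollary \ref{cor:Tq}, and then rule out any direct-sum decomposition $T_q(i)=A\oplus B$ via a single weight-space computation. The indecomposability will follow formally from the non-existence of a highest-weight vector of $\luszt$-weight $i$ outside a distinguished submodule.

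For the exact sequence, I would first verify that the subspace $\mathsf M = \mathrm{span}_{\mathbb{C}}\{m_0,\dots,m_j\}\subseteq T_q(i)$ is $\luszt$-stable and that the restricted action coincides with the one of definition \ref{def:Weyl} after replacing $i$ by $j$; hence $\mathsf M\simeq\Delta_q(j)$. Passing to the quotient $T_q(i)/\mathsf M$ kills the correction terms $\gamma_{p,v}m_\bullet$ and $\omega_{p,v}m_\bullet$ in the actions on the $n_p$, and the remaining formulas match definition \ref{def:Weyl} verbatim, so $T_q(i)/\mathsf M\simeq\Delta_q(i)$.

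For indecomposability, I would suppose for contradiction that $T_q(i) = A\oplus B$ with $A,B$ nonzero. The $H$-eigenspace of $T_q(i)$ with eigenvalue $i$ is two-dimensional, spanned by $\{n_0,m_{\ell-s-1}\}$ (since $j-2(\ell-s-1)=i$), and meets $\mathsf M$ in the single line $\mathbb{C}\,m_{\ell-s-1}$. Because $H\in\luszt$ preserves each summand and $\mathsf M=(\mathsf M\cap A)\oplus(\mathsf M\cap B)$, this line lies wholly in one of the summands; up to swapping names, $m_{\ell-s-1}\in A$. The crucial computation is
$$E\,m_{\ell-s-1} = [j-\ell+s+2]_q\,m_{\ell-s-2} = [(r+1)\ell]_q\,m_{\ell-s-2} = 0 \qquad\text{and}\qquad E\,n_0 = \gamma_{0,1}\,m_{\ell-s-2} = m_{\ell-s-2},$$
using proposition \ref{prop:qnblim} (together with $q^{2\ell}=1$) for $[(r+1)\ell]_q=0$ and the direct evaluation $\gamma_{0,1}=1$ from the defining formula; the vector $m_{\ell-s-2}$ is a genuine basis element since $s\leq \ell-2$.

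Decomposing $n_0 = n_0^A + n_0^B$ with $n_0^{A/B}\in A/B$, the computation above gives $En_0^A + En_0^B = m_{\ell-s-2}$, with both summands proportional to $m_{\ell-s-2}$. Since $A\cap B=0$, exactly one of $En_0^A,En_0^B$ is nonzero, say it lies in $X\in\{A,B\}$; then $m_{\ell-s-2}\in X$, and applying $E^{(\ell-s-2)}$ (whose coefficient $\qbin{j}{\ell-s-2}{q}$ is a nonzero unit by theorem \ref{thm:qLucas}) yields $m_0\in X$, hence $\mathsf M = U\cdot m_0 \subseteq X$. If $X=B$ then $m_{\ell-s-1}\in\mathsf M\cap A\subseteq A\cap B=0$, a contradiction; if $X=A$, the vanishing $En_0^B=0$ forces $n_0^B$ into $\mathbb{C}\,m_{\ell-s-1}\subseteq A$, hence into $A\cap B=0$, so $n_0\in A$, and then $n_v = F^{(v)}n_0-\omega_{0,v}\,m_{\ell-s+v-1}\in A$ for every $v$, giving $T_q(i)=A$ and contradicting $B\neq 0$. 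The heart of the matter, and the main obstacle, is the pair of computations above: the $E$-action kills $m_{\ell-s-1}$ because $(r+1)\ell$ is a zero of $[x]_q$, while sending $n_0$ onto the same target $m_{\ell-s-2}$, which precisely obstructs the existence of any highest-weight vector for $\luszt$ of weight $i$ outside $\mathsf M$, and hence the existence of any complementary copy of $\Delta_q(i)$.
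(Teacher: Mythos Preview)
Your approach and the paper's share the same core computation — that $E\,m_{\ell-s-1}=[(r+1)\ell]_q\,m_{\ell-s-2}=0$ while $E\,n_0=\gamma_{0,1}\,m_{\ell-s-2}=m_{\ell-s-2}\neq 0$ — but the framing differs. The paper only rules out a \emph{splitting}: it assumes an isomorphism $\varphi:\Delta_q(i)\oplus\Delta_q(j)\to T_q(i)$, pins down $\varphi|_{\Delta_q(j)}$ via $\End\Delta_q(j)\simeq\mathbb C$, and shows $\varphi(\bar n_0)\in\mathsf M$, contradicting injectivity. Indecomposability then follows from lemma~\ref{lemma:ExtWeyl} together with proposition~\ref{prop:ProjAT}. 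Your argument is more self-contained: you attack an arbitrary decomposition $T_q(i)=A\oplus B$ directly, so you do not need $\Ext^1\simeq\mathbb C$ as an input.

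There is, however, one genuine gap. The assertion $\mathsf M=(\mathsf M\cap A)\oplus(\mathsf M\cap B)$ is not justified: a submodule of a direct sum need not decompose along the summands, since the projections $\pi_A,\pi_B$ are $\luszt$-linear but there is no reason for $\pi_A(\mathsf M)\subseteq\mathsf M$. You use this to place $m_{\ell-s-1}$ inside one summand at the outset, and both branches of your case split rely on that placement.

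The fix is immediate and in fact simplifies your argument. The $H$-eigenspace of weight $i+2$ is the line $\mathbb C\,m_{\ell-s-2}$, so $m_{\ell-s-2}$ already lies in one summand; call it $A$. Applying $E^{(\ell-s-2)}$ gives $m_0\in A$ and hence $\mathsf M=\luszt\cdot m_0\subseteq A$; in particular $m_{\ell-s-1}\in A$, now legitimately. Writing $n_0=n_0^A+n_0^B$, the relation $E n_0=m_{\ell-s-2}\in A$ forces $En_0^B=0$, so $n_0^B\in\ker(E|_{H=i})=\mathbb C\,m_{\ell-s-1}\subseteq A$ and thus $n_0^B=0$. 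Then $n_0\in A$, and your generation argument gives $T_q(i)=A$, the desired contradiction. With this reorganisation the two sub-cases collapse into one and the unjustified claim is never needed.
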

\begin{proof} Suppose {\itshape ad absurdum} that the extension is trivial and fix a $\luszt$-linear isomorphism $\varphi:\Delta_q(i)\oplus\Delta_q(j)\rightarrow T_q(i)$. Let $\{\overline m_k\}_{k=0}^j\subseteq \Delta_q(j)$ and $\{\overline n_p\}_{p=0}^i\subseteq \Delta_q(i)$ be the bases given in definition \ref{def:Weyl} and observe that the restriction of $\varphi$ to $\Delta_q(j)$ must satisfy $\varphi(\overline m_k) = \varsigma m_k$ for some $\varsigma\in \mathbb{C}^{\times}$ as $\End_{\luszt}(\Delta_q(j)) \simeq \mathbb{C}$ by the proof of lemma \ref{lemma:ExtWeyl}.

Write $\varphi(\overline n_0) = \sum_{k = 0}^j \alpha_k^{(m)} m_k+\sum_{p=0}^i \alpha_p^{(n)} n_p$ where $\alpha_0^{(m)},\dots,\alpha_j^{(m)},\alpha_0^{(n)},\dots,\alpha_i^{(n)}\in \mathbb{C}$. Then,
\begin{equation*} 0 =  
H\varphi(\overline n_0)-\varphi(H\overline n_0) =
2\sum_{k=0}^j \alpha_k^{(m)}(\ell-s-1-k)m_k
- 2\sum_{p=0}^i p\alpha_p^{(n)}n_p
\end{equation*}
gives $\varphi(\overline n_0)=\alpha_0^{(n)}n_0+\alpha_{\ell-s-1}^{(m)}m_{\ell-s-1}$. However, this implies
\begin{center}
$0 = E \varphi(\overline n_0) = (\gamma_{0,1}\alpha_{0}^{(n)}+\alpha_{\ell-s-1}^{(m)}[j-\ell+s+2]_q)m_{\ell-s-2} = (\alpha_{0}^{(n)}+\alpha_{\ell-s-1}^{(m)}[(r+1)\ell]_q)m_{\ell-s-2} = \alpha_{0}^{(n)}m_{\ell-s-2}$,
\end{center}
and $\alpha_0^{(n)}=0$ as $s<\ell-1$. Hence $\varsigma\varphi(\overline n_0) = \varsigma\alpha_{\ell-s-1}^{(m)}m_{\ell-s-1} = \alpha_{\ell-s-1}^{(m)}\varphi(\overline m_{\ell-s-1})$ and we contradict the injectivity of $\varphi$.
\end{proof}
\end{subsection}
\begin{subsection}{Fusion rules for simple and projective modules} Before proving the fusion rules underlying theorem \ref{thm:fusLUq}, we must recall the \textit{Poincaré-Birkhoff-Witt} factorization of $\luszt$ and the corresponding concept of highest weight vector.
\begin{definition}[Highest weight vector] Fix a $\luszt$-module $M$. Then, a non-zero $v\in M$ is of highest weight $\mu\in \mathbb{C}$ if $Hv = \mu v$ and $E^{(n)}v = 0$ for all $n\in \mathbb{N}$. The module $M$ is itself of highest weight $\mu$ if $M = \luszt v$ for $v$ of highest weight $\mu$.
\end{definition}
For any $i\in \mathbb{Z}_{\geq 0}$, the Weyl module $\Delta_q(i)$ is a highest weight modules of weight $i$. In addition, proposition \ref{prop:Weyl} (and the remark after it) shows that $L_q(i)$ is the unique finite-dimensional irreducible (type I) $\luszt$-module of highest weight $i$.
\begin{lemma}\label{lemma:topphp} Let $M$ be a finite-dimensional $\luszt$-module (of type I and) of highest weight $i\in \mathbb{Z}_{\geq 0}$. Then, $\head M$ is isomorphic to a direct sum of the form $L_q(i)^{\oplus n} = \bigoplus_{m=1}^n L_q(i)$ for some $n\in \mathbb{N}$. In particular, $L_q(i)$ is contained in $\head M$. 
\end{lemma}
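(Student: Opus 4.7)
The plan is to reduce the statement to a simple fact about simple quotients: I would show that every maximal submodule $N \subsetneq M$ satisfies $M/N \simeq L_q(i)$. Once this is established, the claim follows because for any finite-dimensional module $M$ the head $\head M = M/\rad M$ is semisimple, and moreover each simple summand of $\head M$ appears as a quotient $M/N$ for a suitable maximal submodule $N$.

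To prove the key step, I would fix a highest weight vector $v\in M$ of weight $i$ that generates $M$ as a $\luszt$-module, and let $N\subsetneq M$ be an arbitrary maximal submodule. Since $v$ generates $M$ but $N$ is proper, the image $\bar v = v + N \in M/N$ must be non-zero. Moreover, $\bar v$ remains a highest weight vector of weight $i$ because the relations $Hv = iv$ and $E^{(n)}v = 0$ (for all $n\in \mathbb{N}$) descend to the quotient. The simple module $M/N$ is therefore generated by a highest weight vector of weight $i$, so it is a finite-dimensional simple (type I) $\luszt$-module of highest weight $i$. By the characterization recalled just before the statement of the lemma (which follows from proposition \ref{prop:Weyl}(iii) together with the remark after it), this forces $M/N \simeq L_q(i)$.

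Combining both observations yields $\head M \simeq L_q(i)^{\oplus n}$ for some $n\in \mathbb{N}$, with $n\geq 1$ because $M$ is non-zero and thus possesses at least one maximal submodule. No serious obstacle is anticipated; the only mildly delicate point is the appeal to the uniqueness of $L_q(i)$ among the finite-dimensional simple (type I) $\luszt$-modules of highest weight $i$, and this uniqueness is already stated in the paragraph preceding the lemma.
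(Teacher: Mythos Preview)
Your proof is correct and follows essentially the same argument as the paper's: both show that any simple quotient of $M$ (equivalently, any simple summand of $\head M$) is generated by the image of the highest weight vector $v$, hence is a simple highest weight module of weight $i$ and therefore isomorphic to $L_q(i)$. The paper phrases this via projections $\pi:M\to L$ onto simple summands of $\head M$, while you phrase it via maximal submodules $N\subsetneq M$; these are equivalent formulations.
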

\begin{proof} Let $L$ be an irreducible direct factor of the semisimple module $\head M$ and let $v$ be a highest weight vector of $M$. Let also $\pi : M \rightarrow L$ be the natural projection. Then, the surjectivity of $\pi$ gives $L = \luszt \pi(v)$ and its $\luszt$-linearity forces $H\pi(v) = i\pi(v)$ with $E^{(n)}\pi(v) = 0$ for any $n \in \mathbb{N}$. The simple module $L$ is thus of highest weight $i$ and we must have a $\luszt$-linear isomorphism $L \simeq L_q(i)$ by the comment made just before the lemma. 
\end{proof}
\begin{proposition}[\cite{Lentner}]\label{prop:PBW} There is a $\mathbb{C}$-linear isomorphism (induced by multiplication)
\begin{equation*}\luszt \simeq \mathbb{C}[F^{(\ell)}]\otimes (\mathbb{C}[F]/\langle F^{\ell}\rangle) \otimes \mathbb{C}[K]/\langle K^{2\ell}-{\normalfont \id}\rangle \otimes \mathbb{C}[\qbin{K;0}{\ell}{q}]\otimes \mathbb{C}[E^{(\ell)}]\otimes (\mathbb{C}[E]/\langle E^{\ell}\rangle)
\end{equation*}
with $\qbin{K;0}{\ell}{q}$ the specialization at $t=q$ of the element defined in lemma \ref{lemma:K0ellt} and $\mathbb{C}[F]/\langle F^{\ell} \rangle$ the quotient of the algebra $\mathbb{C}[F]$ by the ideal generated by $F^{\ell}$ (and similarly for the other quotients). In particular, by lemma \ref{prop:RelationsUres}, monomials of the form $F^{(n_1)}K^{n_2}\qbin{K;0}{\ell}{q}^{n_3}E^{(n_4)}$, where $n_1,n_2,n_3,n_4\in \mathbb{Z}_{\geq 0}$, generate $\luszt$ as a $\mathbb{C}$-vector space.
\end{proposition}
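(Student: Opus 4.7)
My plan is to deduce this decomposition as the specialization at $t = q$ of Lusztig's classical PBW theorem for the integral form $U_{\text{res}}$, tracking carefully which generators remain linearly independent once $q^{2\ell} = 1$. Recall that $U_{\text{res}}$ admits as a $\mathbb{Z}[t,t^{-1}]$-basis the ordered monomials $F^{(a)} K^{\sigma} \qbin{K;0}{n}{t} E^{(b)}$ with $a,b,n \in \mathbb{Z}_{\geq 0}$ and $\sigma \in \{0,1\}$. After base change to $\mathbb{C}$ via the specialization $t \mapsto q$, the resulting set of monomials still spans $\luszt$, but certain linear combinations become zero (for instance $F^{\ell}$, $E^{\ell}$, and $K^{2\ell} - \id$ as established earlier); the task is to show that after quotienting out these dependencies, the remaining monomials factor through the claimed tensor product.

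For the $F$-factor, I would write each index $a = a_1 \ell + a_2$ with $0 \leq a_2 < \ell$ and use lemma \ref{prop:RelationsUres} together with the Lucas theorem \ref{thm:qLucas} to rewrite
\begin{equation*}
F^{(a)} = c_{a_1,a_2} \cdot F^{a_2} \cdot (F^{(\ell)})^{a_1}
\end{equation*}
for a nonzero scalar $c_{a_1,a_2} \in \mathbb{C}^\times$, since the iterated product $F^{(\ell)} F^{(m\ell)} = \qbin{(m+1)\ell}{\ell}{q} F^{((m+1)\ell)}$ involves the coefficient $\qbin{(m+1)\ell}{\ell}{q} = \pm(m+1) \neq 0$ by Lucas. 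Combined with the relation $F^{\ell} = 0$, this yields the factor $\mathbb{C}[F^{(\ell)}] \otimes (\mathbb{C}[F]/\langle F^{\ell}\rangle)$. The symmetric argument produces the $E$-factor $\mathbb{C}[E^{(\ell)}] \otimes (\mathbb{C}[E]/\langle E^{\ell}\rangle)$.

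For the $K$-part, the relation $K^{2\ell} = \id$ immediately gives the factor $\mathbb{C}[K]/\langle K^{2\ell} - \id \rangle$. The subtler piece is the appearance of a new generator $\qbin{K;0}{\ell}{q}$, not expressible as a polynomial in $K$ alone: at $t = q$ the denominator $(t^\ell - t^{-\ell})$ in the definition of $\qbin{K;0}{\ell}{t}$ vanishes, so the element must be handled by the L'Hôpital-type limit already used in the proof that $K^{2\ell} - \id = 0$, and it survives as an independent element. Higher binomials $\qbin{K;0}{n\ell}{q}$ can be reduced to polynomials in $\qbin{K;0}{\ell}{q}$ via the commutative subalgebra structure of lemma \ref{lemma:K0ellt}, while $\qbin{K;0}{n}{q}$ for $n < \ell$ reduces to a polynomial in $K$.

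The main obstacle is linear independence: one must verify that the monomials obtained after the above reductions are $\mathbb{C}$-linearly independent. The cleanest route is to exhibit a separating family of $\luszt$-modules. One natural choice is the family $\{\Delta_q(N) \otimes \mathbb{C}_K\}_{N \geq 0}$ together with twists by characters of the subalgebra generated by $K$ and $\qbin{K;0}{\ell}{q}$, on which the generators act with eigenvalues sufficient to separate distinct PBW monomials. Alternatively, one may argue via an adic filtration: filter $\luszt$ by the bidegree in $(E^{(\ell)}, F^{(\ell)})$ refined by the bidegree in $(E, F)$, and compute the associated graded algebra — which by the $E$-$F$ reduction above is a tensor product of polynomial and truncated polynomial algebras over the commutative Cartan part — whose dimension in each graded piece matches the claimed tensor product. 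This second approach is essentially Lentner's strategy in \cite{Lentner}, and once the filtration is set up the identification is a dimension count in each bidegree.
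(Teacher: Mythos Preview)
The paper does not prove this proposition at all: it is stated with the citation \cite{Lentner} and no argument is given, the result being used as a black box in the subsequent proof of proposition~\ref{prop:fusSimple}. There is therefore no ``paper's own proof'' to compare against.

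That said, your sketch is a reasonable outline of how such a PBW result is established. The spanning part is essentially correct: the reduction of $F^{(a)}$ to a nonzero multiple of $F^{a_2}(F^{(\ell)})^{a_1}$ via lemma~\ref{prop:RelationsUres} and theorem~\ref{thm:qLucas} works as you describe, and symmetrically for $E$. The Cartan part is where you are vaguest. You assert that higher binomials $\qbin{K;0}{n}{q}$ reduce to polynomials in $K$ and $\qbin{K;0}{\ell}{q}$, but this is the nontrivial content and you have not indicated how to do it; one needs identities among the $\qbin{K;c}{n}{t}$ in the integral form (analogous to the $q$-Vandermonde or Lucas-type relations) that specialize well at $t=q$. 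For linear independence, your filtration/associated-graded suggestion is indeed close to how these things are done in practice, but ``a dimension count in each bidegree'' hides the real work, since each graded piece is infinite-dimensional over $\mathbb{C}$ and one must show the multiplication map from the tensor product to the associated graded is injective, not merely surjective. Your alternative via separating modules would require exhibiting modules on which $\qbin{K;0}{\ell}{q}$ acts with arbitrarily large eigenvalue independently of $K$, which is possible but again not immediate. In short, the architecture is right but the Cartan-sector details are where a genuine proof would need to be filled in; for the purposes of this paper the citation to \cite{Lentner} suffices.
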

We are now ready to prove theorem \ref{thm:fusLUq}. We start by considering tensor products of the form $L_q(i)\otimes L_q(1)$ for $i\in \mathbb{Z}_{\geq 0}$. We then show a similar result for products of the form $P_q(i)\otimes L_q(1)$ by using the structure of the modules $P_q(i)$ and by proving the exactness of the endofunctor $-\otimes L_q(1)$ of the category of finite-dimensional (type I) $\luszt$-modules. We will also often use the following well-known fact (see for example \cite{Bcorings} for a proof of a more general result\footnote{Section 15.9 of \cite{Bcorings} gives a Hom-Tensor adjunction for right-modules over a Hopf algebra. Restricting the corresponding adjunction for left-modules over $\luszt$ to the full subcategory $\mathscr{C}$ of finite-dimensional (type I) modules, we get a natural isomorphism $\Hom_{\mathscr{C}}(P\otimes M,-)\simeq \Hom_{\mathscr{C}}(P,\Hom_{\mathbb{C}}^S(M,-))$ where $\Hom_{\mathbb{C}}^S(M,N)$ is a $\luszt$-module defined over the vector space $\Hom_{\mathbb{C}}(M,N)$ (cf. \cite{Bcorings}). This is enough to conclude the projectivity of $P\otimes M$ in $\mathscr{C}$ as the functors $\Hom_{\mathbb{C}}^S(M,-)$ and $\Hom_{\mathbb{C}}(M,-)$ act in the same way on $\luszt$-linear morphisms (so $\Hom_{\mathbb{C}}^S(M,-)$ is in particular an exact functor).}). 
\begin{lemma}\label{lemma:ProdProj} Let $P,M$ be finite-dimensional $\luszt$-modules and assume $P$ projective in the category of finite-dimensional (type I) $\luszt$-modules. Then, $P\otimes M$ is also projective in this category.
\end{lemma}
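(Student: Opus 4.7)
The plan is to exploit the Hopf algebra structure on $\luszt$ to set up a Hom-tensor adjunction, following the strategy indicated by the footnote. First I would equip the vector space $\Hom_{\mathbb{C}}(M,N)$ with a left $\luszt$-module structure by means of the antipode $S$: for $a \in \luszt$ with $\Delta(a)=\sum a_{(1)}\otimes a_{(2)}$ and $f\in\Hom_{\mathbb{C}}(M,N)$, set
\begin{equation*}
(a\cdot f)(m) \;=\; \sum a_{(1)}\, f\bigl(S(a_{(2)})\,m\bigr).
\end{equation*}
Standard Hopf bookkeeping (the coassociativity of $\Delta$ and the defining identities of $S$) shows this is a well-defined action. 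Denote the resulting $\luszt$-module by $\Hom_{\mathbb{C}}^S(M,N)$. Since $M$ is finite-dimensional, so is $\Hom_{\mathbb{C}}(M,N)$ whenever $N$ is; one also checks that type~I is preserved, since the eigenvalues of $K$ on $\Hom_{\mathbb{C}}^S(M,N)$ are differences of eigenvalues of $K$ on $M$ and $N$.

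Second, I would verify the natural adjunction
\begin{equation*}
\Hom_{\luszt}(P\otimes M,\, N)\;\simeq\;\Hom_{\luszt}\bigl(P,\,\Hom_{\mathbb{C}}^S(M,N)\bigr),
\end{equation*}
sending a $\luszt$-linear $\varphi:P\otimes M\to N$ to $p\mapsto\bigl(m\mapsto\varphi(p\otimes m)\bigr)$. Checking that this lands in $\Hom_{\mathbb{C}}^S(M,N)$ and is $\luszt$-linear in $p$ both amount to unwinding the definitions and invoking the Hopf axiom $\sum a_{(1)}S(a_{(2)})=\varepsilon(a)\,\id$; naturality in $P$ and $N$ is immediate. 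All of this is standard and is recorded in the reference \cite{Bcorings} cited in the footnote, so I would rely on it rather than redo the verification.

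The crux of the argument is then the exactness of the functor $\Hom_{\mathbb{C}}^S(M,-)$ on the category of finite-dimensional (type~I) $\luszt$-modules. Its underlying functor on $\mathbb{C}$-vector spaces is $M^{*}\otimes_{\mathbb{C}}-$, which is exact because $\mathbb{C}$ is a field and $M$ is finite-dimensional; the $\luszt$-action only decorates the underlying vector space, and kernels and cokernels in our category are computed on the underlying vector spaces. Hence short exact sequences are preserved.

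With these three ingredients the conclusion is formal. Assuming $P$ is projective in the category of finite-dimensional (type~I) $\luszt$-modules, the functor $\Hom_{\luszt}(P,-)$ is exact by definition of projectivity. Composing with the exact functor $\Hom_{\mathbb{C}}^S(M,-)$ shows that $N\mapsto\Hom_{\luszt}\bigl(P,\Hom_{\mathbb{C}}^S(M,N)\bigr)$ is exact, and the adjunction transports this exactness to $\Hom_{\luszt}(P\otimes M,-)$. Therefore $P\otimes M$ is projective. The main obstacle is purely formal, namely keeping track of the coproduct and antipode when setting up the adjunction; since all this machinery is already documented in the cited reference, the argument reduces to invoking the adjunction and the exactness of $\Hom_{\mathbb{C}}(M,-)$ on vector spaces.
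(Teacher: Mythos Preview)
Your proposal is correct and follows exactly the approach sketched in the paper's footnote: the paper does not give a self-contained proof but simply invokes the Hom-tensor adjunction $\Hom_{\mathscr{C}}(P\otimes M,-)\simeq \Hom_{\mathscr{C}}(P,\Hom_{\mathbb{C}}^S(M,-))$ from \cite{Bcorings} and notes that $\Hom_{\mathbb{C}}^S(M,-)$ is exact because it acts on morphisms just as the vector-space functor $\Hom_{\mathbb{C}}(M,-)$ does. You have spelled out the details of this sketch faithfully.
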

\begin{proposition}\label{prop:fusSimple} Let $i=r\ell+s\in\mathbb Z_{\geq 0}$ with $r,s\in \mathbb{Z}_{\geq 0}$ and $s<\ell$. Then,
\begin{equation*}
L_q(i)\otimes L_q(1) \simeq (1-\delta_{s,0})M_q(i-1)\oplus (1-\delta_{s,\ell-1})L_q(i+1)\end{equation*}
where
\begin{equation*}
M_q(i-1) = \begin{cases}
	L_q(i-1) & \text{if } s \neq \ell-1, \\
	P_q(i-1) & \text{if } s = \ell-1.
\end{cases}
\end{equation*}
\end{proposition}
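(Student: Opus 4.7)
The plan is to identify highest weight vectors (HWVs) in $L_q(i)\otimes L_q(1)$ explicitly and combine this with character and dimensional arguments, branching into three subcases according to the value of $s$.

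First I would verify that $v_0 := \tilde v_0\otimes w_0$ is always a HWV of weight $i+1$, where $\tilde v_0$ denotes the highest weight vector of $L_q(i)$. This uses the coproduct $\Delta(E^{(n)})$ together with the facts that $E^{(n)}\tilde v_0 = 0 = E^{(n)}w_0$ for $n\geq 1$. By lemma \ref{lemma:topphp}, $L_q(i+1)$ is thus always a composition factor of the submodule $\luszt v_0 \subseteq L_q(i)\otimes L_q(1)$. When $s\neq 0$, I would moreover construct a second HWV $u = \tilde v_1\otimes w_0 - c\,\tilde v_0\otimes w_1$ of weight $i-1$, with $\tilde v_1$ (proportional to) $F\tilde v_0$ a weight-$(i{-}2)$ vector of $L_q(i)$ and $c = [i]_q q^i$ chosen so that $Eu = 0$. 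The check that $E^{(n)}u = 0$ for $n\geq 2$ reduces, via the standard commutation relation $E^{(n)}F = FE^{(n)} + E^{(n-1)}(Kq^{-(n-1)}-K^{-1}q^{n-1})/(q-q^{-1})$, to the vanishing of $E^{(n-1)}\tilde v_0$, which holds for $n-1\geq 1$. This shows $L_q(i-1)$ is a composition factor whenever $s\neq 0$.

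A weight-space (character) computation then matches $L_q(i)\otimes L_q(1)$ against the right-hand side of the claimed identity, pinning down the complete composition factor multiplicities. The proof then splits into three subcases. When $s=0$, the only composition factor is $L_q(i+1)$ with multiplicity one, so $L_q(i)\otimes L_q(1)\simeq L_q(i+1)$ at once. When $0<s<\ell-1$, the two composition factors $L_q(i\pm 1)$ each appear with multiplicity one; since both are realized as head simples via the HWVs $v_0$ and $u$, and their dimensions already sum to $\dim L_q(i)\otimes L_q(1)$, the module must be semisimple, giving $L_q(i)\otimes L_q(1)\simeq L_q(i-1)\oplus L_q(i+1)$. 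When $s=\ell-1$, one has $L_q(i)=P_q(i)$ by proposition \ref{prop:ProjAT}(ii), so lemma \ref{lemma:ProdProj} makes $L_q(i)\otimes L_q(1)$ projective, hence a direct sum of indecomposable projectives $\bigoplus P_q(k_j)^{n_j}$.

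The main obstacle is the last case $s=\ell-1$, where one must identify the correct projective summand. Here $v_0$ must actually lie in the radical, since $L_q(i+1)$ is not a head simple of the expected answer $P_q(i-1)$; so one cannot deduce $L_q(i+1)\subseteq \head$ from $v_0$ directly. The argument I would use is dimensional: the HWV $u$ yields $L_q(i-1)\subseteq \head$, forcing $P_q(i-1)$ to be a direct summand of $L_q(i)\otimes L_q(1)$; a comparison of $\dim P_q(i-1)=2\ell(r+1)$ with $\dim L_q(i)\otimes L_q(1)=2(r+1)\ell$, together with the fact that any other indecomposable projective summand would have strictly positive dimension, forces the remaining summands to vanish and yields $L_q(i)\otimes L_q(1)\simeq P_q(i-1)$. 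The composition factor list of $P_q(i-1)$, namely $L_q(i-1)^{\oplus 2}$, $L_q(i+1)$ and (for $r\geq 1$) $L_q(i-2\ell+1)$, is then readily verified to match the earlier character computation.
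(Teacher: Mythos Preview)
Your overall strategy—find highest weight vectors, use \cref{lemma:topphp}, then finish by dimension—is the same as the paper's. The gap is in how you pass from ``$u$ is a HWV of weight $i-1$'' to ``$L_q(i-1)$ lies in the head of $M=L_q(i)\otimes L_q(1)$''. \Cref{lemma:topphp} only tells you that $L_q(i-1)$ sits in the head of the cyclic submodule $\langle u\rangle$, not in the head of $M$ itself; the head of a submodule need not be a quotient of the ambient module. You notice this yourself for $v_0$ when $s=\ell-1$, but the same issue afflicts your use of $u$ in both the case $0<s<\ell-1$ (where nothing rules out, from your argument alone, that $\langle u\rangle=M$ with head $L_q(i-1)$ and socle $L_q(i+1)=\langle v_0\rangle$) and the case $s=\ell-1$ (where the implication ``$L_q(i-1)\subseteq\head M$, hence $P_q(i-1)$ is a summand'' is correct but its premise is unproved).

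The paper closes this gap not by a head argument but by a \emph{socle} argument: it uses the PBW decomposition (\cref{prop:PBW}) to see that $\langle u\rangle$ is spanned by the vectors $\Delta(F^{(n)})u$, computes these explicitly, and bounds $\dim\langle u\rangle\leq (r+1)s=\dim L_q(i-1)$, forcing $\langle u\rangle\simeq L_q(i-1)$ as a \emph{submodule}. The same is done for $\langle v_0\rangle$ when $1\leq s<\ell-1$, giving two simple submodules whose dimensions add up to $\dim M$. For $s=\ell-1$ the paper then argues that the simple, non-projective submodule $\langle u\rangle\simeq L_q(i-1)$ must lie in the socle of some indecomposable projective summand, which by \cref{prop:ProjAT} is $P_q(i-1)$; the dimension match finishes. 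Your character computation does correctly pin down the composition factors, but to replace the paper's explicit bound you would still need an extra input—either the computation of $\dim\langle u\rangle$, or a separate argument that $\Ext^1(L_q(i\pm1),L_q(i\mp1))=0$ for $0<s<\ell-1$ (which is true, via the Loewy structure of the $P_q(k)$).
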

\begin{proof} Let $j=i+2(\ell-s-1)$ and $\mathscr{B}_j= \{m_{a\ell+b}\,|\,0\leq a\leq r,\,\ell-s-1\leq b < \ell\}\subseteq \Delta_q(j)$ be the basis of $L_q(i)$ given\footnote{The case $s = \ell-1$ was left out of the remark following proposition \ref{prop:Weyl}. In this case, $L_q(i)=\Delta_q(i)$ and $\mathscr{B}_j= \{m_k\}_{k=0}^i$ is the basis of definition \ref{def:Weyl}.} just after proposition \ref{prop:Weyl}. Let also $\{m_0',m_1'\}\subseteq L_q(1)=\Delta_q(1)$ be the basis of definition \ref{def:Weyl} and write $M = L_q(i)\otimes L_q(1)$. Then an easy computation using the coproduct of section \ref{sec:luqsl2} shows that the element $y = m_{\ell-s-1}\otimes m_0' \in M$ is a highest weight vector of weight $i+1$. It thus follows from lemma \ref{lemma:topphp} that the submodule $\langle y\rangle \subseteq M$ generated by $y$ contains a copy of $L_q(i+1)$ in its head. In particular, if $s = 0$, we get $\dim M = 2(r+1) = \dim L_q(i+1)$ and we must have $M\simeq L_q(i+1)$. The result is hence proved in this case and we may suppose hereafter that $s\geq 1$. 

In this situation, another direct computation shows that $x = m_{\ell-s-1}\otimes m_1'+q^{\ell-s}m_{\ell-s}\otimes m_0' \in M$ is a highest weight vector of weight $i-1$. Lemma \ref{lemma:topphp} then implies that $L_q(i-1)$ appears in the head of the submodule $\langle x\rangle\subseteq M$ generated by $x$ so that $\dim \langle x\rangle\geq \dim L_q(i-1)=(r+1)s$. We now want to prove the inequality $\dim \langle x \rangle \leq (r+1)s$ in order to deduce that $\langle x\rangle\simeq L_q(i-1)$ is a submodule of $M$. To do so, note that $K$ with the specialization at $q$ of the element $\qbin{K;0}{\ell}{t}\in U_{\text{res}}$ of lemma \ref{lemma:K0ellt} both act diagonally\footnote{This follows easily from the definition of $\qbin{K;0}{\ell}{t}$ and from the fact that $\Delta$ is an algebra morphism with $\Delta(K)x = (K\otimes K)x = q^{i-1}x$.} on $x$ through the coproduct $\Delta$. Moreover, $E^{(n)}$ annihilates  the highest weight vector $x$. Thus, proposition \ref{prop:PBW} shows that $\langle x\rangle$ is the linear span of the elements $\Delta(F^{(n)})x$ with $n\in \mathbb{Z}_{\geq 0}$. Also, by definition of  $\Delta$,
\begin{align*}
\Delta(F^{(n)})x &= (F^{(n)}\otimes K^n)(m_{\ell-s-1}\otimes m_1') + q^{\ell-s}(F^{(n)}\otimes K^n +q^{n-1}F^{(n-1)}\otimes K^{n-1}F)m_{\ell-s}\otimes m_0'\\
&= \qbin{\ell+n-s}{n}{q}(m_{\ell+n-s-1}\otimes m_1'+q^{\ell+n-s}m_{\ell+n-s}\otimes m_0')
\end{align*}
where the last equality follows from theorem \ref{thm:qPascal}. Write as usual $n$ as $n_1\ell+n_2$ with $n_1,n_2\in \mathbb{Z}_{\geq 0}$ and $n_2 < \ell$. Then the vector $\Delta(F^{(n)})x$ is zero if $n_1 \geq r+1$ (as this gives $\ell+n-s-1 > j$) or if $n_2 \geq s$ (since $\qbin{\ell+n-s}{n}{q}=\qbin{(n_1+1)\ell+n_2-s}{n_1\ell+n_2}{q} = 0$ in this case by theorem \ref{thm:qLucas}). Therefore, $\langle x\rangle$ is contained in the linear span of $\{\Delta(F^{(n_1\ell+n_2)})x\,|\, 0\leq n_1 \leq r \text{ and } 0\leq n_2< s\}$ and we indeed have $\dim \langle x\rangle \leq (r+1)s$. The module $M$ thus contains a submodule isomorphic to $L_q(i-1)$.

Suppose $s = \ell-1$. Then, proposition \ref{prop:ProjAT} implies $L_q(i) = P_q(i)$ and $M = L_q(i)\otimes L_q(1)$ must thus be projective in the category $\mathscr{C}$ of finite-dimensional (type I) $\luszt$-modules by lemma \ref{lemma:ProdProj}. However, the submodule $\langle x\rangle \simeq L_q(i-1)$ of $M$ just established is not projective in $\mathscr{C}$ and must therefore lie in the socle of a direct factor of $M$ that is projective in this category. Proposition \ref{prop:ProjAT} (iii) shows that this direct factor is $P_q(i-1)$ with dimension $\dim P_q(i-1)=2(r+1)\ell=\dim M$ so that $M\simeq P_q(i-1)$ as claimed for $s=\ell-1$\footnote{Note that the first paragraph of the proof says that $M$ also contains a submodule $\langle y \rangle$ whose head is isomorphic to $L_q(i+1)$. When $s=\ell-1$, this submodule is isomorphic to $\Delta_q(i+1)$ and is indeed contained in $P_q(i-1)$ by proposition \ref{prop:ProjAT} (iii) (as the $J$ associated to $I=i-1$ is $J=i+1$).}.

Suppose finally $1\leq s < \ell-1$ and recall the submodule $\langle y \rangle \subseteq M$ introduced at the beginning of the proof. As for the submodule $\langle x\rangle$, this submodule $\langle y \rangle$ is the linear span of the $\Delta(F^{(n)})y$ with $n\in \mathbb{Z}_{\geq 0}$:
\begin{equation*}
\Delta(F^{(n)})y = q^n\qbin{\ell+n-s-1}{n}{q}m_{\ell+n-s-1}\otimes m_0'+
\qbin{\ell+n-s-2}{n-1}{q}m_{\ell+n-s-2}\otimes m_1'.
\end{equation*}
If again $n=n_1\ell+ n_2$, then $\Delta(F^{(n)})y$ is zero if $n_2 \geq s+2$ (as this forces $\qbin{\ell+n-s-1}{n}{q}=\qbin{\ell+n-s-2}{n-1}{q}=0$ by theorem \ref{thm:qLucas}). It is also zero if $n_1 \geq r+1$. Indeed, in this situation, $\ell+n-s-2 \geq j$ and both $m_{\ell+n-s-1}$ and $m_{\ell+n-s-2}$ are zero unless $\ell+n-s-2=j$. This last equality can however happen only when $n_2=0$ and implies $\qbin{\ell+n-s-2}{n-1}q=\qbin{n_1\ell+\ell-s-1}{(n_1-1)\ell+\ell-1}q=0$ by theorem \ref{thm:qLucas}. The submodule $\langle y\rangle\subseteq M$ is thus the linear span of $\{\Delta(F^{(n_1\ell+n_2)})y\ |\ 0\leq n_1 \leq r \text{ and } 0\leq n_2\leq s+1\}$ and $\dim \langle{y}\rangle \leq (r+1)(s+2)$. The simple module $L_q(i+1)$ must then be isomorphic to $\langle y\rangle \subseteq M$ as $L_q(i+1)\subseteq \head \langle y\rangle$ with $\dim L_q(i+1)=(r+1)(s+2)\geq\dim \langle y\rangle$. We thus have constructed submodules $\langle x \rangle$ and $\langle y\rangle$ of $M$ satisfying $\langle x \rangle \simeq L_q(i-1)$ and $\langle y \rangle \simeq L_q(i+1)$. It is then easy to show that $M \simeq L_q(i-1)\oplus L_q(i+1)$ as claimed for $1\leq s<\ell-1$ since, in this case, $\dim M = 2(r+1)(s+1) = (r+1)s+(r+1)(s+2) = \dim (L_q(i-1)\oplus L_q(i+1))$.
\end{proof}
\begin{corollary}\label{cor:fusProj}
Let $i=r\ell+s\in\mathbb Z_{\geq 0}$ with $r,s\in \mathbb{Z}_{\geq 0}$ and $s<\ell-1$. Then
\begin{equation*}
P_q(i)\otimes L_q(1) \simeq (1+\delta_{s,\ell-2})P_q(i+1)\oplus (1-\delta_{r,0}\delta_{s,0})P_q(i-1)\oplus \delta_{s,0}P_q(i+2\ell-1).
\end{equation*}
\end{corollary}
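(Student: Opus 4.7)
The plan is to leverage projectivity throughout. First, $P_q(i)\otimes L_q(1)$ is projective in the category of finite-dimensional (type I) $\luszt$-modules by lemma \ref{lemma:ProdProj}, so proposition \ref{prop:ProjAT}(iv) immediately gives a decomposition $P_q(i)\otimes L_q(1)\simeq \bigoplus_{k\geq 0} P_q(k)^{\oplus n_k}$ for some multiplicities $n_k\in\mathbb{Z}_{\geq 0}$, and the entire proof reduces to computing these multiplicities.

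To compute $n_k$, I would first establish the identity $n_k=[L_q(k)\otimes L_q(1):L_q(i)]$. This follows from three steps: (i) applying $\Hom_{\luszt}(-,L_q(k))$ to the decomposition and using $\head P_q(j)\simeq L_q(j)$ to get $n_k=\dim\Hom_{\luszt}(P_q(i)\otimes L_q(1),L_q(k))$; (ii) invoking the Hom-tensor adjunction for the Hopf algebra $\luszt$ together with the self-duality $L_q(1)^*\simeq L_q(1)$ (easily verified since $L_q(1)^*$ is again a two-dimensional simple $\luszt$-module of highest weight $1$), giving $\Hom_{\luszt}(P_q(i)\otimes L_q(1),L_q(k))\simeq \Hom_{\luszt}(P_q(i),L_q(k)\otimes L_q(1))$; and (iii) observing that $\dim\Hom_{\luszt}(P_q(i),M)=[M:L_q(i)]$ for any finite-dimensional $M$, by projectivity of $P_q(i)$ and induction along a composition series.

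The remaining work is a case analysis driven by proposition \ref{prop:fusSimple}. Writing $k=r_k\ell+s_k$ with $s_k<\ell$, there are two regimes: when $s_k\neq\ell-1$, the product $L_q(k)\otimes L_q(1)$ is semisimple with composition factors $L_q(k+1)$ (always) and $L_q(k-1)$ (provided $s_k\neq 0$); when $s_k=\ell-1$, it is isomorphic to $P_q(k-1)$, whose composition factors, read from the Loewy diagram \eqref{fig:LoewyPq} applied to $k-1$ (which satisfies $s_{k-1}=\ell-2$), are $L_q(k-1)$ twice, $L_q(k+1)$, and $L_q(k-2\ell+1)$ (when this index is non-negative). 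A quick inspection shows that $L_q(i)$ can appear in $L_q(k)\otimes L_q(1)$ only when $k\in\{i-1,i+1,i+2\ell-1\}$, and a sub-case analysis on $s=s_i$ (treating $s\leq\ell-3$, $s=\ell-2$ and $s=0$ separately) yields exactly $n_{i+1}=1+\delta_{s,\ell-2}$, $n_{i-1}=1-\delta_{r,0}\delta_{s,0}$ and $n_{i+2\ell-1}=\delta_{s,0}$, matching the stated formula.

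The reduction to $n_k=[L_q(k)\otimes L_q(1):L_q(i)]$ is clean; the main obstacle lies in the subsequent case analysis, where several degeneracies overlap. In particular, when $\ell=2$ one has $s=0=\ell-2$ simultaneously, doubling the multiplicity at $k=i+1$; and when $r=0$ with $s=0$ the index $k=i-1=-1$ ceases to exist and the corresponding summand collapses. A sanity check via the dimension identity $\dim(P_q(i)\otimes L_q(1))=4\ell(r+1)$ will confirm that no summand is lost in these degenerate situations.
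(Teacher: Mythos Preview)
Your argument is correct and takes a genuinely different route from the paper. The paper works constructively: it applies the exact functor $-\otimes L_q(1)$ to $\soc P_q(i)\simeq L_q(i)$, uses proposition \ref{prop:fusSimple} to locate simple submodules inside $P_q(i)\otimes L_q(1)$, then promotes each such simple to a full projective summand via injectivity of the $P_q(k)$'s (proposition \ref{prop:ProjAT}(i)), and finishes by a case-by-case dimension count. Your approach instead packages everything into the reciprocity identity $n_k=[L_q(k)\otimes L_q(1):L_q(i)]$, obtained from the Hom-tensor adjunction together with $L_q(1)^*\simeq L_q(1)$ and the standard fact that $\dim\Hom(P_q(i),M)$ counts multiplicities of $L_q(i)$. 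This is cleaner and more uniform: the case analysis is shifted entirely onto the already-known structure of $L_q(k)\otimes L_q(1)$, and no dimension bookkeeping is needed. The paper's approach, on the other hand, stays closer to the concrete module structure and avoids invoking the adjunction and duality (which the paper only alludes to in a footnote). Both handle the degeneracies $\ell=2$ and $(r,s)=(0,0)$ correctly; your final dimension check is a reasonable safeguard but is not strictly necessary once the $n_k$ are computed.
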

\begin{proof} The endofunctor $-\otimes L_q(1)$ of the category $\mathscr{C}$ of finite-dimensional (type I) $\luszt$-modules is clearly exact since it acts on morphisms precisely like the usual endofunctor $-\otimes L_q(1)$ of the category of finite-dimensional $\mathbb{C}$-vector spaces. The action of this functor on $\soc P_q(i)\simeq L_q(i)$ together with proposition \ref{prop:fusSimple} (with $s<\ell-1$) thus shows the existence of an inclusion of $(1-\delta_{s,0})L_q(i-1)\oplus L_q(i+1) \simeq \soc P_q(i) \otimes L_q(1)$ into $M=P_q(i)\otimes L_q(1)$. However, by lemma \ref{lemma:ProdProj}, $M$ is projective in the category $\mathscr{C}$ so this inclusion (and proposition \ref{prop:ProjAT}) implies that $M$ has a direct factor isomorphic to $(1-\delta_{s,0})P_q(i-1)\oplus P_q(i+1)$. Note that proposition \ref{prop:ProjAT} also states that $L_q(i-1)$ (or $L_q(i+1)$) is projective only when $s=0$ (resp.~$s=\ell-2$). The inclusion $(1-\delta_{s,0})L_q(i-1)\oplus L_q(i+1)\subsetneq (1-\delta_{s,0})P_q(i-1)\oplus P_q(i+1)$ is thus strict if $0\neq s \neq \ell-2$. Also, $M \simeq P_q(i-1)\oplus P_q(i+1)$ in this case as $\dim M = 4\ell(r+1) = 2\ell(r+1)+2\ell(r+1)= \dim P_q(i-1) + \dim P_q(i+1).$ The statement is thus proved for all $s$ except $0$ and $\ell-2$. 

If $s = \ell-2\neq 0$, then the submodule $L_q(i+1) = P_q(i+1)$ appearing in $\soc P_q(i)\otimes L_q(1)$ is simple and projective in $\mathscr{C}$ by proposition \ref{prop:ProjAT}. The action of $-\otimes L_q(1)$ on $\head P_q(i) = L_q(i)$ must then give another copy of $L_q(i+1)$ in the head of $M$ (by proposition \ref{prop:fusSimple}) so that $M$ must have $P_q(i-1)\oplus 2L_q(i+1)$ as a direct factor. This gives $M \simeq P_q(i-1)\oplus 2P_q(i+1)$ as claimed since $\dim M = 4\ell(r+1) = 2\ell(r+1)+\ell(r+1)+\ell(r+1) = \dim (P_q(i-1)\oplus P_q(i+1)\oplus P_q(i+1))$.

If $s = 0$, then $j = i+2(\ell-1)=(r+1)\ell+\ell-2$ and $i_2 = i-2=(r-1)\ell+\ell-2$ with $L_q(i_2) = 0$ if $r = 0$. Moreover proposition \ref{prop:fusSimple} shows that the action of $-\otimes L_q(1)$ on the subquotients $L_q(j)$ and $L_q(i_2)$ of $P_q(i)$ leads to two subquotients of $M$ isomorphic to $L_q(j+1)$ and $L_q(i_2+1)$.
Both are projective in $\mathscr{C}$ by proposition \ref{prop:ProjAT} (that is $L_q(j+1)=P_q(j+1)$ and $L_q(i_2+1)=P_q(i_2+1)$) and the module $M$ must thus have a direct factor isomorphic to $(1-\delta_{r,0})P_q(i-1)\oplus P_q(i+2\ell-1)$. Of course, the projective $P_q(i+1)$ found in the first paragraph of this proof is also a direct factor of $M$ (with multiplicity at least $2$ if $s = \ell-2$). Two cases are now considered.

\noindent {\bfseries\itshape Case 1: $s=0\neq \ell-2$.} Then, as $P_q(i-1)=L_q(i-1)$ and $P_q(i+2\ell-1)=L_q(i+2\ell-1)$, we have
\begin{equation*}
\dim P_q(i+1)\oplus (1-\delta_{r,0})\dim P_q(i-1)\oplus \dim P_q(i+2\ell-1)= 2\ell(r+1)+(1-\delta_{r,0})\ell r+\ell(r+2)=4\ell(r+1)=\dim M
\end{equation*}
so that $M \simeq P_q(i+1)\oplus (1-\delta_{r,0})P_q(i-1)\oplus P_q(i+2\ell-1)$ as claimed.

\noindent {\bfseries\itshape Case 2: $s=\ell-2=0$.} As in the case $s=\ell-2$, the action of $-\otimes L_q(1)$ on both $\soc P_q(i)$ and $\head P_q(i)$ gives rise to a copy of $P_q(i+1)$. Then, $M \simeq 2P_q(i+1)\oplus (1-\delta_{r,0})P_q(i-1)\oplus P_q(i+2\ell-1)$ as the direct sum on the right is a direct factor of $M$ and since\footnote{By the above results, all the modules appearing in the decomposition $2P_q(i+1)\oplus (1-\delta_{r,0})P_q(i-1)\oplus P_q(i+2\ell-1)$ are simple when $s=\ell-2=0$.} $\dim M = \ell(r+1)+\ell(r+1)+(1-\delta_{r,0})\ell r+\ell(r+2)= \dim(2P_q(i+1)\oplus (1-\delta_{r,0})P_q(i-1)\oplus P_q(i+2\ell-1))$.

This concludes the proof.
\end{proof}
\end{subsection}
\end{section}


%
\begin{section}{Remaining cases and problematic pairs}\label{app:problematic}
%
In this appendix, we finish the analysis done in section \ref{sec:xxzLUq} by letting $q = \pm 1$ and by replacing Lusztig's quantum group $\luszt$ by the universal envelopping algebra $U\mathfrak{sl}_2$. We also show some results involving the problematic pairs $(0,\pm i) \in \lambda_N$ (with $N$ even, see section \ref{sec:main}) which were left unproven in sections \ref{sec:xxzatln} and \ref{sec:struc}.
%
%
\begin{subsection}{XXZ chain morphisms for $\ell=1$}\label{sec:qpm1}
Throughout this subsection, the parameter $q$ is set to $+1$ or $-1$ and the algebra $U_q\mathfrak{sl}_2$ is understood to be the universal enveloping algebra $U\mathfrak{sl}_2$. This algebra is generated by elements $e,f,h$ satisfying
$$[h,e]=2e,\qquad [h,f]=-2f\quad\text{and}\quad[e,f]=h.$$ 
For $n\in\mathbb{Z}_{\geq 0}$, we denote by $L(n)$ the unique simple $U\mathfrak{sl}_2$-module with $\dim L(n) = n+1$. This representation has a basis $\{v_0,\dots ,v_n\}$ with explicit action defined by $ev_i = i(n-i+1)v_{i-1}$, $fv_i = v_{i+1}$ and $hv_i = (n-2i)v_i$ (where $v_{-1} = v_{n+1} = 0$). Let $\Delta:U\mathfrak{sl}_2\rightarrow U\mathfrak{sl}_2\otimes U\mathfrak{sl}_2$ be the algebra morphism given by
\begin{equation*}
\Delta(e) = e\otimes 1+q(1\otimes e), \qquad \Delta(f) = f\otimes 1+q(1\otimes f) \quad \text{and}\quad \Delta(h) = h\otimes 1+1\otimes h.
\end{equation*}
This morphism for $q=1$ is the usual coproduct used to define the tensor product of representations of $\mathfrak{sl}_2$. A quick check shows that it is also an algebra morphism at $q=-1$. Define recursively a family of maps\footnote{Remark that $\Delta_3 \neq (\Delta\otimes \id)\circ \Delta$ when $q = -1$ as $\Delta$ is then not coassociative. Therefore the pullback of a threefold tensor product of $U\mathfrak{sl}_2$-modules by $(\Delta\otimes \id)\circ \Delta$ and its pullback by $\Delta_3$ can be non-isomorphic  and care is to be taken when defining a $U\mathfrak{sl}_2$-action on the XXZ chains with $q = -1$.} $\{\Delta_n:U\mathfrak{sl}_2\rightarrow U\mathfrak{sl}_2^{\otimes n}\}_{n\geq 2}$ with $\Delta_2= \Delta$ and $\Delta_{n+1} = (\id\otimes \Delta)\circ\Delta_n$ where $\id$ denotes the identity operator on $U\mathfrak{sl}_2^{\otimes (n-1)}$. Identify moreover $L(1)$ with $\mathbb{C}^2$ through the change of basis given by $v_0\mapsto |+\rangle$ with $v_1\mapsto |-\rangle$ and associate the spin chain $\XXZ{N;z}{+}$ ($=\XXZ{N;z}{-}$) to the pullback of $(L_q(1))^{\otimes N}$ by $\Delta_N$. Then, an easy computation gives the following explicit formulas for the $U\mathfrak{sl}_2$-action on $\XXZ{N;z}{+}$
\begin{equation*} 
e|x_1\dots x_N\rangle_z^+ = \sum_{\substack{1\leq j\leq N\\ x_j = -}} q^{j-1}|x_1\dots x_{j-1}(+)x_{j+1}\dots x_N\rangle_z^+, \qquad 
f|x_1\dots x_N\rangle_z^+ = \sum_{\substack{1\leq j\leq N\\ x_j=+}}q^{j-1}|x_1\dots x_{j-1}(-)x_{j+1}\dots x_N\rangle_z^+
\end{equation*}
and $h|x_1\dots x_N\rangle_z^+ = 2S^z |x_1\dots x_N\rangle_z^+ = d|x_1\dots x_N\rangle_z^+$ where $d = \sum_{j=1}^N x_j$. 
\begin{theorem}\label{thm:fl1} Let $d \in \mathbb{Z}$ be of the parity of $N$ and $z \in \mathbb{C}^{\times}$ such that $q^d=z^2$. Set $d_m = d+2m$ with $z_m = zq^m$ for $m\in\mathbb{Z}$. Then, $\ii_{d_m,z_m}^{(1)}:\XXZ{N;d_m,z_m}{+}\rightarrow \XXZ{N;d_{m-1},z_{m-1}}{+}$ given by $|x_1\dots x_N\rangle_{z_m}^+\mapsto f|x_1\dots x_N\rangle_{z_{m-1}}^+$ is $\atl{N}$-linear.
\end{theorem}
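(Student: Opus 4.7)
The plan is to verify $\atl N$-linearity by checking compatibility with the two generators $e_1$ and $\Omega_N$; this suffices in view of the relation $\Omega_N e_i = e_{i-1}\Omega_N$ in $\atl N$. For $e_1$, the explicit formula \eqref{eq:e1} shows that its action is independent of the twist $z$, while $f$ acts on $(\mathbb{C}^2)^{\otimes N}$ through the iterated coproduct $\Delta_N$ and therefore commutes with the action of the subalgebra $\tl N\subseteq \atl N$ by (the $q=\pm 1$ specialization of) the quantum Schur--Weyl duality of proposition \ref{prop:qSW}. Consequently $\ii^{(1)}_{d_m,z_m}$ intertwines the two $e_1$-actions automatically.

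The substantive step is the commutation with $\Omega_N$, which I will check on a basis vector $|x_1\dots x_N\rangle_{z_m}^+$. Using the expression $\Omega_N = t\,z^{-\sigma_1^z}$ (with $t$ the left translation) together with the explicit formula for $f$ recalled in the excerpt, both sides will be expanded as sums indexed by the positions where the lowering operator flips a $+$ into a $-$. For an interior index $j \in \{2,\dots,N\}$ on the left (flip first, translate second), the same outgoing basis vector is produced on the right by $f$ flipping position $j-1$ of the already-translated word $|x_2\dots x_N x_1\rangle$; matching the coefficients then reduces to $q\,z_{m-1}^{-x_1}=z_m^{-x_1}$, which is immediate from $z_m = z_{m-1}q$ combined with $q^{1+x_1}=1$ (true since $q^2=1$ and $x_1\in\{\pm 1\}$).

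The hard part is the wrap-around contribution, which is the sole place where the hypothesis $q^d = z^2$ intervenes. When $x_1=+$, the summand $j=1$ on the left yields the basis vector $|x_2\dots x_N\,(-)\rangle_{z_{m-1}}^+$ with weight $z_{m-1}$, because the translation carries the newly-lowered spin across the periodic boundary; on the right this same vector is produced by $f$ acting at position $N$ of the translated word, with coefficient $z_m^{-1}q^{N-1}$. Equating these forces $z_{m-1}z_m = q^{N-1}$, and substituting $z_m=zq^m$ together with $q^2=1$ this simplifies to $z^2=q^N$. Since $d\equiv N\pmod 2$ and $q^2=1$ give $q^N=q^d$, the required identity is precisely $z^2=q^d$. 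The only remaining case $x_1=-$ introduces no boundary contribution on either side, so this closes the verification and establishes $\atl N$-linearity of $\ii^{(1)}_{d_m,z_m}$ for every $m\in\mathbb{Z}$.
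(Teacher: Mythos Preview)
Your proof is correct and follows essentially the same route as the paper's: verify compatibility with $e_1$ and $\Omega_N$ on basis vectors, with the wrap-around term in the $\Omega_N$ computation being the one place where the hypothesis $z^2=q^d$ is used, reducing (via $q^2=1$ and $d\equiv N\bmod 2$) to $z_{m-1}z_m=q^{N-1}$. One small caveat: for the $e_1$ step you invoke proposition~\ref{prop:qSW}, but that proposition is stated in the paper under the standing assumption $\ell\geq 2$ for $\luszt$, whereas this appendix treats $q=\pm 1$ with the (non-coassociative when $q=-1$) coproduct $\Delta(f)=f\otimes 1+q(1\otimes f)$ on $U\mathfrak{sl}_2$; the paper accordingly chooses to check $[e_1^+,\Delta_N(f)]=0$ by hand. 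Your shortcut is still valid, but to be self-contained at $q=\pm 1$ you should note that it reduces to the two-site identity $[e_1^+,\,f\otimes 1+q(1\otimes f)]=0$ on $\mathbb C^2\otimes\mathbb C^2$, which is an immediate check using $q^2=1$.
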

\begin{proof} Fix a basis element $|x_1\dots x_N\rangle_{z_m}^+$ of $\XXZ{N;d_m,z_m}{+}$. Then, a straightforward computation using \eqref{eq:e1} gives
\begin{equation*}
e_1\ii_{d_m,z_m}^{(1)}|x_1\dots x_N\rangle_{z_m}^+ = \varsigma+\varsigma_1\quad\text{and}\quad \ii_{d_m,z_m}^{(1)}e_1|x_1\dots x_N\rangle_{z_m}^+=\varsigma+\varsigma_2
\end{equation*}
where, as $q^2=1$,
\begin{align*}
\varsigma_1 &=
\delta_{x_1,+}\delta_{x_2,+}((1-q^{2})|(+)(-)x_3\dots x_N\rangle_{z_{m-1}}^++(q-q^{-1})|(-)(+)x_3\dots x_N\rangle_{z_{m-1}}^+) = 0, \\
\varsigma_2 &= \delta_{x_1+x_2,0}((\delta_{x_2,+}+q\delta_{x_1,+})|(-)(-)x_3\dots x_N\rangle_{z_{m-1}}^+ - q^{x_1}\delta_{x_1,+}|(-)x_2\dots x_N\rangle_{z_{m-1}}^+ - q^{x_1+1}\delta_{x_2,+}|x_1(-)x_3\dots x_N\rangle_{z_{m-1}}^+)=0 \text{ and}\\
\varsigma &= \sum_{\substack{3\leq j\leq N \\ x_j = +}}q^{j-1} 
\delta_{x_1+x_2,0}(|x_2x_1x_3\dots x_{j-1}(-)x_{j+1}\dots x_N\rangle_{z_{m-1}}^+ - q^{x_1}|x_1\dots x_{j-1}(-)x_{j+1}\dots x_N\rangle_{z_{m-1}}^+).
\end{align*}
Thus, $e_1\ii_{d_m,z_m}^{(1)}|x_1\dots x_N\rangle_{z_m}^+ =\ii_{d_m,z_m}^{(1)}e_1|x_1\dots x_N\rangle_{z_m}^+$ as desired. We also have
\begin{align*} 
\Omega_N \ii^{(1)}_{d_m,z_m}|x_1\dots x_N\rangle_{z_m}^+ &= z_{m-1}\delta_{x_1,+}|x_2\dots x_N(-)\rangle_{z_{m-1}}^++z_{m-1}^{-x_1}\sum_{\substack{2\leq j \leq N\\x_j =+}}q^{j-1}|x_2\dots x_{j-1}(-)x_{j+1}\dots x_Nx_1\rangle_{z_{m-1}}^+,\\
\ii^{(1)}_{d_m,z_m}\Omega_N |x_1\dots x_N\rangle_{z_m}^+&= z_m^{-1}q^{N-1}\delta_{x_1,+}|x_2\dots x_N(-)\rangle_{z_{m-1}}^++z_m^{-x_1}\sum_{\substack{1\leq j\leq N-1\\x_{j+1}=+}}q^{j-1}|x_2\dots x_j(-)x_{j+2}\dots x_Nx_1\rangle_{z_{m-1}}^+,
\end{align*}
so $\Omega_N \ii^{(1)}_{d_m,z_m}|x_1\dots x_N\rangle_{z_m}^+ = \ii^{(1)}_{d_m,z_m}\Omega_N|x_1\dots x_N\rangle_{z_m}^+$ as $z_{m-1}^{-x_1} = q^{-1}z_m^{-x_1}$ and $z_{m-1} = q^{d-1}z_m^{-1} = q^{N-1}z_m^{-1}$ by hypothesis.
\end{proof}
Suppose that $q^{d} = z^{2}$. Then, $q^{-d}=(z^{-1})^2$ and theorem \ref{thm:fl1} produces a morphism of $\atl{N}$-modules from $\XXZ{N;-d_{m-1},z_{m-1}^{-1}}{+}$ to $\XXZ{N;-d_m,z_m^{-1}}{+}$. Denote by $\jj^{(1)}_{d_m,z_m}:\XXZ{N;d_{m-1},z_{m-1}}{+}\rightarrow \XXZ{N;d_m,z_m}{+}$ the $\atl{N}$-linear morphism obtained by conjugating this last map with the spin flip of section \ref{sec:atlOnXXZ}. (Recall that $\XXZ{N;d,z}+$ and $\XXZ{N;d,z}-$ are equal as $q^2 =1$.) Note also that the assumption $q^d = z^2$ amounts here to saying that $(d_{m-1},z_{m-1})\unlhd (d_m,z_m)$ whenever $m\in \mathbb{Z}$ where $\unlhd$ is the partial order on $\Delta=\mathbb{Z}\times \mathbb{C}^{\times}$ given through \eqref{eq:conditionsAB2}. Fix $m\in \mathbb{N}$ and let $\kk^{(m)}:\XXZ{N;d_m,z_m}{+}\rightarrow\XXZ{N;d,z}{+}$ and $\mm^{(m)}:\XXZ{N;d,z}{+}\rightarrow\XXZ{N;d_m,z_m}{+}$ be the $\atl{N}$-morphisms defined by
\begin{equation*}
\kk^{(m)} = \ii^{(1)}_{d_1,z_1}\ii^{(1)}_{d_2,z_2}\dots \ii^{(1)}_{d_m,z_m}\qquad\text{and}\qquad \mm^{(m)}=\jj^{(1)}_{d_m,z_m}\jj^{(1)}_{d_{m-1},z_{m-1}}\dots \jj^{(1)}_{d_1,z_1}
\end{equation*}
so $\kk^{(m)}|x_1\dots x_N\rangle_{z_m}^+ = f^m|x_1\dots x_N\rangle_z^+$ and $\mm^{(m)}|x_1\dots x_N\rangle_z^+ = e^m|x_1\dots x_N\rangle_{z_m}^+$ for $x_1,\dots ,x_N \in \{+,-\}$ since the $U\mathfrak{sl}_2$-action on $\XXZ{N;z}{+}$ is independent\footnote{We also use the fact that $\mm_{d_m,z_m}^{(1)}|x_1\dots x_N\rangle_{z_{m-1}}^+ = 
sf|{-}x_1\dots {-}x_N\rangle_{z_m}^+ = e|x_1\dots x_N\rangle_{z_m}^+$ with $s$ the spin flip of section \ref{sec:atlOnXXZ}.} of $z$. The morphisms $\kk^{(m)}$ and $\mm^{(m)}$ satisfy properties similar to those described in theorem \ref{thm:Aut}. To prove this fact, observe that, since the category of finite-dimensional $U\mathfrak{sl}_2$-modules is semisimple, we can decompose the $U\mathfrak{sl}_2$-module $\XXZ{N;z}{+}$ as a sum of irreducibles $L(n)$ with $n\in \mathbb{Z}_{\geq 0}$. Within $L(n)$, the action of $e^jf^j$ on the basis vector $v_i$ gives a non-zero multiple of $v_i$ if $i+j\leq n$. This well-known fact and its analogue for $f^je^j$ lead to the following lemma. 
\begin{lemma} Let $n,m\in \mathbb{Z}_{\geq 0}$ and $d \in \mathbb{Z}$ with $|d|\leq n$ and $d \equiv n$ modulo 2. Then, the action of $e^mf^m$ (or $f^me^m$) on $L(n)$ induces a $\mathbb{C}$-linear automorphism of the eigenspace $\Eval{L(n)}{h=d}{}$ if $|d-2m|\leq |d|$ (or $|d+2m|\leq |d|$, respectively).
\end{lemma}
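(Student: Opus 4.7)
The plan is to reduce the assertion to a one‑line computation inside a single basis vector of $L(n)$. Since the eigenspace $\Eval{L(n)}{h=d}{}$ is one‑dimensional, spanned by $v_i$ with $i=\tfrac12(n-d)\in\{0,1,\dots,n\}$, the endomorphism $e^mf^m$ (or $f^me^m$) induced on it is either zero or a non‑zero scalar; thus the whole content of the lemma is the non‑vanishing of that scalar.

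The first step is a direct calculation using the formulas $fv_k=v_{k+1}$ and $ev_k=k(n-k+1)v_{k-1}$ recalled just before the lemma. They give $f^mv_i=v_{i+m}$ whenever $i+m\le n$, and then iteration of $e$ yields
\[
 e^mf^m\,v_i \;=\; \Big(\prod_{k=1}^{m}(i+k)\,(n-i-k+1)\Big)\,v_i .
\]
It therefore remains to check that each factor $(i+k)(n-i-k+1)$, $1\le k\le m$, is strictly positive under the hypothesis $|d-2m|\le|d|$.

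The second step is to translate this hypothesis. The inequality $|d-2m|\le|d|$ forces $m=0$ if $d<0$ (a trivial case) and $0\le m\le d$ if $d\ge 0$. In the latter situation $i=\tfrac12(n-d)\ge 0$ gives $i+k\ge 1$, while
\[
 n-i-k+1 \;\ge\; n-i-m+1 \;=\; \tfrac12(n+d)-m+1 \;\ge\; 1,
\]
using $m\le d$ and $|d|\le n$. All factors are then positive, so $e^mf^m$ acts as a non‑zero scalar on $\Eval{L(n)}{h=d}{}$ and is therefore an automorphism. The statement for $f^me^m$ is identical after the obvious symmetric computation, or, equivalently, after invoking the involution $v_k\leftrightarrow v_{n-k}$ of $L(n)$ which swaps $e$ with $f$ and exchanges the $h$‑eigenvalues $d$ and $-d$, thereby turning the condition $|d+2m|\le|d|$ into $|(-d)-2m|\le|{-d}|$ and reducing the claim to the first case. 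No step presents a genuine obstacle: the proof amounts to verifying that a product of positive integers is non‑zero, the only subtle point being the bookkeeping that ensures $v_{i+m}\neq 0$, which is exactly what the hypothesis $|d-2m|\le|d|$ guarantees.
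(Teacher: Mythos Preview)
Your proof is correct and is essentially the same approach as the paper's: the paper simply invokes the well-known fact that $e^jf^j v_i$ is a non-zero multiple of $v_i$ whenever $i+j\le n$ (together with its analogue for $f^je^j$), and you have spelled out exactly this computation and verified that the hypothesis $|d-2m|\le|d|$ guarantees the needed inequality.
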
 
\noindent The analogue of theorem \ref{thm:Aut} follows directly from this lemma as the action of $h\in U\mathfrak{sl}_2$ on the chain $\XXZ{N;z}+$ coincide with the action of the diagonal spin operator $2S^z$ defining the eigenspaces $\XXZ{N;d,z}+$.
\begin{theorem}\label{thm:Auti} With the above notation, the maps $\mm^{(m)}\kk^{(m)}$ and $\kk^{(m)}\mm^{(m)}$ are respectively one-to-one if $|d|\leq |d+2m|$ and $|d|\geq |d+2m|$. In particular, $\kk^{(m)}$ is injective (or surjective) if $|d|\leq |d+2m|$ (or if $|d|\geq |d+2m|$, respectively) and $\mm^{(m)}$ is injective (or surjective) if $|d|\geq |d+2m|$ (or if $|d|\leq |d+2m|$, respectively).
\end{theorem}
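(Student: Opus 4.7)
The plan is to reduce the statement about the XXZ chain to the preceding lemma by decomposing $\XXZ{N;z}{+}$ as a $U\mathfrak{sl}_2$-module. Since the category of finite-dimensional $U\mathfrak{sl}_2$-modules is semisimple, we may write $\XXZ{N;z}{+}\simeq\bigoplus_{n\in I} L(n)$ for some finite multiset $I\subseteq \mathbb{Z}_{\geq 0}$. Because the action of $h$ on $\XXZ{N;z}{+}$ coincides with that of $2S^z$, this decomposition restricts on each $S^z$-eigenspace to $\XXZ{N;d,z}{+}\simeq \bigoplus_{n\in I, n\geq |d|, n\equiv d\ (2)}\Eval{L(n)}{h=d}{}$.

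Next, I would observe that the constructed maps act as divided powers of $U\mathfrak{sl}_2$ on basis vectors, independently of the twist parameter. Concretely, on $|x_1\dots x_N\rangle_z^+\in \XXZ{N;d,z}{+}$ one computes directly from the definitions that
\begin{equation*}
\mm^{(m)}\kk^{(m)}|x_1\dots x_N\rangle_{z_m}^+ = e^m f^m |x_1\dots x_N\rangle_{z_m}^+
\quad\text{and}\quad
\kk^{(m)}\mm^{(m)}|x_1\dots x_N\rangle_{z}^+ = f^m e^m|x_1\dots x_N\rangle_z^+,
\end{equation*}
so $\mm^{(m)}\kk^{(m)}$ (resp.\ $\kk^{(m)}\mm^{(m)}$) acts on each summand $\Eval{L(n)}{h=d+2m}{}$ (resp.\ $\Eval{L(n)}{h=d}{}$) as $e^m f^m$ (resp.\ $f^m e^m$). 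Under the hypothesis $|d|\leq |d+2m|$, the lemma with the eigenvalue $d+2m$ (and with $m$ in place of its $m$) gives that $e^m f^m$ is an automorphism of $\Eval{L(n)}{h=d+2m}{}$ for each $n$ appearing, and summing over $n$ yields that $\mm^{(m)}\kk^{(m)}$ is an automorphism of $\XXZ{N;d+2m,z_m}{+}$, hence in particular injective. The case $|d|\geq |d+2m|$ is symmetric and gives that $\kk^{(m)}\mm^{(m)}$ is an automorphism of $\XXZ{N;d,z}{+}$.

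Finally, the consequences for $\kk^{(m)}$ and $\mm^{(m)}$ follow from elementary linear algebra: if $\mm^{(m)}\kk^{(m)}$ is one-to-one, then $\kk^{(m)}$ must be one-to-one and $\mm^{(m)}$ must be onto its image, which on restricting to the image of $\kk^{(m)}$ is a bijection onto $\XXZ{N;d+2m,z_m}{+}$, so $\mm^{(m)}$ is surjective; dually, if $\kk^{(m)}\mm^{(m)}$ is one-to-one, then $\mm^{(m)}$ is injective and $\kk^{(m)}$ is surjective. There is no real obstacle here: the only nontrivial input is the lemma, whose hypotheses on $|d|$ are preserved by direct summation, so the reduction to simple $U\mathfrak{sl}_2$-modules is immediate.
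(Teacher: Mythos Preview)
Your proof is correct and follows exactly the approach the paper takes: decompose $\XXZ{N;z}{+}$ into simple $U\mathfrak{sl}_2$-modules via semisimplicity, identify the $h$-eigenspaces with the $S^z$-eigenspaces, and apply the preceding lemma summand by summand (the paper's own proof is the single sentence preceding the theorem). Your last paragraph could be streamlined by noting that an injective endomorphism of a finite-dimensional space is automatically bijective, so surjectivity of $\mm^{(m)}$ (resp.\ $\kk^{(m)}$) is immediate once $\mm^{(m)}\kk^{(m)}$ (resp.\ $\kk^{(m)}\mm^{(m)}$) is injective.
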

\end{subsection}
%
%
\begin{subsection}{Problematic pairs}\label{sub:problematic}
This section studies the problematic pairs, that is the pairs $(0,q)$ and $(0,q^{-1})$ in the case where $q+q^{-1}=0$ and $N$ is even. Throughout the section $q$ and $N$ will be assumed to satisfy these conditions. Of course, this means that $q$ is either $i$ or $-i$. Note that $(0,q)\sim (0,q^{-1})$ in $\Lambda$ and remark that theorem 2.4 of \cite{GL-Aff} implies that the head of the module $\Cell{N;0,q}$ does not provide any new simple module that is not already labeled in $\Lambda_N$ (see \eqref{eq:lambda}). The main purpose of this subsection is thus to determine the content and structure of these cellular modules. Here it is.
\begin{proposition}\label{prop:StrucCellProb} Let $N\in2\mathbb N$ and $q$ be either $i$ or $-i$. The Loewy diagram of the cellular module $\Cell{N;0,q}$ is
\begin{equation*}
\begin{tikzpicture}[baseline={(current bounding box.center)},scale=1/3]
\node (k0) at (0,9) [] {$(2,1)$};
\node (k3) at (8,9) [] {$(2,-1)$};
\node (j0) at (0,6) [] {$(4,-q)$};
\node (k1) at (0,3) [] {$(6,-1)$};
\node (j1) at (0,0) [] {\ \ \ \ \ $\vdots$\ \ \ \ \ };
\node (i0) at (8,6) [] {$(4,q)$};
\node (h0) at (8,3) [] {$(6,1)$};
\node (i1) at (8,0) [] {\ \ \ \ \ $\vdots$\ \ \ \ \ };
\node (k2) at (0,-3) [] {$(N,q^{1-\frac N2})$};
\node (h1) at (8,-3) [] {$(N,-q^{1-\frac N2})$\ .};
\draw[->] (k0) -- (j0);\draw[->] (k0) -- (i0);
\draw[->] (k3) -- (i0);\draw[->] (k3) -- (j0);
\draw[->] (j0) -- (k1);\draw[->] (j0) -- (h0);
\draw[->] (i0) -- (k1);\draw[->] (i0) -- (h0);
\draw[->] (k1) -- (j1);\draw[->] (k1) -- (i1);
\draw[->] (h0) -- (j1);\draw[->] (h0) -- (i1);
\draw[->] (j1) -- (k2);\draw[->] (i1) -- (k2);
\draw[->] (j1) -- (h1);\draw[->] (i1) -- (h1);
\end{tikzpicture}
\end{equation*}
\end{proposition}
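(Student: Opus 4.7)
The plan is to realize $\Cell{N;0,q}$ as the sum of two maximal submodules, each isomorphic to a cellular module associated with a non-problematic pair (and hence fully described by Theorem \ref{thm:GL}). Since $q=\pm i$ forces $\ell=2$ and $q^2=-1$, a direct computation from \eqref{eq:conditionsAB} shows that $(0,q)$ admits $(2,1)$ as direct successor through condition A and $(2,-1)$ through condition B. The full successor diagram of $(0,q)$ in $\lambda_N$ consists of the nodes appearing in the proposition, organized exactly as in subcase (iii) of Theorem \ref{thm:GL} but with two apexes $(2,\pm 1)$ instead of a single $(d_0,z_0)$. Proposition \ref{prop:grandThm} then yields injective morphisms $\gl{(0,q);(2,1)}:\Cell{N;2,1}\hookrightarrow\Cell{N;0,q}$ and $\gl{(0,q);(2,-1)}:\Cell{N;2,-1}\hookrightarrow\Cell{N;0,q}$, whose images I denote $\mathsf{A}$ and $\mathsf{B}$ respectively.

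Since neither $(2,1)$ nor $(2,-1)$ is problematic, Theorem \ref{thm:GL} fully describes $\mathsf{A}$ and $\mathsf{B}$: each has a subcase-(iii) Loewy diagram, and their composition factors \emph{below the apex} coincide (both families being $(4,\pm q),(6,\pm 1),\ldots,(N,\pm q^{1-N/2})$). Graham-Lehrer's multiplicity-at-most-one statement (just after Proposition \ref{prop:grandThm}) then forces the composition factors of $\Cell{N;0,q}$ to be exactly the simples $\Irre{N;t,x}$ with $(0,q)\preceq(t,x)$ and $(t,x)\in\Lambda_N$, each appearing once; I would confirm this via a dimension count matching $\binom{N}{N/2}$. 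In particular $\mathsf{A}\simeq\Cell{N;2,1}$ contains every composition factor of $\Cell{N;0,q}$ except $\Irre{N;2,-1}$, forcing $\Cell{N;0,q}/\mathsf{A}\simeq\Irre{N;2,-1}$ and making $\mathsf{A}$ maximal; identically $\Cell{N;0,q}/\mathsf{B}\simeq\Irre{N;2,1}$ and $\mathsf{B}$ is also maximal. As these two simple quotients are non-isomorphic, $\mathsf{A}\neq\mathsf{B}$, whence $\mathsf{A}+\mathsf{B}=\Cell{N;0,q}$.

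The isomorphism theorem then gives $\mathsf{A}/(\mathsf{A}\cap\mathsf{B})\simeq\Irre{N;2,1}$ and $\mathsf{B}/(\mathsf{A}\cap\mathsf{B})\simeq\Irre{N;2,-1}$, so $\mathsf{A}\cap\mathsf{B}$ coincides with both $\rad\mathsf{A}$ and $\rad\mathsf{B}$ and hence with the Jacobson radical of $\Cell{N;0,q}$; in particular $\head\Cell{N;0,q}\simeq\Irre{N;2,1}\oplus\Irre{N;2,-1}$, which crucially does not involve any non-existent ``$\Irre{N;0,q}$''. The Loewy structure of $\mathsf{A}\cap\mathsf{B}$ is then read off from $\rad\Cell{N;2,1}$ via Theorem \ref{thm:GL}(iii), and the four arrows from the two top nodes $(2,\pm 1)$ down to $(4,\pm q)$ are inherited from the arrows of $\mathsf{A}\simeq\Cell{N;2,1}$ and $\mathsf{B}\simeq\Cell{N;2,-1}$ (with $(2,1)$ having $(4,-q)$ as direct A-successor and $(4,q)$ as direct B-successor, the two roles being swapped for $(2,-1)$, so that all four arrows indeed appear in the final diagram). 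The main delicate point is justifying the \emph{a priori} equality $\rad\mathsf{A}=\rad\mathsf{B}$ as submodules of $\Cell{N;0,q}$ --- not just as abstract modules --- but this is automatic from the double maximality established above together with the fact, visible from the explicit Graham-Lehrer description, that both radicals share the same abstract Loewy diagram.
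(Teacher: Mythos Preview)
Your approach is essentially the same as the paper's: identify the direct successors $(2,1)$ and $(2,-1)$, embed $\Cell{N;2,1}$ and $\Cell{N;2,-1}$ into $\Cell{N;0,q}$ via Proposition~\ref{prop:grandThm}, read off their Loewy diagrams from Theorem~\ref{thm:GL}, and conclude by a dimension count that these account for everything. The main difference is one of emphasis: the paper carries out the dimension computation explicitly (deriving a recursion $d_i=\binom{2n}{n+i}-\binom{2n}{n+i+1}-d_{i+1}$ for $d_i=\dim\Irre{N;2i,y_i}$ and summing to $\binom{2n}{n}$), whereas you defer this step and instead spell out the module-theoretic consequences (maximality of $\mathsf A$ and $\mathsf B$, $\mathsf A+\mathsf B=\Cell{N;0,q}$, $\mathsf A\cap\mathsf B=\rad\Cell{N;0,q}$). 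Your elaboration is a nice complement to the paper's argument, but note that the dimension count is not optional here: the multiplicity-at-most-one statement after Proposition~\ref{prop:grandThm} is recorded for non-problematic pairs, so the cleanest way to rule out extra composition factors in $\Cell{N;0,q}$ is precisely the equality $\dim\Cell{N;0,q}=\sum_i 2d_i$ that you promised but did not perform.
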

\begin{proof}
The first step here is to compute the direct successors in $\lambda_N$ of $(0,q)$. These are precisely the nodes of the proposed diagram, with arrows pointing toward the successors. These nodes are labelled by an integer $i\in \mathbb{N}$ which is bounded above by $n=N/2$ and are given explicitly by $(2i,y_i)$ for the left column and by $(2i,-y_i)$ for the right where $y_i=q^{1-i}$.  The direct successors of $(0,q)$ through condition A and B are respectively $(2,1)$ and $(2,-1)$. Proposition \ref{prop:grandThm} applies as the pairs $(0,q)$, $(2,1)$ and $(2,-1)$ all belong to $\lambda_N$. There are therefore two injective $\atl{N}$-morphisms $\Cell{N;2,1}\to\Cell{N;0,q}$ and $\Cell{N;2,-1}\to\Cell{N;0,q}$ and the Loewy diagram of $\Cell{N;0,q}$ must contain the diagrams of $\Cell{N;2,1}$ and $\Cell{N;2,-1}$. Fortunately, the latter diagrams may be deduced from theorem \ref{thm:GL}: they are precisely those obtained by removing, from the diagram drawn in the statement, the node $(2,-1)$ for $\Cell{N;2,1}$ and $(2,1)$ for the other. It is thus sufficient to show that the dimension of $\Cell{N;0,q}$ is equal to that of the sum of the composition factors appearing in the statement.

As said above, the modules $\Cell{N;2,1}$ and $\Cell{N;2,-1}$ share all their composition factors except for their respective simple heads $\Irre{N;2,1}$ and $\Irre{N;2,-1}$. Since $\dim\Cell{N;2,1}=\dim\Cell{N;2,-1}$ (the dimension \eqref{eq:dimW} of $\Cell{N;d,z}$ is independent of $z$), the two simples $\Irre{N;2,1}$ and $\Irre{N;2,-1}$ have the same dimension. This argument can be repeated for each pair of irreducibles $\Irre{N;2i,y_i}$ and $\Irre{N;2i,-y_i}$. We thus want to prove that $\dim\Cell{N;0,q}=\sum_{1\leq i\leq n}2d_i$ where $d_i=\dim \Irre{N;2i,y_i}$. The dimensions $d_i$ satisfy
\begin{align*}
d_i&=\dim \Irre{N;2i,y_i}=\dim\Cell{N;2i,y_i}-2\sum_{i+1\leq j\leq n}d_j\\
&=\dim \Cell{N;2i,y_i}-\dim \Cell{N;2(i+1),y_{i+1}}-d_{i+1}= \begin{pmatrix}2n\\ n+i\end{pmatrix}-\begin{pmatrix}2n\\ n+i+1\end{pmatrix}-d_{i+1}.
\end{align*}
The third equality was obtained by gathering up all $d_j$'s in the sum but one of the the two $d_{i+1}$'s and the last one follows from \eqref{eq:dimW}. This recurrence ends at $d_n=\dim\Irre{N;N,y_n}=\dim\Cell{N;N,y_n}=1$. Its solution is
$$d_i=\begin{pmatrix}2n\\ n+i\end{pmatrix}+2\sum_{1\leq j\leq n-i}(-1)^j\begin{pmatrix}2n\\ n+i+j\end{pmatrix}.$$
Hence, 
\begin{align*}
\sum_{1\leq i\leq n} 2d_i &=\dim\Cell{N;2,y_1}+d_1
   =\begin{pmatrix} 2n\\ n-1\end{pmatrix}+\begin{pmatrix} 2n\\ n+1\end{pmatrix}+2\sum_{1\leq j\leq n-1}(-1)^j\begin{pmatrix}2n\\ n+1+j\end{pmatrix}\\
   &=-2\sum_{1\leq i\leq n}(-1)^i\begin{pmatrix}2n\\ n+i\end{pmatrix}
    =2(-1)^{n+1}\sum_{0\leq i\leq n-1}(-1)^i\begin{pmatrix}2n\\ i\end{pmatrix}.\\
\intertext{Note that the alterning sum contains the $n$ first terms of the binomial expansion of $(1-1)^{2n}$. Thus,}
   &=\begin{pmatrix} 2n\\ n\end{pmatrix}+(-1)^{n+1}\sum_{0\leq i\leq 2n}(-1)^i\begin{pmatrix}2n\\ i\end{pmatrix}=\begin{pmatrix} 2n\\ n\end{pmatrix}+(-1)^{n+1}(1-1)^{2n}\\
   &=\dim\Cell{N;0,q}
\end{align*}
as desired. This ends the demonstration.
\end{proof}
We can now adapt the proofs of theorem \ref{thm:main} and of corollary \ref{thm:gpEstUnQuotient} to the case of problematic pairs. 
\begin{proof}[Proof (corollary \ref{thm:gpEstUnQuotient}, problematic case).]
The proof given for proposition \ref{prop:StrucCellProb} shows that $\pg{N;0,q} \simeq \Irre{N;2,-1} = \head \Cell{N;2,-1}$ which is a quotient of $\im \ay{N;0,q}$ by proposition \ref{thm:Bsurvit}. To repeat this argument for the other problematic pair $(0,q^{-1})$, note that $\Cell{N;0,q}\simeq \Cell{N;0,q^{-1}}$ by definition of the cellular modules as the cokernel of some $f_z$ (see section \ref{sec:atln}). Note also that $(0,q^{-1})\preceq (2,-1)$ and $(0,q^{-1})\preceq (2,1)$ directly through condition A and B (respectively) by lemma \ref{thm:partialOrders}. Proposition \ref{prop:StrucCellProb} thus gives\footnote{We stress that the generic part of $\Cell{N;0,q}$ and of $\Cell{N;0,q^{-1}}$ are not isomorphic even though the modules $\Cell{N;0,q}$ and $\Cell{N;0,q^{-1}}$ are.} $\pg{N;0,q^{-1}} \simeq \Irre{N;2,1} = \head \Cell{N;2,1}$ which is a quotient of $\im\ay{N;0,q^{-1}}$ by proposition \ref{thm:Bsurvit}.
\end{proof}
In the following proof which follows almost exactly the steps of subsection \ref{sub:iii}, we will often use implicitly the Loewy diagram of proposition \ref{prop:StrucCellProb} in order to identify the successors of the problematic pair $(0,q)$ for the order $\preceq$. 
\begin{proof}[Proof (theorem \ref{thm:main}, subcase (iii), problematic case).] Fix $N\in 2\mathbb{N}$ and $q\in \mathbb{C}$ with $q^2=-1$. Then, the reasoning of subsection \ref{sub:iii} shows that $\XXZ{N;0,q}{+}$ admits a submodule $\mathsf{M}$ such that $\mathsf{M}\simeq \XXZ{N;2,-1}{+}/\im \ii^+_{(2,-1);(4,-q)}$ (namely $\mathsf{M}=\im \ii^+_{(0,q);(2,-1)}$). However, $(D,Z)=(2,-1)$ is not a problematic pair and is such that $q^D \not\in \{Z^2,Z^{-2}\}$. The proof of subsection \ref{sub:iii} thus shows that $\XXZ{N;2,-1}{+}$ has the Loewy diagram
\begin{equation*}
\begin{tikzpicture}[baseline={(current bounding box.center)},scale=0.45]
\node (d0) at (0,2) [] {$(2,-1)$}; %
\node (s0) at (3,4) [] {$(4,q)$}; %
\node (t0) at (3,0) [] {$(4,-q)$}; %
\node (h0) at (5,2) [] {$(6,-1)$}; %
\node (d1) at (8,2) [] {$(6,1)$}; %
\node (s1) at (10,4) [] {$(8,-q)$}; %
\node (t1) at (10,0) [] {$(8,q)$}; %
\node (h1) at (12,2) [] {$(10,1)$}; %
\node (d2) at (15,2) [] {$(10,-1)$}; %
\node (s2) at (17,4) [] {$\dots$};
\node (t2) at (17,0) [] {$\dots$};
\draw[->] (2,3.6) -- (0.8,2.7);
\draw[->] (0.8,1.3) -- (2,0.6);
\draw[->] (3.8,3.6) -- (5,2.6);
\draw[->] (4,3.8) -- (7.7,2.6);
\draw[red,thick,->] (5,1.5) -- (3.5,0.6); 
\draw[->] (7.8,1.5) -- (4.1,0.4); 
\draw[->] (8.7,3.8) -- (5.2,2.6); 
\draw[->] (8.7,3.6) -- (7.9,2.6); 
\draw[red,thick,->] (5.2,1.5) -- (9.15,0.4); 
\draw[->] (8,1.5) -- (9.8,0.6); 
\draw[->] (11.2,3.6) -- (12,2.6); 
\draw[->] (11.2,3.8) -- (14.8,2.6); 
\draw[red,thick,->] (12.1,1.5) -- (10,0.6); 
\draw[->] (15,1.45) -- (10.8,0.4); 
\draw[->] (16.4,3.9) -- (12.2,2.6); 
\draw[->] (16.6,3.8) -- (14.9,2.6); 
\draw[red,thick,->] (12.2,1.5) -- (16.3,0.3); 
\draw[->] (14.9,1.5) -- (16.5,0.5); 
\end{tikzpicture}
\end{equation*}
where the image $\im \ii^+_{(2,-1);(4,-q)}$ has been identified with red arrows. The submodule $\mathsf{M}\subseteq \XXZ{N;0,q}{+}$ hence has the structure: 
\begin{equation*}
\begin{tikzpicture}[baseline={(current bounding box.center)},scale=0.45]
\node (d0) at (0,2) [] {$(2,-1)$}; %
\node (s0) at (3,4) [] {$(4,q)$}; %
\node (d1) at (6,2) [] {$(6,1)$}; %
\node (s1) at (9,4) [] {$(8,-q)$}; %
\node (d2) at (12,2) [] {$(10,-1)$}; %
\node (s2) at (15,4) [] {$\dots$};
\draw[->] (2,3.6) -- (0.8,2.7);
\draw[->] (4,3.8) -- (5.2,2.6);
\draw[->] (7.7,3.6) -- (6.9,2.6); 
\draw[->] (10.2,3.6) -- (11,2.6); 
\draw[->] (14.6,3.7) -- (13.3,2.6); 
\end{tikzpicture}
\end{equation*}
Using the above analysis for the problematic pair $(0,q^{-1})$ and applying the $\star$-duality (with the help of proposition \ref{thm:isoByDual}), we can moreover deduce that $\XXZ{N;0,q}{+}$ has a quotient with the Loewy diagram:
\begin{equation}\label{eq:QuotProb}
\begin{tikzpicture}[baseline={(current bounding box.center)},scale=0.45]
\node (d0) at (0,4) [] {$(2,1)$}; %
\node (s0) at (3,2) [] {$(4,q)$}; %
\node (d1) at (6,4) [] {$(6,-1)$}; %
\node (s1) at (9,2) [] {$(8,-q)$}; %
\node (d2) at (12,4) [] {$(10,1)$}; %
\node (s2) at (15,2) [] {$\dots$};
\draw[->] (0.9,3.6) -- (2.1,2.5);
\draw[->] (4.7,3.6) -- (3.75,2.5);
\draw[->] (7.2,3.6) -- (7.9,2.5); 
\draw[->] (10.9,3.6) -- (10,2.5); 
\draw[->] (13.2,3.7) -- (14.5,2.5); 
\end{tikzpicture}
\end{equation}
Also, the proof of subsection \ref{sub:iii} shows that $\XXZ{N;0,q}{+}$ has another submodule, namely $\im s \ii^-_{(0,-q);(2,-1)}$, which is isomorphic to $(\XXZ{N;2,-1}{+}/\im \ii^+_{(2,-1);(4,-q)})^{\circ} \simeq \mathsf{M}^{\circ}$. By the above analysis and proposition \ref{prop:foncteurcirc}, the structure of this submodule is
\begin{equation*}
\begin{tikzpicture}[baseline={(current bounding box.center)},scale=0.45]
\node (d0) at (0,2) [] {$(2,-1)$}; %
\node (s0) at (3,4) [] {$(4,-q)$}; %
\node (d1) at (6,2) [] {$(6,1)$}; %
\node (s1) at (9,4) [] {$(8,q)$}; %
\node (d2) at (12,2) [] {$(10,-1)$}; %
\node (s2) at (15,4) [] {$\dots$};
\draw[->] (1.8,3.6) -- (0.8,2.7);
\draw[->] (4.3,3.8) -- (5.2,2.6);
\draw[->] (8.1,3.6) -- (6.8,2.6); 
\draw[->] (9.9,3.6) -- (10.9,2.6); 
\draw[->] (14.6,3.7) -- (13.3,2.6); 
\end{tikzpicture}
\end{equation*}
Repeating this argument for the pair $(0,q^{-1})$ and applying once again the $\star$-duality (with the help of proposition \ref{thm:isoByDual}), we deduce that $\XXZ{N;0,q}{+}$ has in addition a quotient with the following Loewy diagram:
\begin{equation*}
\begin{tikzpicture}[baseline={(current bounding box.center)},scale=0.45]
\node (d0) at (0,4) [] {$(2,-1)$}; %
\node (s0) at (3,2) [] {$(4,-q)$}; %
\node (d1) at (6,4) [] {$(6,1)$}; %
\node (s1) at (9,2) [] {$(8,q)$}; %
\node (d2) at (12,4) [] {$(10,-1)$}; %
\node (s2) at (15,2) [] {$\dots$};
\draw[->] (1.2,3.6) -- (2,2.6);
\draw[->] (5,3.8) -- (3.8,2.6);
\draw[->] (7,3.8) -- (8.1,2.6); 
\draw[->] (10.7,3.6) -- (9.7,2.6); 
\draw[->] (13.4,3.7) -- (14.7,2.5); 
\end{tikzpicture}
\end{equation*}
We can now finish the proof as in subsection \ref{sub:iii}. Indeed, we have shown (see proposition \ref{prop:StrucCellProb}) that all composition factors of $\Cell{N;0,q}$ appear in some subquotient of $\XXZ{N;0,q}{+}$. Since $\dim \XXZ{N;0,q}{+} = \dim \Cell{N;0,q}$, we have found all composition factors of the eigenspace $\XXZ{N;0,q}{+}$ and its Loewy diagram must be the one announced in theorem \ref{thm:main}, that is
\begin{equation}\label{fig:StrucXProb}
\begin{tikzpicture}[baseline={(current bounding box.center)},scale=0.45]
\node (s0) at (3,4) [] {$(2,1)$}; %
\node (t0) at (3,0) [] {$(2,-1)$}; %
\node (h0) at (5,2) [] {$(4,q)$}; %
\node (d1) at (8,2) [] {$(4,-q)$}; %
\node (s1) at (10,4) [] {$(6,-1)$}; %
\node (t1) at (10,0) [] {$(6,1)$}; %
\node (h1) at (12,2) [] {$(8,-q)$}; %
\node (d2) at (15,2) [] {$(8,q)$}; %
\node (s2) at (17,4) [] {$\dots$};
\node (t2) at (17,0) [] {$\dots$};
\draw[->] (3.8,3.6) -- (5,2.6);
\draw[->] (4,3.8) -- (7.7,2.6);
\draw[red,thick,->] (5,1.5) -- (3.5,0.6); 
\draw[->] (7.8,1.5) -- (4.1,0.4); 
\draw[->] (8.7,3.8) -- (5.2,2.6); 
\draw[->] (8.7,3.6) -- (7.9,2.6); 
\draw[red,thick,->] (5.2,1.5) -- (9.15,0.4); 
\draw[->] (8,1.5) -- (9.8,0.6); 
\draw[->] (11.2,3.6) -- (12,2.6); 
\draw[->] (11.2,3.8) -- (14.8,2.6); 
\draw[red,thick,->] (12.1,1.5) -- (10,0.6); 
\draw[->] (15,1.45) -- (10.8,0.4); 
\draw[->] (16.4,3.9) -- (12.2,2.6); 
\draw[->] (16.6,3.8) -- (14.9,2.6); 
\draw[red,thick,->] (12.2,1.5) -- (16.3,0.3); 
\draw[->] (14.9,1.5) -- (16.5,0.5); 
\end{tikzpicture}
\end{equation}
with perhaps some missing arrows and where the submodule $M=\im \ii^+_{(0,q);(2,-1)}$ has been identified with red arrows. The fact that there is no missing arrows is shown exactly like in subsection \ref{sub:iii} if $N>2$. When $N=2$, the precedent diagram becomes the one of the module $\Cell{2;0,q}$ (see proposition \ref{prop:StrucCellProb}). It must therefore be missing an arrow since $\Cell{2;0,q}\not\simeq \XXZ{2;0,q}{+}$ (as an easy computation shows that the generator $e_1\in \atl{N}$ acts as zero on $\Cell{2;0,q}$ but not of $\XXZ{2;0,q}{+}$). However, the only arrow that can be added while still producing a diagram compatible with the submodules and quotients found above is $(2,1)\rightarrow (2,-1)$. The Loewy diagram of $\XXZ{2;0,q}{+}$ must then be given by this arrow, that is
\begin{equation*}
\begin{tikzpicture}[baseline={(current bounding box.center)},scale=0.45]
\node (s0) at (0,0) [] {$(2,1)$}; %
\node (t0) at (4,0) [] {$(2,-1)$}; %
\draw[->] (s0) -- (t0);%
\end{tikzpicture}
\end{equation*}
The structure of the other problematic eigenspace $\XXZ{N;0,q^{-1}}{+} \simeq (\XXZ{N;0,q}{+})^{\star}$ can also be deduced from \eqref{fig:StrucXProb} with the help of proposition \ref{thm:isoByDual}. The resulting diagram for $N=2$ is given by the arrow $(2,-1)\rightarrow (2,1)$. For $N>2$, we rather obtain
\begin{equation}\label{fig:StrucXProb2}
\begin{tikzpicture}[baseline={(current bounding box.center)},scale=0.45]
\node (s0) at (3,4) [] {$(2,-1)$}; %
\node (t0) at (3,0) [] {$(2,1)$}; %
\node (h0) at (5,2) [] {$(4,-q)$}; %
\node (d1) at (8,2) [] {$(4,q)$}; %
\node (s1) at (10,4) [] {$(6,1)$}; %
\node (t1) at (10,0) [] {$(6,-1)$}; %
\node (h1) at (12,2) [] {$(8,q)$}; %
\node (d2) at (15,2) [] {$(8,-q)$}; %
\node (s2) at (17,4) [] {$\dots$};
\node (t2) at (17,0) [] {$\dots$};
\draw[->] (4,3.5) -- (5,2.6);
\draw[->] (4.2,3.8) -- (7.7,2.6);
\draw[red,thick,->] (5,1.5) -- (3.5,0.6); 
\draw[->] (7.8,1.5) -- (3.9,0.4); 
\draw[->] (8.9,3.8) -- (5.2,2.6); 
\draw[->] (8.9,3.6) -- (7.9,2.6); 
\draw[red,thick,->] (5.2,1.5) -- (8.9,0.4); 
\draw[->] (8,1.5) -- (9.8,0.6); 
\draw[->] (11,3.6) -- (12,2.6); 
\draw[->] (11,3.8) -- (14.8,2.6); 
\draw[red,thick,->] (12.1,1.5) -- (10,0.6); 
\draw[->] (14.8,1.45) -- (11.1,0.4); 
\draw[->] (16.4,3.9) -- (12.2,2.6); 
\draw[->] (16.6,3.8) -- (14.9,2.6); 
\draw[red,thick,->] (12.2,1.5) -- (16.3,0.3); 
\draw[->] (14.9,1.5) -- (16.5,0.5); 
\end{tikzpicture}
\end{equation}
where the image $\im \ii_{(0,-q);(2,1)}$ is again identified with red arrows (this image is the $\star$-dual of the quotient \eqref{eq:QuotProb} of $\XXZ{N;0,q}{+}$). The diagrams obtained are compatible with theorem \ref{thm:main} and we can thus conclude the proof.
\end{proof}
We end this appendix by noting that a straightforward comparison between the diagram \eqref{fig:StrucXProb} (or the corresponding diagram for $N=2$) and the one given in proposition \ref{prop:StrucCellProb} shows that, for the problematic pair $(0,q)$, the image $\im \ay{N;0,q}$ is exactly the generic part $\pg{N;0,q} \simeq \Irre{N;2,-1}$. This is also true for the other problematic pair $(0,-q)$ as the generic part $\pg{N;0,-q}$ is then isomorphic to the simple module $\Irre{N;2,1}$ (see the proof of corollary \ref{thm:gpEstUnQuotient} for the problematic case) which is a submodule of $\XXZ{N;0,-q}{+}$ by diagram \eqref{fig:StrucXProb2} (or by the corresponding diagram for $N=2$). This is a special case of the aforementioned corollary \ref{cor:Imind}.

\end{subsection}
\end{section}

%
%

\section*{Acknowledgements}

We thank Jonathan Bellet\^ete and Alexi Morin-Duchesne for their interest in the project and their useful comments. We are particularly indebted to Jonathan Bellet\^ete for telling us the result generalizing lemma \ref{lem:jonathan} with its proof. 
Useful discussions with Ibrahim Assem and Lo\"\i c Poulain d'Andecy are gratefully acknowledged. 
TP held scholarships from the Natural Sciences and Engineering Research Council of Canada and the Fonds de recherche Nature et technologies (Qu\'ebec) while this work was done. TP also received funding from the European Union's Horizon 2020 research and innovation programme under the Marie Sk\l{}odowska-Curie grant agreement \includegraphics[scale=0.08]{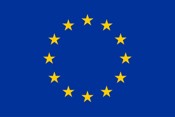} No 945322. YSA holds a grant from the Natural Sciences and Engineering Research Council of Canada. This support is gratefully acknowledged.

\raggedright
\singlespacing
\nocite{*} 
\bibliographystyle{unsrt}
\bibliography{xxz16}

\begin{thebibliography}{10}

\bibitem{PS}
V~Pasquier amd H~Saleur.
\newblock Common structures between finite systems and conformal field theories
  through quantum groups.
\newblock {\em Nucl.~Phys.~B}, 330:523--553, 1990.

\bibitem{GL-Aff}
J~Graham and G~Lehrer.
\newblock The representation theory of affine {Temperley}-{Lieb} algebras.
\newblock {\em Enseign. Math.}, 44:173--218, 1998.

\bibitem{MDSA}
A~Morin-Duchesne and Y~Saint-Aubin.
\newblock A homomorphism between link and {XXZ} modules over the periodic
  {Temperley}-{Lieb} algebra.
\newblock {\em J.~Phys.~A}, 46:285207, 2013.
\newblock \href{http://arxiv.org/abs/1203.4996}{\textsf{arXiv:1203.4996
  [math-ph]}}.

\bibitem{Deguchi}
T~Deguchi, K~Fabricius, and BM~McCoy.
\newblock The $sl_2$ loop algebra symmetry of the six-vertex model at roots of
  unity.
\newblock {\em J.~Stat.~Phys.}, 102:701--736, 2001.
\newblock
  \href{https://arxiv.org/abs/cond-mat/9912141}{\textsf{arXiv:cond-mat/9912141}}.

\bibitem{MDSAloop}
A~Morin-Duchesne and Y~Saint-Aubin.
\newblock Jordan cells of periodic loop models.
\newblock {\em J.~Phys.~A}, 46:494013, 2013.
\newblock \href{https://arxiv.org/abs/1302.5483}{\textsf{arXiv:1302.5483v3}}.

\bibitem{GRS1}
AM~Gainutdinov, N~Read, and H~Saleur.
\newblock Bimodule structure in the periodic $g\ell(1|1)$ spin chain.
\newblock {\em Nucl.~Phys.~B}, 871:289--329, 2013.
\newblock \href{http://arxiv.org/abs/1112.3407}{\textsf{arXiv:1112.3407v3}}.

\bibitem{GRS2}
AM~Gainutdinov, N~Read, and H~Saleur.
\newblock Associative algebraic approach to logarithmic {CFT} in the bulk: the
  continuum limit of the $g\ell(1|1)$ periodic spin chain, {H}owe duality and
  the interchiral algebra.
\newblock {\em Comm.~Math.~Phys.}, 341:35--103, 2016.
\newblock \href{https://arxiv.org/abs/1207.6334}{\textsf{arXiv:1207.6334v2}}.

\bibitem{koo1994representations}
WM~Koo and H~Saleur.
\newblock Representations of the {V}irasoro algebra from lattice models.
\newblock {\em Nuclear Physics B}, 426(3):459--504, 1994.

\bibitem{Jantzen}
JC~Jantzen.
\newblock {\em Lectures on quantum groups}, volume~6 of {\em Graduate Studies
  in Mathematics}.
\newblock American Mathematical Society, Providence, RI, 1995.

\bibitem{ChariPressley}
V~Chari and A~Pressley.
\newblock {\em A guide to quantum groups}.
\newblock Cambridge University Press, Cambridge, UK, 1995.

\bibitem{Klimyk}
A~Klimyk and K~Schm\"udgen.
\newblock {\em Quantum groups and their representations}.
\newblock Texts and monographs in physics. Springer, Berlin, Germany, 1997.

\bibitem{AndersenTubbenhauer}
HH~Andersen and D~Tubbenhauer.
\newblock Diagram categories for ${U}_q$-tilting modules at roots of unity.
\newblock {\em Transform.~Groups}, 1:29--89, 2017.
\newblock \href{https://arxiv.org/abs/1409.2799}{\textsf{arXiv:1409.2799v2}}.

\bibitem{BSAfusion}
J~Bellet\^{e}te and Y~Saint-Aubin.
\newblock On the computation of fusion over the affine {Temperley}-{Lieb}
  algebra.
\newblock {\em Nucl.~Phys.~B}, 937:333--370, 2018.
\newblock \href{http://arxiv.org/abs/1802.03575}{\textsf{arXiv:1802.03575}}.

\bibitem{MS}
P~Martin and H~Saleur.
\newblock On an algebraic approach to higher dimensional statistical mechanics.
\newblock {\em Comm.~Math.~Phys.}, 158:1555--190, 1993.
\newblock
  \href{http://arxiv.org/abs/hep-th/9208061}{\textsf{arXiv:hep-th/9208061}}.

\bibitem{ridout2014standard}
David Ridout and Yvan Saint-Aubin.
\newblock Standard modules, induction and the structure of the temperley-lieb
  algebra.
\newblock {\em Advances in Theoretical and Mathematical Physics},
  18(5):957--1041, 2014.

\bibitem{Green}
RM~Green.
\newblock On representations of affine {Temperley}-{Lieb} algebras.
\newblock In I~Reiten, SO~Smal\o{}, and \O~Solberg, editors, {\em Conference
  Proceedings, Canadian Mathematical Society}, volume~24. American Mathematical
  Society, 1998.

\bibitem{AGR}
FC~Alcaraz, U~Grimm, and V~Rittenberg.
\newblock The {XXZ} heisenberg chain, conformal invariance and the operator
  content of $c<1$ systems.
\newblock {\em Nucl.~Phys.~B}, 316:735--768, 1989.

\bibitem{Lentner}
S~Lentner.
\newblock The unrolled quantum group inside lusztig’s quantum group of
  divided powers.
\newblock {\em Letters in Mathematical Physics}, 109(7):1665--1682, 2019.

\bibitem{AndersenPolo}
HH~Andersen, P~Polo, and Kexin Wen.
\newblock Representations of quantum algebras.
\newblock {\em Inventiones mathematicae}, 120(1):409--410, 1995.

\bibitem{bushlanov2009lusztig}
PV~Bushlanov, BL~Feigin, AM~Gainutdinov, and IY~Tipunin.
\newblock Lusztig limit of quantum sl(2) at root of unity and fusion of (1, p)
  virasoro logarithmic minimal models.
\newblock {\em Nuclear Physics B}, 818(3):179--195, 2009.

\bibitem{bushlanov2012lusztig}
PV~Bushlanov, AM~Gainutdinov, and IY~Tipunin.
\newblock Kazhdan-{L}usztig equivalence and fusion of {K}ac modules in
  {V}irasoro logarithmic models.
\newblock {\em Nuclear Physics B}, 862(3):232--269, 2012.
\newblock \href{http://arxiv.org/abs/1102.0271}{\textsf{arXiv:1102.0271v2
  [hep-th]}}.

\bibitem{Jimbo1}
M~Jimbo.
\newblock A $q$-difference analogue of ${U}(\frak g)$ and the {Y}ang-{B}axter
  equation.
\newblock {\em Lett.~Math.~Phys.}, 10:63--69, 1985.

\bibitem{Jimbo2}
M~Jimbo.
\newblock A $q$-analogue of ${U}(\frak{gl}({N}+1))$, {H}ecke algebra, and the
  {Y}ang-{B}axter equation.
\newblock {\em Lett.~Math.~Phys.}, 11:247--252, 1986.

\bibitem{MartinSchur}
PP~Martin.
\newblock On {S}chur-{W}eyl duality, ${A}_n$ {H}ecke algebras and quantum
  $sl({N})$ on $\otimes^{n+1}\mathbb{C}^{N}$.
\newblock {\em Int.~Jour.~Modern Phys.}, 07:645--673, 1992.

\bibitem{JonathanEtCie}
J~Bellet\^{e}te, AM~Gainutdinov, JL~Jacobsen, H~Saleur, and TS~Tavares.
\newblock Topological defects in the affine {T}emperley-{L}ieb algebra: the
  critical cases.
\newblock {\em in preparation}, 2022.

\bibitem{martin1994blob}
P~Martin and H~Saleur.
\newblock The blob algebra and the periodic {T}emperley-{L}ieb algebra.
\newblock {\em Letters in mathematical physics}, 30(3):189--206, 1994.

\bibitem{GLdiagram}
J~Graham and G~Lehrer.
\newblock Diagram algebras, hecke algebras and decomposition numbers at toors
  of unity.
\newblock {\em Ann.~Scient.~\'Ec.~Norm.~Sup., 4e s\'erie}, 36:479--524, 2003.

\bibitem{morin2015boundary}
A~Morin-Duchesne, J~Rasmussen, and D~Ridout.
\newblock Boundary algebras and {Kac} modules for logarithmic minimal models.
\newblock {\em Nuclear Physics B}, 899:677--769, 2015.

\bibitem{GLlienBlob}
J~Graham and G~Lehrer.
\newblock Cellular algebras and diagram algebras in representation theory.
\newblock In {\em Representation theory of algebraic groups and quantum
  groups}, pages 141--173, 2004.

\bibitem{BGJS2018}
J~Bellet\^ete, AM~Gainutdinov, JL~Jacobsen, H~Saleur, and TS~Tavares.
\newblock Topological defects in lattice models and affine {T}emperley-{L}ieb
  algebras.
\newblock 2018.
\newblock \href{https://arxiv.org/abs/1811.02551}{\textsf{arXiv:1811.02551}}.

\bibitem{feigin1982invariant}
BL~Feigin and DB~Fuks.
\newblock Invariant skew-symmetric differential operators on the line and
  {V}erma modules over the virasoro algebra.
\newblock {\em Functional Analysis and Its Applications}, 16(2):114--126, 1982.

\bibitem{StumQuiros}
B~Le Stum and A~Quir\'os.
\newblock On quantum integers and rationals.
\newblock In F~Chamizo, J~Gu\`ardia, A~Rojas-Le\'on, and JM~Tornero, editors,
  {\em Trends in number theory}, volume 649 of {\em Contemporary Mathematics},
  pages 107--131. American Mathematical Society, Providence, RI, 2015.
\newblock \href{https://arxiv.org/abs/1310.8143}{\textsf{arXiv:1310.8143v1}}.

\bibitem{Bcorings}
T~Brzezinski and R~Wisbauer.
\newblock {\em Corings and comodules}.
\newblock Cambridge University Press, 2003.

\bibitem{saleur1989virasoro}
H~Saleur.
\newblock Virasoro and {T}emperley-{L}ieb algebras.
\newblock In {\em Knots, Topology and QFT, Proceedings of the Johns Hopkins
  Workshop, Florence}, pages 485--496. World Scientific, 1989.

\bibitem{GoodmanWenzl}
F~Goodman and H~Wenzl.
\newblock The {T}emperley-{L}ieb algebra at roots of unity.
\newblock {\em Pac.~J.~Math.}, 161:307--334, 1993.

\bibitem{MartinBook}
P~Martin.
\newblock {\em {Potts} Models and Related Problems in Statistical Mechanics},
  volume~5 of {\em Advances in Statistical Mechanics}.
\newblock World Scientific, Singapore, 1991.

\bibitem{gainutdinov2013physical}
AM~Gainutdinov, JL~Jacobsen, H~Saleur, and R~Vasseur.
\newblock A physical approach to the classification of indecomposable
  {V}irasoro representations from the blob algebra.
\newblock {\em Nuclear Physics B}, 873(3):614--681, 2013.

\end{thebibliography}
\end{document}